\tikzset{
  commutative diagrams/.cd, 
  arrow style=tikz, 
  diagrams={>=stealth}
}
\newenvironment{customthm}[1]
  {\innercustomthm}
  {\endinnercustomthm}
\def\@tocline#1#2#3#4#5#6#7{\relax
  \ifnum #1>\c@tocdepth 
  \else
    \par \addpenalty\@secpenalty\addvspace{#2}%
    \begingroup \hyphenpenalty\@M
    \@ifempty{#4}{%
      \@tempdima\csname r@tocindent\number#1\endcsname\relax
    }{%
      \@tempdima#4\relax
    }%
    \parindent\z@ \leftskip#3\relax \advance\leftskip\@tempdima\relax
    \rightskip\@pnumwidth plus4em \parfillskip-\@pnumwidth
    #5\leavevmode\hskip-\@tempdima
      \ifcase #1
       \or\or \hskip 1em \or \hskip 2em \else \hskip 3em \fi%
      #6\nobreak\relax
    \dotfill\hbox to\@pnumwidth{\@tocpagenum{#7}}\par
    \nobreak
    \endgroup
  \fi}
\newcounter{marginnote}
\DeclareMathAlphabet{\mathpzc}{OT1}{pzc}{m}{it}
\newtheorem{theorem}{Theorem}[subsection]
\newtheorem{lemma}[theorem]{Lemma}
\newtheorem{proposition}[theorem]{Proposition}
\newtheorem{conjecture}[theorem]{Conjecture}
\newtheorem{quasi-theorem}[theorem]{Quasi-Theorem}
\theoremstyle{definition}
\newtheorem{definition}[theorem]{Definition}
\newtheorem{remark}[theorem]{Remark}
\newtheorem{construction}[theorem]{Construction}
\newtheorem{example}[theorem]{Example}
\newtheorem{blank remark}[theorem]{}
\newtheorem{not1}[theorem]{Notation}
\newcommand{\A}{{\mathbb{A}}}           
\newcommand{\CC} {{\mathbb C}}          
\newcommand{\NN} {{\mathbb N}}		
\newcommand{\PP}{\mathbb{P}}         
\newcommand{\QQ} {{\mathbb Q}}		
\newcommand{\RR} {{\mathbb R}}		
\newcommand{\ZZ} {{\mathbb Z}}		
\def\setminus{\smallsetminus}
\newcommand{\Hom}{\operatorname{Hom}}
\DeclareMathOperator{\spec}{Spec}
\newcommand{\cal}{\mathcal}
\def\cM{{\cal M}}
\newcommand{\Mbar}{\overline{\cM}\vphantom{\cM}}
\def\trop{\mathsf{trop}}
\def\blfootnote{\xdef\@thefnmark{}\@footnotetext}
\title[Logarithmic DT theory]{Logarithmic Donaldson--Thomas theory}
\date{}
\author{Davesh Maulik {\it \&} Dhruv Ranganathan}
\address{Davesh Maulik \\ Department of Mathematics\\
Massachusetts Institute of Technology, Cambridge, MA, USA}
\email{\href{mailto:maulik@mit.edu}{maulik@mit.edu}}
\address{Dhruv Ranganathan \\ Department of Pure Mathematics {\it \&} Mathematical Statistics\\
University of Cambridge, Cambridge, UK}
\email{\href{mailto:dr508@cam.ac.uk}{dr508@cam.ac.uk}}
\begin{document}

\maketitle

\begin{abstract}
Let $X$ be a smooth and projective threefold with a simple normal crossings divisor $D$. We construct the Donaldson--Thomas theory of the pair $(X|D)$ enumerating ideal sheaves on $X$ relative to $D$. These moduli spaces are compactified by studying subschemes in expansions of the target geometry, and the moduli space carries a virtual fundamental class leading to numerical invariants with expected properties.   We formulate punctual evaluation, rationality and wall-crossing conjectures, in parallel with the standard theory.  Our formalism specializes to the Li--Wu theory of relative ideal sheaves when the divisor is smooth, and is parallel to recent work on logarithmic Gromov--Witten theory with expansions. 
\end{abstract}

\vspace{0.3in}
\setcounter{tocdepth}{1}
\tableofcontents
\eject

\section*{Introduction}

A basic technique in enumerative geometry is to degenerate from a smooth variety to a singular one, whose irreducible components may be easier to analyze.  
In Gromov--Witten theory, this is an essential tool in the subject, with both symplectic and algebraic incarnations \cite{IP03,LR01, Li01, Li02}.  Associated to a smooth pair $(X,D)$, \textit{relative} Gromov--Witten theory studies maps with fixed tangency along $D$, and uses these to study degenerations of a smooth variety to two smooth components meeting transversely.  The key geometric idea is that the target is dynamic, expanding along $D$ to prevent components mapping to the divisor.  

Logarithmic Gromov--Witten theory was developed~\cite{AC11,Che10,GS13} to handle more general logarithmically smooth pairs $(X,D)$, for instance, where $D$ is a normal crossings boundary divisor; these occur as components of degenerations with more complicated combinatorics.  In this approach, the target is no longer dynamic and, instead, logarithmic structures are used to maintain a form of transversality in limits.

When $X$ is a threefold, there are sheaf-theoretic approaches to enumerating curves, developed by Donaldson--Thomas and Pandharipande--Thomas~\cite{DT, RPWT, PT09}.  In the case of a smooth 
pair $(X,D)$, there is again a powerful relative theory, developed by Li--Wu~\cite{LiWu15}, which is one of the central tools of the subject.   The goal of this paper is to construct the Donaldson--Thomas theory of a pair $(X,D)$ where $D$ is a simple normal crossings divisor, referred to here as the logarithmic DT theory of $(X|D)$.  

One challenge in building such a theory is that there is no clear analogue of the unexpanded formalism of~\cite{GS13}. The moduli space of prestable curves and its universal family are themselves normal crossings pairs, so the logarithmic mapping space provides a starting point. The Hilbert scheme exhibits no such structure, and there is not, as yet, a good definition of a logarithmic structure on an ideal sheaf.  Instead, our strategy is closer to the original expanded formalism of~\cite{LiWu15}.  In an upcoming paper~\cite{MR22}, we complete the parallel to logarithmic Gromov--Witten theory, with its associated degeneration formalism \cite{ACGS15,ACGS20, R19}, and formulate the logarithmic version of the correspondence between Gromov--Witten and DT/PT theories.

\subsection{Main results} 

We work over the complex numbers. Let $X$ be a smooth and projective threefold equipped with a simple normal crossings divisor $D$. We further assume the intersection of any subset of irreducible components of $D$ is connected. 
Our main results are described below. Precise definitions and statements can be found in the main text. Fix a curve class $\beta$ in $H_2(X)$ and an integer $n$ for the holomorphic Euler characteristic. The main construction of the paper is a moduli problem $\mathsf{DT}_{\beta,n}(X|D)$ associated to these data.

\begin{customthm}{A}
The moduli problem $\mathsf{DT}_{\beta,n}(X|D)$ is representable by a proper Deligne--Mumford stack. It compactifies the moduli space of ideal sheaves on $X$ relative to $D$ with numerical invariants $\beta$ and $n$, and is equipped with universal diagram
\[
\begin{tikzcd}
\mathcal Z \arrow[rr,hook]\arrow{dr}& &\mathcal Y\arrow{dl} \arrow{r} & X\\
&  \mathsf{DT}_{\beta,n}(X|D).&
\end{tikzcd}
\]
The fibers of $\mathsf{DT}_{\beta,n}(X|D)$ parameterize relative ideal sheaves on expansions of $X$ along $D$. The space $\mathsf{DT}_{\beta,n}(X|D)$ carries a perfect obstruction theory and a virtual fundamental class with expected properties. 
\end{customthm}

The structure of our result mirrors the structure of relative Donaldson--Thomas theory.  Namely, we first construct a moduli stack parametrizing allowable target expansions; the moduli space of stable relative ideal sheaves will then consist of ideal sheaves on fibers of the universal family, satisfying a certain stability condition.
However, in our setting, both the classifying stack and the correct notion of stability are more subtle, and ideas from tropical geometry are crucial to finding the correct formulation. The approach is outlined in the final sections of this introduction.

Numerical invariants are obtained by integration against the virtual fundamental class. Primary and descendant fields are given by the Chern characters of the universal ideal sheaf, as in the standard theory~\cite{MNOP06a}. The Hilbert scheme of points on the divisor components, relative to the induced boundary divisor, can be analogously compactified. 

\begin{customthm}{B}
Each Donaldson--Thomas moduli space $\mathsf{DT}_{\beta,n}(X|D)$ is equipped with an evaluation morphism
\[
\mathsf{ev}\colon \mathsf{DT}_{\beta,n}(X|D)\to \mathsf{Ev}(D).
\]
The space $\mathsf{Ev}(D)$ is a compactification of the Hilbert scheme of points on the smooth locus of $D$. Donaldson--Thomas invariants are defined by pairing the virtual class with cohomology classes pulled back from $\mathsf{Ev}(D)$, and primary and descendant fields. 
\end{customthm}

Once the numerical invariants are defined, we formulate the basic conjectures of the sheaf theory side of the subject, in our general logarithmic setting. These conjectures include the precise evaluation of the punctual series and the rationality of the normalized generating function for primary DT invariants.  These are collected in Section~\ref{sec: invariants-conjectures}. 

Formal similarities between the map and sheaf sides give a natural generalization of the relative GW/DT correspondence to normal crossings geometries, and motivate a study of the correspondence via normal crossings degenerations~\cite{MNOP06b,MOOP}. A prototype version of this theory plays a key role in the study of the GW/Pairs correspondence of Pandharipande and Pixton~\cite{PP12}. These ideas are a central motivation for our study, and are developed in our follow-up~\cite{MR22}.

The results complete an exact parallel to~\cite[Theorem~A]{R19}, where an expanded version of logarithmic Gromov--Witten theory is constructed. 
 As in that paper, a notable feature of our moduli spaces is that they are not unique.  Rather, there is a combinatorial choice in constructing the stack of expansions, which leads to an infinite collection of moduli spaces with the desired properties, naturally organized into an inverse system.   There are natural compatibilies between virtual classes, so that the numerical invariants are independent of this choice.  This is very similar to the combinatorial choice in studying familes of degenerating abelian varieties and constructing toroidal compactifications.  While the geometry is a little different, the non-uniqueness arises in an identical fashion, via a system of polyhedral structures on a fan-like object.

However, there is a critical difference between this paper and~\cite{R19}, which creates an additional layer of complexity.  In Gromov--Witten theory with expansions, we {\it start} with the existing mapping stack of logarithmic maps to a fixed target and then apply logarithmic modifications to construct the expanded moduli space. As a result, the output is automatically proper.  In our case, there is no unexpanded space to start with, and we are forced to construct the entire system of modifications directly. Notably, properness of the moduli problem needs to be understood in a new way.  In order to do this, we first provide a tropical algorithm for finding transverse limits for families of subschemes of $X$, and then use this tropical algorithm to guess the correct moduli problem, after which we establish algebraicity, boundedness, and somewhat tautologically, properness. In the following subsections, we give a detailed outline of our approach.

\subsection{Transversality} Let $Z\subset X$ be a subscheme with ideal sheaf $\mathcal I_Z$. We are interested in subschemes that intersect $D$ in its smooth locus, with the property no one-dimensional components or embedded points of $Z$ lie in $D$. Algebraically, this is the condition that the map
\[
\mathcal I_Z\otimes \mathcal O_D\to \mathcal O_D
\]
is injective. We refer to subschemes satisfying this condition as \textit{algebraically transverse subschemes}. 

The locus of algebraically transverse subschemes is a non-proper open subscheme of the Hilbert scheme of $X$. We aim to find a compactification of this moduli problem with the prescription that the universal subscheme continues to be algebraically transverse, in an appropriate sense.


\subsection{Limits from tropicalization, after Tevelev}  In order to achieve transversality for limits of families, the scheme $X$ must be allowed to break. The degenerations here are built from tropical geometry, using an elegant argument due to Tevelev, based on Kapranov's \textit{visible contours}~\cite{Kap93,Tev07}. 

The cone over the dual complex of $D$ is denoted $\Sigma$ and can be identified with a union of faces inside $\mathbb R_{\geq 0}^k$.  Let $\Sigma^+$ be product of the cone $\RR_{\geq 0}$ with $\Sigma$. We view these as the fans associated to the toric stacks $\mathsf{A}^k$ and $\mathsf{A}^k\times\A^1$ respectively. Given an injection of cone complexes $\Delta\hookrightarrow  \Sigma^+$, the toric dictionary gives rise to a modification 
\[
\mathcal Y\to X\times \A^1\to \A^1.
\]
Geometrically, the expansion is obtained from the constant family by performing birational modifications to the strata of $D$ in the special fiber and passing to an open subscheme. When $D$ is a smooth divisor, this essentially recovers the class of expansions considered by Li and Li--Wu~\cite{Li01,LiWu15}, and we explain this in detail in Section~\ref{sec: revisiting-Li-Wu}. In the general case, they recover the class of targets in~\cite{R19}. 

We can now apply Tevelev's approach.  Given an algebraically transverse family of subschemes $\mathcal Z_\eta$ over a $\CC^\star$, the tropicalization of $\mathcal Z_\eta$ is a subset of $\Sigma^+$. This subset can be given the choice of a fan $\Delta$ contained in $\Sigma^+$. For an appropriate choice of fan structure, this produces a degeneration $\mathcal Y$ of $X$ over $\A^1$. The flat limit of $\mathcal Z_\eta$ in this degeneration is algebraically transverse to the strata of $\mathcal Y$. This was proved by Tevelev when $(X|D)$ is toric, and we make the necessary extensions in the main text. These limits have strong uniqueness and functoriality properties, making them appropriate for constructing moduli. In Figure~\ref{fig: expansion}, we caricature a subscheme in $(X|D)$ and a potential expansion of $(X|D)$ with a limiting subscheme, together with the corresponding tropical data. 
\begin{figure}

\tikzset{every picture/.style={line width=0.75pt}} 

\begin{tikzpicture}[x=0.75pt,y=0.75pt,yscale=-1,xscale=1, scale=0.8]

\draw    (80,220) -- (80,82) ;
\draw [shift={(80,80)}, rotate = 90] [color={rgb, 255:red, 0; green, 0; blue, 0 }  ][line width=0.75]    (10.93,-3.29) .. controls (6.95,-1.4) and (3.31,-0.3) .. (0,0) .. controls (3.31,0.3) and (6.95,1.4) .. (10.93,3.29)   ;
\draw    (80,220) -- (228,220) ;
\draw [shift={(230,220)}, rotate = 180] [color={rgb, 255:red, 0; green, 0; blue, 0 }  ][line width=0.75]    (10.93,-3.29) .. controls (6.95,-1.4) and (3.31,-0.3) .. (0,0) .. controls (3.31,0.3) and (6.95,1.4) .. (10.93,3.29)   ;
\draw [shift={(80,220)}, rotate = 0] [color={rgb, 255:red, 0; green, 0; blue, 0 }  ][fill={rgb, 255:red, 0; green, 0; blue, 0 }  ][line width=0.75]      (0, 0) circle [x radius= 3.35, y radius= 3.35]   ;
\draw   (340,412.95) .. controls (340,376.18) and (378.54,346.36) .. (426.09,346.36) .. controls (473.63,346.36) and (512.17,376.18) .. (512.17,412.95) .. controls (512.17,449.73) and (473.63,479.55) .. (426.09,479.55) .. controls (378.54,479.55) and (340,449.73) .. (340,412.95) -- cycle ;
\draw    (368.7,375) .. controls (370.37,373.33) and (372.03,373.33) .. (373.7,375) .. controls (375.37,376.67) and (377.03,376.67) .. (378.7,375) .. controls (380.37,373.33) and (382.03,373.33) .. (383.7,375) .. controls (385.37,376.67) and (387.03,376.67) .. (388.7,375) .. controls (390.37,373.33) and (392.03,373.33) .. (393.7,375) .. controls (395.37,376.67) and (397.03,376.67) .. (398.7,375) .. controls (400.37,373.33) and (402.03,373.33) .. (403.7,375) .. controls (405.37,376.67) and (407.03,376.67) .. (408.7,375) .. controls (410.37,373.33) and (412.03,373.33) .. (413.7,375) .. controls (415.37,376.67) and (417.03,376.67) .. (418.7,375) .. controls (420.37,373.33) and (422.03,373.33) .. (423.7,375) .. controls (425.37,376.67) and (427.03,376.67) .. (428.7,375) .. controls (430.37,373.33) and (432.03,373.33) .. (433.7,375) .. controls (435.37,376.67) and (437.03,376.67) .. (438.7,375) .. controls (440.37,373.33) and (442.03,373.33) .. (443.7,375) .. controls (445.37,376.67) and (447.03,376.67) .. (448.7,375) .. controls (450.37,373.33) and (452.03,373.33) .. (453.7,375) .. controls (455.37,376.67) and (457.03,376.67) .. (458.7,375) .. controls (460.37,373.33) and (462.03,373.33) .. (463.7,375) .. controls (465.37,376.67) and (467.03,376.67) .. (468.7,375) .. controls (470.37,373.33) and (472.03,373.33) .. (473.7,375) .. controls (475.37,376.67) and (477.03,376.67) .. (478.7,375) .. controls (480.37,373.33) and (482.03,373.33) .. (483.7,375) .. controls (485.37,376.67) and (487.03,376.67) .. (488.7,375) .. controls (490.37,373.33) and (492.03,373.33) .. (493.7,375) .. controls (495.37,376.67) and (497.03,376.67) .. (498.7,375) .. controls (500.37,373.33) and (502.03,373.33) .. (503.7,375) .. controls (505.37,376.67) and (507.03,376.67) .. (508.7,375) .. controls (510.37,373.33) and (512.03,373.33) .. (513.7,375) .. controls (515.37,376.67) and (517.03,376.67) .. (518.7,375) .. controls (520.37,373.33) and (522.03,373.33) .. (523.7,375) .. controls (525.37,376.67) and (527.03,376.67) .. (528.7,375) .. controls (530.37,373.33) and (532.03,373.33) .. (533.7,375) .. controls (535.37,376.67) and (537.03,376.67) .. (538.7,375) .. controls (540.37,373.33) and (542.03,373.33) .. (543.7,375) .. controls (545.37,376.67) and (547.03,376.67) .. (548.7,375) .. controls (550.37,373.33) and (552.03,373.33) .. (553.7,375) .. controls (555.37,376.67) and (557.03,376.67) .. (558.7,375) -- (560,375) -- (560,375) ;
\draw    (460,459.55) .. controls (458.37,457.84) and (458.41,456.18) .. (460.12,454.55) .. controls (461.83,452.92) and (461.87,451.26) .. (460.24,449.55) .. controls (458.61,447.84) and (458.65,446.18) .. (460.36,444.55) .. controls (462.07,442.92) and (462.11,441.26) .. (460.48,439.55) .. controls (458.85,437.84) and (458.89,436.18) .. (460.6,434.55) .. controls (462.31,432.92) and (462.35,431.26) .. (460.72,429.55) .. controls (459.1,427.84) and (459.14,426.18) .. (460.85,424.56) .. controls (462.56,422.93) and (462.6,421.27) .. (460.97,419.56) .. controls (459.34,417.85) and (459.38,416.19) .. (461.09,414.56) .. controls (462.8,412.93) and (462.84,411.27) .. (461.21,409.56) .. controls (459.58,407.85) and (459.62,406.19) .. (461.33,404.56) .. controls (463.04,402.93) and (463.08,401.27) .. (461.45,399.56) .. controls (459.82,397.85) and (459.86,396.19) .. (461.57,394.56) .. controls (463.28,392.94) and (463.32,391.28) .. (461.69,389.57) .. controls (460.06,387.86) and (460.1,386.2) .. (461.81,384.57) .. controls (463.52,382.94) and (463.56,381.28) .. (461.93,379.57) .. controls (460.3,377.86) and (460.34,376.2) .. (462.05,374.57) .. controls (463.76,372.94) and (463.8,371.28) .. (462.17,369.57) .. controls (460.54,367.86) and (460.58,366.2) .. (462.29,364.57) .. controls (464,362.94) and (464.04,361.28) .. (462.41,359.57) .. controls (460.79,357.86) and (460.83,356.2) .. (462.54,354.58) .. controls (464.25,352.95) and (464.29,351.29) .. (462.66,349.58) .. controls (461.03,347.87) and (461.07,346.21) .. (462.78,344.58) .. controls (464.49,342.95) and (464.53,341.29) .. (462.9,339.58) .. controls (461.27,337.87) and (461.31,336.21) .. (463.02,334.58) .. controls (464.73,332.95) and (464.77,331.29) .. (463.14,329.58) .. controls (461.51,327.87) and (461.55,326.21) .. (463.26,324.58) .. controls (464.97,322.96) and (465.01,321.3) .. (463.38,319.59) .. controls (461.75,317.88) and (461.79,316.22) .. (463.5,314.59) .. controls (465.21,312.96) and (465.25,311.3) .. (463.62,309.59) .. controls (461.99,307.88) and (462.03,306.22) .. (463.74,304.59) .. controls (465.45,302.96) and (465.49,301.3) .. (463.86,299.59) .. controls (462.23,297.88) and (462.27,296.22) .. (463.98,294.59) .. controls (465.69,292.97) and (465.73,291.31) .. (464.11,289.6) .. controls (462.48,287.89) and (462.52,286.23) .. (464.23,284.6) .. controls (465.94,282.97) and (465.98,281.31) .. (464.35,279.6) -- (464.35,279.55) -- (464.35,279.55) ;
\draw  [draw opacity=0][dash pattern={on 0.84pt off 2.51pt}] (80,220) -- (81.19,88.67) -- (219.96,87.13) -- (218.77,218.46) -- cycle ; \draw  [dash pattern={on 0.84pt off 2.51pt}] (80,220) -- (218.77,218.46)(80.18,200) -- (218.95,198.46)(80.36,180) -- (219.13,178.47)(80.54,160) -- (219.31,158.47)(80.73,140) -- (219.49,138.47)(80.91,120) -- (219.67,118.47)(81.09,100) -- (219.85,98.47) ; \draw  [dash pattern={on 0.84pt off 2.51pt}] (80,220) -- (81.19,88.67)(100,219.78) -- (101.19,88.45)(120.01,219.56) -- (121.2,88.22)(140.01,219.34) -- (141.2,88)(160.01,219.11) -- (161.2,87.78)(180.01,218.89) -- (181.21,87.56)(200.02,218.67) -- (201.21,87.34) ; \draw  [dash pattern={on 0.84pt off 2.51pt}]  ;
\draw    (80,470) -- (80,322) ;
\draw [shift={(80,320)}, rotate = 90] [color={rgb, 255:red, 0; green, 0; blue, 0 }  ][line width=0.75]    (10.93,-3.29) .. controls (6.95,-1.4) and (3.31,-0.3) .. (0,0) .. controls (3.31,0.3) and (6.95,1.4) .. (10.93,3.29)   ;
\draw    (80,470) -- (228,470) ;
\draw [shift={(230,470)}, rotate = 180] [color={rgb, 255:red, 0; green, 0; blue, 0 }  ][line width=0.75]    (10.93,-3.29) .. controls (6.95,-1.4) and (3.31,-0.3) .. (0,0) .. controls (3.31,0.3) and (6.95,1.4) .. (10.93,3.29)   ;
\draw [shift={(80,470)}, rotate = 0] [color={rgb, 255:red, 0; green, 0; blue, 0 }  ][fill={rgb, 255:red, 0; green, 0; blue, 0 }  ][line width=0.75]      (0, 0) circle [x radius= 3.35, y radius= 3.35]   ;
\draw  [draw opacity=0][dash pattern={on 0.84pt off 2.51pt}] (80,470) -- (81.35,321.52) -- (220,320) -- (218.65,468.48) -- cycle ; \draw  [dash pattern={on 0.84pt off 2.51pt}] (80,470) -- (218.65,468.48)(80.18,450) -- (218.83,448.48)(80.36,430) -- (219.02,428.48)(80.54,410) -- (219.2,408.48)(80.73,390) -- (219.38,388.48)(80.91,370) -- (219.56,368.48)(81.09,350) -- (219.74,348.48)(81.27,330.01) -- (219.92,328.48) ; \draw  [dash pattern={on 0.84pt off 2.51pt}] (80,470) -- (81.35,321.52)(100,469.78) -- (101.35,321.3)(120.01,469.56) -- (121.35,321.08)(140.01,469.34) -- (141.36,320.86)(160.01,469.12) -- (161.36,320.64)(180.01,468.9) -- (181.36,320.42)(200.02,468.68) -- (201.36,320.2) ; \draw  [dash pattern={on 0.84pt off 2.51pt}]  ;
\draw    (80.54,410) -- (160.56,409.12) ;
\draw [shift={(160.56,409.12)}, rotate = 359.37] [color={rgb, 255:red, 0; green, 0; blue, 0 }  ][fill={rgb, 255:red, 0; green, 0; blue, 0 }  ][line width=0.75]      (0, 0) circle [x radius= 3.35, y radius= 3.35]   ;
\draw [shift={(80.54,410)}, rotate = 359.37] [color={rgb, 255:red, 0; green, 0; blue, 0 }  ][fill={rgb, 255:red, 0; green, 0; blue, 0 }  ][line width=0.75]      (0, 0) circle [x radius= 3.35, y radius= 3.35]   ;
\draw    (160.01,469.11) -- (160.56,409.12) ;
\draw [shift={(160.56,409.12)}, rotate = 270.52] [color={rgb, 255:red, 0; green, 0; blue, 0 }  ][fill={rgb, 255:red, 0; green, 0; blue, 0 }  ][line width=0.75]      (0, 0) circle [x radius= 3.35, y radius= 3.35]   ;
\draw [shift={(160.01,469.11)}, rotate = 270.52] [color={rgb, 255:red, 0; green, 0; blue, 0 }  ][fill={rgb, 255:red, 0; green, 0; blue, 0 }  ][line width=0.75]      (0, 0) circle [x radius= 3.35, y radius= 3.35]   ;
\draw    (160.56,409.12) -- (160.01,332) ;
\draw [shift={(160,330)}, rotate = 89.6] [color={rgb, 255:red, 0; green, 0; blue, 0 }  ][line width=0.75]    (10.93,-3.29) .. controls (6.95,-1.4) and (3.31,-0.3) .. (0,0) .. controls (3.31,0.3) and (6.95,1.4) .. (10.93,3.29)   ;
\draw    (160.56,409.12) -- (218,409.97) ;
\draw [shift={(220,410)}, rotate = 180.85] [color={rgb, 255:red, 0; green, 0; blue, 0 }  ][line width=0.75]    (10.93,-3.29) .. controls (6.95,-1.4) and (3.31,-0.3) .. (0,0) .. controls (3.31,0.3) and (6.95,1.4) .. (10.93,3.29)   ;
\draw    (368.7,279.55) .. controls (370.37,277.88) and (372.03,277.88) .. (373.7,279.55) .. controls (375.37,281.22) and (377.03,281.22) .. (378.7,279.55) .. controls (380.37,277.88) and (382.03,277.88) .. (383.7,279.55) .. controls (385.37,281.22) and (387.03,281.22) .. (388.7,279.55) .. controls (390.37,277.88) and (392.03,277.88) .. (393.7,279.55) .. controls (395.37,281.22) and (397.03,281.22) .. (398.7,279.55) .. controls (400.37,277.88) and (402.03,277.88) .. (403.7,279.55) .. controls (405.37,281.22) and (407.03,281.22) .. (408.7,279.55) .. controls (410.37,277.88) and (412.03,277.88) .. (413.7,279.55) -- (416.52,279.55) -- (416.52,279.55) .. controls (418.19,277.88) and (419.85,277.88) .. (421.52,279.55) .. controls (423.19,281.22) and (424.85,281.22) .. (426.52,279.55) .. controls (428.19,277.88) and (429.85,277.88) .. (431.52,279.55) .. controls (433.19,281.22) and (434.85,281.22) .. (436.52,279.55) .. controls (438.19,277.88) and (439.85,277.88) .. (441.52,279.55) .. controls (443.19,281.22) and (444.85,281.22) .. (446.52,279.55) .. controls (448.19,277.88) and (449.85,277.88) .. (451.52,279.55) .. controls (453.19,281.22) and (454.85,281.22) .. (456.52,279.55) .. controls (458.19,277.88) and (459.85,277.88) .. (461.52,279.55) .. controls (463.19,281.22) and (464.85,281.22) .. (466.52,279.55) .. controls (468.19,277.88) and (469.85,277.88) .. (471.52,279.55) .. controls (473.19,281.22) and (474.85,281.22) .. (476.52,279.55) .. controls (478.19,277.88) and (479.85,277.88) .. (481.52,279.55) .. controls (483.19,281.22) and (484.85,281.22) .. (486.52,279.55) .. controls (488.19,277.88) and (489.85,277.88) .. (491.52,279.55) .. controls (493.19,281.22) and (494.85,281.22) .. (496.52,279.55) .. controls (498.19,277.88) and (499.85,277.88) .. (501.52,279.55) .. controls (503.19,281.22) and (504.85,281.22) .. (506.52,279.55) .. controls (508.19,277.88) and (509.85,277.88) .. (511.52,279.55) .. controls (513.19,281.22) and (514.85,281.22) .. (516.52,279.55) .. controls (518.19,277.88) and (519.85,277.88) .. (521.52,279.55) .. controls (523.19,281.22) and (524.85,281.22) .. (526.52,279.55) .. controls (528.19,277.88) and (529.85,277.88) .. (531.52,279.55) .. controls (533.19,281.22) and (534.85,281.22) .. (536.52,279.55) .. controls (538.19,277.88) and (539.85,277.88) .. (541.52,279.55) .. controls (543.19,281.22) and (544.85,281.22) .. (546.52,279.55) .. controls (548.19,277.88) and (549.85,277.88) .. (551.52,279.55) .. controls (553.19,281.22) and (554.85,281.22) .. (556.52,279.55) -- (560,279.55) -- (560,279.55) ;
\draw    (560,480) .. controls (558.33,478.33) and (558.33,476.67) .. (560,475) .. controls (561.67,473.33) and (561.67,471.67) .. (560,470) .. controls (558.33,468.33) and (558.33,466.67) .. (560,465) .. controls (561.67,463.33) and (561.67,461.67) .. (560,460) .. controls (558.33,458.33) and (558.33,456.67) .. (560,455) .. controls (561.67,453.33) and (561.67,451.67) .. (560,450) .. controls (558.33,448.33) and (558.33,446.67) .. (560,445) .. controls (561.67,443.33) and (561.67,441.67) .. (560,440) .. controls (558.33,438.33) and (558.33,436.67) .. (560,435) .. controls (561.67,433.33) and (561.67,431.67) .. (560,430) .. controls (558.33,428.33) and (558.33,426.67) .. (560,425) .. controls (561.67,423.33) and (561.67,421.67) .. (560,420) .. controls (558.33,418.33) and (558.33,416.67) .. (560,415) .. controls (561.67,413.33) and (561.67,411.67) .. (560,410) .. controls (558.33,408.33) and (558.33,406.67) .. (560,405) .. controls (561.67,403.33) and (561.67,401.67) .. (560,400) .. controls (558.33,398.33) and (558.33,396.67) .. (560,395) .. controls (561.67,393.33) and (561.67,391.67) .. (560,390) .. controls (558.33,388.33) and (558.33,386.67) .. (560,385) .. controls (561.67,383.33) and (561.67,381.67) .. (560,380) .. controls (558.33,378.33) and (558.33,376.67) .. (560,375) .. controls (561.67,373.33) and (561.67,371.67) .. (560,370) .. controls (558.33,368.33) and (558.33,366.67) .. (560,365) .. controls (561.67,363.33) and (561.67,361.67) .. (560,360) .. controls (558.33,358.33) and (558.33,356.67) .. (560,355) .. controls (561.67,353.33) and (561.67,351.67) .. (560,350) .. controls (558.33,348.33) and (558.33,346.67) .. (560,345) .. controls (561.67,343.33) and (561.67,341.67) .. (560,340) .. controls (558.33,338.33) and (558.33,336.67) .. (560,335) .. controls (561.67,333.33) and (561.67,331.67) .. (560,330) .. controls (558.33,328.33) and (558.33,326.67) .. (560,325) .. controls (561.67,323.33) and (561.67,321.67) .. (560,320) .. controls (558.33,318.33) and (558.33,316.67) .. (560,315) .. controls (561.67,313.33) and (561.67,311.67) .. (560,310) .. controls (558.33,308.33) and (558.33,306.67) .. (560,305) .. controls (561.67,303.33) and (561.67,301.67) .. (560,300) .. controls (558.33,298.33) and (558.33,296.67) .. (560,295) .. controls (561.67,293.33) and (561.67,291.67) .. (560,290) .. controls (558.33,288.33) and (558.33,286.67) .. (560,285) .. controls (561.67,283.33) and (561.67,281.67) .. (560,280) -- (560,279.55) -- (560,279.55) ;
\draw   (330,145) .. controls (330,103.58) and (370.29,70) .. (420,70) .. controls (469.71,70) and (510,103.58) .. (510,145) .. controls (510,186.42) and (469.71,220) .. (420,220) .. controls (370.29,220) and (330,186.42) .. (330,145) -- cycle ;
\draw    (360,116) .. controls (361.73,114.4) and (363.4,114.47) .. (365,116.2) .. controls (366.59,117.93) and (368.26,118) .. (369.99,116.4) .. controls (371.72,114.8) and (373.39,114.87) .. (374.99,116.6) .. controls (376.58,118.33) and (378.25,118.4) .. (379.98,116.8) .. controls (381.71,115.2) and (383.38,115.27) .. (384.98,117) .. controls (386.58,118.73) and (388.25,118.8) .. (389.98,117.2) .. controls (391.71,115.6) and (393.38,115.67) .. (394.97,117.4) .. controls (396.57,119.13) and (398.24,119.2) .. (399.97,117.6) .. controls (401.7,116) and (403.37,116.07) .. (404.96,117.8) .. controls (406.56,119.53) and (408.23,119.6) .. (409.96,118) .. controls (411.69,116.4) and (413.36,116.47) .. (414.96,118.2) .. controls (416.55,119.93) and (418.22,120) .. (419.95,118.4) .. controls (421.68,116.8) and (423.35,116.87) .. (424.95,118.6) .. controls (426.54,120.33) and (428.21,120.4) .. (429.94,118.8) .. controls (431.67,117.2) and (433.34,117.27) .. (434.94,119) .. controls (436.54,120.73) and (438.21,120.8) .. (439.94,119.2) .. controls (441.67,117.6) and (443.34,117.67) .. (444.93,119.4) .. controls (446.53,121.13) and (448.2,121.2) .. (449.93,119.6) .. controls (451.66,118) and (453.33,118.07) .. (454.92,119.8) .. controls (456.52,121.53) and (458.19,121.6) .. (459.92,120) -- (460,120) -- (460,120) ;
\draw    (450,205.33) .. controls (448.33,203.66) and (448.33,202) .. (450,200.33) .. controls (451.67,198.66) and (451.67,197) .. (450,195.33) .. controls (448.33,193.66) and (448.33,192) .. (450,190.33) .. controls (451.67,188.66) and (451.67,187) .. (450,185.33) .. controls (448.33,183.66) and (448.33,182) .. (450,180.33) .. controls (451.67,178.66) and (451.67,177) .. (450,175.33) .. controls (448.33,173.66) and (448.33,172) .. (450,170.33) .. controls (451.67,168.66) and (451.67,167) .. (450,165.33) .. controls (448.33,163.66) and (448.33,162) .. (450,160.33) .. controls (451.67,158.66) and (451.67,157) .. (450,155.33) .. controls (448.33,153.66) and (448.33,152) .. (450,150.33) .. controls (451.67,148.66) and (451.67,147) .. (450,145.33) .. controls (448.33,143.66) and (448.33,142) .. (450,140.33) .. controls (451.67,138.66) and (451.67,137) .. (450,135.33) .. controls (448.33,133.66) and (448.33,132) .. (450,130.33) .. controls (451.67,128.66) and (451.67,127) .. (450,125.33) .. controls (448.33,123.66) and (448.33,122) .. (450,120.33) .. controls (451.67,118.66) and (451.67,117) .. (450,115.33) .. controls (448.33,113.66) and (448.33,112) .. (450,110.33) -- (450,110) -- (450,110) ;
\draw  [color={rgb, 255:red, 208; green, 2; blue, 27 }  ,draw opacity=1 ] (390.63,374.42) .. controls (389.25,384.03) and (387.95,393.19) .. (391.74,395.66) .. controls (395.54,398.12) and (403.38,393.21) .. (411.59,388.04) .. controls (419.81,382.87) and (427.65,377.96) .. (431.45,380.43) .. controls (435.24,382.89) and (433.94,392.05) .. (432.56,401.66) .. controls (431.18,411.27) and (429.88,420.43) .. (433.67,422.9) .. controls (437.46,425.36) and (445.3,420.45) .. (453.52,415.28) .. controls (461.74,410.11) and (469.58,405.2) .. (473.38,407.67) .. controls (477.17,410.13) and (475.87,419.29) .. (474.49,428.9) .. controls (473.11,438.51) and (471.81,447.67) .. (475.6,450.13) .. controls (479.39,452.6) and (487.23,447.69) .. (495.45,442.52) .. controls (503.67,437.35) and (511.51,432.44) .. (515.3,434.91) .. controls (519.1,437.37) and (517.8,446.53) .. (516.42,456.14) .. controls (515.03,465.75) and (513.73,474.91) .. (517.53,477.37) .. controls (521.32,479.84) and (529.16,474.93) .. (537.38,469.76) .. controls (545.6,464.59) and (553.44,459.68) .. (557.23,462.15) .. controls (558.42,462.92) and (559.11,464.35) .. (559.45,466.24) ;
\draw [color={rgb, 255:red, 208; green, 2; blue, 27 }  ,draw opacity=1 ]   (410,280) -- (390,380) ;
\draw  [color={rgb, 255:red, 208; green, 2; blue, 27 }  ,draw opacity=1 ] (390,300) .. controls (446.67,353.33) and (503.33,353.33) .. (560,300) ;
\draw [color={rgb, 255:red, 208; green, 2; blue, 27 }  ,draw opacity=1 ]   (550,480) .. controls (590,450) and (470,310) .. (510,280) ;
\draw [color={rgb, 255:red, 208; green, 2; blue, 27 }  ,draw opacity=1 ]   (400,330) ;
\draw [shift={(400,330)}, rotate = 0] [color={rgb, 255:red, 208; green, 2; blue, 27 }  ,draw opacity=1 ][fill={rgb, 255:red, 208; green, 2; blue, 27 }  ,fill opacity=1 ][line width=0.75]      (0, 0) circle [x radius= 3.35, y radius= 3.35]   ;
\draw [color={rgb, 255:red, 208; green, 2; blue, 27 }  ,draw opacity=1 ]   (430,420) ;
\draw [shift={(430,420)}, rotate = 0] [color={rgb, 255:red, 208; green, 2; blue, 27 }  ,draw opacity=1 ][fill={rgb, 255:red, 208; green, 2; blue, 27 }  ,fill opacity=1 ][line width=0.75]      (0, 0) circle [x radius= 3.35, y radius= 3.35]   ;
\draw [color={rgb, 255:red, 208; green, 2; blue, 27 }  ,draw opacity=1 ]   (520,330) ;
\draw [shift={(520,330)}, rotate = 0] [color={rgb, 255:red, 208; green, 2; blue, 27 }  ,draw opacity=1 ][fill={rgb, 255:red, 208; green, 2; blue, 27 }  ,fill opacity=1 ][line width=0.75]      (0, 0) circle [x radius= 3.35, y radius= 3.35]   ;
\draw  [color={rgb, 255:red, 208; green, 2; blue, 27 }  ,draw opacity=1 ] (389.25,124.94) .. controls (405.5,114.31) and (424.35,106.72) .. (432.81,116.59) .. controls (449.72,136.33) and (391.23,186.45) .. (387.33,181.89) .. controls (383.42,177.34) and (441.92,127.22) .. (458.83,146.97) .. controls (475.75,166.71) and (417.26,216.83) .. (413.35,212.27) .. controls (409.45,207.71) and (467.94,157.6) .. (484.86,177.34) .. controls (485.08,177.61) and (485.3,177.88) .. (485.5,178.15) ;
\draw [color={rgb, 255:red, 208; green, 2; blue, 27 }  ,draw opacity=1 ]   (370,160) .. controls (410,130) and (430,190) .. (470,160) ;

\draw (47,410.4) node [anchor=north west][inner sep=0.75pt]    {$V_{1}$};
\draw (147,482.4) node [anchor=north west][inner sep=0.75pt]    {$V_{2}$};
\draw (171,382.4) node [anchor=north west][inner sep=0.75pt]    {$V_{3}$};
\draw (67,482.4) node [anchor=north west][inner sep=0.75pt]    {$V_{0}$};
\draw (356,429.95) node [anchor=north west][inner sep=0.75pt]    {$Y_{0} \ =\ X$};
\draw (368,172.4) node [anchor=north west][inner sep=0.75pt]    {$X$};
\draw (368,301.95) node [anchor=north west][inner sep=0.75pt]    {$Y_{1}$};
\draw (538,330.4) node [anchor=north west][inner sep=0.75pt]    {$Y_{3}$};
\draw (521,452.4) node [anchor=north west][inner sep=0.75pt]    {$Y_{2}$};
\draw (341,120.4) node [anchor=north west][inner sep=0.75pt]    {$D_{1}$};
\draw (461,180.4) node [anchor=north west][inner sep=0.75pt]    {$D_{2}$};
\draw (67,232.4) node [anchor=north west][inner sep=0.75pt]    {$V_{0}$};

\end{tikzpicture}
\caption{This captures how the authors visualize expansions and subschemes in them. The two pictures on the left are typical tropicalizations of subschemes. The top left is the tropicalization of a subscheme in the interior of the moduli problem, while the bottom left is a more complicated one. The pictures on the left are outputs of the Tevelev theorem, and determine expansions of $X$ -- a gluing together of projective bundles over the strata of $X$. The reader can notice, for example in the second row, a bijection between components of the expansion and the vertices of the graph on the left. The circled component on the right, in both pictures, is the main component, i.e. $X$ itself. The subschemes drawn, indicated in red, can have singularities, or embedded components -- three embedded points can be seen on the bottom right. However (i) the subscheme must be disjoint from the codimension $2$ strata, i.e. the corners, and (ii) the embedded points must lie in the interiors of irreducible components.}
\label{fig: expansion}
\end{figure}

Precedent for building moduli spaces, particularly in contexts adjacent to toric geometry, via Tevelev's work is provided by Hacking--Keel--Tevelev, and the method has been used in Gromov--Witten theory before, see~\cite{HKT,KT06,R15b}. 

\subsection{The universal tropical expansion} By axiomatizing the output of Tevelev's argument, we propose a class of \textit{ideal sheaves on $X$ relative to $D$} for the Donaldson--Thomas moduli problem. These are subschemes of expansions of $X$ along $D$ that are transverse to the strata. In order to construct a global moduli problem, we identify an Artin stack that encodes the possible expansions of $X$ that could arise from Tevelev's procedure. 

The discussion above predicts the one-parameter degenerations of the target, but subtleties arise in extending them over higher dimensional bases, having to do with flattening the universal degeneration.  We tackle this by first studying an appropriate tropical moduli problem, using recent work that identifies a category of certain locally toric Artin stacks with purely combinatorial objects~\cite{CCUW}. 

The outcome of the tropical study is a system of moduli stacks of universal expansions, related to each other by birational transformations, and organized into an inverse system. Each element in this system is ``good enough'' to function as a stack of expansions for our moduli problem, but there is typically no distinguished choice. The system depends only on the combinatorics of the boundary divisor $D\subset X$.  Both the moduli space and its universal family have this structure and compatible choices give rise to a universal degeneration.

\subsection{Moduli space of stable relative ideal sheaves} After fixing a stack of expansions, we define a notion of DT stability for ideal sheaves on $X$ relative to $D$.  An important subtlety appears when considering \textit{tube subschemes}, namely subschemes in a component of an expansion that are pulled back from a surface in its boundary.  In relative DT theory, these are ruled out by stability; in our setting, they are forced on us by the combinatorial algorithms in an analogous fashion to how \textit{trivial bubbles} arise in the stable maps geometry.  
In any fiber of the universal family of the stack of expansions, there are distinguished irreducible components, denoted tube components;  the DT stability condition we impose is that these are precisely the components which host tube subschemes.  We show this defines a moduli problem with the expected properties.  
The transversality hypotheses guarantee that the morphism to the moduli stack of expansions
\[
\mathsf{DT}_{\beta,n}(X|D)\to \mathsf{Exp}(X|D)
\] 
has a perfect obstruction theory and consequently a virtual class. This establishes an appropriate Donaldson--Thomas theory for the pair $(X|D)$. In Section~\ref{sec: revisiting-Li-Wu}, we explain how, in hindsight, the tube geometry above can be artificially introduced into Li--Wu's theory, and why it can be avoided it that case.

{
\subsection{Pairs, etc} We have chosen to focus on the moduli theory of ideal sheaves in this paper, but the methods appear to be adaptable to other settings. In particular, one can define logarithmic stable pair invariants by replacing the Hilbert scheme with the stable pair moduli spaces of Pandharipande and Thomas.   The stable pair adaptations are recorded in Remark~\ref{rem: stable-pairs}.   It seems reasonable to hope for further applications.  For instance, it may be possible to rederive logarithmic Gromov--Witten theory, relying on target expansions from the very beginning. In another direction, the logarithmic theory of quasimaps has only been treated in the smooth pair case~\cite{BN19}. 
}

\subsection{Toroidal embeddings} The expectation is that the theory set up in this paper can be extended to any logarithmically smooth target, without either the simple normal crossing or connectivity restrictions placed on $D$. The arguments in the present paper carry over with cosmetic changes to treat generalized Deligne--Faltings logarithmic structures~\cite{AC11}. This includes all singular toric varieties. A more delicate modification of the combinatorics can likely be used to treat divisor geometries with disconnected intersections.  These two variants, together with the case where $D$ has a self-intersecting component, will be addressed elsewhere. Logarithmically \'etale descent and virtual birational invariance techniques are likely to play a role, see~\cite{ACMW,AW}. 

\subsection{Further directions and recent progress}  
We mention briefly some natural directions to pursue with the theory constructed here.
In a recent sequel \cite{MR22} to this paper,  we develop a degeneration formalism, generalizing that of \cite{LiWu15}, and parallel to~\cite{R19}.  We also develop a logarithmic version of the GW/Pairs correspondence, and show it is compatible with normal crossings degenerations.  With this in place, our subsequent goal is to extend the inductive strategy of Pandharipande--Pixton and prove the GW/Pairs correspondence for a broader class of threefolds, i.e. varieties which are not easily studied by double-point degenerations.  For example, one can envisage a proof GW/Pairs for threefolds admitting an algebro-geometric SYZ fibration, i.e. a normal crossings degeneration to a union of rational varieties. A natural class of examples comes from taking zero loci of sections of toric vector bundles.  

In another direction, the formalism of relative DT theory (in cohomology and $K$-theory) interacts well with the representation-theoretic structure on the Hilbert scheme of points on a surface \cite{maulik-okounkov}, and we expect that our logarithmic theory will extend this circle of ideas.  In a similarly speculative vein, logarithmic Gromov--Witten invariants in genus $0$ are related to the symplectic cohomology of the open variety $X \backslash D$, which is the natural replacement for quantum cohomology for open geometries. It would be interesting to examine whether logarithmic Donaldson--Thomas theory can be related to the symplectic cohomology of Hilbert schemes of points on open surfaces.

Our focus here is on logarithmic moduli spaces of subschemes of dimension at most $1$, due to the applications for DT theory. The methods here make use of the simplicity of the transversality condition for $1$-dimensional subschemes, and the simpler combinatorics in this case. However, in the time since this paper first appeared on ar$\chi$iv, further progress has been made.  In recent work, Kennedy-Hunt~\cite{KH22} proposes a general logarithmic Quot scheme with no constraints on the dimension of the support, building on the techniques introduced here.


\subsection*{Outline of paper}  We briefly outline the sections of this paper.  In Section~\ref{sec: flavours}, we review some basic constructions and results from tropical geometry.  In Section~\ref{sec: flat-limits}, we give the tropical algorithm for constructing algebraically transverse flat limits of subschemes.
In Section~\ref{sec: target-moduli}, we construct the tropical moduli spaces of expansions as well as their geometric counterparts.  In Section~\ref{moduli-of-sheaves}, we define stable relative ideal sheaves and show their moduli functor is represented by a proper Deligne--Mumford stack.  In Section~\ref{sec: log-DT-theory}, we study the virtual structure on the moduli space, define logarithmic DT invariants, and state the basic conjectures.  In Section~\ref{sec: first-examples}, we give a handful of simple examples, demonstrating the basic theory.  In Section~\ref{sec:expanded}, we complete the proof of the valuative criterion of properness, initiated in Section~\ref{sec: flat-limits}, to deal with the case where the generic fiber is expanded.  

\subsection*{Background and conventions} We have put some effort into minimizing the amount of logarithmic geometry that is explicitly used in this paper, and there is nothing that we use beyond~\cite[Sections~1--5]{ACMUW}. We do use the combinatorics of cone complexes and cone spaces heavily, and refer the reader to~\cite[Sections~2 {\it \&} 6]{CCUW}. Logarithmic schemes and stacks that appear will be fine and saturated unless otherwise specified, locally of finite type, and over the complex numbers. 

\subsection*{Acknowledgements} We are grateful to Dan Abramovich, Dori Bejleri, Qile Chen, Johan de Jong, Mark Gross, Eric Katz, Max Lieblich, Hannah Markwig, Navid Nabijou, Andrei Okounkov, Sam Payne, Mattia Talpo, Richard Thomas, Martin Ulirsch, Jeremy Usatine, and Jonathan Wise for numerous conversations over the years on related topics. We extend a special thanks to Patrick Kennedy-Hunt, for numerous questions and comments on earlier versions of this paper that helped improve both the presentation and the mathematics. The first author would like to thank Daniel Tiger for providing extra childcare during the final stages of this project. The second author was a Moore Instructor at MIT and a visitor at the Chennai Mathematical Institute during work on this project, and is grateful to both institutions for excellent working conditions. Two anonymous referees provided detailed feedback which led to various improvements to both exposition and mathematics.

\noindent
D.R. is partially supported by EPSRC grant EP/V051830/1. 


\section{Flavours of tropicalization}\label{sec: flavours}

We require some elementary notions from logarithmic geometry, and a reference that is well suited to our point of view is~\cite[Sections~3--5]{ACMUW}. There are a few different ways in which tropicalizations arise in logarithmic geometry and we recall these for the reader.

\subsection{Cone complexes and their morphisms} We start with the building blocks of toric geometry. A \textit{polyhedral cone with integral structure} $(\sigma,M)$ is a topological space $\sigma$ equipped with a finitely generated abelian group $M$ of continuous real-valued functions from $\sigma$ to $\mathbb R$, such that the evaluation
\[
\sigma\to\Hom(M,\RR)
\]
is a homeomorphism onto a strongly convex polyhedral cone. If this cone is \textit{rational} with respect to the dual lattice of $M$, then we say that $(\sigma,M)$ is rational. We define the lattice of integral points of $\sigma$ by taking the preimage of the dual lattice of $M$ under the evaluation map.  The set of elements of $M$ that are non-negative on $\sigma$ form a monoid $S_\sigma$ referred to as the \textit{dual monoid}. The cone $\sigma$ is recovered as the space of monoid homomorphisms $\Hom(S_\sigma,\RR_{\geq 0})$.  

Henceforth, a \textit{rational polyhedral cone with integral structure} will be referred to as a \textit{cone}. 

\begin{definition}[{Cone complexes}]
A \textit{rational polyhedral cone complex} is a topological space that is presented as a colimit of a partially ordered set of cones, where all arrows are given by isomorphisms onto proper faces. A \textit{morphism} of cone complexes is a continuous map
\[
\Sigma'\to \Sigma
\]
such that (i) the image of every cone in $\Sigma'$ is contained in a cone of $\Sigma$ and (ii) the restriction of the map to any cone of $\Sigma'$ is given by an integer linear map. 
\end{definition}

Cone complexes are nearly identical the \textit{fans} considered in toric geometry~\cite{Ful93}. The key differences are that they do not come equipped with a global embedding into a vector space, and two cones can intersect along a union of faces. However, we will typically restrict to the case where an intersection of two cones is a face of each, so the main thing to keep in mind is the lack of a global embedding. Concretely, the reader may keep in mind that toric fans of $\mathbb P^1$ and $\mathbb A^2\setminus\{(0,0)\}$ are isomorphic as cone complexes.

A cone complex is \textit{smooth} if every cone is isomorphic to a standard orthant with its canonical integral structure. We record two combinatorial notions associated to morphisms between cone complexes. 

\begin{definition}[Flat maps and reduced fibers]
Let $\Sigma$ be a smooth cone complex and let $\pi: \Sigma'\to \Sigma$ be a morphism of cone complexes. Then $\pi$ is \textit{flat} if the image of every cone of $\Sigma'$ is a cone of $\Sigma$. A flat map is said to have \textit{reduced fibers} if for every cone $\sigma'$ of $\Sigma'$ with image $\sigma$, the image of the lattice of $\sigma'$ is equal to the lattice of $\sigma$. 
\end{definition}

The terminology is compatible with the identically named geometric notions, when applied to toric maps~\cite[Section~4 {\it \&} 5]{AK00}.

\subsection{Tropicalization for the target} A basic fact from the theory of logarithmic schemes is that every Zariski logarithmically smooth scheme gives rise to a cone complex. We take a moment to unpack this statement in concrete terms. Let $(X|D)$ be a smooth scheme of finite type over $\CC$ equipped with a simple normal crossings divisor $D$. Assume that the intersections of irreducible components of $D$ are always connected. The presence of $D$ gives $X$ the structure of a logarithmically smooth scheme. We let $X^\circ$ be the complement of the divisor, where the logarithmic structure is trivial. 

We unwind the definition of the logarithmic structure in this case for the benefit of the reader. The components of $D$ give rise to a distinguished class of functions in the structure sheaf. For an open set $U\subset X$, we record the values of the logarithmic structure sheaf and the characteristic monoid sheaf\footnote{These may regarded as sheaves in either the Zariski or \'etale topology. }:
\[
M_X(U) = \left\{f\in\mathcal O_X(U): f \textrm{ is invertible on } U\setminus D\right\} \ \ \ \textrm{and} \ \ \overline M_X(U) = M_X(U)/\mathcal O_X^\star(U).
\]
Locally at each point $x\in X$, there are functions, canonical up to multiplication by a unit, cutting out the irreducible components of $D$ passing through $x$. In particular, the stalk of the characteristic monoid sheaf at $x$ is naturally identified with $\NN^e$, where $e$ is the number the such components. 

The dual cones of the stalks of the characteristic monoids of $X$ give rise to a collection of cones, one for each point of $X$. The generization maps naturally give rise to gluing morphisms, and these cones form a finite type cone complex. We denote it by $\Sigma_X$, refer to it as the \textit{tropicalization} of $X$ or the \textit{cone complex of $X$}, and regard it as a cone complex equipped with an integral structure.  

In our case, the tropicalization has a more practical description. Given the pair $(X,D)$, let $d$ be the number of irreducible components of $D$ and enumerate these components $D_1,\ldots, D_d$. Each $k$-dimensional face of the cone $\RR_{\geq 0}^d$ is spanned by rays $v_{i_i},\ldots, v_{i_k}$. Call such a face \textit{relevant to $(X,D)$} if the corresponding intersection $D_{i_1},\ldots, D_{i_k}$ is nonempty. Then the tropicalization $\Sigma_X$ is the union of cones in $\RR_{\geq 0}^d$ that are relevant to $(X,D)$. In the case of a toric variety this construction recovers the fan, as an abstract cone complex.  The construction extends to the case of logarithmically smooth schemes~\cite{ACP,U15}. Further details and generalizations may be found in the references~\cite{ACMUW,CCUW,Kat89,KKMSD}.

\subsection{Subdivisions}\label{sec: subdivisions} Let $(X|D)$ be a simple normal crossings pair with tropicalization $\Sigma_X$. For simplicity, we assume that the non-empty intersections of components of $D$ are connected. We will produce target expansions by using subdivisions of the tropicalization.

\begin{definition}
A \textit{subdivision} is a cone complex $\Delta$ and a morphism of cone complexes
\[
\Delta\hookrightarrow \Sigma_X
\]
that is injective on the support of $\Delta$ and further, such that the integral points of the image of each cone $\tau\in \Delta$ are exactly the intersection of the integral points of $\Sigma_X$ with $\tau$. 
\end{definition}

Note that this more flexible than the standard definition; the underlying map of sets need not be a bijection because we wish to have the flexibility to discard closed strata after blowing up. When the map on integral points is a bijection we call it a \textit{complete subdivision}. 

In toric geometry, subdivisions give rise to possibly non-proper birational models of $X$. The same is true for $(X|D)$. The cone complex $\Sigma_X$ associated to $D$ is smooth, and the intersection of any two cones is a face of each. If  there are $d$ rays in $\Sigma_X$, we may embed it via
\[
\Sigma_X\hookrightarrow \RR^{d}_{\geq 0},
\]
by mapping each ray in $\Sigma_X$ isomorphically onto the positive ray on the corresponding axis. The positive orthant in this vector space is the fan associated to the toric variety $\mathbb A^d$ with dense torus $\mathbb G_m^d$. The subdivision
\[
\Delta\hookrightarrow \Sigma_X\hookrightarrow \RR^d_{\geq 0}
\]
defines a non-complete fan with associated toric variety $\A_\Delta$, equipped with an $\mathbb G_m^d$-equivariant birational map $\A_\Delta\to \A^d$. By passing to quotients, we have a morphism of stacks
\[
[\A_\Delta/\mathbb G_m^d]\to[\A^d/\mathbb G_m^d].
\]
The presence of $D$ gives rise to a tautological morphism $(X|D)\to [\A^d/\mathbb G_m^d]$.

\begin{definition}\label{def: subdivision}
The {\it birational model of $X$ associated to the subdivision $\Delta\hookrightarrow \Sigma$} is given by 
\[
X_\Delta:= X\times_{ [\A^d/\mathbb G_m^d]} [\A_\Delta/\mathbb G_m^d].
\]
\end{definition}

\begin{remark}\label{rem: divisorial}
In practice, we apply this construction to $X\times \A^1$ with the divisor $X\times \{0\}\cup D\times \A^1$. The deformation to the normal cone of a stratum of $D$ is a special case of the construction.
\end{remark}

\begin{remark}
It is natural to formulate subdivisions in terms of subfunctors of the functor on logarithmic schemes defined by $X$ and by its cone complex. The birational model associated to a subdivision is defined by pulling back the subdivision along the tropicalization map, see~\cite{CCUW,Kat89}. 
\end{remark}

\subsection{Tropicalization for subschemes via valuations}\label{sec: trop-via-val} In this section we assume that $X$ is proper, with the exception of Remark~\ref{rem: non-compact-trop} and situations where it becomes active. 

Locally at each point on $X$, the components of the divisor $D$ provide a distinguished set of functions -- a subset of coordinates -- at that point. The tropicalization of a subscheme is the image of its ``coordinatewise valuation'', in these coordinates. 

Let $(K,\nu)$ be a rank $1$ valued field extending $\CC$ and let $\nu$ be the valuation. Consider a morphism
\[
\spec K \to X^\circ.
\] 
Since $X$ is proper, this morphism extends to the valuation ring
\[
\spec R\to X.
\]
Let $x$ be the image of the closed point. The smallest closed stratum of $D$ containing $x$ is the intersection of a (possibly empty) subset of irreducible divisor components $D_{i_1},\ldots, D_{i_r}$. The associated equations generate the stalk of the characteristic sheaf at the point $x$, which is abstractly isomorphic to $\NN^r$. For each element $\overline f \in \NN^r$, we may lift it to a function $f$ on $X$ in a neighborhood of the point $x$, pull back to $\spec R$ along the map above, and compose with the valuation on $R$. The ratio of two such lifts is a unit, so this gives rise to a well-defined element
\[
[\NN^r\to \RR_{\geq 0}]\in \Sigma_X\subset \Hom(\NN^r,\RR_{\geq 0}).
\]
For any \textit{valued} field $K$, we have a well-defined morphism
\[
\trop\colon X^\circ(K)\to \Sigma_X.
\]
Let $K$ be a valued field whose associated valuation map $K^\times\to \RR$ is \text{surjective}. Let $Z^\circ \subset X^\circ$ be a subscheme. Let $\trop(Z^\circ)$ be the subset of $\Sigma_X$ obtained by restricting $\trop$ to $Z^\circ(K)$; it is independent of the choice of valued field $K$ and its functoriality properties are outlined in~\cite{Gub13,U13}.\footnote{Comprehensive treatments of tropicalization regularly make use of Berkovich's analytification. We omit this here to avoid additional technical machinery, although the proofs of these basic results use them. See~\cite{ACMUW} for a survey of the relationship between tropical, logarithmic, and non-archimedean geometry.}

\subsection{Properties of the tropicalization} The shapes of tropicalizations are governed by the Bieri--Groves theorem~\cite{BG84,U15}.

\begin{theorem}\label{thm: bieri-groves}
Let $Z^\circ\subset X^\circ$ be a closed subscheme. Then the set $\trop(Z^\circ)$ is the \textit{support} of a rational polyhedral cone complex of $\Sigma_X$. The topological dimension of $\trop(Z^\circ)$ is bounded above by the algebraic dimension of $Z^\circ$. If $X^\circ$ is a closed subvariety of an algebraic torus, the topological dimension of $\trop(Z^\circ)$ is equal to the algebraic dimension of $X$. 
\end{theorem}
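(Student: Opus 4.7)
The plan is to reduce to the classical Bieri--Groves theorem for subvarieties of algebraic tori, where the result is known, and then glue across the strata of $\Sigma_X$ to obtain the global statement.

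First, I would establish compatibility with toric charts. Because $(X,D)$ is logarithmically smooth, for each point $x \in X$ lying on a locally closed stratum corresponding to a cone $\sigma \in \Sigma_X$, there is a strict \'etale (or Zariski, in the simple normal crossings case) neighborhood $U \to X$ with a strict map $U \to \spec \CC[S_\sigma]$ to an affine toric variety. The tropicalization map is defined by evaluating valuations on elements of $\overline{M}_X$, so it is functorial with respect to such charts: the image of $\trop(Z^\circ \cap U^\circ)$ inside $\sigma$ coincides with the restriction to $\sigma$ of the tropicalization of the image of $Z^\circ \cap U^\circ$ in the torus $\spec \CC[M_\sigma]$. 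Thus it suffices to prove both the polyhedrality statement and the dimension bound for subvarieties of an algebraic torus, then reassemble.

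Second, in the case of a closed subvariety $Y \subset T$ of an algebraic torus, I would invoke the Gr\"obner-theoretic proof of Bieri--Groves, as in~\cite{MS14}. Given the ideal $I$ of $Y$ in the Laurent polynomial ring over a valued field $K$, one partitions the real weight space into Gr\"obner cones according to which initial ideal $\mathrm{in}_w(I)$ is obtained; this yields a finite rational polyhedral fan. The fundamental theorem of tropical geometry identifies $\trop(Y)$ with the union of those Gr\"obner cones on which $\mathrm{in}_w(I)$ contains no monomial. This simultaneously produces the cone complex structure and proves polyhedrality. For the dimension statement in the torus case, I would use the standard projection argument: by induction on $\dim Y$, projecting along a coordinate subtorus in general position reduces to a hypersurface, where the tropicalization is computed directly from the Newton polytope of a defining equation and is pure of dimension $\dim Y$.

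Third, I would assemble the local polyhedral structures into a cone complex inside $\Sigma_X$. Across overlaps of toric charts, the compatibility follows from the behaviour of initial degenerations under faces: passing to a face of $\sigma$ corresponds to restricting to a subtorus, under which Gr\"obner cones and monomial-freeness are preserved. For the global dimension inequality $\dim \trop(Z^\circ) \le \dim Z^\circ$, note that in each chart $\trop(Z^\circ \cap U^\circ) \subseteq \trop(\overline Z^\circ)$ where $\overline{Z^\circ}$ is the closure inside the local torus, and the torus case controls the dimension.

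The main obstacle I anticipate is the gluing step: ensuring that the Gr\"obner fan structures produced on different charts actually assemble into a single subcomplex of $\Sigma_X$, rather than merely a topological subset carrying various polyhedral refinements. This requires a careful functoriality check for initial degenerations along toric morphisms, but once established it gives the cone complex structure claimed. The final clause, equality of dimensions in the torus case, is precisely the pure-dimensionality half of classical Bieri--Groves and needs no further argument beyond the reference.
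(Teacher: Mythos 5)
The paper does not prove this statement: it is quoted as the Bieri--Groves theorem, with the polyhedrality and dimension claims attributed to~\cite{BG84} and the extension from tori to logarithmically smooth targets to~\cite{U15}. There is therefore no internal argument to compare yours against; what you have written is essentially a reconstruction of the proof in those references -- the torus case via Gr\"obner fans and the fundamental theorem of tropical geometry as in~\cite{MS14}, the dimension equality via projection to a hypersurface, and the passage to $\Sigma_X$ by working stratum by stratum in strict toric charts, which is exactly Ulirsch's strategy. Two remarks. First, the obstacle you single out -- that the Gr\"obner fan structures on different charts may fail to assemble into one canonical subcomplex of $\Sigma_X$ -- is not actually an obstacle to the statement being proved: the theorem asserts only that $\trop(Z^\circ)$ is the \emph{support} of a rational polyhedral cone complex, and the paper remarks immediately afterwards that this set carries no distinguished polyhedral structure in general. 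It suffices that $\trop(Z^\circ)\cap\sigma$ is a finite union of rational cones for each $\sigma\in\Sigma_X$ (each contained in $\sigma$), after which a common refinement exists; no functoriality of initial degenerations across charts is required. Second, you should say explicitly why the output is \emph{conical}: here $Z^\circ$ is defined over the trivially valued field $\CC$, so the Gr\"obner complex is a genuine fan, whereas the variant for families over a nontrivially valued field, stated later in the same section of the paper, is only polyhedral with possibly bounded cells. With these points made precise, your argument is correct and matches the proofs in the cited sources.
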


The set $\trop(Z^\circ)$ has no distinguished polyhedral structure in general~\cite[Example~3.5.4]{MS14}. It is simply a set, and this set \textit{can} be given the structure of a cone complex.

The role of tropicalization in degeneration and compactification problems has its origin in the following two theorems, proved by Tevelev for toric varieties and Ulirsch for logarithmic schemes, see~\cite[Theorem~1.2]{Tev07} and~\cite[Theorem~1.2]{U15}. 

The first concerns the properness of closures of subschemes in partial compactifications of $X^\circ$. Let $Z^\circ$ be a subscheme of $X^\circ$ and let $X'$ be a simple normal crossings compactification. Let $X''\subset X'$ be the complement of a union of closed strata of $X'$ and let $\Sigma_{X''}$ be the subfan of $\Sigma_{X'}$ obtained by deleting the corresponding union of open cones.

\begin{theorem}\label{thm: properness-criterion}
The closure $Z$ of $Z^\circ$ in the partial compactification $X''$ is proper if and only if $\trop(Z^\circ)$ is set theoretically contained in $\Sigma_{X''}$.
\end{theorem}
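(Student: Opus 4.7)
The plan is to reduce the theorem to a stratum-by-stratum criterion and then verify each implication by a curve-selection/valuation argument. Let $\bar Z$ denote the closure of $Z^\circ$ in the proper scheme $X'$. Since $X''$ is open in $X'$ and $Z = \bar Z \cap X''$ is a closed subscheme of $X''$, properness of $Z$ is equivalent to $\bar Z$ already being contained in $X''$; that is, $\bar Z$ meets none of the closed strata of $X'$ that were removed. The theorem then reduces to the claim: for each locally closed stratum $S \subset X'$ with corresponding cone $\sigma \in \Sigma_{X'}$, one has $\bar Z \cap S \neq \emptyset$ if and only if the relative interior $\sigma^\circ$ intersects $\trop(Z^\circ)$.

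For the forward direction of this stratum-wise claim, a point $p \in \bar Z \cap S$ admits, by density of $Z^\circ$ in $\bar Z$ and the usual curve-selection lemma, a morphism $\spec R \to \bar Z$ from a discrete valuation ring $R$ with fraction field $K$ such that the closed point maps to $p$ and the generic point factors through $Z^\circ$. Evaluating the associated valuation on local equations of the components of $D$ passing through $p$ produces a tropical point $w \in \sigma^\circ \cap \trop(Z^\circ)$, as required. Conversely, starting from $w \in \sigma^\circ \cap \trop(Z^\circ)$, the definition of $\trop$ supplies a valued field point $\spec K \to Z^\circ$ with tropical value $w$; properness of $X'$ extends this to $\spec R \to X'$, and the image of $\spec R$ lies in $\bar Z$ because $\bar Z$ is closed and its generic point already lies in $Z^\circ$.

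The central technical point, and the one I expect to require most care, is the identification, at a point $p \in S$, of the relative interior of $\sigma$ with precisely those valuations whose reduction lies in $S$ itself rather than in some deeper stratum. Concretely, $\sigma = \Hom(\overline M_{X',p},\RR_{\geq 0})$ is dual to the stalk of the characteristic monoid, which is freely generated by the classes of local equations of the divisor components through $p$; strict positivity of the valuation on each generator (i.e.\ $w \in \sigma^\circ$) is equivalent to each such equation vanishing at the closed point of $\spec R$, which is in turn equivalent to the closed point lying in $S$ and in no further intersection. Once this dictionary is in place, the rest of the argument is a formal interplay between the valuative criterion, curve selection in $\bar Z$, and the definition of the tropicalization of a subscheme via valued field points.
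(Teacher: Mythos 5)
Your argument is correct, but note that the paper does not prove this statement at all: it is quoted as a known result, attributed to Tevelev in the toric case and Ulirsch for logarithmic schemes (\cite[Theorem~1.2]{Tev07}, \cite[Theorem~1.2]{U15}). What you have written is essentially the standard proof from those sources: reduce properness of $Z$ to the closure $\bar Z$ in the proper model $X'$ avoiding the removed strata, and then establish the stratum-by-stratum dictionary $\bar Z\cap S\neq\emptyset \iff \trop(Z^\circ)\cap\sigma^\circ\neq\emptyset$ by running valuations in both directions (this is Tevelev's Lemma~2.2 and the Tevelev--Ulirsch lemma the paper invokes later, in its family version, in Section~1.8). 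Two small points you should make explicit if you write this up. First, in the forward direction your curve-selection step produces a \emph{discretely} valued point, whereas the paper defines $\trop(Z^\circ)$ using a valued field with surjective real valuation; you need the (standard, but worth a sentence) observation that tropicalization is insensitive to valued field extension, so the resulting rational point of $\sigma^\circ$ does lie in $\trop(Z^\circ)$. Second, the reduction at the end uses that $\trop(Z^\circ)$ is contained in $|\Sigma_{X'}|$, which is the disjoint union of the relative interiors $\sigma^\circ$, so that missing every removed open cone is the same as landing in the subfan $\Sigma_{X''}$; this is where you use that the removed strata form a closed, specialization-stable collection. With those remarks your proof is complete and agrees with the argument the paper is implicitly relying on.
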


The second concerns transversality. We say the closure $Z$ of $Z^\circ$ in $X'$ \textit{intersects strata in the expected dimension if} 
\[
\dim E\cap Z = \dim Z-\mathsf{codim}_{X'}E.
\]

\begin{theorem}\label{thm: transversality-criterion}
The closure $Z$ of $Z^\circ$ in the compactification $X'$ of $X^\circ$ intersects strata in the expected dimension if and only if $\trop(Z^\circ)$ is a union of cones in $\Sigma_{X'}$. 
\end{theorem}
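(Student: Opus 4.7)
The plan is to follow the strategy of Ulirsch~\cite{U15} and reduce to Tevelev's classical toric transversality theorem~\cite{Tev07}. Both the transversality condition on $Z$ and the support condition on $\trop(Z^\circ)$ can be checked \'etale-locally around each stratum of $X'$, so it suffices to verify the equivalence one stratum at a time. Fix a cone $\sigma \in \Sigma_{X'}$ of dimension $k$ with corresponding stratum $E_\sigma$, and a point $x$ in the open part of $E_\sigma$. Because $(X',D)$ is a simple normal crossings pair, there is an \'etale neighborhood of $x$ isomorphic to $(\A^k\times \A^{n-k}, V(z_1\cdots z_k))$, which is a toric variety (up to a smooth factor) whose fan is precisely the star of $\sigma$ in $\Sigma_{X'}$. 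The first task is to verify that, under this identification, the restriction of $\trop(Z^\circ)$ to this star agrees with the valuative tropicalization of $Z^\circ \cap U^\circ$ in the local toric chart $U$; this compatibility is the content of the functoriality of tropicalization with respect to logarithmic morphisms and is recorded in~\cite{ACMUW,U15}.

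Once the local toric model is in place, the key input is Tevelev's theorem as formulated in~\cite[Theorem~1.2]{Tev07} and reproved in~\cite[Theorem~6.4.17]{MS14}: for a closed subvariety $Y^\circ$ of an algebraic torus with closure $\overline{Y}$ in a toric variety $V(\Sigma')$, the intersection $\overline Y \cap O_\tau$ with each torus orbit is either empty or of pure expected dimension if and only if the support of $\trop(Y^\circ)$ is a union of cones of $\Sigma'$. Applying this to the subvariety $Z^\circ\cap U^\circ$ inside $U = \A^k\times \A^{n-k}$ (extended trivially along the smooth factor) yields the desired equivalence around $E_\sigma$: transversality at $E_\sigma$ and its adjacent strata corresponds precisely to $\trop(Z^\circ)$ meeting the star of $\sigma$ in a union of cones.

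For the \emph{only if} direction, assuming $Z$ intersects strata in the expected dimension, we argue by contradiction: if $\text{relint}(\sigma)$ meets $\trop(Z^\circ)$ in a proper nonempty subset, then by the Bieri--Groves structure theorem this subset has dimension strictly less than $\dim \sigma$, and the valuative description of tropicalization exhibits specializations of $Z^\circ$ into $E_\sigma$ whose parameter space has dimension exceeding the expected one. For the \emph{if} direction, when $\trop(Z^\circ)$ is a union of cones of $\Sigma_{X'}$, either $\sigma\subseteq \trop(Z^\circ)$---in which case Tevelev's theorem in the local chart produces the expected-dimensional intersection---or $\text{relint}(\sigma)\cap \trop(Z^\circ)=\emptyset$, in which case no specialization lands in $E_\sigma^\circ$ and $Z\cap E_\sigma$ is concentrated in deeper strata handled by induction on codimension.

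The main obstacle is the comparison between the global tropicalization $\trop(Z^\circ)\subset \Sigma_{X'}$ and its realization in each \'etale-local toric chart; without this, Tevelev's theorem cannot be transported from the toric setting to the toroidal one. Once this functoriality is settled, and one accommodates the trivial smooth factor $\A^{n-k}$ by considering product strata rather than orbits, the proof amounts to an \'etale-local application of the toric case, exactly parallel to the argument in~\cite{U15}.
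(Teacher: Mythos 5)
The paper does not actually prove this statement: it is quoted as a known result, attributed to Tevelev in the toric case and Ulirsch in the toroidal/logarithmic case, with pointers to \cite[Theorem~1.2]{Tev07} and \cite[Theorem~1.2]{U15}. Your sketch is essentially a reconstruction of Ulirsch's argument --- \'etale-local reduction to toric charts, functoriality of tropicalization, then Tevelev --- so in spirit you are reproving the cited result rather than diverging from the paper. Two points deserve care, though. First, in the local chart the open part $U^\circ = U\setminus D$ is $\mathbb G_m^k\times \A^{n-k}$, not an algebraic torus, so Tevelev's theorem as stated in \cite[Theorem~6.4.17]{MS14} does not apply verbatim to $Z^\circ\cap U^\circ$; one must either pass to a very affine subvariety or work with the Berkovich-analytic formulation, which is precisely the technical content of \cite{U15} (and the reason the paper's own footnote concedes that ``the proofs of these basic results'' use analytification). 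Saying ``extended trivially along the smooth factor'' glosses over this.

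Second, your ``only if'' direction contains a misstatement: if $\mathrm{relint}(\sigma)\cap\trop(Z^\circ)$ is a nonempty proper subset of $\mathrm{relint}(\sigma)$, Bieri--Groves does \emph{not} force it to have dimension less than $\dim\sigma$ (it can be a full-dimensional proper polyhedral subset of $\sigma$ when $\dim\sigma\le\dim Z^\circ$). The correct mechanism for the dimension excess is different: for $w$ in this intersection, the local star of $\trop(Z^\circ)$ at $w$ has dimension $\dim Z^\circ$, and since $\sigma\not\subseteq\trop(Z^\circ)$ its intersection with $w+\mathrm{span}(\sigma)$ has dimension strictly less than $\dim\sigma$, so the image in $N/\mathrm{span}(\sigma)$ --- which computes $\dim(Z\cap E_\sigma)$ by Tevelev's lemma together with Bieri--Groves applied to $Z\cap E_\sigma$ inside the orbit --- has dimension strictly greater than $\dim Z^\circ-\dim\sigma$. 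With that repair, and with the very-affine reduction made explicit, the argument is the standard one and is correct.
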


\begin{remark}
When $X$ is a toric variety, Tevelev has shown that there exists a toric blowup $X'\to X$ such that the closure has a stronger transversality property, hinted at in the introduction, called \textit{algebraic transversality}. We will require and refine this result in the course of our main result~\cite[Theorem~1.2]{Tev07}; a simple proof is given in~\cite[Theorem~6.4.17]{MS14}.
\end{remark}

\begin{remark}\label{rem: non-compact-trop}
If $X$ is not proper, there is no longer a tropicalization map defined on the set $X^\circ(K)$, because limits need not exist. However, the above relationship with the cone complex persists. For each valued field $K$ with valuation ring $R$ as above, let $X^\beth(K)$ be the subset $X^\circ(K)$ consisting of those $K$-points that extend to $R$-points. There is a morphism
\[
X^\beth(K)\to \Sigma_X
\]
defined exactly as defined above. Similarly, given a subscheme $Z\subset X^\circ$, we can define its tropicalization as the image of $Z^\beth(K)$ in $\Sigma_X$, see~\cite[Section~5.2]{U13}. 
\end{remark}

\subsection{Tropicalization via compactifications} In the previous section, tropicalizations were seen to select partial compactifications of $X^\circ$, in which the closure of a subvariety meets each stratum in the expected codimension. There is a partial converse. 

Let $X'$ be a simple normal crossings compactification of $X^\circ$. Let $Z$ be a subscheme of $X'$ whose intersection with the strata of $X'$ have the expected codimension, with
\[
Z^\circ = X^\circ\cap Z.
\] 
The following perspective is due to Hacking--Keel--Tevelev, extended by Ulirsch, and is sometimes referred to as \textit{geometric tropicalization}, see~\cite{HKT,U15}.

\begin{theorem}\label{thm: divisorial-trop}
The tropicalization of $Z^\circ$ is equal to the union of cones $\sigma$ in the underlying set of $\Sigma_{X'}$ such $Z$ nontrivially intersects the locally closed strata dual to $\sigma$. That is, there is an equality of subsets of $\Sigma_{X'}$ given by
\[
\trop(Z^\circ) \ \ \  {=}\bigcup_{\sigma\in \Sigma_X: V(\sigma)\cap Z \neq \emptyset} \sigma\ \ \ \ \ \ \ \ \textrm{in} \ \ \ \ \ |\Sigma_X|,
\]
where $V(\sigma)$ is the locally closed stratum of $X'$ corresponding to the cone $\sigma$. 
\end{theorem}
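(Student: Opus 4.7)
The plan is to use Theorem~\ref{thm: transversality-criterion} to reduce to matching cones, then prove the two containments.

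By Theorem~\ref{thm: transversality-criterion} and the expected-dimension hypothesis, $\trop(Z^\circ)$ is a union of cones in $\Sigma_{X'}$, so it suffices to match these with the cones whose dual stratum meets $Z$. For the containment $\trop(Z^\circ)\subseteq\bigcup_{\sigma:V(\sigma)\cap Z\neq\emptyset}\sigma$, I take $w\in\trop(Z^\circ)$ realized by a map $\spec K\to Z^\circ\subset X^\circ$ for a valued field $K$ with valuation ring $R$. Properness of $X'$ and closedness of $Z$ in $X'$ extend this to $\spec R\to Z$; let $z$ be the image of the closed point and $\sigma$ the cone dual to the unique locally closed stratum $V(\sigma)$ containing $z$. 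Unwinding Section~\ref{sec: trop-via-val}, $w$ is obtained by applying the valuation to lifts of the generators of the characteristic monoid at $z$, which are the local equations of the components of $D$ through $z$, so $w\in\sigma$ and $V(\sigma)\cap Z\ni z$ as required.

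For the reverse containment, fix $\sigma$ with $V(\sigma)\cap Z\neq\emptyset$ and pick $z\in V(\sigma)\cap Z$. The expected-dimension hypothesis gives an irreducible component $W$ of $Z$ through $z$ meeting $V(\sigma)$ properly, hence not contained in any component of $D$ through $z$. A general complete-intersection curve germ on $W$ through $z$ then produces an arc $\spec R\to Z$ whose generic point lies in $Z^\circ$ and whose closed point maps to $z$. Working in local coordinates $x_1,\ldots,x_k$ cutting out the components of $D$ indexing the rays of $\sigma$, each $x_j$ has strictly positive valuation along the arc (since $z$ lies on every such component), so the tropicalization of the arc lands in the relative interior $\sigma^\circ$. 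Because $\trop(Z^\circ)$ is a union of cones of $\Sigma_{X'}$ and any two cones of $\Sigma_{X'}$ meet in a common face, any cone containing an interior point of $\sigma$ must contain $\sigma$; hence $\sigma\subseteq\trop(Z^\circ)$.

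The main subtlety is producing an arc in $Z$ through $z$ whose tropicalization hits $\sigma^\circ$. Both ingredients---the existence of a component $W$ of $Z$ through $z$ not contained in $D$, and the automatic positive vanishing of each $x_j$ along any arc through $z\in V(\sigma)$---rest on the expected-dimension hypothesis at $z$. Without it, the relevant component could sit in a stratum strictly deeper than $V(\sigma)$ and the arc would tropicalize to a proper face of $\sigma$, breaking the cone-containment argument of the previous paragraph.
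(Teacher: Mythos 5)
The paper does not actually prove Theorem~\ref{thm: divisorial-trop}: it is quoted as a known result of Hacking--Keel--Tevelev and Ulirsch, with the proof deferred to~\cite{HKT,U15}. Your argument is, in essence, the standard proof of geometric tropicalization, and it is correct. The forward inclusion via the valuative criterion applied to the proper scheme $Z$, and the reverse inclusion via an arc on a component $W\not\subseteq D$ through $z$ together with the ``union of cones'' structure supplied by Theorem~\ref{thm: transversality-criterion}, is exactly the right skeleton; the observation that a cone of $\Sigma_{X'}$ containing a relative-interior point of $\sigma$ must contain all of $\sigma$ (because cones of the complex meet along faces) correctly finishes the reverse inclusion from a single well-chosen arc. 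Three small points are worth making explicit. First, for the reverse inclusion you need every point of $V(\sigma)\cap Z$ to lie on a component of $Z$ meeting $X^\circ$; the expected-dimension hypothesis as literally stated does not exclude, say, an isolated point of $Z$ sitting inside $D$, so one should read the hypothesis as including $Z=\overline{Z^\circ}$ (which is clearly the intended setup, and which is what actually guarantees your component $W$ exists). Second, your arc produces a point over a \emph{discretely} valued field, whereas the paper defines $\trop(Z^\circ)$ using a field with surjective real valuation; this is harmless by the stated independence of the choice of $K$, but deserves a sentence. Third, your closing remark that without the hypothesis ``the arc would tropicalize to a proper face of $\sigma$'' is slightly off: if the relevant component lies inside the boundary, the arc simply has no generic point in $X^\circ$ and contributes nothing to $\trop(Z^\circ)$ at all --- the failure mode is that $\sigma$ appears on the right-hand side but not the left. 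None of these affects the validity of the proof.
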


The definition of $\trop(Z^\circ)$ via coordinatewise valuation described in the previous subsection depends on a choice of compactification $X$ of $X^\circ$, but if $X$ is replaced with a blowup along a stratum, the set $\trop(Z^\circ)$ is unchanged. The result above can therefore be viewed as a computational tool; it describes the tropicalization using a single compactification in which $Z$ is dimensionally transverse. We return to this in Section~\ref{sec:expanded}.

\begin{remark}[Asymptotics and stars]\label{rem: asymptotics}
Let $Z^\circ\subset X^\circ$ be a subscheme with tropicalization $\trop(Z^\circ)$ in $\Sigma_X$. Assume that the tropicalization is a union of cones in $\Sigma_X$ and let $Z$ denote the closure of $Z^\circ$. If $D_i\subset X$ is an irreducible component, we can view it as a simple normal crossings pair in its own right with interior $D_i^\circ$ and divisor equal to the intersection of $D_i$ with the remaining components of $D$. It contains the subscheme $Z_i^\circ = Z\cap D_i^\circ$. The tropicalization of $Z_i^\circ\subset D_i^\circ$ can be read off from the larger $\trop(Z^\circ)$ as follows. The divisor $D_i$ determines a ray $\rho_i$ in $\Sigma_X$. The star of $\rho_i$ is the union of cones that contain $\rho_i$ and can be identified with $\Sigma_{D_i}$. The tropicalization of $\trop(Z_i^\circ)$ is the union of the cones under this identification where $\trop(Z^\circ)$ itself is supported. In practice, it is visible as the collection of asymptotic directions of $\trop(Z^\circ)$ parallel to $\rho_i$. 
\end{remark}

\subsection{Tropicalization for a family of subschemes} The constructions extend to flat one-parameter families of subschemes. We will consider subschemes in $X$ that are defined over a valued field $K$ that extends the trivially valued ground field $\CC$. 

In order to avoid foundational issues, we will assume that all subschemes are defined over the localization of a smooth algebraic curve of finite type. The assumption is made so we can appeal to the relationship between tropicalization via valuations and via logarithmic geometry, which has only received a definitive treatment under these hypotheses~\cite{U15}. The valued field will arise for us in the study of the valuative criterion for properness. The relevant moduli spaces will be shown to be of finite type, so this is a harmless assumption. 

\subsubsection{Tropicalization over a valued field} Let $(X|D)$ be a simple normal crossings compactification as above, with interior $X^\circ$. Let $K$ be a valued field extending $\CC$ as above. Consider an algebraically transverse subscheme
\[
\mathcal Z_\eta\hookrightarrow X\times_{\CC} K,
\]
and let $\mathcal Z_\eta^\circ$ be the open subscheme contained in $X^\circ$. By the algebraic transversality hypothesis, this open subscheme is dense. After passing from $K$ to a valued extension $L$ with real surjective valuation, we may once again consider the tropicalization map
\[
\trop\colon \mathcal Z^\circ_\eta(L)\to \Sigma_X.
\]
The tropicalization is independent of the choice of $L$, provided the valuation is surjective onto the real numbers.\footnote{In the toric setting, the independence of $L$ follows, for example, from non-emptiness results for the fibers of the tropicalization map~\cite{Pay07}, applied to the toric charts of $X$, following~\cite{U15}. It can also be deduced from elementary properties of the Berkovich analytification and the formalism in~\cite{ACP,U15}.}

The basic structure result extends to this nontrivially valued setting with a small twist.

\begin{theorem}
The set $\trop(\mathcal Z_\eta^\circ)$ is the \textit{support} of a rational polyhedral complex of $\Sigma_X$. The topological dimension of $\trop(\mathcal Z_\eta^\circ)$ is bounded above by the algebraic dimension of $\mathcal Z^\circ$. If $X^\circ$ is a closed subvariety of an algebraic torus, then the topological dimension of the tropicalization is equal to the algebraic dimension of $\mathcal Z^\circ$.
\end{theorem}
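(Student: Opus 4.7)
The plan is to reduce the theorem to Gubler's structure theorem for tropicalizations of closed subvarieties of algebraic tori over valued fields, cited as [Gub13], by working étale-locally on $X$ and gluing. Gubler already provides both the polyhedrality and the pure-dimensionality in the toric setting over a field with surjective valuation, so the SNC case will be bootstrapped from this.

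First, I would cover $X$ by étale charts $U \to X$ each admitting a smooth morphism $\varphi : U \to \A^r$ that pulls the standard toric boundary back to $D \cap U$, where $r$ is the number of boundary components meeting $U$. The induced map $\Sigma_U \to \RR^r_{\geq 0}$ is an embedding of cone complexes, and by the definition of tropicalization via valuations recalled in Section~\ref{sec: trop-via-val}, it is compatible with the tropicalization of subschemes in the evident sense. Base-changing $\mathcal Z_\eta$ to $U_K := U \times_\CC K$, composing with $\varphi_K$, and using strong transversality to restrict to the dense torus produces a family $\mathcal W_\eta \subset \mathbb{G}_m^r \times_\CC K$. Applying Gubler's theorem to $\mathcal W_\eta$ yields a rational polyhedral complex structure on $\trop(\mathcal W_\eta) \subset \RR^r$ of topological dimension at most $\dim \mathcal Z_\eta^\circ$, with equality when $\mathcal W_\eta$ is closed in the torus. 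Pulling back along the embedding $\Sigma_U \hookrightarrow \RR^r_{\geq 0}$ transports this polyhedral structure to the piece of $\trop(\mathcal Z_\eta^\circ)$ supported on $\Sigma_U$.

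Finally, I would glue the local polyhedral structures across overlapping étale charts to produce a global rational polyhedral complex on $\trop(\mathcal Z_\eta^\circ) \subset \Sigma_X$. The dimension bound descends immediately from the local statement; in the case where $X^\circ$ is a closed subvariety of a torus no gluing is needed and Gubler's equality applies directly. The main obstacle will be compatibility of the polyhedral structures across overlaps: the cleanest route is to observe that Gubler's polyhedral structure is essentially intrinsic, characterized combinatorially by multiplicities of valuations on residue fields of generic points of its cells, which is independent of the chart. Since $\mathcal Z_\eta$ is of finite type, only finitely many charts are required, and the glued object is a finite rational polyhedral complex in $\Sigma_X$.
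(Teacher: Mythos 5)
The paper does not prove this statement at all: it records that the toric case is Gubler's theorem \cite{Gub13} and that the simple normal crossings case ``is a consequence of'' \cite[Theorem~5.1]{BU18}, and moves on. Your proposal fills in the reduction that the paper outsources to \cite{BU18}, and the overall strategy --- \'etale charts $U\to\A^r$ adapted to $D$, pushing into the dense torus, invoking Gubler, and assembling over a finite cover --- is the right one and consistent with how the paper frames the result. Two small points. First, the image $\mathcal W_\eta=\varphi_K(\mathcal Z_\eta\cap U_K)$ is only constructible and may drop dimension since $\varphi$ is smooth of positive relative dimension rather than an embedding; you should pass to its closure in $\mathbb G_m^r$ before citing Gubler (harmless for the support, since tropicalization of a dense open equals that of the closure over a field with surjective valuation) and note that the possible dimension drop only helps the upper bound, while the equality clause is hypothesized only when $X^\circ$ itself sits closed in a torus, where no chart is needed.

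Second, and more substantively: your proposed fix for the gluing step --- that Gubler's polyhedral structure is ``essentially intrinsic'' and hence chart-independent --- is false, and the paper says so explicitly in the sentence immediately following the theorem, citing \cite[Example~3.5.4]{MS14} for the fact that $\trop(Z^\circ)$ carries no distinguished polyhedral structure. Fortunately you do not need this. The theorem only asserts that $\trop(\mathcal Z_\eta^\circ)$ is the \emph{support} of a rational polyhedral complex. Since finitely many charts suffice, the set is a finite union of supports of rational polyhedral complexes, i.e.\ a finite union of rational polyhedra, and any such union admits \emph{some} structure of a rational polyhedral complex (e.g.\ by common refinement); no compatibility of the local structures is required. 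With that repair, and the caveat (which the paper flags as the ``small twist'') that the resulting complex is polyhedral but not conical because $\mathcal Z_\eta^\circ$ lives over a nontrivially valued field, your argument goes through.
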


 The toric case of this result was established by Gubler without restriction on the base field~\cite{Gub13}. The statement for simple normal crossings targets is a consequence of~\cite[Theorem~5.1]{BU18}.

To emphasize the point, the tropicalization of a variety that is defined over a nontrivial valued extension of $\CC$ is polyhedral but \textit{not necessarily conical}. In other words, it can have bounded cells. The target $X$ itself is still defined over $\CC$, not merely over $K$, so {\it its} tropicalization remains conical.

\subsubsection{Families over a punctured curve} Let $C$ be a smooth algebraic curve of finite type with a distinguished point $0\in C$, let $C^\circ$ be the complement of this point. The pair $(C,0)$ has a cone complex $\Sigma_C$, canonically identified with the positive real line $\RR_{\geq 0}$. 

Let $(X|D)$ be a pair as before and $X^\circ$ its interior. Consider a flat family of $1$-dimensional subschemes over $C^\circ$
\[
\mathcal Z^\circ\subset C^\circ\times X^\circ.
\]
There are two tropicalizations associated to this family. For the first, we consider $\mathcal Z^\circ$ as a $2$-dimensional subscheme of $C^\circ\times X^\circ$. The partial compactification $C\times X$ determines a tropicalization, as explained in Remark~\ref{rem: non-compact-trop}: 
\[
\trop(\mathcal Z^\circ)\subset \RR_{\geq 0} \times \Sigma_X. 
\]
For the second, note that the function field of $C$ is equipped with a discrete valuation arising from order of vanishing at $0$. Let $K$ be an extension of this field with real surjective valuation and note that there is a canonical inclusion of $\spec K$ to $C^\circ$. Consider the tropicalization of the $K$-valued points of the base change
\[
\mathcal Z_\eta = \mathcal Z^\circ\times_{C^\circ} K\subset X(K),
\]
where the morphism $\spec K \rightarrow C^\circ$ is determined by the fraction field of the local ring of $C$ at $0$.  Denote the result by $\trop(\mathcal Z_\eta)$ -- note that the subscript $\eta$ indicates that we are considering it as a generic fiber. 

These two procedures are related. When considering the total space of $\mathcal Z^\circ$ as a surface over $C^\circ$, there is a map
\[
\upsilon\colon \trop(\mathcal Z^\circ)\to \RR_{\geq 0}.
\]
The fiber $\upsilon^{-1}(1)$ coincides with the second tropicalization $\trop(\mathcal Z_\eta)$. This follows from a tracing of definitions for the tropicalization of fibers of maps of Berkovich spaces and the functoriality results in~\cite{U15}. 

\subsection{Transversality for one-parameter families} The appropriate generalizations of the properness and transversality statements earlier in this section are as follows. Let $R$ be the valuation ring of $K$ and equip $\spec R$ with the divisorial logarithmic structure at the closed point. The scheme $X\times \spec R$ is equipped with the simple normal crossings divisor given by $D\times\spec R\cup X\times 0$. Once equipped with this divisor, it has tropicalization $\Sigma_X\times\RR_{\geq 0}$. 

Let $\mathcal Y'\to X\times\spec R$ be a toroidal modification of the constant family and let $\mathcal Y''\subset \mathcal Y'$ be the complement of a union of closed strata. In practice, $\mathcal Y''$ will be the complement of all closed strata of codimension $2$ in the special fiber. These have tropicalizations $\Sigma_{\mathcal Y'}$ and $\Sigma_{\mathcal Y''}$ that are, respectively, a complete subdivision and a subdivision of $\Sigma_X\times\RR_{\geq 0}$. There is a morphism of cone complexes by composition:
\[
\Sigma_{\mathcal Y''}\to \RR_{\geq 0}
\]
and we denote the fiber over $1$ by $\Sigma_{\mathcal Y''}(1)$. We view this as a polyhedral complex. 

Let $\mathcal Z^\circ$ be a subscheme of $X^\circ\times \spec K$. We examine the question of when the closure is proper. 

\begin{theorem}
The closure $\mathcal Z$ of $\mathcal Z^\circ$ in the degeneration $\mathcal Y''$ of $X$ is proper over $\spec R$ if and only if $\trop(\mathcal Z^\circ)$ is set theoretically contained in $\Sigma_{\mathcal Y''}(1)$. 
\end{theorem}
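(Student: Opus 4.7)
The plan is to apply the valuative criterion of properness to $\mathcal Z \to \spec R$ and reduce to the absolute properness criterion (Theorem~\ref{thm: properness-criterion}) applied to the partial compactification $\mathcal Y''$ of the generic fiber $X^\circ \times \spec K$. Since $\mathcal Z$ is a closed subscheme of $\mathcal Y''$, which is separated and of finite type over $\spec R$, separatedness is automatic and properness reduces to existence of extensions in the valuative criterion. Concretely, I would establish: for every valued extension $(L, v_L)$ of $(K, v_K)$ with valuation ring $S$, every $L$-point $p \in \mathcal Z^\circ(L)$ extends to an $S$-point of $\mathcal Y''$; landing in $\mathcal Z$ itself is then automatic since $\mathcal Z$ is closed in $\mathcal Y''$ and the generic fiber of the extension lies in $\mathcal Z^\circ$.

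Next, I would translate the criterion into tropical terms. Viewing $X \times \spec R$ with the simple normal crossings divisor $D \times \spec R \cup X \times \{0\}$, its tropicalization is $\Sigma_X \times \RR_{\geq 0}$, and $\mathcal Y''$ is a partial compactification of $X^\circ \times \spec K$ with tropicalization $\Sigma_{\mathcal Y''}$. An $L$-point $p$ of $X^\circ \times \spec K$ tropicalizes in $\Sigma_X \times \RR_{\geq 0}$ to the pair $(\trop_X(p), v_L(\pi))$, where $\pi$ is a uniformizer of $R$ normalised so $v_K(\pi) = 1$; any valued extension satisfies $v_L(\pi) = 1$, so the tropicalization lies at height $1$. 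As $p$ and $L$ vary over valued extensions with surjective valuation, these tropicalizations sweep out precisely $\trop(\mathcal Z^\circ) \subset \Sigma_X \times \{1\}$, by the standard surjectivity of tropicalization onto the support of the tropical variety. Applying Theorem~\ref{thm: properness-criterion} (extended from the trivially valued base to the valued base $\spec K$ as in~\cite{Gub13,BU18}) to the partial compactification $\mathcal Y''$, the $L$-point extends to an $S$-point iff its tropicalization lies in the support of $\Sigma_{\mathcal Y''}$. Combining these, the valuative criterion holds iff $\trop(\mathcal Z^\circ) \subset \Sigma_{\mathcal Y''}(1)$.

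The main obstacle I expect is the careful identification of the two tropicalizations in the second paragraph: relating the tropicalization of $p$ taken in the total space $X^\circ \times \spec K$ (landing in $\Sigma_X \times \RR_{\geq 0}$ at height $v_L(\pi)$) with the tropicalization of $p$ taken in the generic fiber (landing in $\Sigma_X$), and checking that the height-$1$ slice correctly recovers all of $\trop(\mathcal Z^\circ)$. This is a mild generalisation of the fibered-tropicalization observation immediately preceding the statement (that $\upsilon^{-1}(1) = \trop(\mathcal Z_\eta)$), but must be handled for arbitrary valued extensions rather than the DVR extension arising from a punctured smooth curve. A related subtlety is that the absolute criterion (Theorem~\ref{thm: properness-criterion}) is stated for subschemes over $\CC$, and one has to verify the same argument applies when the ambient variety is partially compactified over a nontrivially valued base; once these points are sorted out, surjectivity of tropicalization onto its support provides the remaining bookkeeping and the argument concludes.
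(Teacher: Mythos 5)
Your argument is correct in outline but takes a genuinely different route from the paper's. The paper does not run the valuative criterion over the valued base at all: it first observes that the closure of $\mathcal Z^\circ$ in the \emph{proper} model $\mathcal Y'$ is automatically proper over $\spec R$, then uses the standing hypothesis that $K$ is the local field of a point on a curve $C$ to spread the whole situation out to a family over $C$, viewed as a logarithmic scheme over the trivially valued field $\CC$. At that point properness of the closure in $\mathcal Y''$ becomes equivalent to the two closures (in $\mathcal Y'$ and in $\mathcal Y''$) coinciding, which is exactly the content of the Tevelev--Ulirsch Lemma \cite[Lemma~4.1]{U15}; the tropical containment condition is the hypothesis of that lemma. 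Your proposal instead re-derives the Tevelev-type statement directly over $\spec R$ via the valuative criterion, the reduction map to strata, and density of tropical points, citing \cite{Gub13,BU18} for the valued-base versions of the inputs. What the paper's detour through $C$ buys is precisely that it never has to develop Theorem~\ref{thm: properness-criterion} or the surjectivity of tropicalization over a nontrivially valued base --- it only ever invokes results stated for logarithmic schemes over $\CC$. What your route buys is independence from the spreading-out hypothesis, at the cost of the extensions you flag yourself. Two points in your sketch deserve more care than you give them: (i) the valuative criterion for $\mathcal Z\to\spec R$ must also be checked against valuation rings whose closed point lies over the \emph{generic} point of $\spec R$ (i.e.\ properness of the generic fibre of $\mathcal Z$); this is controlled by the recession directions of $\trop(\mathcal Z^\circ)$ and the cones of $\Sigma_{\mathcal Y''}$ lying over $0\in\RR_{\geq 0}$, not directly by points realized at height $1$, so it is not literally subsumed by your height-$1$ analysis; and (ii) the reduction from arbitrary valuation rings to rank-one valued extensions of $(K,v_K)$ centered on $\mathcal Z^\circ$ should be stated (it follows from the refined valuative criterion for finite-type morphisms of Noetherian schemes, testing only on a dense set of points). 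Neither is a fatal gap, but both must be addressed for the ``if and only if'' to close.
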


\begin{proof}
This is well-known to experts, but in the form stated, we have been unable to locate a suitable reference. We explain how it can be deduced from results that do appear in the literature. 

First, we note that the closure of $\mathcal Z^\circ$ in the larger degeneration $\mathcal Y'$ is certainly proper over $\spec R$ because $X$ is proper and $\mathcal Y'\to X\times\spec R$ is a proper and birational morphism. We now use the hypothesis on $K$. Specifically, since $K$ is a localization of the function field of a smooth curve $C$ with the valuation associated to a closed point $0$ with complement $C^\circ$. Moreover, the given subscheme $\mathcal Z^\circ$, as well as the degenerations $\mathcal Y'$ and $\mathcal Y''$ can be assumed to arise via base change from corresponding families over $C$, along the inclusion
\[
\spec R\to C
\]
of the local ring at $0$. Rather than overburdening the notation, we replace the families over $\spec R$ with the corresponding families over $C$. We now view $C$, $X\times C$, $\mathcal Y'$ and $\mathcal Y''$ as logarithmic schemes over $\CC$. Note that due to the potential non-properness of $C$, Remark~\ref{rem: non-compact-trop} is in effect. The reader is also advised to keep in mind the relationship between the two tropicalizations of $\mathcal Z^\circ$ described above.

We are now in a position to apply the Tevelev--Ulirsch Lemma for logarithmic schemes as stated in~\cite[Lemma~4.1]{U15}. From it, we deduce that the closure of $\mathcal Z^\circ$ in $\mathcal Y'$ coincides with the closure in $\mathcal Y''$ precisely under the hypotheses stated in the theorem. We conclude the result. 
\end{proof}

We keep the notation above, and now deal with the corresponding transversality statement. 

\begin{theorem}
The closure $\mathcal Z$ of $\mathcal Z^\circ$ in the degeneration $\mathcal Y''$ of $X$ intersects the strata of $\mathcal Y'$ in the expected dimension if and only if $\trop(\mathcal Z^\circ)$ is a union of polyhedra in $\Sigma_{\mathcal Y''}(1)$. 
\end{theorem}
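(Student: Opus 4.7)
The plan is to mirror the reduction used in the proof of the previous theorem and then invoke the absolute transversality criterion, Theorem~\ref{thm: transversality-criterion}. First, I would replace the family over $\spec R$ by a corresponding family over the smooth curve $C$ at the closed point $0$, viewing everything as logarithmic schemes of finite type over $\CC$. After this replacement, $X \times C$ carries the simple normal crossings divisor $D \times C \cup X \times \{0\}$, and the modifications $\mathcal Y'$ and $\mathcal Y''$ realize a proper subdivision and a subdivision, respectively, of the cone complex $\Sigma_X \times \RR_{\geq 0}$; the subscheme $\mathcal Z^\circ$ becomes a subscheme of $X^\circ \times C^\circ$ whose tropicalization in $\Sigma_X \times \RR_{\geq 0}$ is defined via Remark~\ref{rem: non-compact-trop}.

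With the setup in place, I would apply Theorem~\ref{thm: transversality-criterion} to the closure of $\mathcal Z^\circ$ inside $\mathcal Y''$. The theorem is stated for a simple normal crossings compactification; to handle the partial compactification $\mathcal Y''$, I would first apply the criterion to the proper modification $\mathcal Y'$ and invoke the preceding properness theorem to conclude that the excised closed strata of $\mathcal Y'$ contribute nothing to $\trop(\mathcal Z^\circ)$, so there is no distinction between transversality against strata of $\mathcal Y'$ and against strata of $\mathcal Y''$. The upshot is that $\mathcal Z$ meets the strata of $\mathcal Y'$ in the expected codimension if and only if $\trop(\mathcal Z^\circ)$ is a union of cones in $\Sigma_{\mathcal Y''}$.

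The final step is the translation between the conical description in $\Sigma_{\mathcal Y''}$ and the polyhedral description in the slice $\Sigma_{\mathcal Y''}(1)$. Under the height projection $\Sigma_{\mathcal Y''} \to \RR_{\geq 0}$, cones of $\Sigma_{\mathcal Y''}$ split into those mapping to $\{0\}$, which correspond to strata of $\mathcal Y''$ lying over $C^\circ$, and those mapping onto $\RR_{\geq 0}$, which correspond to strata of the special fiber. Expected-dimension transversality for the first class is automatic from the strong transversality of the generic fiber, so only the second class carries content. Cones of the second type are in bijection with the polyhedral cells of the slice via the operation of intersecting with $\{h = 1\}$, and the height-one slice of $\trop(\mathcal Z^\circ)$ coincides with $\trop(\mathcal Z_\eta)$ by the functoriality discussion preceding the theorem. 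Thus $\trop(\mathcal Z^\circ)$ is a union of cones in $\Sigma_{\mathcal Y''}$ precisely when its slice is a union of polyhedra in $\Sigma_{\mathcal Y''}(1)$, which yields the desired equivalence. The main technical point I expect to need care is the treatment of unbounded cells of the slice: these correspond to cones of $\Sigma_{\mathcal Y''}$ also meeting $\{h=0\}$, and one must verify that the cone--polyhedron dictionary remains compatible with the subdivision structure in that borderline case.
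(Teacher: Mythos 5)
Your proposal follows the paper's proof: spread the family out over a curve, note that the map to the base is irrelevant for computing dimensions of strata intersections, and apply Theorem~\ref{thm: transversality-criterion} directly, with the cone--polyhedron dictionary between $\Sigma_{\mathcal Y''}$ and its height-one slice supplying the translation. Your treatment of the partial compactification and of the unbounded cells is more explicit than the paper's two-sentence argument, but it is the same approach.
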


\begin{proof}
Proceed as in the proof of the theorem above and spread out the family until it is defined over a curve. To calculate the dimension of the intersections of $\mathcal Z$ with the strata of $\mathcal Y$, the map to $\spec R$ is not relevant, so we directly apply Theorem~\ref{thm: transversality-criterion} in the previous section and conclude. 
\end{proof}

\section{Tropical degenerations and algebraic transversality}\label{sec: flat-limits}

Let $(X|D)$ be a simple normal crossing pair such that the all intersections of irreducible components of $D$ are connected.  We further assume that $X$ is proper.
In this section, we first introduce the precise notion of expansion of $X$ that we consider in this paper.  We then examine how to use tropical data to construct algebraically transverse flat limits for families of subschemes of $X$, along the lines of work of Tevelev~\cite{Tev07} and Ulrisch~\cite{U15}.  This will motivate our construction of the stack of expansions in the next section, and provide the main ingredient in our proof of properness. Throughout this section, we specialize our discussion to subschemes of dimension at most $1$.

\subsection{The plan} Given a simple normal crossings pair $(X|D)$, we have recalled in the section above how a \textit{conical subdivision} of its tropicalization $\Sigma_X$ determines an open subset in a proper birational modification of $X$. A conical subdivision of $\Sigma_X\times\RR_{\geq 0}$, by the same dictionary, will determine a \textit{expansion}, a special type of degeneration, of $X$ along its boundary $D$. It leads to a family defined over $\A^1$. 

A conical subdivision 
\[
\widetilde{\Sigma_X\times\RR_{\geq 0}}\to \Sigma_X\times\RR_{\geq 0}
\]
can be visualized as a {\it polyhedral} subdivision of $\Sigma_X$: take the hight $1$ slice of the subdivision under the projection to $\RR_{\geq 0}$. The result is a union of polyhedra in $\Sigma_X$ glued along faces -- a polyhedral complex. The cone over this polyhedral subdivision recovers the original conical subdivision. 

As we have seen in the previous section, the tropicalization of a $1$-dimensional subscheme of $X$, as defined in the previous section, determines a polyhedral subdivision and therefore an expansion. 

We capture the possible tropicalizations of subschemes by the notion of a \textit{$1$-complex}, and use them to prove flat limit algorithms which will eventually establish the properness and separatedness of our yet-to-be-proposed moduli spaces. In order to build these moduli, we will form the parameter space for such $1$-complexes, and use logarithmic geometry to turn this into a parameterizing stack for expansions. The logarithmic Hilbert scheme will be built on top of this stack of expansions.

\subsection{Graphical preliminaries}\label{sec: graphical-prelims} Let $\underline G$ be a finite graph, possibly disconnected but without loops or parallel edges. We enhance it with two additional pieces of data. The first is a finite set of \textit{rays}, formally given by a finite set $R(\underline G)$ equipped with a map to the vertex set
\[
r\colon R(\underline G)\to V(\underline G).
\]
The second is the \textit{metrization} of the edge set, given by the edge length function
\[
\ell\colon E(\underline G)\to \RR_{>0}.
\]

\begin{definition}
An \textit{abstract $1$-complex} is a triple $(\underline G, r\colon R(\underline G)\to V(\underline G),\ell\colon E(\underline G)\to \RR_{>0})$ consisting of a finite graph, a collection of rays, and an edge length on the edges.
\end{definition}

These data give rise simultaneously to a metric space and a polyhedral complex, both enhancing the topological realization of $\underline G$. The topological realization of $\underline G$ is a CW complex and we endow an edge $E$ with a metric by identifying it with an interval in $\RR$ of length $\ell(E)$. For each element $ h\in R(\underline G)$ we glue on a copy of the metric space $\RR_{\geq 0}$ to the point $r(h)$. As we are now free to think of each edge or half edge as being either a polyhedron or a metric space, the result is a space $G$ that is simultaneously a metric space and a polyhedral complex of dimension at most $1$. In particular, it makes sense to talk about real-valued continuous piecewise affine functions on $G$.


Let $\Sigma$ be a smooth cone complex such that the intersection of any two cones is a face of each. Let $|\Sigma|$ be the associated topological space. Note that $\Sigma$ embeds canonically as a subcomplex of a standard orthant via $\Sigma\hookrightarrow \RR_{\geq 0}^{\Sigma^{(1)}}$.

A \textit{piecewise affine map $F:G\to\Sigma$} is the data of a continuous map on the underlying topological spaces
\[
|G|\to \underline |\Sigma|,
\]
such that every face of $G$ maps to a cone of $\Sigma$, and such that the map is integer affine upon restriction to each face.  

We can measure the \textit{slope along an edge of $G$}, well-defined up to sign, as follows. Each non-contracted edge $E$ maps to a cone $\sigma$, and thus maps onto a line $L_E$ in $\sigma^{\mathsf{gp}}$. We refer to the expansion factor of the induced map $F:E\to L_E$ as the \textit{slope of $F$ along the edge $E$}. 

\begin{definition}
An \textit{embedded $1$-complex} in $\Sigma$ is a piecewise affine map, written,
\[
G\to \Sigma
\]
that is injective on underlying topological spaces subject to the following conditions: (i) the slope along all edges $E$ in $G$ is equal to $1$, and (ii) the image of each ray of $G$ is parallel to a one-dimensional face of the cone containing it.  
\end{definition}

This latter condition will be satisfied for all the embedded $1$-complexes that occur for us, since it will be forced by algebraic transversality of subschemes in expansions. The reader can also drop condition, as these would correspond to components of the stack of expansions that are not relevant for our moduli spaces.

When it is clear from context that $G$ has been embedded, we refer to it simply as a \textit{$1$-complex} and will use the notation $G$ to refer to this embedded object. 

\subsection{Target geometry} Let $\Sigma_X$ be the cone complex of $(X|D)$. An embedded $1$-complex $G\subset \Sigma_X$ gives rise to a class of target geometries as follows. Place the embedded $1$-complex $G$ at height $1$ inside $\Sigma_X\times\RR_{\geq 0}$ and let $\mathsf C(G)$ be the cone over it. 

\begin{lemma}
The cone $\mathsf C(G)$ over $G$ is a cone complex embedded in $\Sigma_X\times\RR_{\geq 0}$
\end{lemma}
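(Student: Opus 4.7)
The plan is to verify directly the three defining properties of a cone complex embedded in $\Sigma_X \times \RR_{\geq 0}$: each cell is a rational polyhedral cone with integral structure, face inclusions are isomorphisms onto proper faces, and any two cells meet in a common face. I would begin by enumerating the cells of $\mathsf{C}(G)$: the apex $\{0\}$; for each vertex $v$ of $G$ placed at height $1$, the ray $\RR_{\geq 0}\cdot(v,1)$; for each edge $[v_1,v_2]$ of $G$, the two-dimensional cone $\RR_{\geq 0}(v_1,1) + \RR_{\geq 0}(v_2,1)$; and for each ray of $G$ attached at a vertex $v$ with primitive direction $d$, the two-dimensional cone $\RR_{\geq 0}(v,1) + \RR_{\geq 0}(d,0)$.

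Rationality and the integral structure on each cell follow from the hypothesis that $G \hookrightarrow \Sigma_X$ is an embedding of rational polyhedral complexes with unit slopes on each edge: this forces vertex images to be rational points of $\Sigma_X$ and edge/ray directions to be primitive integer vectors, so each ray has a well-defined primitive generator and each two-dimensional cone is generated by such. The lattice on each two-dimensional cone is inherited from the lattice of $\sigma \oplus \ZZ$, where $\sigma$ is the smallest cone of $\Sigma_X$ containing the corresponding cell of $G$.

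For face relations, the proper faces of the two-dimensional cone over an edge $[v_1,v_2]$ are precisely the apex and the rays over $v_1$ and $v_2$; analogously for rays of $G$, where one face becomes the asymptotic ray $\RR_{\geq 0}\cdot(d,0)$. These glue consistently at shared vertices because the incidence pattern is prescribed by the graph structure on $G$. For the intersection condition, injectivity of $G \hookrightarrow \Sigma_X$ ensures that distinct cells of $G$ have disjoint relative interiors; since two distinct points both sitting at height $1$ generate two distinct rays from the origin, no collapsing occurs, so two cones of $\mathsf{C}(G)$ intersect only in the cone over the common boundary of the corresponding cells of $G$.

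The embedding into $\Sigma_X \times \RR_{\geq 0}$ is then realized cell-by-cell: each edge or ray of $G$ lies in a single cone $\sigma$ of $\Sigma_X$ by the definition of a morphism of polyhedral complexes, so its cone at height $1$ lies in $\sigma \times \RR_{\geq 0}$, and the restriction to each cell is an integral linear map. The main point that requires care is the intersection condition: showing that two cells of $\mathsf{C}(G)$ meet in exactly a common face, rather than merely overlapping, relies on combining the injectivity of $G \hookrightarrow \Sigma_X$ with the observation that the height-$1$ slicing cleanly separates distinct vertices into distinct rays, ensuring that no unexpected incidences are introduced by passing to the cone.
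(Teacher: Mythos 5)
Your overall strategy (enumerate the cells of $\mathsf C(G)$, check face relations, check that pairwise intersections are common faces) matches the paper's, and most of the verification is fine. But there is a genuine gap in the intersection step, and it sits exactly where the paper locates the only nontrivial point: the fiber over $0$ in $\Sigma_X\times\RR_{\geq 0}$. You assert that ``two cones of $\mathsf C(G)$ intersect only in the cone over the common boundary of the corresponding cells of $G$'' and that ``no unexpected incidences are introduced by passing to the cone.'' This is false for the unbounded rays of $G$. Take two rays of $G$ with the same primitive direction $d$, attached at distinct vertices $v_1\neq v_2$ and disjoint in $\Sigma_X$ (this is the generic situation, e.g.\ a curve meeting a divisor component at two points). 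Their cells in $G$ have empty common boundary, yet the closed cones $\RR_{\geq 0}(v_1,1)+\RR_{\geq 0}(d,0)$ and $\RR_{\geq 0}(v_2,1)+\RR_{\geq 0}(d,0)$ both contain the recession ray $\RR_{\geq 0}(d,0)$ at height $0$. So an incidence \emph{is} introduced by passing to the cone, and your argument, taken literally, would conclude these two cones meet only at the origin, which is not the case. The height-$1$ slicing controls all heights $t>0$ (where the slices are disjoint translates), but says nothing about the closure at $t=0$.

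The lemma is still true, and the repair is exactly the paper's observation: the intersections that are not visible at height $1$ all occur in the $0$-fiber, where each cone of $\mathsf C(G)$ contributes its recession cone. Since $G$ has dimension at most $1$, these recession cones are either the origin or single rays, and two rays from the origin either coincide or meet only at the origin; in the coinciding case the shared ray is a face of each $2$-dimensional cone, so the common-face condition holds. You should add this case analysis (and note that it is precisely here that the $1$-dimensionality of $G$ is used --- for a $2$-dimensional complex the analogous check would require more care, which is why the paper cites the argument of Brown--Gross--Siebert). The rest of your write-up --- rationality, integral structures, and the face enumeration including the asymptotic ray as a face --- is consistent with the paper.
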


\begin{proof}
The cone over each face in $G$ certainly forms a cone, so we check that this collection of cones meet along faces of each. Following the argument in~\cite[Theorem~3.4]{BGS11} this check is nontrivial in the fiber over $0$ in $\Sigma_X\times\RR_{\geq 0}$. Since $G$ has dimension either $0$ or $1$, the cones in the $0$ fiber are either rays starting from the origin or the origin itself. Since two such rays either coincide or intersect only at the origin, this implies the statement. 
\end{proof}

By applying the construction in Section~\ref{sec: subdivisions} and Remark~\ref{rem: divisorial} to the subdivision
\[
\mathsf C(G)\hookrightarrow \Sigma_X\times\RR_{\geq 0},
\]
we obtain a target expansion
\[
\mathcal Y_G\to X.
\]
It is typically not proper. 

\begin{remark}
The construction of such target expansions using subdivisions goes back at least to Mumford's work on degenerations of abelian varieties~\cite{Mum72}. Its first appearance in enumerative geometry is in work of Nishinou and Siebert~\cite{NS06}. 
\end{remark}

Every affine toric variety carries a canonical logarithmic structure. Consider a toric monoid $P$ with associated toric variety $U_P$ and a closed point $u$ in the closed torus orbit. By pulling back the toric logarithmic structure to $u$, we obtain the \textit{$P$-logarithmic point}. It is denoted $\spec \CC_P$. The \textit{standard logarithmic point} is $\spec \CC_\NN$ and is the pullback to $0$ of the toric logarithmic structure on $\A^1$.

\begin{definition}[{\it Expansions and families}]
A \textit{rough expansion} of $X$ over $\spec \CC_\NN$ is a logarithmic scheme $\mathcal Y\to X\times \spec \CC_\NN$ which is the fiber over $0$ in the modification of $(X|D)\times\A^1$ induced by a subdivision
\[
\Delta\hookrightarrow \Sigma_X\times\RR_{\geq 0}.
\]
A rough expansion is called an \textit{expansion} if, in addition, the following two conditions are satisfied 
\begin{enumerate}[(E1)]
\item The scheme $\mathcal Y$ is reduced.
\item The subdivision is given by the cone over an embedded $1$-complex in $\Sigma_X$. Equivalently, at every point of $\mathcal Y$, the relative characteristic monoid of $\mathcal Y\to \spec \CC_\NN$ is free of rank at most $1$.
\end{enumerate}
If $S$ is a fine and saturated logarithmic scheme, a morphism $\mathcal Y/S\to X\times S/S$ is called an {\it expansion of $X$ over $S$} if all pullbacks $\spec \CC_\NN\to S$ are expansions and if the morphism $\mathcal{Y} \to S$ is flat and logarithmically smooth.
\end{definition}

We make a few comments about the definition. The first concerns (E2). We deal exclusively with subschemes of dimension at most $1$ in this paper, and the transversality condition that we will impose on these subschemes will ensure that the meet the strata of their ambient spaces in a dimensionally transverse fashion. Once a rough expansion is constructed, the codimension $2$ strata become irrelevant, and by removing these, one is led to more efficient moduli spaces. The second concerns the definition of an expansion over general bases $S$. For experts in logarithmic geometry, we note that one can replace the condition ``flat and logarithmically smooth'' with ``logarithmically smooth and integral''. Similarly, the reducedness of the fibers can be replaced with ``saturated''. Finally, we note that although the expansions themselves tend to be non-proper, we always have a map from $\mathcal Y$ to the original target $X$, which has been assumed to be projective throughout. Moreover, we will only ever consider sheaves on $\mathcal Y$ with proper support, so for example, the support always defines a cycle in $X$ by pushforward. 

A $1$-complex and the corresponding geometric expansion are shown in Figure~\ref{fig: a-1-complex}. 

\begin{figure}
\tikzset{every picture/.style={line width=0.75pt}} 

\tikzset{every picture/.style={line width=0.75pt}} 

\tikzset{every picture/.style={line width=0.75pt}} 

\begin{tikzpicture}[x=0.75pt,y=0.75pt,yscale=-1,xscale=1]

\draw  [dash pattern={on 3.75pt off 3pt on 7.5pt off 1.5pt}]  (83,207) -- (83.49,60) ;
\draw [shift={(83.5,58)}, rotate = 90.19] [color={rgb, 255:red, 0; green, 0; blue, 0 }  ][line width=0.75]    (10.93,-3.29) .. controls (6.95,-1.4) and (3.31,-0.3) .. (0,0) .. controls (3.31,0.3) and (6.95,1.4) .. (10.93,3.29)   ;
\draw  [dash pattern={on 3.75pt off 3pt on 7.5pt off 1.5pt}]  (83,207) -- (240.5,206.01) ;
\draw [shift={(242.5,206)}, rotate = 179.64] [color={rgb, 255:red, 0; green, 0; blue, 0 }  ][line width=0.75]    (10.93,-3.29) .. controls (6.95,-1.4) and (3.31,-0.3) .. (0,0) .. controls (3.31,0.3) and (6.95,1.4) .. (10.93,3.29)   ;
\draw    (133.5,165) -- (83,207) ;
\draw    (133.5,165) -- (134.48,62) ;
\draw [shift={(134.5,60)}, rotate = 90.55] [color={rgb, 255:red, 0; green, 0; blue, 0 }  ][line width=0.75]    (10.93,-3.29) .. controls (6.95,-1.4) and (3.31,-0.3) .. (0,0) .. controls (3.31,0.3) and (6.95,1.4) .. (10.93,3.29)   ;
\draw    (133.5,165) -- (220.5,163.04) ;
\draw [shift={(222.5,163)}, rotate = 178.71] [color={rgb, 255:red, 0; green, 0; blue, 0 }  ][line width=0.75]    (10.93,-3.29) .. controls (6.95,-1.4) and (3.31,-0.3) .. (0,0) .. controls (3.31,0.3) and (6.95,1.4) .. (10.93,3.29)   ;
\draw    (416.11,106.98) .. controls (416.7,104.46) and (418.07,103.44) .. (420.22,103.91) .. controls (422.73,104.12) and (424.04,103.15) .. (424.16,101) .. controls (424.63,98.59) and (426.06,97.54) .. (428.46,97.87) .. controls (430.81,98.24) and (432.18,97.26) .. (432.57,94.95) .. controls (432.95,92.66) and (434.26,91.76) .. (436.5,92.25) .. controls (439,92.59) and (440.41,91.66) .. (440.74,89.46) .. controls (441.39,87.09) and (442.89,86.17) .. (445.24,86.7) .. controls (447.23,87.47) and (448.67,86.67) .. (449.56,84.28) .. controls (450.22,82.07) and (451.61,81.38) .. (453.73,82.21) .. controls (456,83.03) and (457.62,82.36) .. (458.59,80.19) .. controls (459.68,78.06) and (461.26,77.56) .. (463.34,78.69) .. controls (465.25,79.96) and (466.81,79.63) .. (468.04,77.72) .. controls (469.45,75.87) and (471.16,75.71) .. (473.15,77.26) .. controls (474.69,78.93) and (476.29,78.97) .. (477.94,77.4) .. controls (479.79,75.93) and (481.43,76.16) .. (482.86,78.09) .. controls (484.15,80.08) and (485.86,80.49) .. (487.98,79.33) .. controls (489.97,78.2) and (491.45,78.68) .. (492.43,80.79) .. controls (493.65,83.04) and (495.2,83.66) .. (497.09,82.64) .. controls (499.4,81.84) and (501.03,82.59) .. (501.99,84.89) .. controls (502.6,87.05) and (503.97,87.75) .. (506.11,86.98) -- (506.11,86.98) ;
\draw    (496.11,166.98) .. controls (496.35,164.56) and (497.66,163.42) .. (500.04,163.55) .. controls (502.38,163.72) and (503.59,162.65) .. (503.68,160.34) .. controls (503.98,157.83) and (505.23,156.7) .. (507.44,156.95) .. controls (509.87,156.97) and (511.12,155.79) .. (511.2,153.42) .. controls (511.17,151.11) and (512.29,150) .. (514.54,150.09) .. controls (516.97,149.94) and (518.12,148.68) .. (517.99,146.31) .. controls (517.84,143.86) and (518.86,142.55) .. (521.07,142.38) .. controls (523.4,141.86) and (524.3,140.37) .. (523.76,137.92) .. controls (522.85,135.9) and (523.45,134.39) .. (525.56,133.4) .. controls (527.6,132.06) and (527.91,130.48) .. (526.5,128.67) .. controls (524.93,126.88) and (524.96,125.18) .. (526.58,123.57) .. controls (528.07,121.66) and (527.88,120.04) .. (525.99,118.73) .. controls (523.98,117.23) and (523.62,115.59) .. (524.9,113.82) .. controls (526.01,111.59) and (525.52,109.94) .. (523.45,108.87) .. controls (521.34,107.81) and (520.79,106.17) .. (521.78,103.96) .. controls (522.83,101.99) and (522.23,100.39) .. (519.98,99.16) .. controls (517.84,98.27) and (517.23,96.73) .. (518.15,94.56) .. controls (519.03,92.31) and (518.44,90.87) .. (516.38,90.25) -- (516.38,90.25) ;
\draw    (486.11,166.98) .. controls (485.54,164.81) and (485.57,163.85) .. (486.2,164.1) .. controls (483.97,163.11) and (482.37,163.66) .. (481.4,165.75) .. controls (480.17,167.84) and (478.56,168.26) .. (476.59,167.01) .. controls (474.7,165.64) and (473.06,165.85) .. (471.67,167.66) .. controls (469.94,169.35) and (468.21,169.31) .. (466.49,167.52) .. controls (465.2,165.63) and (463.57,165.3) .. (461.58,166.52) .. controls (459.5,167.57) and (458.02,166.98) .. (457.13,164.77) .. controls (456.48,162.52) and (454.97,161.62) .. (452.6,162.05) .. controls (450.24,162.34) and (448.94,161.28) .. (448.71,158.88) .. controls (448.56,156.53) and (447.32,155.42) .. (444.99,155.57) .. controls (442.68,155.71) and (441.51,154.63) .. (441.49,152.33) .. controls (441.36,149.9) and (440.09,148.67) .. (437.69,148.66) .. controls (435.48,148.8) and (434.39,147.69) .. (434.43,145.34) .. controls (434.4,142.89) and (433.25,141.66) .. (430.99,141.65) .. controls (428.6,141.47) and (427.51,140.21) .. (427.71,137.88) .. controls (427.9,135.48) and (426.83,134.14) .. (424.5,133.85) .. controls (422.2,133.54) and (421.26,132.22) .. (421.67,129.87) .. controls (422.18,127.56) and (421.32,126.15) .. (419.11,125.63) .. controls (416.82,124.78) and (416.13,123.25) .. (417.02,121.02) -- (416.11,116.98) ;
\draw    (493.84,180.73) .. controls (361,316) and (317,148) .. (403.3,114.18) ;
\draw   (405.84,103.7) .. controls (409.38,101.71) and (413.98,103.18) .. (416.11,106.98) .. controls (418.25,110.78) and (417.11,115.47) .. (413.57,117.45) .. controls (410.03,119.44) and (405.43,117.97) .. (403.3,114.18) .. controls (401.17,110.38) and (402.31,105.69) .. (405.84,103.7) -- cycle ;
\draw   (486.11,166.98) .. controls (489.65,164.99) and (494.25,166.46) .. (496.38,170.25) .. controls (498.52,174.05) and (497.38,178.74) .. (493.84,180.73) .. controls (490.3,182.72) and (485.7,181.25) .. (483.57,177.45) .. controls (481.44,173.66) and (482.57,168.97) .. (486.11,166.98) -- cycle ;
\draw   (508.66,76.5) .. controls (512.19,74.52) and (516.79,75.98) .. (518.92,79.78) .. controls (521.06,83.58) and (519.92,88.27) .. (516.38,90.25) .. controls (512.84,92.24) and (508.25,90.78) .. (506.11,86.98) .. controls (503.98,83.18) and (505.12,78.49) .. (508.66,76.5) -- cycle ;
\draw  [color={rgb, 255:red, 208; green, 2; blue, 27 }  ,draw opacity=1 ][line width=1.5] [line join = round][line cap = round] (439,72.5) .. controls (454.93,88.43) and (436.73,108.69) .. (432,120.5) .. controls (430.79,123.52) and (423.94,132.44) .. (426,134.5) .. controls (426.46,134.96) and (430.3,135.73) .. (431,135.5) .. controls (432.8,134.9) and (436.82,129.68) .. (438,128.5) .. controls (442.59,123.91) and (449.97,121.91) .. (456,119.5) .. controls (459.24,118.2) and (467.14,114.82) .. (469,121.5) .. controls (471.98,132.24) and (466.62,141.56) .. (462,148.5) .. controls (459.2,152.7) and (456.2,155.89) .. (455,159.5) .. controls (454.68,160.45) and (454.06,162.15) .. (455,162.5) .. controls (460.45,164.54) and (465.9,160.92) .. (470,157.5) .. controls (474.84,153.47) and (481.51,144.25) .. (489,143.5) .. controls (497.21,142.68) and (518,147.67) .. (518,154.5) ;
\draw  [color={rgb, 255:red, 208; green, 2; blue, 27 }  ,draw opacity=1 ][line width=1.5] [line join = round][line cap = round] (388,158.5) .. controls (388,156.1) and (394.26,152.81) .. (396,150.5) .. controls (400.7,144.23) and (412.52,139.74) .. (419,136.5) .. controls (419.38,136.31) and (426.38,133.47) .. (427,134.5) .. controls (429.86,139.27) and (424.89,140.83) .. (424,143.5) .. controls (421,152.49) and (415.79,168.29) .. (423,175.5) .. controls (434.55,187.05) and (445.57,171.93) .. (450,167.5) .. controls (452.02,165.48) and (452.53,165.43) .. (454,162.5) .. controls (454.33,161.83) and (455.47,160.97) .. (456,161.5) .. controls (462.67,168.17) and (454.19,182.43) .. (456,191.5) .. controls (456.75,195.24) and (463,197.18) .. (463,197.5) ;
\draw    (372,198) .. controls (370.73,196.07) and (371.04,194.42) .. (372.94,193.07) .. controls (374.77,191.51) and (374.91,189.8) .. (373.36,187.95) .. controls (371.74,186.18) and (371.77,184.54) .. (373.44,183.05) .. controls (375.09,181.23) and (375.07,179.55) .. (373.38,178.02) .. controls (371.69,176.19) and (371.68,174.48) .. (373.33,172.87) .. controls (375.01,171.29) and (375.04,169.67) .. (373.42,168.02) .. controls (371.85,166.27) and (371.96,164.64) .. (373.76,163.11) .. controls (375.61,161.68) and (375.85,160.02) .. (374.47,158.12) .. controls (373.2,156.07) and (373.59,154.39) .. (375.65,153.09) .. controls (377.76,151.96) and (378.3,150.41) .. (377.27,148.45) .. controls (376.52,146.03) and (377.25,144.47) .. (379.47,143.77) .. controls (381.71,143.22) and (382.58,141.8) .. (382.07,139.51) .. controls (381.76,137.08) and (382.72,135.8) .. (384.94,135.67) .. controls (387.41,135.37) and (388.55,134.09) .. (388.38,131.84) .. controls (388.37,129.49) and (389.56,128.36) .. (391.95,128.43) .. controls (394.33,128.58) and (395.69,127.44) .. (396.03,125.03) .. controls (396.15,122.84) and (397.49,121.85) .. (400.05,122.06) .. controls (402.22,122.6) and (403.49,121.76) .. (403.86,119.53) -- (408,117) ;
\draw    (391,213) .. controls (391.93,210.71) and (393.51,210.1) .. (395.75,211.19) .. controls (397.66,212.6) and (399.38,212.45) .. (400.91,210.74) .. controls (402.5,209.13) and (404.17,209.23) .. (405.9,211.04) .. controls (407.31,212.88) and (408.97,213.09) .. (410.86,211.66) .. controls (412.84,210.27) and (414.45,210.5) .. (415.69,212.37) .. controls (417.29,214.28) and (418.98,214.51) .. (420.75,213.06) .. controls (422.5,211.58) and (424.1,211.74) .. (425.53,213.55) .. controls (427.07,215.32) and (428.71,215.41) .. (430.45,213.8) .. controls (432.08,212.13) and (433.76,212.09) .. (435.49,213.69) .. controls (437.38,215.2) and (439.09,215.02) .. (440.63,213.13) .. controls (442.01,211.18) and (443.57,210.85) .. (445.32,212.14) .. controls (447.59,213.21) and (449.17,212.71) .. (450.06,210.62) .. controls (451.13,208.37) and (452.72,207.66) .. (454.82,208.49) .. controls (457.07,209.15) and (458.49,208.32) .. (459.07,206.01) .. controls (459.5,203.68) and (460.92,202.65) .. (463.31,202.92) .. controls (465.82,202.98) and (467.06,201.9) .. (467.01,199.69) .. controls (466.88,197.48) and (468.1,196.22) .. (470.69,195.91) .. controls (473.01,195.8) and (474.06,194.57) .. (473.83,192.22) .. controls (473.54,189.87) and (474.57,188.49) .. (476.94,188.08) .. controls (479.35,187.51) and (480.21,186.23) .. (479.51,184.26) .. controls (479.1,181.77) and (479.95,180.38) .. (482.05,180.11) -- (483.57,177.45) ;

\draw (255,212.4) node [anchor=north west][inner sep=0.75pt]    {$\Sigma $};
\draw (141,142.4) node [anchor=north west][inner sep=0.75pt]    {$v_{1}$};
\draw (71,212.4) node [anchor=north west][inner sep=0.75pt]    {$v_{0}$};
\draw (489,203.4) node [anchor=north west][inner sep=0.75pt]    {$Y_{0}$};
\draw (531,130.4) node [anchor=north west][inner sep=0.75pt]    {$Y_{1} = \mathbb P^2\setminus \textrm{$\{$3 points$\}$}$};

\end{tikzpicture}
\vspace{-0.5in}
\caption{On the left we sketch a $1$-complex in the cone complex $\mathbb R_{\geq 0}^2$. The cone complex $\Sigma_X$ is $\RR_{\geq 0}^2$ drawn with a dashed arrow while the $1$-complex is solid. On the right is a cartoon of the corresponding expansion. It is obtained by performing deformation to the normal cone of a codimension $2$ stratum and then passing to an open. The wavy lines indicate the divisors where the logarithmic structure is nontrivial. The holes indicate codimension $2$ strata that have been removed from the expansion. The red curve in the middle is a subscheme, of the type that we will soon introduce.}\label{fig: a-1-complex}
\end{figure}

\begin{remark}

The definition has a peculiar feature when we consider $X$ itself. Specifically, $X$ is a rough expansion of itself, but it is only an expansion of itself when it contains no codimension $2$ strata. The complement of the codimension $2$ strata in $X$ is an expansion of $X$, as is the interior of $X$.  The reason for imposing this condition is that we will later require our families of subschemes to have nonempty intersection with all logarithmic strata, which is not satisfied for a $1$-dimensional transverse subscheme unless there are no codimension $2$ strata.  This definition yields a cleaner universal property when we consider the valuative criterion of properness.

\end{remark}

\subsection{Flat limit algorithms: existence} We let $X^\circ$ be the complement of $D$ in $X$. Let $C$ be a smooth curve with a distinguished point $0$, whose complement is denoted $C^\circ$.

\begin{proposition}\label{prop: dimensional-transversality}
Consider a flat family of subschemes $\mathcal Z^\circ\subset X^\circ\times C^\circ$ over $C^\circ$. There exists a rough expansion $\mathcal Y\to X\times C$ over $C$ such that the closure $\mathcal Z$ of $\mathcal Z^\circ$ in $\mathcal Y$ has the following properties:
\begin{enumerate}[(D1)]
\item The scheme $\mathcal Z$ is proper and flat over $C$.
\item The scheme $\mathcal Z$ has non-empty intersection with each stratum of $\mathcal Y$. 
\item The scheme $\mathcal Z$ intersects all the strata of $\mathcal Y$ in the expected dimension. 
\end{enumerate}
Moreover, after replacing $C$ with a ramified base change, the degeneration of $X$ can be guaranteed to have reduced special fiber, i.e. the rough expansion can be chosen to be an expansion. 
\end{proposition}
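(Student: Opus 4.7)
The plan is to construct the desired expansion directly from the tropicalization of $\mathcal{Z}^\circ$, adapting Tevelev's approach to families. Viewing $\mathcal{Z}^\circ \subset X^\circ \times C^\circ$ as a $2$-dimensional subscheme of the logarithmic scheme $X \times C$, where $C$ is equipped with the divisorial logarithmic structure at $0$, the total tropicalization
\[
T := \trop(\mathcal{Z}^\circ) \subset \Sigma_X \times \RR_{\geq 0}
\]
is a rational polyhedral cone complex of dimension at most $2$, by the Bieri--Groves-type statement of the previous section together with Remark~\ref{rem: non-compact-trop}. Since $\mathcal{Z}^\circ \to C^\circ$ is flat, every irreducible component of $\mathcal{Z}^\circ$ dominates $C^\circ$, so each maximal cone of $T$ projects surjectively onto the $\RR_{\geq 0}$ factor. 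Consequently, $T$ is the cone over its slice $G := T \cap \upsilon^{-1}(1)$, which is a $1$-complex embedded in $\Sigma_X$ coinciding with the generic fiber tropicalization $\trop(\mathcal{Z}_\eta)$.

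Next, choose a polyhedral structure on $G$ and take $\Delta := \mathsf{C}(G) = T$ as a subdivision of $\Sigma_X \times \RR_{\geq 0}$; concretely, embed $T$ as a subcomplex of some complete refinement of $\Sigma_X \times \RR_{\geq 0}$ and discard all closed strata outside $T$. Applying the construction of Definition~\ref{def: subdivision}, as in Remark~\ref{rem: divisorial}, to the pair $(X|D) \times \A^1$ with divisor $D \times \A^1 \cup X \times \{0\}$ produces a non-proper modification $\mathcal{Y} \to X \times C$. By construction, $\Sigma_{\mathcal{Y}} = \mathsf{C}(G)$ is a cone over a $1$-complex, so $\mathcal{Y}$ is a rough expansion in the sense of the definition.

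The three conditions are now verified by invoking the family criteria of the previous section. Properness (E1) follows from the family properness theorem, since $\trop(\mathcal{Z}_\eta) = G = \Sigma_{\mathcal{Y}}(1)$ by construction; the expected-dimension statement (E3) follows from the family transversality theorem, since $\trop(\mathcal{Z}_\eta)$ is then a union of polyhedra in $\Sigma_{\mathcal{Y}}(1)$, and combined with properness and the flatness of $\mathcal{Y} \to C$ this yields the flatness of $\mathcal{Z} \to C$. The non-emptiness condition (E2) is where the choice of $\Delta$ is crucial: each stratum of $\mathcal{Y}$ corresponds to a cone of $\mathsf{C}(G)$, hence to a cell of $T$, and Theorem~\ref{thm: divisorial-trop} applied to the generic fiber forces each such stratum to meet $\mathcal{Z}$ nontrivially.

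For the moreover clause, the reducedness of the special fiber of $\mathcal{Y} \to C$ translates via the dictionary for logarithmically smooth morphisms into the reduced fibers condition on the map $\Sigma_{\mathcal{Y}} \to \RR_{\geq 0}$. This fails precisely when vertices of $G$ sit at non-lattice points of $\Sigma_X$; the base change $t \mapsto t^n$ on $C$ rescales $G$ by $n$, so taking $n$ to be the least common multiple of the lattice indices along the rays of $\mathsf{C}(G)$ yields the desired reducedness and simultaneously guarantees that $\Delta$ has integral structure. The main obstacle in the argument is carrying out (E2) without sacrificing (E1) and (E3): one must take $\Delta$ to have support exactly $T$ rather than a larger refinement, so that no stratum of $\mathcal{Y}$ is artificially disjoint from $\mathcal{Z}$, and this requires confirming that the ambient complete refinement of $\Sigma_X \times \RR_{\geq 0}$ can be chosen compatibly with the subcomplex $T$.
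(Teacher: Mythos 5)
Your proposal is correct and follows essentially the same route as the paper: tropicalize, choose a polyhedral structure $G$ on the (generic-fiber) tropicalization, cone over it to get the subdivision of $\Sigma_X\times\RR_{\geq 0}$, invoke the family properness and transversality criteria for (E1) and (E3), and rescale the lattice via a ramified base change for reducedness. The only real divergence is in (E2), where the paper argues directly (a point in the relative interior of each cone lifts to an $R$-point by properness, whose closed point lands in the corresponding stratum), whereas your appeal to Theorem~\ref{thm: divisorial-trop} strictly only forces the strata of \emph{maximal} cones to meet $\mathcal Z$ — for faces you need the extra observation that (E3) plus a dimension count rules out $Z$ missing the open stratum, or else the paper's lifting argument.
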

We will refer to a family satisfying these properties as as \textit{dimensionally transverse} family. A sketch picture of dimensional transversality can be seen on the right in Figure~\ref{fig: a-1-complex}. Notice in particular that in the main component $Y_0$ of the expansion shown there, the subscheme does not intersect the logarithmic divisors. Correspondingly, the $1$-complex on the left of the image does not include the dashed axes. 

\begin{proof}
Let 
\[
\spec \CC[\![t]\!] \to C
\]
be the spectrum of the completed local ring of $C$ at $0$. Pull back the subscheme family $\mathcal Z^\circ$ to the generic point of this valuation ring to obtain a subscheme $\mathcal Z^\circ_\eta$ of $X^\circ$ over $\CC(\!(t)\!)$ . After passing to a valued field extension with value group $\RR$, apply the tropicalization map to obtain
\[
\trop(\mathcal Z^\circ)\subset \Sigma_X.
\]
By the structure result for tropicalizations recorded in Theorem~\ref{thm: bieri-groves}, this is the support of a polyhedral complex of dimension at most $1$ in $\Sigma$. Choose a polyhedral structure on this set and call it $G$. In practice, this amounts to expressing the tropicalization as a union of vertices, closed bounded edges, and unbounded rays. This results in a polyhedral complex embedded in $\Sigma_X$. Let $\mathsf C({G})$ be the cone over $G$ in $\Sigma_X\times\RR_{\geq 0}$. We obtain an associated target family
\[
\mathcal Y_G\to C. 
\]
By construction, the tropicalization of $\mathcal Z^\circ$ when viewed as a single subscheme of the total space $X^\circ\times C^\circ$ is equal to $\mathsf C({G})$. The theorems in Section~\ref{sec: trop-via-val} imply that the closure is proper. Since the base is a smooth curve, the closure is also flat over $C$. 

We examine the strata intersections. Given a stratum $W$ of $\mathcal Y_G$ dual to a cone $\sigma$ in $\mathsf{C}(G)$, choose a point $v\in \sigma^\circ$. By definition, this is the image of a $K$-valued point of $\mathcal Z^\circ$ under the tropicalization map. Since the family of subschemes is proper this extends to an $R$-valued point where $R$ is the valuation ring of $K$.  Since the tropicalization of this $K$-valued point lies in the interior of $\sigma$, the closed point maps to $W$, so it follows that $\mathcal Z$ intersects all strata. 

Finally, we show that we can obtain a degeneration with reduced special fiber after a base change. By using the toric dictionary, we observe that the special fiber of $\mathcal Y_G\to C$ is reduced if and only if the vertices of $G$ are lattice points in $\Sigma_X$. This can be engineered by using Kawamata's cyclic covering trick, explained in~\cite{AK00}. The toroidal procedure is carried out by replacing the integral lattice in $\Sigma_C$ with a finite index sublattice, thereby ensuring that the fiber over the new primitive generator of $\Sigma_C$ is a dilation of $G$ which has integral vertices. The main result of~\cite[Section~5]{AK00} is that this produces the requisite ramified base change. The statements concerning strata intersections are unaffected by the base change, and we conclude the result. 
\end{proof}

\subsection{Flat limit algorithms: uniqueness} The tropical limit algorithm in the previous section inherits a uniqueness property that should be thought of as close to a universal closedness result for the forthcoming moduli problem, where by ``close to'' we mean that we will need to eventually strengthen the notion of transversality. 

Let $K$ be the discretely valued field associated to the valuation at $0$ in $C$ and let $R$ be its valuation ring. 

\begin{proposition}\label{prop: uniqueness-dimensional-case}
Let $\mathcal Z_\eta^\circ$ be a flat family of subschemes of $X^\circ$ over $\spec K$. Assume that the closure of $\mathcal Z_\eta^\circ$ in $X\times \spec K$ is dimensionally transverse. Then there exists a canonical triple $(R',\mathcal Y',\mathcal Z')$ comprising of a ramified base change $R\subset R'$ with fraction field $K'$ and expansion of $X$ 
\[
\mathcal Y'\to \spec R'
\] 
such that the closure $\mathcal Z'$ of $\mathcal Z_\eta^\circ\otimes_K K'$ in $\mathcal Y$ is dimensionally transverse. Moreover, the triple satisfies the following uniqueness property: \\

\noindent
$(\star)$ \ \ \ For any other choice $(R'',\mathcal Y'',\mathcal Z'')$ satisfying these requirements, there exists a unique toroidal birational morphism
\[
\mathcal Y''\to \mathcal Y''_1
\]
over $\spec R''$ with subscheme $\mathcal Z''_1$, and a ramified covering $\spec R''\to \spec R$, such that $\mathcal Z''_1\subset \mathcal Y''_1$ is obtained from $\mathcal Z'\subset \mathcal Y'$ by base change. 
\end{proposition}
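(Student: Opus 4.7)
The plan is to refine the construction from the preceding proposition by making a canonical choice of polyhedral structure. After spreading $\mathcal{Z}_\eta^\circ$ out over a smooth curve neighbourhood of $0$, compute the tropicalization $T = \trop(\mathcal{Z}_\eta^\circ) \subset \Sigma_X$ using the discrete valuation on $K$. By the family version of the Bieri--Groves structure theorem recalled in Section~\ref{sec: flavours}, $T$ is the support of a rational polyhedral complex of dimension at most one. There is a canonical coarsest polyhedral structure on $T$: declare the vertices to be the points where $T$ fails to be locally a single line segment, together with those points where $T$ crosses the boundary of a cone of $\Sigma_X$. This produces a canonical embedded $1$-complex $G \hookrightarrow \Sigma_X$, whose vertices are rational but not necessarily integral points.

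Let $n$ be the least common denominator of the vertex coordinates of $G$, and let $R \subset R'$ be the unique ramified extension of degree $n$. After this base change $G$ dilates by $n$ into an embedded $1$-complex with integral vertices, and the cone $\mathsf{C}(G) \subset \Sigma_X \times \RR_{\geq 0}$ over the dilation is a subdivision satisfying condition (Exp 2). It produces the expansion $\mathcal{Y}' \to \spec R'$, and one takes $\mathcal{Z}'$ to be the closure of $\mathcal{Z}_\eta^\circ \otimes_K K'$ in $\mathcal{Y}'$. The family version of the transversality criterion at the end of Section~\ref{sec: flavours} then guarantees that $\mathcal{Z}'$ is dimensionally transverse, and its properness over $\spec R'$ follows from the corresponding properness criterion.

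For the uniqueness property $(\star)$, let $(R'', \mathcal{Y}'', \mathcal{Z}'')$ be a second triple, with defining subdivision $\Delta'' \hookrightarrow \Sigma_X \times \RR_{\geq 0}$. The height-one slice of $\Delta''$, rescaled according to the ramification index of $R \subset R''$, must contain $T$ as a union of polyhedra by the family transversality criterion. Since $G$ was chosen as the coarsest polyhedral structure on $T$, this forces $\Delta''(1)$ to refine the relevant dilation of $G$. Pass to a common ramified cover $R''' \supset R', R''$ inside an algebraic closure of $K$; both $\mathcal{Y}'$ and $\mathcal{Y}''$ pull back to expansions over $\spec R'''$, and the refinement of polyhedral complexes induces a unique toroidal birational morphism of the pullbacks compatible with their universal subschemes. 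Descending along the natural factorization $\spec R''' \to \spec R''$ gives the morphism $\mathcal{Y}'' \to \mathcal{Y}''_1$ of the statement; its uniqueness is automatic because the underlying toroidal morphism is determined by the map of cone complexes, which in turn is determined by the refinement of subdivisions.

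The main technical obstacle will be establishing the existence and canonicity of the coarsest polyhedral structure on $T$, especially at the points where $T$ crosses a codimension one face of $\Sigma_X$; these must be added as vertices to meet the integrality condition on subdivisions from Section~\ref{sec: subdivisions}, even when $T$ remains locally linear across such a crossing. A secondary difficulty is the bookkeeping needed to reconcile the various ramified covers: when the ramification indices of $R'$ and $R''$ over $R$ are coprime, the comparison is only visible after passing to the compositum, and one must check that the resulting $\mathcal{Y}''_1$ agrees with the intended interpretation of a base change of $\mathcal{Y}'$ appearing in $(\star)$.
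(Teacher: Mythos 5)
Your proposal is correct and follows essentially the same route as the paper: a canonical coarsest polyhedral structure on the tropicalization (with vertices exactly at bending points and cone-boundary crossings, which is the paper's ``essential vertex'' lemma), a minimal dilation determined by the least common denominator of the vertex coordinates, and uniqueness deduced from coarseness of that structure. Two small remarks: your ``secondary difficulty'' about coprime ramification indices does not arise, since any competing expansion must have integral vertices and hence its ramification index is a multiple of the minimal one, so $R'\subseteq R''$ and no compositum is needed; and in the uniqueness step the containment ``$\Delta''(1)$ contains $T$'' must be upgraded to an equality of supports before one can speak of a refinement of $G$ --- this follows from the requirement (E2) that $\mathcal Z''$ meet every stratum, via the geometric tropicalization statement of Theorem~\ref{thm: divisorial-trop}, which is how the paper closes that gap.
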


We deduce the uniqueness results from two simple combinatorial observations. 

\begin{lemma}
Let $\iota: G\hookrightarrow \Sigma_X$ be an embedded $1$-complex. The underlying set $|G|$ of $G$ carries a unique minimal polyhedral structure $\overline G$ such that the map $\iota$ descends to an embedding $\overline G\hookrightarrow \Sigma_X$. 
\end{lemma}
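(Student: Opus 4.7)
The plan is to identify which vertices of $G$ can be forgotten without destroying the embedding property and then to build $\overline G$ by coarsening $G$ along all such vertices.

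First, I would establish a local criterion: call a vertex $v$ of $G$ \emph{inessential} if its star in $G$ consists of exactly two edges (and no rays), both lying in a common cone $\sigma$ of $\Sigma_X$, and whose primitive integral tangent vectors at $\iota(v) \in \sigma^{\mathrm{gp}}$ are negatives of one another; equivalently, $\iota$ restricted to a neighborhood of $v$ is an affine embedding of an interval into $\sigma$. All other vertices are \emph{essential}. One immediately checks that removing such a $v$ and merging its two incident edges into a single edge of combined length gives a coarser polyhedral structure on $|G|$ along which the underlying continuous map of $\iota$ remains a slope-$1$ embedding of polyhedral complexes.

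Next, define $\overline G$ to be the polyhedral structure on $|G|$ whose $0$-cells are the essential vertices, and whose edges (resp.\ rays) are the maximal concatenations of edges of $G$ running between consecutive essential vertices (resp.\ terminating at a ray of $G$). The main thing to verify is that each such maximal concatenation maps into a single cone of $\Sigma_X$. Two consecutive edges across an inessential vertex lie in a common cone by definition, and their tangent directions align. For longer chains I would propagate the claim inductively: at each further inessential vertex, the tangent line segment lies in the intersection of the two cones of $\Sigma_X$ supporting the incident edges, and the standing hypothesis that any two cones of $\Sigma_X$ meet along a common face forces both edges into a single cone shared with the earlier portion of the chain. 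This yields a morphism $\overline\iota\colon \overline G \to \Sigma_X$ which is a topological embedding and has slope $1$ on each edge, as required.

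For minimality and uniqueness, let $G'$ be any polyhedral structure on $|G|$ admitting a slope-$1$ polyhedral embedding $\iota'$ into $\Sigma_X$ agreeing with $\iota$ as a continuous map. I would argue that every essential vertex $v$ of $\overline G$ must be a vertex of $G'$: if $v$ is a branch point, a leaf, or the endpoint of a ray of $G$, it is forced to be a $0$-cell of any polyhedral structure on $|G|$; if instead $v$ is bivalent, then by construction the two adjacent linear pieces of $\iota$ near $v$ are not collinear inside any single cone of $\Sigma_X$, so no affine-linear edge of $G'$ can pass through $v$ in its interior, and again $v$ must be a vertex of $G'$. Hence the vertex set of $G'$ contains that of $\overline G$, which proves that $\overline G$ is the coarsest admissible structure and is unique with this property.

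The main obstacle I anticipate is precisely the cone-containment step when merging a long chain of edges across several inessential vertices: one must rule out the possibility that the merged linear segment is cut into pieces living in distinct maximal cones of $\Sigma_X$ glued along a hyperplane, and this is exactly where the assumption that intersections of cones of $\Sigma_X$ are faces of each is indispensable.
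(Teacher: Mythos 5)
Your proposal follows the same strategy as the paper's proof: declare a bivalent vertex inessential when the map does not bend there, take $\overline G$ to be the structure whose vertices are the essential ones, and prove minimality by showing that every essential vertex is forced to be a $0$-cell of any admissible structure. Your inductive verification that a maximal chain of edges across inessential vertices lands in a single cone is a genuine elaboration the paper leaves implicit, and your identification of the face-intersection hypothesis as the crucial input there is correct.

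There is, however, one concrete defect: your definition of ``inessential'' requires the star of $v$ to consist of two bounded edges \emph{and no rays}, so a vertex at which a bounded edge continues collinearly into a ray (with a neighborhood in the relative interior of a single cone) is declared essential and retained in $\overline G$. In that situation the coarser structure in which the edge and ray are merged into a single longer ray based at the previous essential vertex is still a slope-$1$ polyhedral embedding, so your $\overline G$ is not minimal, and the corresponding step of your uniqueness argument --- that ``the endpoint of a ray of $G$ is forced to be a $0$-cell of any polyhedral structure'' --- is false in exactly this case. (Your description of the rays of $\overline G$ as maximal concatenations terminating in a ray of $G$ seems to anticipate this merging, but it is incompatible with your stated definition of inessential.) The paper avoids the problem by phrasing inessentiality as a purely local condition on the metric space $|G|$ --- a neighborhood of $v$ lies in the relative interior of a single cone and on a single line in $\sigma^{\mathrm{gp}}$ --- which treats bounded edges and rays uniformly; adopting that formulation repairs both the construction and the minimality argument, and the rest of your proof goes through unchanged.
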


The term minimal here is used in the sense that any other polyhedral structure is obtained from the putatutive unique minimal one by subdividing along edges. Note that a morphism of polyhedral complexes is required to map vertices and edges of $G$ to into cones of $\Sigma_X$.

\begin{proof}
Recall that $G$ comes with a distinguished vertex set and $|G|$ has the structure of a metric space independent of the chosen vertex set. The non-bivalent vertices of $G$ are necessarily contained in the vertex set of any polyhedral structure on $|G|$. Let $x\in G$ be a $2$-valent vertex. Call $v$ \textit{inessential} if both of the following conditions hold.\footnote{Intuitively, $x$ is inessential if the map $\iota$ doesn't \textit{bend} at $x$; thus, the map $\iota$ descends to a polyhedral map after coarsening the polyhedral structure to exclude $x$ from the vertex set.}
\begin{enumerate}[(1)]
\item There exists an open neighborhood $U_x$ of $x$ that is completely contained in the relative interior $\sigma^\circ$ of a cone $\sigma\in\Sigma_X$.
\item Every point in this neighborhood lies on the same line in the vector space $\sigma^{\mathsf{gp}}$.
\end{enumerate}
A vertex that is not inessential is essential. Let $\overline G$ be the polyhedral complex obtained from the metric space $|G|$ by declaring the vertices of $\overline G$ to be the essential vertices of $G$. The morphism $\iota$ descends to an embedding $\overline G\to \Sigma_X$. Conversely, any polyhedral structure must contain the points of $|G|$ that are not inessential to ensure that $\iota$ is a morphism of polyhedral complexes. It follows that $\iota:\overline G\to \Sigma_X$ is minimal. 
\end{proof}

Passing to the cone over $\overline G$, we obtain a uniqueness property concerning the dilations of $\overline G$ obtained by uniformly scaling the edge lengths. 

\begin{lemma}
Let $G\hookrightarrow \Sigma_X$ be an embedded $1$-complex. Let $\mathsf C(\overline G)\hookrightarrow \Sigma_X\times\RR_{\geq 0}$ be the cone over the minimal polyhedral structure $\overline G$ of $G$. There exists a minimum positive integer $b$ such that all vertices in the fiber of $\mathsf C(\overline G)$ over $b$ lie in the lattice of $\Sigma_X$. 
\end{lemma}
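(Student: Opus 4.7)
The plan is to unwind the definition of the fiber and then reduce the statement to an elementary denominator-clearing argument on a finite set of rational points. By construction, $\mathsf{C}(\overline G)$ is the union, over cells $\tau$ of $\overline G$, of the cones in $\Sigma_X \times \RR_{\geq 0}$ spanned by the origin together with $\tau$ placed at height $1$. Slicing along the projection to $\RR_{\geq 0}$ at height $b>0$ recovers a scaled copy $b\cdot \overline G$ embedded at height $b$, whose vertices are precisely the points $(bv_1,b),\ldots,(bv_n,b)$, where $v_1,\ldots,v_n$ are the vertices of $\overline G$. Since $b$ is constrained to be a positive integer, the lemma amounts to finding the least such $b$ for which $bv_i$ lies in the integral lattice of the cone of $\Sigma_X$ containing $v_i$, for every $i$.

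To that end, I would first check that the vertices of $\overline G$ are rational points of $\Sigma_X$. By definition, the embedding $G\hookrightarrow \Sigma_X$ is a morphism of rational polyhedral complexes, so $G$ has rational vertices, and the edge-slope-$1$ condition from Section~\ref{sec: graphical-prelims} keeps us inside this category. The minimal polyhedral structure $\overline G$ was produced in the preceding lemma by deleting inessential $2$-valent vertices of $G$, so its vertex set is a subset of the vertex set of $G$; in particular each $v_i$ is a rational point of $\Sigma_X$. Finiteness of the vertex set is inherited from the finiteness of $G$.

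Next, for each $i$ let $\sigma_i\in\Sigma_X$ be the minimal cone containing $v_i$, let $N_{\sigma_i}$ be its integral lattice, and let $q_i$ be the smallest positive integer with $q_iv_i\in N_{\sigma_i}$; this exists by the rationality of $v_i$. Setting $b:=\mathrm{lcm}(q_1,\ldots,q_n)$ yields $bv_i\in N_{\sigma_i}$ for every $i$, so all vertices of the fiber over $b$ are lattice points. Conversely, any positive integer $b'$ with the same property satisfies $q_i\mid b'$ for every $i$ by minimality of $q_i$, and hence $b\mid b'$, giving the required minimality.

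The main (and essentially only) point requiring care is the inheritance of rationality from $G$ to $\overline G$, which is already secured by the previous lemma; once that is in hand, the construction of $b$ and the verification of its minimality are immediate.
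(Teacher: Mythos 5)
Your argument is correct and is exactly the least-common-multiples computation that the paper's proof (which consists of the single sentence ``The proof is an exercise in least common multiples'') leaves to the reader: rationality of the vertices of $\overline G$ is inherited from $G$, each vertex has a minimal clearing denominator $q_i$, and $b=\mathrm{lcm}(q_1,\dots,q_n)$ is the minimal integer height with integral fiber vertices. No gaps; you have simply written out the details the authors omitted.
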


\begin{proof}
We take $b$ to be the least common multiple of the denominators appearing in the coordinates of all vertices of $G$, with respect to the coordinates on $\Sigma_X$. 
\end{proof}

\subsubsection*{Proof of Proposition.} The uniqueness is a translation of the combinatorial observations above, as we now explain. Let $\mathcal Z_\eta^\circ$ be a subscheme as in the proposition. The algorithm for finding limits has the following steps. We calculate the tropicalization of $\mathcal Z_\eta^\circ$ and obtain a canonical $1$-complex $\overline G\hookrightarrow \Sigma_X$. We then pass to the cone over this complex $\mathsf C(\overline G)$ in $\Sigma_X\times \RR_{\geq 0}$. Finally, we find the minimum positive integer $b$ in $\RR_{\geq 0}$ over which the fiber in $\mathsf C(\overline G)$ has integral vertices. We perform the order $b$ cyclic ramified base change to obtain the requisite triple $(R',\mathcal Y',\mathcal Z')$. 

Now consider another set $(R'',\mathcal Y'',\mathcal Z'')$ satisfying these properties. Let $\mathsf C(G'')\to \RR_{\geq 0}$ be the fan associated to this expansion. Tropicalization is invariant under taking valued field extensions; since the subscheme $\mathcal Z''$ nontrivially intersects all the strata of the expansion, we can conclude using  Theorem~\ref{thm: divisorial-trop} that the support of the tropicalization is equal to $\mathsf C(G'')$ where $G''$ is the height $1$ slice of the cone. If we replace $G''$ with its minimal polyhedral structure $\overline G''$, we obtain a refinement of cone complexes $\mathsf C(G'')\to \mathsf C(\overline G'')$. The latter cone complex gives rise to an expansion of $X$ in which the closure of $\mathcal Z_\eta^\circ$ is still dimensionally transverse. 

For the base change, by using the preceding lemma we let $\overline G'$ be the minimal dilation of $\overline G$ whose vertices are all integral. The cones over $\overline G'$ and $\overline G''$ coincide, so $\overline G''$ must be a dilation of $\overline G'$. It follows that the morphism 
\[
\mathsf C(\overline G'')\to \RR_{\geq 0}
\]
is obtained from 
\[
\mathsf C(\overline G')\to \RR_{\geq 0}
\]
by passing to a finite index sublattice in the integral structure of the base. The result follows.
\qed

Before proceeding to algebraic transversality, we offer two examples of procedure above. We begin with a $0$-dimensional example.

\begin{example}
Let $X$ be the $\A^2$ equipped with its toric logarithmic structure. Consider the subscheme $\mathcal Z$ given by the following union of two reduced points defined over the field $\CC(\!(t)\!)$:
\[
\mathcal Z = \{(t^2,t^3)\} \cup \{(t^4,t^5)\}\subset \A^2.
\]
The flat limit is a non-reduced point supported at the origin in $\A^2$. The tropicalization of $\mathcal Z$ consists of two points
\[
\trop(\mathcal Z) = \{(2,3)\}\cup \{(4,5)\} \subset \RR^2_{\geq 0}.
\]
According to the algorithm above, associated expansion is obtained by performing a toric modification of $\A^2\times \A^1$, adding rays passing through points $(2,3,1)$ and $(4,5,1)$ in the fan $\RR^2_{\geq 0}\times \RR_{\geq 0}$. The resulting expansion consists of two components, both isomorphic to $\mathbb G_m^2$. The new flat limit in the expanded family is the union of two reduced points, with one in each of these two components. 
\end{example}

We now consider a $1$-dimensional example. 

\begin{example}
Let $X$ be $\PP^2$ equipped with its toric logarithmic structure and homogeneous coordinates $X$, $Y$, and $Z$. Consider the subscheme $\mathcal Z$ given by a line defined over the field $\CC(\!(t)\!)$:
\[
\mathcal Z = \mathbb V(t^{-3}X+t^{-2}Y+Z) \subset \mathbb P^2
\]
The tropicalization is easily computed by hand as the subset of $\RR^2$ obtained by the union of $3$ rays $\rho_1,\rho_2,\rho_3$ at the point $(-3,-2)$.  The ray $\rho_1$ is parallel to the negative $x$-axis, the ray $\rho_2$ is parallel to the negative $y$-axis, and $\rho_3$ is parallel to the first-quadrant diagonal. It is the translate of the $1$-skeleton of the fan of $\PP^2$ to the point $(-2,-3)$. The associated $1$-complex $G$ has $2$-vertices: one vertex at the point $(-3,-2)$ which is trivalent and we call $V_1$, and another at the point $(-1,0)$ where the set $|G|$ intersects the $1$-skeleton of the fan of $\PP^2$, which we call $V_2$. See Figure~\ref{fig: example-tropical-line}

\begin{figure}

\tikzset{every picture/.style={line width=0.75pt}} 

\begin{tikzpicture}[x=0.75pt,y=0.75pt,yscale=-1,xscale=1]
\draw [ball color=black] (190,180) circle (0.5mm);
\draw [ball color=black] (220,180) circle (0.5mm);
\draw  [dash pattern={on 3.75pt off 3pt on 7.5pt off 1.5pt}]  (250,160) -- (250,52) ;
\draw [shift={(250,50)}, rotate = 90] [color={rgb, 255:red, 0; green, 0; blue, 0 }  ][line width=0.75]    (10.93,-3.29) .. controls (6.95,-1.4) and (3.31,-0.3) .. (0,0) .. controls (3.31,0.3) and (6.95,1.4) .. (10.93,3.29)   ;
\draw  [dash pattern={on 3.75pt off 3pt on 7.5pt off 1.5pt}]  (250,160) -- (348,160) ;
\draw [shift={(350,160)}, rotate = 180] [color={rgb, 255:red, 0; green, 0; blue, 0 }  ][line width=0.75]    (10.93,-3.29) .. controls (6.95,-1.4) and (3.31,-0.3) .. (0,0) .. controls (3.31,0.3) and (6.95,1.4) .. (10.93,3.29)   ;
\draw  [dash pattern={on 3.75pt off 3pt on 7.5pt off 1.5pt}]  (250,160) -- (161.66,218.89) ;
\draw [shift={(160,220)}, rotate = 326.31] [color={rgb, 255:red, 0; green, 0; blue, 0 }  ][line width=0.75]    (10.93,-3.29) .. controls (6.95,-1.4) and (3.31,-0.3) .. (0,0) .. controls (3.31,0.3) and (6.95,1.4) .. (10.93,3.29)   ;
\draw    (190,180) -- (190,72) ;
\draw [shift={(190,70)}, rotate = 90] [color={rgb, 255:red, 0; green, 0; blue, 0 }  ][line width=0.75]    (10.93,-3.29) .. controls (6.95,-1.4) and (3.31,-0.3) .. (0,0) .. controls (3.31,0.3) and (6.95,1.4) .. (10.93,3.29)   ;
\draw    (190,180) -- (288,180) ;
\draw [shift={(290,180)}, rotate = 180] [color={rgb, 255:red, 0; green, 0; blue, 0 }  ][line width=0.75]    (10.93,-3.29) .. controls (6.95,-1.4) and (3.31,-0.3) .. (0,0) .. controls (3.31,0.3) and (6.95,1.4) .. (10.93,3.29)   ;
\draw    (190,180) -- (101.66,238.89) ;
\draw [shift={(100,240)}, rotate = 326.31] [color={rgb, 255:red, 0; green, 0; blue, 0 }  ][line width=0.75]    (10.93,-3.29) .. controls (6.95,-1.4) and (3.31,-0.3) .. (0,0) .. controls (3.31,0.3) and (6.95,1.4) .. (10.93,3.29)   ;

\draw (151,160.4) node [anchor=north west][inner sep=0.75pt]    {$V_{1}$};
\draw (227,190.4) node [anchor=north west][inner sep=0.75pt]    {$V_{2}$};
\draw (155,110.4) node [anchor=north west][inner sep=0.75pt]    {$G$};
\end{tikzpicture}
\caption{The $1$-complex depicted in solid arrows with the fan of $\mathbb P^2$ in dashed arrows.}\label{fig: example-tropical-line}
\end{figure}

The associated expansion consists of two components corresponding to the two vertices above. The component $X_1$ corresponding to $V_1$ is a copy of $\mathbb P^2$ minus its three fixed points. The component $X_2$ corresponding to $V_2$ is a copy of $\mathbb P^1\times\mathbb G_m$. A straightforward calculation shows that the limiting subscheme is a line in $X_1$ union a fiber in $X_2$. 
\end{example}

\subsection{Algebraic transversality from dimensional transversality}\label{sec: strong-transversality} We have constructed limits of transverse subschemes of $X$ inside expansions of $X$ along $D$ that are dimensionally transverse to the strata of the expansion. We require a stronger form of transversality. 

\begin{definition}[Algebraic transversality]
Let $\mathcal Y$ be an expansion of $X$ and let $\mathcal Z\subset \mathcal Y$ be a subscheme with ideal sheaf $\mathcal I_{\mathcal Z}$. Then $\mathcal Z$ is said to be \textit{algebraically transverse} if it is dimensionally transverse and for every closed divisor stratum $\mathcal S\subset \mathcal Y$, the induced map
\[
\mathcal I_{\mathcal Z}\otimes_{\mathcal O_{\mathcal Y}} \mathcal O_{\mathcal S}\to \mathcal O_{\mathcal Y}\otimes_{\mathcal O_{\mathcal Y}} \mathcal O_{\mathcal S}
\]
is injective. 
\end{definition}

We call this {\it algebraic} transversality because it depends on more than just the dimension of the intersection with the strata, which is only set theoretic. On the other hand, it is weaker than genuine geometric transversality, because it allows the intersection of the subscheme with a divisor to be non-reduced.\footnote{In an earlier version of this paper, which was in circulation from 2020 until 2023, we had used the terminology ``strongly transverse'' instead of ``algebraically transverse''. We thank the referee, as well as numerous friends and colleagues of their objections to this. }

Let us give a few different ways to think about this condition. We maintain the notation above. 

\begin{proposition}
The subscheme $\mathcal Z\subset\mathcal Y$ is algebraically transverse if and only all of the following conditions hold:
\begin{enumerate}[(i)]
\item $\mathcal Z$ intersects every stratum of $\mathcal Y$, 
 \item there are no embedded points or components of $\mathcal Z$ that are contained in the codimension $1$ strata of $\mathcal Y$, and 
 \item if $\mathcal S$ is any double divisor contained in irreducible components $Y_i$ and $Y_j$ of $\mathcal Y$, the restrictions of the subscheme to $Y_i$ and $Y_j$ have the same intersection with $S$. 
\end{enumerate}
\end{proposition}

\begin{proof}
Assume $\mathcal Z\subset\mathcal Y$ is algebraically transverse. Since this implies dimensional transversality by definition, it is clear that (i) holds, see Proposition~\ref{prop: dimensional-transversality}. Now, the injectivity condition in the definition of algebraic transversality can be checked formally local on $\mathcal Y$. If we look in a neighborhood $U$ of a double divisor $\mathcal S$, then $\mathcal Y$ can be identified with $\mathbb A^{\mathsf{dim} X-1}\times \mathcal N$, where $\mathcal N$ is a nodal curve, i.e. given by $\{uv = 0\}$. The stated injectivity condition is equivalent to flatness of $\mathcal Z_U\to \mathcal N$. The condition (ii) now follows. Finally, condition (iii) follows by pulling back the flat morphism $\mathcal Z_U\to \mathcal N$ to each of the two branches $\{u=0\}$ and $\{v=0\}$ of the node, and then to the node $\{u = v = 0\}$ itself. Since the result is independent of which branch we restrict to, (iii) follows. 

Conversely, suppose the conditions (i)--(iii) hold. We can again restrict to a formal neighborhood $U = \mathbb A^{\mathsf{dim} X-1}\times \mathcal N$, and examine the resulting subscheme to $\mathcal Z_U$. We claim that under these hypotheses, the map $\mathcal Z_U\to \mathcal N$ is flat. Since $\mathcal N$ is reduced, we can deduce this from the valuative criterion for flatness~\cite[Corollaire 4.2.10]{RG71}, i.e. we can check flatness after pulling back to the spectrum $B$ of a DVR. Working over the base $B$, flatness of $\mathcal Z_B\to B$ is equivalent to the condition that the total space is the closure of the generic fiber. This is clearly implied by (i)--(iii)
\end{proof} 

\begin{remark}
The proposition allows one to check algebraic transversality in practice. Given a $1$-dimensional subscheme $\mathcal Z\hookrightarrow \mathcal Y$ in an expansion of $X$, our definition of dimensional transversality already includes condition (i). Therefore, in order to guarantee algebraic transversality, one needs to simply check that there are no embedded points on the divisorial strata, and that the induced subscheme on the double divisor from of its branches is the same. 
\end{remark}

A more conceptual characterization comes from the proof of the proposition above. Recall that $\Sigma_X$ is a union of faces in the standard orthant fan $\RR_{\geq 0}^d$. Let $\mathsf A_\Sigma$ be the associated union of orbits in the Artin stack $[\A^d/\mathbb G_m^d]$. The expansion $\mathcal Y\to X$ is pulled back from an associated expansion $\mathsf A_{\mathcal Y}$. We stress that this is a gluing together of Artin fans along divisors.

\begin{proposition}
A subscheme $\mathcal Z\hookrightarrow \mathcal Y$ is algebraically transverse if and only if the composite map $\mathcal Z\to \mathsf A_{\mathcal Y}$ is both flat and surjective. 
\end{proposition}

\begin{proof}
We can pass to an open neighborhood in $\mathcal Y$, and as in the previous proposition, algebraic transversality becomes equivalent to (i) flatness of $\mathcal Z_U$ over the nodal curve in the local model $U = \mathbb A^{\mathsf{dim} X-1}\times \mathcal N$, plus (ii) the condition that all strata have non-empty intersection with $\mathcal Z$. The map from $U$ to the Artin fan $ \mathsf A_{\mathcal Y}$ is certainly flat and surjective, so the forward implication is clear. The converse follows immediately from the characterization of algebraic transversality in the  previous proposition.
\end{proof}

Via the proposition above, algebraic transversality is equivalent to the \textit{logarithmic flatness} of $\mathcal Z$, when equipped with the pullback logarithmic structure from the expansion, together with the condition all strata have non-empty intersection with $\mathcal Z$. The perspective coming from logarithmic flatness play a key role in Kennedy-Hunt's work on the general logarithmic Hilbert scheme~\cite{KH22}. 

Finally, we note that by basic properties of tensor products, algebraic transversality is equivalent to the condition that the higher Tor functors of $\mathcal O_Z$ with $\mathcal O_S$ vanish. This is how transversality is stated by Li--Wu, who refer to the condition as {\it normality to the divisor}, see~\cite{LiWu15}.

\begin{proposition}\label{thm: uniqueness-strong-case}
Let $\mathcal Z_\eta^\circ$ be a flat family of subschemes of $X^\circ$ over $\spec K$ whose closure in $X$ is algebraically transverse. Then there exists a canonical triple $(R',\mathcal Y',\mathcal Z')$ comprised of a ramified base change $R\subset R'$ with fraction field $K'$ and expansion of $X$ 
\[
\mathcal Y'\to \spec R'
\] 
such that the closure $\mathcal Z'$ of $\mathcal Z_\eta^\circ\otimes_K K'$ in $\mathcal Y$ is algebraically transverse, satisfying the following uniqueness property: \\

\noindent
$(\star)$ \ \ \ For any other choice $(R'',\mathcal Y'',\mathcal Z'')$ satisfying these requirements, there exists a unique toroidal birational morphism
\[
\mathcal Y''\to \mathcal Y''_1
\]
over $\spec R''$ with subscheme $\mathcal Z''_1$, and a ramified covering $\spec R''\to \spec R$, such that $\mathcal Z''_1\subset \mathcal Y''_1$ is obtained from $\mathcal Z'\subset \mathcal Y'$ by base change. 
\end{proposition}

\begin{proof}
The proposition is established in the following sequence of lemmas.
\end{proof}

\subsubsection{A proof via the work of Li--Wu}
Given the dimensional transversality statement that we have already established, one can deduce the proposition above formally from the results of Li--Wu. We give a sketch of this, in case the reader wants to simply skip the rest of this section without a loss of continuity. Using the arguments of the previous section on dimensional transversality,  given a family of algebraically transverse subschemes over a valued field $\spec K$, a limit can be found over a ramified base change $\spec R'$ of its valuation ring, where the intersections of all locally closed strata with the flat limit of the subscheme are either empty or have the expected dimension. By removing the codimension $2$ strata from the degeneration, we obtain a degeneration with only double points, such that the closure of the general fiber of subschemes forms a proper and flat family. In other words, we find a dimensionally transverse limit.

We can now appeal to the results of~\cite{LiWu15} concerning degenerating subschemes in double point degenerations. The results of Li--Wu are not explicitly stated for non-proper total spaces, but the properness is only used to establish {\it some} proper and flat limit of the generic fiber. This is already achieved by the first step. The methods of ~\cite{LiWu15} apply with minor changes to give the result above. We leave the details of this to an interested reader.

\subsubsection{A proof via Gr\"obner theory} We take a different approach, inspired by Gr\"obner theory considerations. The approach has a slightly more constructive nature. 

The strategy is to reduce to a theorem of Tevelev on subvarieties of toric varieties~\cite{Tev07}.  Specifically, if $X$ is a toric variety with dense torus $T$, Tevelev proves that for any subscheme $Z\hookrightarrow X$, there exists a toric blowup $X'\to X$, with $X'$ smooth, such that the strict transform $Z'\hookrightarrow X'$ is what he calls {\it tropical}~\cite[Theorem~1.2]{Tev07}. This means that the induced map $Z\to [X'/T]$ is flat. When this theorem is applied to a flat family of subschemes in $X$ of relative dimension $1$, over $\A^1$, the output is precisely the algebraically transverse family that we seek. 

We will reduce our more general situation to the toric one. The reduction is essentially straightforward; once dimensional transversality has been achieved, the additional birational modifications required to produce an algebraically transverse family can be analyzed affine locally. The details follow. 

We will use the following lemma in the course of our proof, which allows us to take an algebraically transverse limit and contract unnecessary blowups. 

\begin{lemma}[Contraction lemma]
Let $\mathcal Y\to \spec R$ be a expansion of $X$ whose special fiber includes two irreducible components $Y_1$ and $Y_2$ that meet transversely along a smooth divisor $D$. Let $\mathcal Y'\to\mathcal Y$ be the blowup at $D$ with exceptional $E$. Assume that the special fiber of $\mathcal Y'$ is reduced . 

Suppose $\mathcal Z'\hookrightarrow \mathcal Y'$ is a flat and proper family of algebraically transverse subschemes of dimension $1$, with generic fiber $\mathcal Z'_\eta$, and such that $\mathcal Z'\cap E$ is the pullback of a subscheme along $E\to D$.

Let $\mathcal Z$ be the closure of the generic fiber $\mathcal Z_\eta$ in $\mathcal Y$. Then $\mathcal Z$ is a flat family of algebraically transverse subschemes.
\end{lemma}

\begin{proof}
Throughout the proof, we add the subscript $0$ to indicate special fibers. Suppose $\mathcal Z$ were not algebraically transverse. By the characterization of algebraic transversality, this means that the special fiber $\mathcal Z_0\hookrightarrow\mathcal Y_0$ has embedded points contained in $D$. Since $\mathcal Z$ is the closure of its generic fiber, it is flat over $\spec R$. It follows that the special fibers of $\mathcal Z'_0$ and $\mathcal Z_0$ have the same holomorphic Euler characteristic. However, this now gives a numerical contradiction. Since $\mathcal Z'_0$ is algebraically transverse, its holomorphic Euler characteristic is given by
\[
\chi_{\mathsf{hol}}(\mathcal Z'_0) = \chi_{\mathsf{hol}}(\mathcal Z'_0\cap Y_1)+\chi_{\mathsf{hol}}(\mathcal Z'_0\cap Y_2)+\chi_{\mathsf{hol}}(\mathcal Z'_0\cap E)- \chi_{\mathsf{hol}}(\mathcal Z'_0\cap E\cap Y_1)-\chi_{\mathsf{hol}}(\mathcal Z'_0\cap E\cap Y_2).
\]
But we also have
\[
\chi_{\mathsf{hol}}(\mathcal Z'_0\cap E\cap Y_i) = \chi_{\mathsf{hol}}(\mathcal Z'_0\cap E),
\]
since $\mathcal Z'_0\cap E$ is a $\mathbb P^1$-bundle over $\mathcal Z'_0\cap E\cap Y_i$.

Now, the intersection of $\mathcal Z'$ with $(Y_1\cup Y_2)\setminus E$ maps isomorphically onto its image in the complement of $D$ in $\mathcal Y_0$. On the one hand, the closure of this image cannot contain the embedded points above, and is a closed subscheme, properly contained in $\mathcal Z_0$. On the other hand, the Euler characteristic of this closure is equal to that of $\mathcal Z'_0$ above. Since the embedded points contribute positively to the holomorphic Euler characteristic of $\mathcal Z_0$, we arrive at a contradiction.
\end{proof}

We move on to the main proof. We first identify the subdivision that is required to produce an algebraically transverse limit, and then prove that it has the properties required. We require the notion of an \textit{initial degeneration} from tropical geometry. In the notation of the proposition, let $\mathcal Z_\eta^\circ$ be a flat family of subschemes of $X^\circ$ over $\spec K$. Then we have
\[
\trop(\mathcal Z_\eta)\subset \Sigma_X
\]
which is the support of a $1$-dimensional polyhedral complex embedded in $\Sigma_X$. Given a rational point $w$ in $\Sigma_X$, we consider a non-proper degeneration of $X$ as follows. Consider the linear map
\[
\RR_{\geq 0}\to \Sigma_X\times\RR_{\geq 0}
\]
mapping isomorphically onto the ray joining the origin to $(w,1)$. This determines an associated flat degeneration 
\[
\mathcal Y_w\to \spec(R)
\]
whose general fiber is the interior $X^\circ$ of $X$, and whose special fiber is a torus torsor over a locally closed stratum of $X$, and precisely, the locally closed stratum corresponding to the cone that contains $w$ in its interior. We denote the special fiber by $X_w$. We note in particular, that up to isomorphism, the scheme $X_w$ is constant when $w$ varies in the interior of each cone of $\Sigma_X$. 

\begin{definition}
The \textit{initial degeneration} of $\mathcal Z_\eta$ at a point $w$ of $\Sigma_X$ is the special fiber of the closure of $\mathcal Z_\eta\cap X^\circ$ in $\mathcal Y_w$, viewed as a subscheme of $X_w$. It will be denoted $\mathsf{in}_w(\mathcal Z_\eta)$. 
\end{definition}

Recall from the previous section that $\trop(Z_\eta)$ has a minimal polyhedral structure, and this determines a degeneration $\mathcal Y_\Gamma$. Let $w$ be a point on an edge $e$ of this minimal polyhedral structure. Then $X_w$ is a $\mathbb G_m$-torsor over a codimension $1$ stratum $X_e$, in the special fiber of the associated degeneration. We will say that the initial degeneration $\mathsf{in}_w(\mathcal Z_\eta)$ is \textit{tubular} if it is the schematic preimage of a $0$-dimensional subscheme under the projection $X_w\to X_e$. 

The following basic finiteness theorem holds. 

\begin{proposition}
The subset of points $w$ in $\trop(\mathcal Z_\eta)$ with the property that $\mathsf{in}_w(\mathcal Z_\eta)$ is not tubular is finite. 
\end{proposition}

\begin{proof}
In the notation of the preceding paragraph, if we are given a finite set of points in the set $\trop(Z_\eta)$, there is a unique coarsest polyhedral structure on the tropicalization that includes these points as vertices. It of course refines the minimal polyhedral structure on $\trop(Z_\eta)$. We refer to the vertices of this new polyhedral structure that do not appear in the minimal one as {\it extra} vertices. They should be viewed as lying in the interior of an edge in the minimal structure. 

By the paragraph above and the dictionary explained in the background section, each such choice of finite set produces a model of $X$ over the DVR. We assume it has reduced special fiber by performing an appropriate base change. The closure of $Z_\eta$ in this model is a flat family of subschemes. 

Let $w$ be an extra vertex. The component it is dual to is a $\mathbb P^1$-bundle. Since the Hilbert polynomial of the subscheme is constant in the family, for all but finitely many choices of $w$ in $\trop(Z_\eta)$, the curve class of the subscheme in this component is a fiber class. Similarly, since the holomorphic Euler characteristic is fixed, we find that $\mathsf{in}_w(\mathcal Z_\eta)$ must be tubular for all but finitely many choices of $w$. 
\end{proof}

We are now in a position to construct an algebraically transverse limit.

\begin{construction}[The Gr\"obner limit]
Let $\mathscr G$ denote the unique minimal polyhedral structure on $\trop(\mathcal Z_\eta)$ whose vertices include all points $w$ in $\trop(\mathcal Z_\eta)$ whose initial degeneration is {\it not} tubular. Consider the associated rough expansion $\mathcal V_{\mathscr G}\to \spec(R)$. Perform a ramified base change $R\subset R'$ of order equal to the least common multiple of the denominators appearing in the coordinates of the vertices of $\mathscr G$. Let $\mathcal Y_{\mathscr G}\to \spec(R')$ be the associated expansion. Let $\mathcal Z\to \spec(R')$ be the new flat limit, obtained as the closure of $\mathcal Z_\eta$ after extension of scalars. 
\end{construction}

We now show that this Gr\"obner limit is algebraically transverse. 

\begin{lemma}\label{lem: contraction-lemma}
The Gr\"obner limit family $\mathcal Z\to \spec(R')$ is a family of algebraically transverse subschemes of $\mathcal Y_{\mathscr G}$. 
\end{lemma}

\begin{proof}
Consider the Gr\"obner family of subschemes $\mathcal Z\hookrightarrow \mathcal Y_{\mathscr G}$ over $\spec(R')$ as above. The family is certainly dimensionally transverse, as we have already shown, and we assume that the generic fiber is algebraically transverse. Algebraic transversality is therefore equivalent to the condition that there are no embedded points on the double locus or the divisorial strata in the special fiber. We treat the case of the double locus, the other case is essentially identical. 

The condition may be checked Zariski locally on the subscheme $\mathcal Z$, and depends only on the map from $\mathcal Z$ to the Artin fan of the target $\mathcal Y_{\mathscr G}$. By shrinking to a neighborhood $U$ around the double divisor, and denoting the subscheme by $\mathcal Z_U$, each divisor may be assumed to be principal. Choose generators for the coordinate ring of $U$ that include the equations for these principal divisors, to obtain a maps
\[
\mathcal Z_U\to \mathbb A^n_R\to \mathbb A^2_R,
\]
where the second arrow is given by projection onto the first two coordinates. Note that with respect to the logarithmic structure on $\mathbb A^2_R$ the tropicalization of $\mathcal Z_U$ is a single edge, and we henceforth replace ${\mathscr G}$ to be this edge. Moreover, the logarithmic structure induced on $\mathcal Z_U$ by pullback from $\mathcal Y_{\mathscr G}$ is the pullback of the logarithmic structure coming from the coordinate axes of $\mathbb A^2$. Our task is check flatness of the induced map
\[
\mathcal Z_U\to \left[\A^2/\mathbb G_m^2\right]. 
\]
We now reduce to the toric case. Choose $n-2$ generic hyperplanes and add them to the logarithmic structure of $\mathbb A^n$ above; this logarithmic structure is now toric. This gives rise to a factorization of the above map as
\begin{equation}\label{eq: two-arrows}
\mathcal Z_U\to \left[\A^n/\mathbb G_m^n\right]\to \left[\A^2/\mathbb G_m^2\right].
\end{equation}
The second map is certainly flat, so we need only check flatness for the first. 

With the subscheme now embedded in a toric variety, with its toric logarithmic structure, we may now appeal to the methods of Gr\"obner theory and tropical geometry, in the sense of~\cite[Section~12]{Gub13} or~\cite[Section~6.4]{MS14}. Since the affine hyperplanes are generic, the closures of the initial degenerations of $\mathcal Z_U$ are constant on the faces of the tropicalization of $\mathcal Z_U$. In order to see this, we note that the tropicalization of $\mathcal Z_U$ in the new embedding is obtained from ${\mathscr G}$ by adding unbounded rays to each of its two vertices. Note that since the affine hyperplanes are generic, the initial degenerations at the points of these unbounded rays are tubular, and are constant along these new rays. 

We apply the result of~\cite[Theorem~12.3]{Gub13}. This constructs (non-uniquely) a Gr\"obner complex ${\mathscr G}'$ on the underlying set of ${\mathscr G}$, which is an explicit polyhedral structure on the tropicalization of $\mathcal Z_U$. The associated degeneration $\mathcal Y_{{\mathscr G}'}$ of the generic fiber $\mathcal Y_{{\mathscr G},\eta}$ has the property that (i) the initial degenerations are constant along faces, and (ii) the strict transform of $\mathcal Z_U$ in the associated degeneration is algebraically transverse.\footnote{Note that in the tropical geometry literature, compactifications and degenerations with algebraic transversality are called ``flat tropical compactifications'' in~\cite[Theorem~6.4.17]{MS14} or merely ``tropical compactifications'' in~\cite{Tev07,Gub13}.}

The degenerations $\mathcal Y_{{\mathscr G}'}$ and $\mathcal Y_{\mathscr G}$ can only differ by further subdivision of edges in the tropicalization; call these new $2$-valent vertices \textit{extra vertices}. By algebraic transversality of the closure of $\mathcal Z_U$ in $\mathcal Y_{{\mathscr G}'}$, if we restrict the subscheme to the components corresponding to the extra vertices, the resulting subscheme is pulled back along the blowup $\mathcal Y_{{\mathscr G}'}$ and $\mathcal Y_{\mathscr G}$. We now apply Lemma~\ref{lem: contraction-lemma} to erase all the extra vertices, and see that the closure in $\mathcal Y_{\mathscr G}$ is already algebraically transverse. This is equivalent to the flatness of the first arrow in Eq~(\ref{eq: two-arrows}), and the proof of the lemma is complete. 
\end{proof}

We now establish the uniqueness property that we need. 

\begin{lemma}
The Gr\"obner limit family $\mathcal Z\to \spec(R')$ satisfies the uniqueness property $(\star)$ of Proposition~\ref{thm: uniqueness-strong-case}. 
\end{lemma}

\begin{proof}
Consider another family $\mathcal Z'\to \spec(R'')$ that is algebraically transverse obtained from a polyhedral structure on $\trop(Z_\eta)$. The first claim is that if a point $w$ on $\trop(Z_\eta)$ has non-tubular initial degeneration, then it must appear in the vertex set of any algebraically transverse family. To see this, first observe that if $\mathcal Z'$ is an algebraically transversely family of subschemes in $\mathcal Y$, then if we blowup $\mathcal Y$ further, then the strict and total transforms of $\mathcal Z'$ must coincide. However, the strict transform is precisely the flat limit of $\mathcal Z_\eta$ in the new family. 

With this in mind, we consider a polyhedral structure $\Lambda$ for which there is a point $w$ not contained in the vertex set of such that the initial degeneration at $w$ is not tubular. Form a new polyhedral structure $\Lambda'$ by introducing this point $w$ as part of the vertex set. Examine the total transform of $\mathcal Z$ under the blowup $\mathcal Y_{\Lambda'}\to \mathcal Y_\Lambda$ restricted to the locally closed stratum corresponding to this vertex $w$. By algebraic transversality, it must coincide with the initial degeneration at $w$, but the total transform is necessarily tubular, since it is the preimage of the intersection of $\mathcal Z$ with a double divisor. We arrive at a contradiction. 

The reducedness condition for an expansion $\mathcal Y_{\Lambda'}$ is that every vertex of $\Lambda'$ must be integral. There is a unique minimal set of vertices -- precisely those with non-tubular initial degenerations -- that must be included in \textit{every} algebraically transverse model. It follows that the order of every base change for the Gr\"obner limit divides the order of the base change for every other algebraically transverse model. The uniqueness statement is a consequence.
\end{proof}

\begin{remark}\label{rem:notubes}
We have the following important consequence of the algorithm for producing algebraically transverse limits.  If $G \hookrightarrow \Sigma_X$ denotes the $1$-complex associated to the minimal algebraically transverse limit, $G$ may involve refining the minimal polyhedral structure $\overline G$ by adding bivalent vertices to subdivide edges.  However, if we look at the irreducible components $Y_i$ corresponding to these bivalent vertices, the embedded subscheme $Z_i \subset Y_i$ is stable, i.e. not fixed by the natural $\mathbb{G}_m$-action on $Y_i$.
\end{remark}

With the results of this section as motivation, we define {\it a subscheme of $X$ relative to $D$}. 

\begin{definition}[Relative subschemes]
Let $S$ be a logarithmic scheme. A \textit{subscheme of $X$ relative to $D$  over $S$} is an expansion $\mathcal Y/S$ of $X$ over $S$ together with a flat family of algebraically transverse subschemes $\mathcal Z\subset\mathcal Y$. 
\end{definition}

Proposition \ref{thm: uniqueness-strong-case} suggests that algebraically transverse subschemes will satisfy the valuative criterion for properness.
Our goal is now to construct moduli for these objects -- first for the expansions themselves which happens in the next section, and then for the algebraically transverse subschemes which happens in the one after.

\section{Moduli of target expansions}\label{sec: target-moduli}

The purpose of this section is to construct moduli spaces for the expansions in the previous section, in analogy with the stack of expansions of a smooth pair;  see~\cite[Section~6.1]{ACMUW} for an exposition of the latter.   
Our approach is to use the dictionary between combinatorial moduli spaces and Artin fans, which we review in the first subsection.  Using this, we then study the combinatorial moduli space parametrizing embedded $1$-complexes inside $\Sigma_X$, and show it can be given the structure of a cone space, in the sense of~\cite{CCUW}.   

Two important subtleties arise in this process.  First, as discussed in the introduction, the choice of cone decomposition on the space of embedded $1$-complexes is not unique, and requires an auxiliary combinatorial choice.  Second, in order to endow the universal family with the structure of a cone space, further subdivision is required, which translates into allowing additional codimension-$1$ bubbling in the geometric expansion.  See Remarks~\ref{rem:nonunique} and~\ref{rem:twofamilies}.

\subsection{Artin fans and cone stacks} The stack $\mathsf{Exp}(X|D)$ is similar in nature to Olsson's stack $\mathsf{LOG}$ of logarithmic structures and are instances of the Artin fans that are extracted from $\mathsf{LOG}$, see~\cite{AW,Ols03}. 

An \textit{Artin cone} is a global toric quotient stack:
\[
\mathsf{A}_P = \spec \CC[P]/\spec \CC[P^{\mathsf{gp}}],
\]
where $P$ is a toric monoid. These stacks are endowed with a logarithmic structure in the smooth topology, from the toric logarithmic structure on $\spec \CC[P]$. Observe that $\mathsf{A}_P$ recovers $P$ from its logarithmic structure, so these data naturally determine each other. 

\begin{definition}
An \textit{Artin fan} is a logarithmic algebraic stack $\mathsf{A}$ that has a strict \'etale cover by a disjoint union of Artin cones. 
\end{definition}

In practice, this means that we are interested in algebraic stacks that arise by gluing stacks of the form $\mathsf A_P$ along open substacks corresponding to torus invariant subvarieties of $\spec \CC[P]$. Note that we do not impose the faithful monodromy condition on Artin fans, but our constructions are explicit rather than conceptual, and in practice all of our Artin fans will be monodromy free.

Under mild assumptions, every logarithmic structure on a scheme arises from a morphism to an Artin fan~\cite[Proposition~3.2.1]{ACMUW}. 
In practice, it is convenient pass to an equivalent but explicitly combinatorial $2$-category, for which we follow~\cite[Sections~2.1--2.2]{CCUW}.

\begin{definition}[{Cone spaces}]\label{def: cone-space}
A \textit{cone space} is a collection $\mathcal C = \{\sigma_\alpha\}$ of rational polyhedral cones together with a collection of face morphisms $\mathcal F$ such that the collection $\mathcal F$ is closed under composition, the identity map of every cone $\sigma_\alpha$ lies in $\mathcal F$, and every face of a cone is the image of exactly one face morphism in $\mathcal F$. 
\end{definition}

A \textit{cone complex} is obtained when the further assumption is placed that there is at most one morphism between any two cones. A \textit{cone stack} can be defined similarly, as a fibered category over the category of rational polyhedral cones satisfying natural conditions. We will not need to work with cone stacks in a serious way, so we refer to the reader to~\cite[Section~2]{CCUW} for a definition. Their results allow us to systematically work with Artin fans using combinatorics.  In fact, any cone space can be subdivided into a cone complex, so even cone spaces can be avoided for our purposes.  However, it can be convenient to allow them to construct minimal choices of stacks of target expansions; for example the stack of expansions in~\cite{LiWu15} arises from a cone space.


\begin{theorem}[{\cite[Theorem~3]{CCUW}}]\label{thm: CCUW}
The $2$-categories of Artin fans and of cone stacks are equivalent. 
\end{theorem}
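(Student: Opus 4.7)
The plan is to exhibit an explicit pair of 2-functors between the category of Artin fans and the category of cone stacks, and to verify they are quasi-inverse. The key local dictionary is: an Artin cone $\mathsf{A}_P$ corresponds to the rational polyhedral cone $\sigma_P = \Hom(P,\RR_{\geq 0})$ with dual monoid $P$; under this correspondence, strict étale morphisms between Artin cones correspond precisely to (compositions of isomorphisms with) face morphisms between the associated cones. This must be established as the foundational building-block equivalence before any gluing is attempted.

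First I would fix the local statement. On objects, a cone $\sigma$ with dual monoid $S_\sigma$ is sent to $\mathsf{A}_{S_\sigma}=[\Spec\CC[S_\sigma]/\Spec\CC[S_\sigma^{\mathrm{gp}}]]$, and conversely every Artin cone is recovered from its characteristic monoid. For morphisms, a face inclusion $\tau\hookrightarrow\sigma$ induces a localization $S_\sigma\to S_\tau$ of toric monoids, hence a strict étale open immersion $\mathsf{A}_{S_\tau}\hookrightarrow\mathsf{A}_{S_\sigma}$. Conversely, one checks that any strict étale morphism $\mathsf{A}_Q\to\mathsf{A}_P$ of Artin cones arises, up to isomorphism of source, from such a face morphism; this uses the fact that strict morphisms of log stacks correspond to monoid homomorphisms preserving the characteristic, and that étaleness forces the induced map on cones to be a face embedding.

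Next I would globalize on both sides. Starting from a cone stack $\mathcal{C}=\{\sigma_\alpha\}$ with face system $\mathcal{F}$, form the groupoid whose objects are $\bigsqcup_\alpha \mathsf{A}_{S_{\sigma_\alpha}}$ and whose morphisms are generated by the strict étale maps coming from $\mathcal{F}$; the closure of $\mathcal{F}$ under composition supplies the groupoid structure, and stackifying produces a log algebraic stack $\mathsf{A}_{\mathcal{C}}$ with a strict étale presentation by Artin cones, hence an Artin fan. In the reverse direction, given an Artin fan $\mathsf{A}$, choose a strict étale cover $U=\bigsqcup \mathsf{A}_{P_i}\to\mathsf{A}$. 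The associated cones $\sigma_{P_i}$ together with the face morphisms arising from the étale groupoid $U\times_{\mathsf{A}} U \rightrightarrows U$ (using the local classification above) assemble into a cone stack $\mathcal{C}_{\mathsf{A}}$; independence of the presentation is checked by comparing any two covers via a common refinement.

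The main obstacle will be the 2-categorical bookkeeping. One must verify: (i) the automorphism 2-group of an Artin cone $\mathsf{A}_P$ in the category of log stacks is exactly $\Aut(P)$, so that no spurious 2-morphisms appear and the face system $\mathcal{F}$ lifts unambiguously; (ii) the cocycle conditions for gluing Artin cones correspond to the composition axioms on $\mathcal{F}$ in Definition~\ref{def: cone-space}, so that the groupoid really does stackify to an Artin fan; and (iii) the two round-trips $\mathcal{C}\mapsto\mathsf{A}_{\mathcal{C}}\mapsto\mathcal{C}_{\mathsf{A}_{\mathcal{C}}}$ and $\mathsf{A}\mapsto\mathcal{C}_{\mathsf{A}}\mapsto\mathsf{A}_{\mathcal{C}_{\mathsf{A}}}$ are naturally 2-isomorphic to the identity, which reduces on the first side to the uniqueness of the minimal cone structure and on the second to the fact that every Artin fan is canonically the colimit of its Artin cones over its strict étale site. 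Granting the local building-block equivalence and this descent bookkeeping, the equivalence of 2-categories follows.
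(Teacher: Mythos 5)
This statement is not proved in the paper at all: it is quoted verbatim from \cite[Theorem~3]{CCUW} and used as a black box, so there is no in-paper argument to compare against. Your outline does, however, track the architecture of the proof in the cited source reasonably faithfully: the local dictionary $\sigma\mapsto \mathsf{A}_{S_\sigma}$, the identification of face morphisms with strict \'etale (in fact strict open) immersions of Artin cones, and gluing along a strict \'etale groupoid presentation is exactly how one proceeds.

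That said, two of the points you defer are the actual mathematical content, and as written your proposal does not supply them. First, the statement that the mapping $2$-groupoids between Artin cones are discrete and given by $\Hom(P,Q)$ --- your item (i) --- is what makes the whole comparison a $2$-equivalence rather than merely a correspondence of objects; it requires an argument (e.g.\ that $\mathsf{A}_P$ contains a dense open point with trivial isotropy, so a $2$-morphism between maps of Artin cones is determined by, and trivialized on, that open locus), and you assert it rather than prove it. Without it, your claim that ``no spurious $2$-morphisms appear'' is circular. Second, in the reverse direction you form the groupoid $U\times_{\mathsf{A}}U\rightrightarrows U$ and read off face morphisms from it, but this presupposes that the fiber product of two Artin cones over an Artin fan is again a disjoint union of Artin cones mapping to each factor by face maps; this is not automatic from the definition of a strict \'etale cover and must be checked before the cone stack $\mathcal{C}_{\mathsf{A}}$ is even well defined. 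A smaller issue: the theorem concerns cone \emph{stacks} (fibered categories over rational polyhedral cones, per the paper's Definition~\ref{def: cone-space} and the surrounding discussion), whereas your gluing step treats the input as a cone space, i.e.\ a set of cones with a face system; the descent data distinguishing the two is precisely where the $2$-categorical subtlety lives, so it cannot be absorbed into ``bookkeeping.'' If you fill in these three points, the outline becomes a genuine proof along the lines of \cite{CCUW}.
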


Under this equivalence, an Artin cone $\mathsf{A}_P$ is carried to the cone $\Hom(P,\RR_{\geq 0})$. If $\Sigma$ is a fan embedded in a vector space and $W_{\Sigma}$ is the resulting toric variety with dense torus $\mathbb G_\Sigma$, the equivalence carries the global quotient Artin fan $[W_{\Sigma}/\mathbb G_\Sigma]$ to the cone complex $\Sigma$. 

\subsection{Embedded complexes} Section~\ref{sec: flat-limits} constructed expansions of $X$ along $D$ that accommodate limits of transverse subschemes. The construction was presented over valuation rings with logarithmic structure. In order to globalize this we construct moduli for the tropical data over higher dimensional cones, and then glue these cones to form a cone space. By the categorical equivalence in Theorem~\ref{thm: CCUW}, we will have produced $\mathsf{Exp}(X|D)$ as an Artin stack. 

Recall that $\Sigma$ is a cone complex with smooth cones such that the intersection of any two cones is a face of each, and we view it as a subcomplex of the standard orthant by the embedding
\[
\Sigma\hookrightarrow \RR_{\geq 0}^{\Sigma^{(1)}}.
\]
We will make use of the Euclidean geometry of this ambient vector space. {

In Section~\ref{sec: graphical-prelims} we defined an \textit{abstract $1$-complex} to be a possibly disconnected metric graph $G$ together with polyhedral complex structure. An {\it embedded $1$-complex in $\Sigma$}, or \textit{$1$-complex} for short, was an injective piecewise affine map of polyhedral complexes
\[
G\hookrightarrow \Sigma
\]
given by piecewise affine functions of slope $1$. Let $|{\mathsf{T}(\Sigma)}|$ be the set of isomorphism classes of $1$-complexes $G$ equipped with an embedding
\[
G\hookrightarrow \Sigma.
\]
}
For each embedded $1$-complex, we can dilate the embedded complex by any positive real scalar, giving rise to an copy of $\RR_{\geq 0}$ in $|{\mathsf{T}(\Sigma)}|$. We will endow this set with the structure of a cone space, whose rays will be a subset of the rays obtained in this fashion. 

\subsection{Construction of cones of embedded graphs}\label{sec: constructing-T}   By the conventions in this text, a \textit{graph} refers to a possibly disconnected finite graph without loop or parallel edges, together with a finite collection of rays placed at the vertices. As is standard in Gromov--Witten theory, we visualize the rays as ``legs''. 

In order to put a cone space structure on $|{\mathsf{T}(\Sigma)}|$, we first study spaces of maps from graphs to $\Sigma$ and then identify maps with the same image.  For now, we suppress discussion of the integral structure, and fix the correct integral lattice afterwards.

\begin{definition}
A \textit{combinatorial $1$-complex $G$ in $\Sigma$} is a graph $\underline G$ with vertex, ray, and edge sets $V(\underline G)$, $R(G)$, and $E(\underline G)$, equipped with the following labeling data:
\begin{enumerate}[(C1)]
\item For each vertex $v\in V(\underline G)$ a cone $\sigma_v$ in $\Sigma$.
\item For each flag consisting of a vertex $v$ incident to an edge or leg $e\in E(\underline G)\cup R(G)$, a choice of cone $\sigma_e$ together with a nonzero primitive integral vector in the lattice of $\sigma_e$ (referred to as a \textit{edge direction})
\end{enumerate}
subject to the compatibility condition that if $v$ is an incident vertex of an edge or ray $e$, the cone $\sigma_v$ is a face of $\sigma_e$, including possibly $\sigma_e$ itself. 
\end{definition}
We define a
\emph{linear path} in $G$ to be a path (consisting of edges and/or rays) where every edge/ray has the same direction vector.   Let $P(G)$ denote the set of linear paths in $G$.

An isomorphism of combinatorial  $1$-complexes is an isomorphism of the underlying graphs that is compatible with the labels. Fix a representative for each isomorphism class. We consider embeddings of a combinatorial $1$-complex into $\Sigma$. Let $|\Sigma|$ denote the support of the cone complex. Define the set $\mathbb X_G$ to be the set of functions
\[
f: V(\underline G)\to |\Sigma|
\]
such that
\begin{enumerate}
\item For each $v\in V(\underline G)$, the image $f(v)$ lies in $\sigma_v$.
\item For each $e\in E(\underline G)$ with adjacent vertices $v$ and $W$, the line segment between $f(v)$ and $f(w)$ has direction vector equal to the one labeling $e$. 
\end{enumerate}

Each such function $f\in \mathbb X_G$ determines a polyhedral complex in $\underline\Sigma$. It can be constructed as follows. Take the collection of points obtained as images of vertices of $\underline G$ under $f$. Given an edge $e$ between $v$ and $w$ introduce a segment between $f(v)$ and $f(w)$, whose edge direction is necessarily the one given by $e$. For each ray, glue an unbounded ray with the corresponding base point and edge direction dictated by $G$. This topological space has a minimal polyhedral complex structure whose vertices are of two types: (1) images of vertices of $\underline G$ under $f$ and (2) points where edges or rays intersect.

We refer to this embedded complex as the \textit{image}\footnote{The terminology is mildly inaccurate, since there may be no polyhedral map from the source of $f$ to its image as a polyhedral complex without first subdividing the source. It is however the smallest polyhedral structure on the topological image with the property that images of vertices under $f$ are vertices.}, and denote it by $\mathsf{im}(f)$. We restrict to those $G$ such that there exists at least one function $f\in \mathbb X_G$ such that $\mathsf{im}(f)$ is isomorphic to the complex $|\underline G|$, the latter with its canonical polyhedral structure.

\begin{lemma}
The set $\mathbb X_G$ has the structure of a cone.
\end{lemma}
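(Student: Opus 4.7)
The plan is to realise $\mathbb X_G$ as a rational polyhedral cone by embedding it as a subcone of a product cone, cut out by finitely many integral linear equations.

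First I would form the ambient product cone $\widetilde \sigma := \prod_{V \in V(\underline G)} \sigma_V \times \prod_{E \in E(\underline G)} \RR_{\geq 0}$, which carries a natural rational polyhedral cone structure, with integral structure given by the product of the integral structures on each factor. A point of $\widetilde \sigma$ records a proposed position $p_V \in \sigma_V$ for each vertex together with a proposed non-negative length $t_E$ for each edge.

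Next I would impose the edge compatibility equations. For each edge $E \in E(\underline G)$ with endpoints $V, W$ and primitive integral direction vector $d_E \in \sigma_E(\NN)$, consider the integral linear equation
\[
p_W - p_V = t_E \, d_E.
\]
The common zero locus $\widetilde{\mathbb X}_G \subset \widetilde \sigma$ of these finitely many equations is therefore a rational polyhedral subcone, inheriting an integral structure. I would then show that the projection onto the vertex coordinates $\widetilde{\mathbb X}_G \to \prod_V \sigma_V$ lands in $\mathbb X_G$ and is a bijection onto it; the key point is that, given $f \in \mathbb X_G$, the scalar $t_E$ for each edge is uniquely recovered from $f(W) - f(V) = t_E d_E$, because $d_E$ is a nonzero primitive integral vector, and the sign $t_E \geq 0$ is built into the convention that the line segment has direction vector equal to the chosen $d_E$. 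Transporting the cone structure along this bijection endows $\mathbb X_G$ with the desired structure, and strong convexity is inherited from strong convexity of $\widetilde \sigma$, which is a product of strongly convex cones.

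Finally I would address two ancillary points. Rays contribute no additional coordinates or equations beyond the labelling data of $G$: a ray $E \in R(G)$ attached at $V$ is pinned down by $f(V)$ together with its fixed direction $d_E$, and the condition that its image $f(V) + \RR_{\geq 0} d_E$ remains inside $\sigma_E$ is automatic once $d_E \in \sigma_E$, by convexity. Moreover, the restriction imposed just after the definition of $\mathbb X_G$ -- that at least one $f$ realises $|\underline G|$ with its canonical polyhedral structure -- guarantees that $\mathbb X_G$ is nonempty. I do not foresee a serious obstacle; the argument is essentially packaging of linear algebra. The subtlest step is the identification of $\widetilde{\mathbb X}_G$ with $\mathbb X_G$, which relies on the primitivity of $d_E$ to guarantee uniqueness of the extracted edge-length coordinate $t_E$.
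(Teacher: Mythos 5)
Your proof is correct and follows essentially the same route as the paper: both realise $\mathbb X_G$ inside the product $\prod_V \sigma_V$ as the locus cut out by finitely many integral linear conditions coming from the edge-direction constraints. Your introduction of auxiliary edge-length coordinates $t_E\geq 0$, later eliminated via the primitivity of $d_E$, is a minor (and slightly cleaner) repackaging of the paper's observation that the edge condition is linear in the vertex coordinates.
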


\begin{proof}
The set of all functions $f: V(\underline G)\to |\Sigma|$ satisfying only the first of the two conditions above is given by the product of $\sigma_v$ ranging over all vertices $v$. Inside this product, we consider the locus where the second condition is imposed. Consider an edge $E$ with incident vertices $v$ and $w$. Consider the line containing $f(V)$ with edge direction given by $e$. The intersection point of this line with the cone $\sigma_w$ is linear in the coordinates of $f(v)$. It follows that the locus where the second condition above is satisfied for a given edge $e$ is linear, and that $\mathbb X_G$ is cut out from the product of all cones $\sigma_v$ by a finite collection of linear equations.  This
gives us a cone structure on $\mathbb X_G$. 
\end{proof}

As mentioned above, each point $f\in \mathbb X_G$ determines an embedded $1$-complex $\mathsf{im}(f)$. Our primary interest is in embedded complexes, so we would like to replace the space $\mathbb X_G$ with the corresponding set of images. However, the topological type of the image is not constant even for points in the relative interior of a cone in $\mathbb X_G$. A key example is provided by two skew edges in a $3$-dimensional cone that intersect in their interiors. We record the example explicitly to illustrate the behavior. 

\begin{example}
Consider the family of embedded $1$-complexes in $\mathbb R^3_{\geq 0}$ parameterized by a cone $\sigma$ which we identify with $\RR_{\geq 0}^2$ and furnish with coordinates $(s,t)$. At a point $(s,t)$ the associated $1$-complex is a union of two components $\rho_1(s,t)$ and $\rho_2(s,t)$. At the point $(s,t)$ the ray $\rho_1(s,t)$ is the unbounded ray connecting $(0,0,s)$ and $(1,1,s)$. At the point $(s,t)$ the segment $\rho_2(s,t)$ is the line segment that connects $(t,0,0)$ and $(0,t,t)$. The two segments intersect along a codimension $1$ subspace in the interior of the base cone $\sigma$. The total spaces of $\rho_1(s,t)$ and $\rho_2(s,t)$, taken indivisually, form $2$-dimensional cones in the $4$-dimensional space $\RR^3_{\geq 0}\times\sigma$. However, their union is not a cone complex, as their intersection is not a face of either. For each fixed value of $(s,t)$ there is a minimal polyhedral structure on the union $\rho_1(s,t)\cup\rho_2(s,t)$. The combinatorial type, and in particular the face poset, of this fiber can change in the interior of the cone $\sigma$. The figure below displays a slice of this picture, see Figure~\ref{fig: the-famous-X}

\begin{figure}

\tikzset{every picture/.style={line width=0.75pt}} 

\begin{tikzpicture}[scale = 0.8, x=0.75pt,y=0.75pt,yscale=-1,xscale=1]

\draw    (100,100) -- (140,140) ;
\draw [shift={(100,100)}, rotate = 45] [color={rgb, 255:red, 0; green, 0; blue, 0 }  ][fill={rgb, 255:red, 0; green, 0; blue, 0 }  ][line width=0.75]      (0, 0) circle [x radius= 3.35, y radius= 3.35]   ;
\draw    (100,190) -- (200,110) ;
\draw [shift={(200,110)}, rotate = 321.34] [color={rgb, 255:red, 0; green, 0; blue, 0 }  ][fill={rgb, 255:red, 0; green, 0; blue, 0 }  ][line width=0.75]      (0, 0) circle [x radius= 3.35, y radius= 3.35]   ;
\draw [shift={(100,190)}, rotate = 321.34] [color={rgb, 255:red, 0; green, 0; blue, 0 }  ][fill={rgb, 255:red, 0; green, 0; blue, 0 }  ][line width=0.75]      (0, 0) circle [x radius= 3.35, y radius= 3.35]   ;
\draw    (200,200) -- (160,160) ;
\draw [shift={(200,200)}, rotate = 225] [color={rgb, 255:red, 0; green, 0; blue, 0 }  ][fill={rgb, 255:red, 0; green, 0; blue, 0 }  ][line width=0.75]      (0, 0) circle [x radius= 3.35, y radius= 3.35]   ;
\draw    (230,160) .. controls (269.2,130.6) and (289.19,187.64) .. (327.62,161.69) ;
\draw [shift={(330,160)}, rotate = 143.13] [fill={rgb, 255:red, 0; green, 0; blue, 0 }  ][line width=0.08]  [draw opacity=0] (10.72,-5.15) -- (0,0) -- (10.72,5.15) -- (7.12,0) -- cycle    ;
\draw    (390,110) -- (440,160) ;
\draw [shift={(440,160)}, rotate = 45] [color={rgb, 255:red, 0; green, 0; blue, 0 }  ][fill={rgb, 255:red, 0; green, 0; blue, 0 }  ][line width=0.75]      (0, 0) circle [x radius= 3.35, y radius= 3.35]   ;
\draw [shift={(390,110)}, rotate = 45] [color={rgb, 255:red, 0; green, 0; blue, 0 }  ][fill={rgb, 255:red, 0; green, 0; blue, 0 }  ][line width=0.75]      (0, 0) circle [x radius= 3.35, y radius= 3.35]   ;
\draw    (390,200) -- (490,120) ;
\draw [shift={(490,120)}, rotate = 321.34] [color={rgb, 255:red, 0; green, 0; blue, 0 }  ][fill={rgb, 255:red, 0; green, 0; blue, 0 }  ][line width=0.75]      (0, 0) circle [x radius= 3.35, y radius= 3.35]   ;
\draw [shift={(390,200)}, rotate = 321.34] [color={rgb, 255:red, 0; green, 0; blue, 0 }  ][fill={rgb, 255:red, 0; green, 0; blue, 0 }  ][line width=0.75]      (0, 0) circle [x radius= 3.35, y radius= 3.35]   ;
\draw    (490,210) -- (440,160) ;
\draw [shift={(490,210)}, rotate = 225] [color={rgb, 255:red, 0; green, 0; blue, 0 }  ][fill={rgb, 255:red, 0; green, 0; blue, 0 }  ][line width=0.75]      (0, 0) circle [x radius= 3.35, y radius= 3.35]   ;
\draw    (570,130) -- (570,198) ;
\draw [shift={(570,200)}, rotate = 270] [color={rgb, 255:red, 0; green, 0; blue, 0 }  ][line width=0.75]    (10.93,-3.29) .. controls (6.95,-1.4) and (3.31,-0.3) .. (0,0) .. controls (3.31,0.3) and (6.95,1.4) .. (10.93,3.29)   ;
\draw    (80,260) -- (450,260) ;
\draw [shift={(450,260)}, rotate = 0] [color={rgb, 255:red, 0; green, 0; blue, 0 }  ][fill={rgb, 255:red, 0; green, 0; blue, 0 }  ][line width=0.75]      (0, 0) circle [x radius= 3.35, y radius= 3.35]   ;
\draw    (450,260) -- (500,260) ;

\draw (111,92.4) node [anchor=north west][inner sep=0.75pt]    {$e_{1}$};
\draw (111,190.4) node [anchor=north west][inner sep=0.75pt]    {$e_{2}$};
\draw (539,88.4) node [anchor=north west][inner sep=0.75pt]    {$\mathbb{R}^{3} \times \mathbb{R}_{\geq 0}$};
\draw (558,250.4) node [anchor=north west][inner sep=0.75pt]    {$\mathbb{R}_{\geq 0}$};
\draw (131,282.4) node [anchor=north west][inner sep=0.75pt]    {$t\neq 0$};
\draw (431,282.4) node [anchor=north west][inner sep=0.75pt]    {$t=0$};

\end{tikzpicture}
\caption{The figure shows a family of embedded $1$-complexes in $\mathbb R^3$ over a $1$-dimensional base. The $1$-complex on the left corresponds to a generic point but at $t = 0$ the two skew edges cross forming a new vertex. As a consequence, the total space has two $2$-dimensional cones meeting at a point in their interior. In order to obtain a total space that, after taking cones, is a cone complex, vertices have to be added to $e_1$ and $e_2$.}\label{fig: the-famous-X}
\end{figure}

\end{example}

The topological changing in the interior of a cone, as in the above example, can be thought of as a non-flatness of the family of $1$-complexes. The following definition is meant to capture this phenomenon of varying topological type. The topological type can also change due to specialization, e.g. a vertex moving from a cone to a face. But it is the changes of topological type in the interior that is the exotic phenomenon, as compared to earlier moduli problems~\cite{ACP,CCUW,GS13}.

\begin{definition}\label{def: surjection}
Let $G$ and $H$ be combinatorial $1$-complexes in $\Sigma_X$. A \textit{surjection} from $G$ to $H$ is given by the a pair of maps
\begin{eqnarray*}
\tau: V(\underline G)&\to& V(\underline H)\\
\upsilon: E(\underline G)\sqcup R(G)&\to& P(\underline H)
\end{eqnarray*}
subject to the following conditions for:
\begin{enumerate}
\item If $v$ and $w$ are the vertices adjacent to $e$, then $\tau(v)$ and $\tau(w)$ are the endpoints of the path $\upsilon(E)$. 

\item Let $e$ be an edge of $\underline G$. Orient the edges of $\nu(e)$ to form a directed path and choose the orientation of $E$ compatibly, i.e. if $e$ is directed from endpoint $v$ to endpoint $w$ then $\nu(e)$ is oriented from $\tau(v)$ to $\tau(w)$. Then, 

If $e$ is an edge of $\underline G$, then the edge directions for each edge in $\upsilon(e)$ coincide with that of $e$.

\item For each vertex of $\underline G$ the cone $\sigma_{\tau(v)}$ is equal to a face of $\sigma_v$. 
\item Each edge or ray in $H$ is contained in a path which is in the image of $\upsilon$.
\end{enumerate}
\end{definition}

Each point $f\in \mathbb X_G$ determines a surjection of $G$, as follows. Given a point $f\in \mathbb X_G$, we consider graph associated to the image $\mathsf{im}(f)$, considered with its natural structure as a combinatorial $1$-complex. A basic finiteness result holds.

\begin{lemma}\label{lem: T-1}
Let $G$ and $\mathbb X_G$ be as above. The set of surjections of $G$ associated to points of $\mathbb X_G$ is finite.
\end{lemma}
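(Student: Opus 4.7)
The plan is to stratify $\mathbb{X}_G$ according to the combinatorial type of $\mathrm{im}(f)$ and show that this stratification has only finitely many pieces. Since each $f \in \mathbb{X}_G$ produces, in a canonical way, a surjection from $G$ to $\mathrm{im}(f)$ (with $\tau(V)$ taken to be the vertex of $\mathrm{im}(f)$ at the point $f(V)$, and $\upsilon(E)$ taken to be the linear path of $\mathrm{im}(f)$ traced by the image of $E$), finiteness of combinatorial types will imply finiteness of the associated surjections.

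The first step is to bound the combinatorial complexity of $H = \mathrm{im}(f)$ uniformly in $f$. By construction, every vertex of $\underline{H}$ is either the image of a vertex of $\underline{G}$, or an intersection point between two edges or rays of $\underline{G}$, so
\[
|V(\underline{H})| \;\leq\; |V(\underline{G})| + \binom{|E(\underline{G}) \sqcup R(G)|}{2}.
\]
Each edge or ray of $\underline{H}$ is a subsegment of the image of some edge or ray of $\underline{G}$ cut by such intersection points, so $|E(\underline{H}) \sqcup R(H)|$ is bounded by an analogous quantity depending only on $G$. The labels on $H$ are drawn from finite sets: the cones come from the finite cone complex $\Sigma$ (finite type in our setting, where $\Sigma = \Sigma_X$ for $X$ of finite type), and the direction vectors on the edges and rays of $H$ are inherited from those already attached to the edges and rays of $G$. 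Therefore only finitely many labeled graphs can arise as $H$.

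Next, I would check that the set of $f$ inducing a given $H$ is a (locally closed) polyhedral subcone of $\mathbb{X}_G$: fixing $H$ amounts to prescribing the face of $\Sigma$ containing each $f(V)$, the list of equalities among the points $f(V)$ and the pairwise intersection points of the edge/ray images, and the overlap pattern among edges/rays with a common direction vector. Each such condition is linear in the coordinates of $\mathbb{X}_G$, giving a finite linear arrangement in $\mathbb{X}_G$ whose strata are indexed by combinatorial types. Finally, once $H$ is fixed, both sides of the maps $\tau$ and $\upsilon$ in Definition~\ref{def: surjection} are finite sets (using that $P(\underline{H})$ is finite because $\underline{H}$ is), so only finitely many pairs $(\tau,\upsilon)$ can occur in total; this completes the finiteness statement.

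The main obstacle I anticipate is verifying that the stratification above is genuinely finite: one must confirm that all degenerations (several edges meeting at a common point, a vertex image lying in the interior of another edge image, several collinear edges overlapping on a subsegment, \emph{etc.}) are already captured by finitely many pairwise incidence conditions and face-membership conditions, and that such degenerations only simplify, rather than refine beyond recognition, the combinatorial type of $H$. Once this is confirmed, the crude combinatorial bounds above suffice and no further estimates are required.
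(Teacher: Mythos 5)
Your argument is correct, but it reaches finiteness by a genuinely different mechanism than the paper. The paper's proof views $\mathbb X_G$ as a cone of tropical maps with universal family $\mathbf G\to\mathbb X_G$, gives the image of $\mathbf G$ in $\Sigma\times\mathbb X_G$ a cone complex structure, and then invokes combinatorial flattening in the sense of Abramovich--Karu to subdivide $\mathbb X_G$ so that the combinatorial type of the fiber is constant on the relative interior of each cell; finiteness of surjections follows from finiteness of cells. You instead bound the complexity of the image $1$-complex $H=\mathrm{im}(f)$ directly and uniformly in $f$: every vertex of $H$ is either the image of a vertex of $G$ or a pairwise intersection point of edge/ray images (two collinear overlapping segments contribute no vertices beyond vertex images, and a segment contained in a cone meets a proper face either entirely, in an endpoint, or not at all, so the face structure of $\Sigma$ forces no further vertices), the labels are drawn from the finite set of cones of $\Sigma$ together with the direction vectors already carried by $G$, and for each fixed $H$ the set of pairs $(\tau,\upsilon)$ is finite. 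This is more elementary and self-contained, avoiding the appeal to semistable reduction. The ``main obstacle'' you flag --- that the stratification of $\mathbb X_G$ by combinatorial type be a finite polyhedral arrangement --- is not actually needed for this lemma: your crude count already covers every degeneration, since any coincidence of points only merges data within the bounded vertex/edge budget. That polyhedral structure is, however, precisely what the paper's flattening argument delivers, and it is the version of the statement that gets used downstream when the subcones $\mathbb X_H\subset\mathbb X_G$ are assembled into the equivariant subdivisions $\mathbb Y_G$; so if you intend to continue past this lemma you would still need to carry out the linearity verification you sketch in your second step.
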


\begin{proof}
We may view the cone $\mathbb X_G$ as a cone of tropical maps with type $G$ to $\Sigma$, in the sense of~\cite[Remark~1.21]{GS13}. It comes equipped with a moduli diagram
\[
\begin{tikzcd}
\mathbf G\arrow{d}\arrow{r}& \Sigma\times\mathbb X_G \\
\mathbb X_G.&
\end{tikzcd}
\]
The fibers of the vertical map are the metrizations of $G$ given by endowing each edge $e$ with length equal to the distance in $\Sigma$ between the images of its endpoints. Since the horizontal map is linear on cones, the image of $\mathbf G$ in $\Sigma\times\mathbb X_G$ can be given the structure of a cone complex. Choose any such cone complex structure $\mathsf{im}(\mathbf G)$. After subdividing this image further, and replacing $\mathbb X_G$ by a subdivision, the induced map
\[
\widetilde {\mathbf G}\to \widetilde {\mathbb X}_G
\]
is flat in the combinatorial sense -- every cone of the source surjects onto a cone of the image. This follows from~\cite[Section~4]{AK00}. After this subdivision, the combinatorial type of the $1$-complex in the fibers in the relative interior of any cell is constant. It follows that there are only finitely many surjections that appear from taking image, as claimed. 
\end{proof}

Let $G$ be as above, and let $H$ be a surjection of $G$ associated to a point in $\mathbb X_G$. Note that we will only consider surjections associated to points of cones $\mathbb X_G$. Let $\mathbb X_H$ be the cone associated to the combinatorial $1$-complex $H$. There is an associated inclusion
\[
\mathbb X_H\hookrightarrow \mathbb X_G.
\]
Specifically, a point of $\mathbb X_H$ in particular determines a function on $V(\underline G)$ by using the map on vertex sets $V(\underline G)\to V(\underline H)$. 

\begin{lemma}\label{lem: common-surjections}
Let $G\to H$ and $G\to K$ be two surjections of $G$ obtained from points of $\mathbb X_G$ by the image construction. Consider a point 
\[
f\in \mathbb X_H\cap \mathbb X_K\subset \mathbb X_G.
\]
The surjection $G\to J$ determined by $f$ is a common surjection of $H$ and $K$. 
\end{lemma}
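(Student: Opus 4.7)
The plan is to reduce the statement to the observation that, for a point $f \in \mathbb X_G$ that arises from a point $g \in \mathbb X_H$ via the inclusion $\mathbb X_H \hookrightarrow \mathbb X_G$, the embedded image $\mathrm{im}(f)$ coincides with $\mathrm{im}(g)$ as a subset of $|\Sigma|$. Once this is shown, the combinatorial $1$-complex extracted from the image is the same in both cases, and the factoring of the resulting surjections will follow by chasing definitions.

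First, I would unpack the inclusion $\mathbb X_H \hookrightarrow \mathbb X_G$ in concrete terms. Writing the surjection $G \to H$ as $(\tau_{GH}, \upsilon_{GH})$, a point $g : V(\underline H) \to |\Sigma|$ of $\mathbb X_H$ maps to $f = g \circ \tau_{GH} : V(\underline G) \to |\Sigma|$. Now fix an edge $E$ of $\underline G$ with incident vertices $V$ and $W$. Condition (1) of Definition~\ref{def: surjection} says that $\upsilon_{GH}(E)$ is a linear path in $\underline H$ all of whose edges share the direction vector of $E$, and condition (2) says its endpoints are $\tau_{GH}(V)$ and $\tau_{GH}(W)$. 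Because the path is linear, the straight segment in $|\Sigma|$ from $f(V) = g(\tau_{GH}(V))$ to $f(W) = g(\tau_{GH}(W))$ is precisely the union of the segments traced out by $g$ along $\upsilon_{GH}(E)$; similarly for rays. Therefore $\mathrm{im}(f) \subseteq \mathrm{im}(g)$. The reverse inclusion follows from condition (4), which guarantees that every edge or ray of $\underline H$ is contained in some path $\upsilon_{GH}(E)$, so every edge or ray segment of $\mathrm{im}(g)$ is swept out by some edge or ray segment of $\mathrm{im}(f)$. Hence $\mathrm{im}(f) = \mathrm{im}(g)$ as subsets of $|\Sigma|$.

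With this in hand, consider now the hypothesis $f \in \mathbb X_H \cap \mathbb X_K$, which supplies lifts $g_H \in \mathbb X_H$ and $g_K \in \mathbb X_K$ with $f = g_H \circ \tau_{GH} = g_K \circ \tau_{GK}$. Applying the observation above twice,
\[
\mathrm{im}(f) \;=\; \mathrm{im}(g_H) \;=\; \mathrm{im}(g_K),
\]
and so all three sets carry a common minimal polyhedral structure endowed with the cone labels induced from $\Sigma$. This common combinatorial $1$-complex is precisely the combinatorial $1$-complex $J$ extracted from $f$, and simultaneously it is the target of the surjection $H \to J$ extracted from $g_H$ and of the surjection $K \to J$ extracted from $g_K$.

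Finally, I would verify the factoring $G \to H \to J$ and $G \to K \to J$. Writing the surjection determined by $f$ as $(\tau_{GJ}, \upsilon_{GJ})$, by construction $\tau_{GJ}(V)$ is the vertex of $J$ at position $f(V)$, while $\tau_{HJ}(\tau_{GH}(V))$ is the vertex of $J$ at position $g_H(\tau_{GH}(V)) = f(V)$. These agree, and the same equality on the $H$-side applied edge-by-edge identifies $\upsilon_{GJ}(E)$ with the concatenation of $\upsilon_{HJ}$ applied to the edges of $\upsilon_{GH}(E)$; compatibility of cone labels is condition (3) of Definition~\ref{def: surjection}, applied in sequence to $G \to H$ and $H \to J$. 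The same argument with $K$ in place of $H$ gives the factoring $G \to K \to J$. The main delicate point, and the one I would verify carefully, is that the image-extraction procedure really does assign the same vertex of $J$ to a vertex $V$ of $G$ as it assigns to $\tau_{GH}(V) \in V(\underline H)$; this is where the linearity of paths in condition (1) is essential, since otherwise the minimal polyhedral structure on $\mathrm{im}(f)$ could a priori contain ``bending'' vertices that do not arise in $\mathrm{im}(g_H)$.
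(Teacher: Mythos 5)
Your proposal is correct and takes essentially the same route as the paper, which disposes of the lemma in one line by declaring $J$ to be the image of $f$ and appealing to the definitions. Your fleshed-out argument — that $f = g_H\circ\tau_{GH}$ forces $\mathrm{im}(f)=\mathrm{im}(g_H)$ via the linearity of the paths $\upsilon_{GH}(E)$ and condition (4), so that the extracted complex $J$ is simultaneously the image of $g_H\in\mathbb X_H$ and of $g_K\in\mathbb X_K$, with the surjections composing — is exactly the definition-chase the paper leaves implicit.
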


\begin{proof}
The lemma follows immediately from the definitions by taking $J$ to be the image of $f$. 
\end{proof}

Fix a combinatorial $1$-complex $G$ and let $\mathsf{Aut}_G$ be the automorphism group. There is a natural action of $\mathsf{Aut}_G$ on the cone $\mathbb X_G$. Our goal will be construct an automorphism-equivariant subdivision of $\mathbb X_G$ such that the subsets $\mathbb X_H$ for surjections $H$ of $G$ become subcomplexes. The following lemma will be of use to us. 

\begin{lemma}\label{lem: equivariant-strictness}
Let $\mathbf C$ be a cone and let $\mathbf F\subset \mathbf C$ be an embedded cone complex. Let $\Gamma$ be a finite group acting on $\mathbf C$ and such that $\mathbf F$ is $\Gamma$-stable. There exists a $\Gamma$-equivariant complete subdivision $\widetilde{\mathbf C}$ of $\mathbf C$ such that $\mathbf F$ is a union of faces of $\widetilde{\mathbf C}$. If $\mathbf F$ is a smooth cone complex, then $\widetilde {\mathbf C}$ can be chosen to be smooth. 
\end{lemma}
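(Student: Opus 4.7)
The plan is to first ignore the group action, produce an auxiliary subdivision of $\mathbf C$ containing $\mathbf F$ as a subcomplex, then symmetrize by common refinement, and finally smooth equivariantly while leaving $\mathbf F$ untouched.

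\textbf{Step 1 (non-equivariant subdivision containing $\mathbf F$).} I would first construct, by induction on the dimension of $\mathbf C$, a proper polyhedral subdivision $\mathbf C'$ of $\mathbf C$ in which every cone of $\mathbf F$ appears as a face. One way: take the arrangement of linear hyperplanes spanned by the codimension-$1$ faces of the cones of $\mathbf F$, intersect this arrangement with $\mathbf C$, and triangulate the resulting polyhedral chambers relative to $\mathbf F$; alternatively, pick a relative-interior lattice point in each face of $\mathbf C$ not already in $\mathbf F$ and iterate stellar subdivisions. Either way the integral structure on $\mathbf C$ is used only to ensure $\mathbf C'$ is again a rational polyhedral cone complex, and the construction inserts no new cones inside any cone of $\mathbf F$.

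\textbf{Step 2 ($\Gamma$-equivariance via common refinement).} The group $\Gamma$ permutes the cones of $\mathbf C$ by integral linear isomorphisms, hence carries $\mathbf C'$ to a possibly different subdivision $\gamma\cdot\mathbf C'$. Since $\mathbf F$ is $\Gamma$-stable, each $\gamma\cdot\mathbf C'$ still contains $\mathbf F$ as a subcomplex. Let
\[
\widetilde{\mathbf C}_0 := \bigwedge_{\gamma\in\Gamma}\gamma\cdot\mathbf C',
\]
the common refinement, whose cones are the non-empty intersections of cones, one from each $\gamma\cdot\mathbf C'$. This is a $\Gamma$-stable proper subdivision of $\mathbf C$, and because $\mathbf F$ is a subcomplex of each factor and the cones of $\mathbf F$ are $\Gamma$-permuted among themselves, $\mathbf F$ remains a union of faces of $\widetilde{\mathbf C}_0$. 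This finishes the first assertion.

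\textbf{Step 3 (equivariant smoothing).} Assume now that $\mathbf F$ is smooth. I would remove the singular cones of $\widetilde{\mathbf C}_0$ by the standard KKMS-style algorithm applied equivariantly. Pick a maximal cone $\sigma\notin\mathbf F$ of maximal multiplicity; choose a primitive integral vector $v\in\sigma$ whose star subdivision strictly decreases the multiplicity, and simultaneously star-subdivide the entire $\Gamma$-orbit of $\sigma$ using the images of $v$ (passing again to a common refinement if different orbit members overlap). Iterate. Because we only ever subdivide cones not contained in $\mathbf F$, the subcomplex $\mathbf F$ is preserved exactly, and since $\mathbf F$ was already smooth, the smoothness of the final refinement $\widetilde{\mathbf C}$ follows from that of its restriction to $\mathbf F$ and the algorithmic smoothness outside $\mathbf F$. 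The main subtlety, and the step I expect to require the most care, is arranging the choices in this last step so that each round of star subdivisions is $\Gamma$-equivariant while still strictly decreasing the singularity measure; this is handled by always working with entire $\Gamma$-orbits and, when necessary, taking a further common refinement with the $\Gamma$-translates before the next iteration, so that the process terminates in finitely many steps and yields the required smooth $\widetilde{\mathbf C}$.
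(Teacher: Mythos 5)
Your overall architecture coincides with the paper's: produce a (possibly non-equivariant) subdivision of $\mathbf C$ containing $\mathbf F$ as a subcomplex, then symmetrize by taking the common refinement of all $\Gamma$-translates (which does not subdivide $\mathbf F$ because $\mathbf F$ is $\Gamma$-stable), and finally resolve equivariantly without touching the already-smooth cones of $\mathbf F$. Your Steps 2 and 3 are exactly the paper's argument; for Step 3 the paper simply cites that toric resolution can be performed without subdividing smooth cones and can be made equivariant, where you sketch the orbit-wise star-subdivision algorithm by hand.

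The one genuine problem is in your Step 1. The "arrangement of linear hyperplanes spanned by the codimension-$1$ faces of the cones of $\mathbf F$" only makes sense when the cones of $\mathbf F$ are full-dimensional in $\mathbf C$: a codimension-$1$ face of a $d$-dimensional cone spans a $(d-1)$-plane, which is not a hyperplane of the ambient space once $d<\dim\mathbf C$. In that case the arrangement you describe does not cut $\mathbf F$ out as a union of faces (e.g.\ a single ray in the interior of a $3$-dimensional cone produces no hyperplanes at all). This is precisely the relevant situation in the paper's application, where the subcomplexes $\mathbb Y_{J_i}\subset\mathbb X_H$ are cut out by linear equations and are typically of positive codimension. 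Your alternative recipe (stellar subdivisions at lattice points of faces of $\mathbf C$ not in $\mathbf F$) has the same defect: subdividing at points chosen in the faces of $\mathbf C$ does not force the cones of $\mathbf F$ to appear as faces. The step is repairable --- either enlarge the arrangement so that for each cone $\tau$ of $\mathbf F$ you include enough rational hyperplanes to cut out both the linear span of $\tau$ and the facets of $\tau$ inside that span, or do what the paper does: invoke the completion theorem of Ewald--Ishida to extend $\mathbf F$ to a complete fan $\Delta$ in the ambient vector space and take the common refinement $\{\sigma_1\cap\sigma_2:\sigma_1\in\Delta,\ \sigma_2\in\mathbf C\}$, which automatically contains $\mathbf F$ as a subcomplex regardless of its dimension. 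With Step 1 repaired in either of these ways, the rest of your argument goes through.
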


\begin{proof}
Let $N_\RR$ denote the associated vector space of the cone $\mathbf C$. We consider $\mathbf F$ and $\mathbf C$ as subsets of this vector space. The existence of equivariant completions for toric varieties guarantees that there exists a complete fan $\Delta$ in $N_\RR$ that contains $\mathbf F$ as a subcomplex~\cite{EI06}. Consider the common refinement of $\Delta$ and $\mathbf C$:
\[
\mathbf C' = \{\sigma_1\cap\sigma_2: \sigma_1\in \Delta \ \ \sigma_2\in \mathbf C\}.
\]
This is a cone complex structure on the intersection of the supports of the two fans, that is on $|\mathbf C|$, see~\cite[Section~2.3]{MS14}. It clearly contains $\mathbf F$ as a subcomplex. The subdivision is not $\Gamma$ equivariant. We fix this by averaging over the group. Given $\gamma\in\Gamma$, the set of translates of cones in $\mathbf C'$ form a cone complex with support $|\mathbf C|$. The common refinement over all group elements gives rise to the subdivision $\widetilde{\mathbf C}$ as claimed in the lemma. Since $\mathbf F$ is $\Gamma$-stable, this last common refinement step does not subdivide it. The final statement on smoothness follows from the fact that toric resolution of singularities for any fan can be performed without subdividing smooth cones~\cite[Theorem~11.1.9]{CLS11} and the fact that these subdivisions can be made $\Gamma$ equivariant~\cite[Section~2]{AW97}.
\end{proof}

Note that this procedure typically involves choices, and is not canonical. Our application is to the cones associated to combinatorial one complexes.

\begin{lemma}\label{lem: T-4}
For each combinatorial $1$-complex $G$, there exists an $\mathsf{Aut}_G$-equivariant subdivision $\mathbb Y_G\to \mathbb X_G$ such that for each surjection $G\to H$ induced by a point of $\mathbb X_G$, the induced map
\[
\mathbb Y_H\hookrightarrow \mathbb Y_G
\]
is an inclusion of a subcomplex. If $\mathbb Y_H$ is smooth, then $\mathbb Y_G$ can be chosen to be smooth.
\end{lemma}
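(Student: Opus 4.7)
My plan is to construct $\mathbb Y_G$ by induction on $\dim \mathbb X_G$, working globally so that each combinatorial type $H$ receives one fixed subdivision $\mathbb Y_H$, invariant under $\mathsf{Aut}_H$, which is reused in every larger $G$ in which $\mathbb X_H$ appears as a subcone. Since we have restricted to those $G$ that coincide with their own generic image type, any surjection $G \to H$ with $H \not\cong G$ must come from a non-generic point of $\mathbb X_G$, so the inclusion $\mathbb X_H \hookrightarrow \mathbb X_G$ has strictly smaller source dimension; this makes the induction well-founded. The base case $\dim \mathbb X_G = 0$ is trivial: set $\mathbb Y_G = \mathbb X_G$.

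For the inductive step, assume that equivariant subdivisions $\mathbb Y_H$ have been produced for every $H$ arising as a surjection $G \to H$ with $H \not\cong G$. Form
\[
\mathbf F_G \;=\; \bigcup_H \mathbb Y_H \;\subset\; \mathbb X_G,
\]
the union ranging over all such surjections. I claim this is a well-defined embedded cone complex. Indeed, Lemma~\ref{lem: common-surjections} expresses any intersection $\mathbb X_H \cap \mathbb X_K$ as a union of subcones $\mathbb X_J$ indexed by common surjection targets of $H$ and $K$, and the inductive hypothesis forces $\mathbb Y_H$ and $\mathbb Y_K$ to restrict to the same subdivision $\bigcup_J \mathbb Y_J$ on this overlap. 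Moreover $\mathbf F_G$ is stable under $\mathsf{Aut}_G$: an automorphism $\gamma$ permutes the family $\{\mathbb X_H\}$, carrying $\mathbb X_H$ to $\mathbb X_{H'}$ for some isomorphic $H' \cong H$, and because $\mathbb Y_H$ depends only on the isomorphism class of $H$, we have $\gamma(\mathbb Y_H) = \mathbb Y_{H'}$.

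Now apply Lemma~\ref{lem: equivariant-strictness} to the triple $(\mathbf C, \mathbf F, \Gamma) = (\mathbb X_G, \mathbf F_G, \mathsf{Aut}_G)$ to obtain an $\mathsf{Aut}_G$-equivariant proper subdivision $\mathbb Y_G$ of $\mathbb X_G$ in which $\mathbf F_G$ appears as a union of faces. In particular, each $\mathbb Y_H$ becomes a subcomplex of $\mathbb Y_G$, proving the first assertion. For the smoothness clause, if every $\mathbb Y_H$ is smooth then so is the subcomplex $\mathbf F_G$, and the smoothness part of Lemma~\ref{lem: equivariant-strictness} allows us to take $\mathbb Y_G$ smooth as well.

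I expect the main difficulty to lie in the global coherence of the inductive choices. The equivariant subdivision produced by Lemma~\ref{lem: equivariant-strictness} is non-canonical, so without care one might end up with several incompatible subdivisions of the same $\mathbb X_H$, one for each ambient $G$ containing it. The remedy is to organize the induction globally over all combinatorial $1$-complexes simultaneously, constructing $\mathbb Y_H$ once and for all at the stage where $H$ is treated as the top complex and reusing it verbatim inside every larger $G$. A compatible verification that $\gamma \in \mathsf{Aut}_G$ carries $\mathbb Y_H$ to $\mathbb Y_{H'}$ then comes down to the fact that the isomorphism $H \xrightarrow{\sim} H'$ induced by $\gamma$ lies in $\mathrm{Isom}(H,H')$, under which the canonical $\mathrm{Aut}$-equivariant subdivisions of $\mathbb X_H$ and $\mathbb X_{H'}$ are identified.
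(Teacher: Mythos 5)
Your proof is correct and takes essentially the same route as the paper: an induction in which, at each stage, Lemma~\ref{lem: common-surjections} guarantees that the union of the previously constructed subdivisions $\mathbb Y_H$ forms a cone complex inside $\mathbb X_G$, to which Lemma~\ref{lem: equivariant-strictness} is then applied, with smoothness inherited from the same lemma. The only material difference is that the paper runs the induction over the (finite) partial order of surjections rather than over $\dim \mathbb X_G$, which sidesteps the unproved claim that a proper surjection strictly drops the dimension of the cone; your global "construct $\mathbb Y_H$ once per isomorphism class" bookkeeping is a welcome explicit treatment of a coherence point the paper leaves implicit.
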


\begin{proof}
Given $G$, the set of all surjections of $G$ forms a partially ordered set, since the notion is stable under composition. Choose any total order extending this partial order. At the minimal elements of this partial order, there is no subdivision necessary, and the union of the cones associated to these types forms a cone complex in $\mathbb X_G$. Proceeding in the chosen order, we repeatedly apply Lemma~\ref{lem: equivariant-strictness}. Given $H$ and its surjections $J_1,\ldots J_k$, we observe that by Lemma~\ref{lem: common-surjections}, the union of the inductively chosen $\mathbb Y_{J_i}$ is a cone complex. We may therefore apply the preceding lemma to obtain an $\mathsf{Aut}_H$-equivariant subdivision $\mathbb Y_H$ of $\mathbb X_H$ such that each previously constructed $\mathbb Y_{J_i}$ is a union of faces. The lemma follows. The smoothness statement follows from the corresponding statement in the preceding lemma. 
\end{proof}

\subsection{Tropical moduli of expansions}\label{sec:tropical-moduli} We now construct the moduli space of expansions via a colimit procedure. Fix a set of subdivisions $\mathbb Y_G$ as guaranteed by the lemma above, one for each choice of combinatorial $1$-complex $G$. Consider the cone complex given by the disjoint union $\sqcup \{\mathbb Y_G\}$ taken over all such $G$. We want to identify two cones $\sigma$ and $\sigma'$ if they are identified via an inclusion $\mathbb Y_H\hookrightarrow \mathbb Y_G$ induced by a surjection or identified via an automorphism of $G$.
Consider the equivalence relation on cones generated by these identifications.In what follows, we will take the quotient by this equivalence relation. The quotient exists in the category of generalized cone complexes\footnote{A generalized cone complex can be thought of as the coarse moduli space of a cone stack. We are interested here only in cone spaces, so the formalisms of~\cite{ACP} and~\cite{CCUW} coincide.}, defined by Abramovich--Caporaso--Payne~\cite[Section~2.6]{ACP}. In showing that such quotients exist in their category, those authors also explain that the quotient also carries a natural cone space (in fact, even a cone complex) structure after subdivision, obtained as follows.  We subdivide each $\mathbb Y_G$ by taking barycentric subdivision of all the cones.   Under the barycentric subdivision procedure above, if an element of $\mathsf{Aut}_G$ stabilizes a cone of $\mathbb Y_G$, then it fixes it pointwise.  The cones of the quotient are precisely the equivalence 
classes considered above, generated by inclusions as $G$ varies or by automorphisms of $G$. Note that the notion of a graph surjection captures both simple specialization phenomena, such as contracting edges of $G$, as well as the more exotic topological type changes in the interior.

\begin{definition}
A \textit{moduli space of tropical expansions} $T$ is the quotient of the disjoint union of cone complexes $\mathbb Y_G$ over all combinatorial $1$-complexes $G$, with cone structure as described in the preceding paragraph. The underlying set will be denoted $|T|$. 
\end{definition}

The quotient as a set is precisely the set of embedded $1$-complexes in $\Sigma$. This also endows it, for example, with the structure of a topological space. We find this topological structure useful in thinking about the problem, but do not need it any technical sense here. This latter structure is examined more thoroughly in~\cite{KH22}.

By toric resolution of singularities, we can guarantee that the cones of $T$ are smooth. From this point forth, we only work with those $T$ that are smooth. In parallel constructions in Gromov--Witten theory, it is natural to allow toric singularities in analogous spaces~\cite{AW,GS13}. We choose not to do this here, however this is largely for convenience in the applications that we currently foresee~\cite{MR22}.

\begin{remark}\label{rem:nonunique}
Note that the cone spaces $T$ constructed above are non-unique.  The ambiguity arises from the inductive construction of the subdivision $\mathbb Y_G$ as a subdivision of $\mathbb X_G$, for each graph $G$. The construction is analogous to, and derived from, the result on equivariant completions of toric varieties and on completions of fans~\cite{EI06}. In this sense, the root cause of the non-uniqueness of our spaces is the same as the non-uniqueness seen in the compactification problems for tori. In particular, dropping the smoothness discussed above does \textit{not} alleviate the non-uniqueness.
\end{remark}

\subsection{Universal family and integral structure}\label{sec:universal}

Given a moduli space $T$ of tropical expansions, we want to equip it with a universal
expansion.  More precisely, we will construct a diagram of cone spaces
\[
\begin{tikzcd}
{\Upsilon\subset \Sigma\times T} \arrow{d}\arrow{r} &\Sigma\\
T,&
\end{tikzcd}
\]
where ${\Upsilon\subset \Sigma\times T}$ is a subdivision and the map $\Upsilon\to T$ is flat in the combinatorial sense: i.e. cones map surjectively onto cones.

In order to construct this diagram, we observe that each $\mathbb X_G$ comes equipped with a universal graph $\mathbf G$ and a map to $\Sigma\times \mathbb X_G$. In order to construct $\Upsilon$ we simply replicate every step in the construction of $T$ above, modified as follows.

For each $G$, we consider the augmented graph $G_\ast$ consisting of the disjoint union of $G$ with a new distinguished vertex $\ast$ (with no edges connecting it to vertices of $G$).
If we denote $s$ a choice of vertex, edge, or ray of $G$,
we consider the set $\mathbb U_G(s)$ consisting of a function from $G_\ast$ to $\Sigma$ whose restriction to $G$ is in $\mathbb X_G$ and such that $f(\ast)$ lies on the image of $f(s)$.

Each element of $\mathbb U_G(s)$ determines a $1$-complex in $\Sigma$ equipped with distinguished \textit{basepoint} lying on $s$; this basepoint is the image $f(\ast)$.  The set $\mathbb U_G(s)$ forms a cone, since the choice for the basepoint is a cone, namely the cone formed by the edge or vertex $s$, as $f$ varies in $\mathbb X_G$.  As we consider all possible choices of $s$, these cones $\mathbb U_G(s)$ form a cone complex $\mathbb U_G$ equipped with an action of $\mathsf{Aut}_G$ and a morphism
\[
\iota_G: \mathbb U_G\to \mathbb X_G \times \Sigma.
\]
Given a surjection $G \to H$, an edge $s$ of $G$, and an edge $t$ of $H$ which is contained in the path $\upsilon(s)$, we have 
\[
\mathbb U_H(t) \hookrightarrow \mathbb U_G(s).
\]
We have similar maps when $s$ or $t$ are rays or vertices.  Notice given a surjection, there could be many distinct embeddings of $\mathbb U_H(t)$ into $\mathbb U_G$.

Following our previous approach, Lemmas~\ref{lem: T-1}, \ref{lem: common-surjections}, \ref{lem: equivariant-strictness}, and~\ref{lem: T-4} each apply in this setting of pointed graphs. They give rise to another system of subdivisions $\widetilde{\mathbb U_G}$ of $\mathbb U_G$, which are compatible with relabeling, the pullback of the subdivision $\mathbb Y_G$, and the inclusions of $\widetilde{ \mathbb U_H(t)}$. If we apply the map $\iota_G$ to this subdivision, it is injective on every cone, and the images of any two cones intersect along a unique face of each. Therefore, there is a natural cone structure on its image $\mathbb V_G \subset  \mathbb Y_G \times \Sigma$, along with natural maps $\mathbb V_H \rightarrow \mathbb V_G$ associated to surjections.  We pass to the quotient as before
and declare the colimit to be $\Upsilon$.  

Finally, we fix the integral structure on $T$ and $\Upsilon$ as follows.  
Given a rational point $P$ of $T$, we say it is \emph{integral} if every vertex of the embedded $1$-complex $\Upsilon_P \subset \Sigma$ lies on an integral lattice point of $\Sigma.$  Note that the process is not circular: the lattice structure on the target $\Sigma$ is unambiguous\footnote{Said differently, a $1$-complex is integral if all its vertices are integral. A point of $T$ is integral if the vertices of the universal $1$-complex above it is integral}. Similarly, a rational point $Q \in \Upsilon$ is integral if its image in $T\times \Sigma$ is integral.  One can check that this definition of integral points in each cone $\sigma$ of $T$ is compatible with the usual notion of integral structure, i.e. it is the restriction of a lattice in the ambient rational vector space.  It follows from our definition that the map $\Upsilon \rightarrow T$ is flat and reduced.


\begin{remark}\label{rem:twofamilies}

Given any point $P$ of $T$, we obtain two $1$-complexes associated to this point.  
First, since we can identify points of the topological realization of $T$ with the points $|T|$, we have the $1$-complex $G_P$ associated to this point.  Second, we can take the fiber 
$\Upsilon_P$ of the map
$\Upsilon \rightarrow T$.  
It follows from the construction that there is a map
\[ \Upsilon_P \rightarrow G_P
\]
which is a refinement of polyhedral sets, obtained by adding $2$-valent vertices along edges.
We will refer to the vertices of $\Upsilon_P$ that are not present in $G_P$ as  \textit{tube vertices}, and these will play an important role in our definition of stability.

As with our construction of $T$, a combinatorial choice is required in the construction of $\Upsilon$.  Consequently, given $P$, while the $1$-complex $G_P$ is canonically given, the subdivision $\Upsilon_P$ depends on this choice.  Moreover, the integral structure on $T$ also depends on this additional choice.
\end{remark}



\begin{remark}[Relative compactification]\label{rem: rel-comp}
Fix a moduli space of tropical expansions $T$ and a finite type subspace $T_{\beta,n}$. We abuse notation mildly, allowing $\Upsilon$ to denote the universal embedded $1$-complex in $T_{\beta,n}\times\Sigma_X$. The subdivisions we have constructed for the universal target expansions are typically non-proper. For appropriate choices in the construction of $T$ it is possible to ensure that $\Upsilon$ can be completed to complete, and in fact, projective, subdivision of $T_{\beta,n}\times\Sigma_X$. Indeed, after itreated barycentric subdivision $T_{\beta,n}$ can be embedded in a vector space~\cite[Section~4.6]{ACMW}. Results on equivariant completions for fans of toric varieties and toric Chow's lemma give rise to the completion~\cite{EI06}. Toroidal semistable reduction guarantees flatness after a further sequence of blowups of $T_{\beta,n}$. 
\end{remark}

\subsection{Common refinements}\label{sec:common-refinements}
The different choices involved in constructing $T$ above lead to different cone spaces, but they only differ by subdivision. We record this for future use. Let $\Lambda$ denote the set of all conical structures on the set $|T|$ that arise from the construction above. Given $\lambda\in \Lambda$, let $T^\lambda$ denote the associated moduli space of tropical expansions.

\begin{proposition}\label{prop: common-refinements}
Any two conical structures $\lambda,\mu \in \Lambda$ share a common refinement. Precisely, there exists $\varphi\in \Lambda$ and morphisms
\[
\begin{tikzcd}
&T^\varphi\arrow{dl}\arrow{dr}&\\
T^\lambda & & T^\mu
\end{tikzcd}
\]
such that each of the two arrows is a composition of a complete subdivision and passage to a finite index sublattice. 
\end{proposition}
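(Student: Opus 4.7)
The plan is to build $\varphi$ by inductively taking common refinements of the subdivisions $\mathbb{Y}_G^\lambda$ and $\mathbb{Y}_G^\mu$ for every combinatorial $1$-complex $G$. Since the two conical structures $\lambda$ and $\mu$ share the underlying set $|T|$, by construction they arise from two systems of $\mathsf{Aut}_G$-equivariant subdivisions of the same canonical cones $\mathbb{X}_G$ of Section~\ref{sec: constructing-T}. I would fix a total order extending the partial order on combinatorial $1$-complexes induced by surjection (as in the inductive construction of $\mathbb{Y}_G$) and at each stage define $\mathbb{Y}_G^\varphi$ to be the common refinement of $\mathbb{Y}_G^\lambda$ and $\mathbb{Y}_G^\mu$ inside $\mathbb{X}_G$, i.e.\ the cone complex whose faces are the intersections $\sigma_1 \cap \sigma_2$ with $\sigma_1 \in \mathbb{Y}_G^\lambda$ and $\sigma_2 \in \mathbb{Y}_G^\mu$.

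Three properties must be verified at each inductive step. First, $\mathbb{Y}_G^\varphi$ is $\mathsf{Aut}_G$-equivariant because both $\mathbb{Y}_G^\lambda$ and $\mathbb{Y}_G^\mu$ are. Second, for any surjection $G \to H$ already handled, the inclusion $\mathbb{Y}_H^\varphi \hookrightarrow \mathbb{Y}_G^\varphi$ of cone complexes is automatic: each cone of $\mathbb{Y}_H^\varphi$ is an intersection $\tau_1 \cap \tau_2$ with $\tau_i$ a cone of $\mathbb{Y}_H^{\lambda}$ respectively $\mathbb{Y}_H^{\mu}$, and by the inductive hypothesis for $\lambda$ and $\mu$ each such $\tau_i$ is itself a cone of $\mathbb{Y}_G^\lambda$ respectively $\mathbb{Y}_G^\mu$, so $\tau_1 \cap \tau_2$ appears as a cone of $\mathbb{Y}_G^\varphi$. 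Third, if smoothness of the cones is required, I would invoke Lemma~\ref{lem: equivariant-strictness} to further equivariantly subdivide $\mathbb{Y}_G^\varphi$ into smooth cones while preserving both the subcomplex relations to the previously constructed $\mathbb{Y}_H^\varphi$ and the refinement maps to $\mathbb{Y}_G^\lambda$ and $\mathbb{Y}_G^\mu$.

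Once the inductive system $\{\mathbb{Y}_G^\varphi\}$ is in place, passing to $\mathsf{Aut}_G$-quotients via the barycentric subdivision of \cite[Section~2.6]{ACP} and then taking the colimit yields a cone space $T^\varphi$, and the componentwise refinement maps assemble into proper subdivisions $T^\varphi \to T^\lambda$ and $T^\varphi \to T^\mu$; propriety is preserved because subdivisions and their compositions keep the underlying support fixed. The same inductive recipe can be applied in parallel to the universal construction of Section~\ref{sec:universal}, producing a common refinement of the universal families that is compatible with the refinement of bases. The integral structure on $T^\varphi$ is unambiguous: it is determined cone-by-cone by the intrinsic condition that the vertices of the corresponding embedded $1$-complex in $\Sigma$ lie at lattice points of $\Sigma$, a condition shared by $\lambda$ and $\mu$.

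The main obstacle I expect is not the existence of the pointwise common refinement, which is a standard fact about cone complexes with the same support, but rather the bookkeeping that ensures the inductive subdivisions remain simultaneously equivariant and compatible with every embedding $\mathbb{Y}_H^\varphi \hookrightarrow \mathbb{Y}_G^\varphi$ arising from distinct surjections $G \to H$ (which may have multiplicities as in Definition~\ref{def: surjection}). This is handled exactly as in the original construction: Lemma~\ref{lem: equivariant-strictness} is applied once to the union of all relevant previously constructed subcomplexes of $\mathbb{Y}_G^\varphi$, rather than to each embedding separately, so the compatibilities are maintained uniformly.
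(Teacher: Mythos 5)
Your proposal is correct and follows essentially the same route as the paper: form the common refinement $\mathbb Y_G^\varphi$ of $\mathbb Y_G^\lambda$ and $\mathbb Y_G^\mu$ inside each $\mathbb X_G$ by intersecting cones, observe that this operation preserves $\mathsf{Aut}_G$-equivariance and is compatible with the inclusions $\mathbb Y_H \hookrightarrow \mathbb Y_G$ coming from surjections, and then rerun the quotient-and-colimit construction to land back in $\Lambda$. Your extra care with the inductive ordering, smoothness via Lemma~\ref{lem: equivariant-strictness}, and the parallel refinement of the universal family only makes explicit what the paper's ``definition chase'' leaves implicit.
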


\begin{proof}
We reinspect the procedure described above. For each combinatorial $1$-complex $G$ and surjection $G\to H$, we choose a subdivision of $\mathbb X_G$ such that the cone complex $\mathbb Y_H$ is a union of faces of this subdivision. In turn, there are choices involved in constructing $\mathbb Y_H$ from $\mathbb X_H$, given by studying surjections $H\to K$ and so on. The moduli spaces $T$ are then formed by taking the colimit of the quotients of these cones by the appropriate automorphism groups. However, we note that two cone complexes on the same underlying space always have a common refinement, whose cones are given by intersections of the cones in the two complexes. Given cone complexes $\mathbb Y_G^\lambda$ and $\mathbb Y_G^\mu$, as their support is the cone $\mathbb X_G$ we can construct their common refinement $\mathbb Y_G^\varphi$. The common refinement yields a compatible system of subdivisions for the different combinatorial $1$-complexes in the following sense. Given a graph surjection $G\to H$ and an inclusion
\[
\mathbb X_H\hookrightarrow \mathbb X_G
\]
we may take the common refinement of $\mathbb X_H$ with each of these three cone complex structures on $\mathbb X_G$ to obtain $\mathbb Y_G^\varphi$ to obtain $\mathbb Y_H^\lambda$, $\mathbb Y_H^\mu$, and $\mathbb Y_H^\varphi$. A definition chase reveals that $\mathbb Y_H^\varphi$ is the common refinement of  $\mathbb Y_H^\lambda$ and $\mathbb Y_H^\mu$. Further, the common refinement of a $\mathsf{Aut}_G$-equivariant subdivisions of $\mathbb X_G$ remains equivariant. It follows that the preceding colimit construction can be run with the quotients $\mathbb Y_G^\varphi/\!/\mathsf{Aut}_G$. By construction, this cone space refines both $T^\lambda$ and $T^\mu$.
\end{proof}

\begin{remark}
There is a broader context for the construction here that we have skirted around in order to keep the discussion concrete. Given a cone complex $\Sigma$, the integer points in its topological realization is canonically identified with the $\NN$-valued points of the functor that $\Sigma$ defines on monoids. Precisely, $\Sigma$ defines the functor
\begin{eqnarray*}
F_\Sigma: \mathsf{Monoids}&\to & \mathsf{Sets}\\
P&\mapsto & \Hom(P^\vee,\Sigma).
\end{eqnarray*}
However, we notice that the $\NN$-valued points of $\Sigma$ and any complete subdivision of $\Sigma$ coincide. More generally, the functors coincide on all \textit{valuative monoids}. In this sense, although the construction of $T$ above is not canonical, the associated functor on \textit{valuative monoids} is canonical. One may view the discussion above as reverse engineering this. We take the integral points set $|T|$ to be the $\NN$-valued points of a putative set of cones $T^\lambda$ on all monoids, which we construct. Functors on valuative monoids (resp. valuative logarithmic schemes) often arise as inverse limits of cone complexes (resp. logarithmic schemes) along subdivisions (resp. logarithmic modifications). However, a functor defined directly on the valuative category needs to satisfy additional properties in order to arise in this fashion. Once these are satisfied, the functor in question, in our case $|T|$ is determined as a polyhedral complex up to refinement. Rather than working with the inverse limit directly, we work with the entire inverse system, making compatible choices to produce morphisms where necessary. There are, at this point, relatively few genuinely valuative approaches to moduli problems. The logarithmic Picard group is a notable exception~\cite{MW18}.
\end{remark}

\subsection{Geometric moduli}\label{sec:geometric-moduli}

We now pass to Artin fans to produce our moduli stack of target expansions.
Consider $\Sigma_X$ the cone complex for $(X|D)$.

Given a moduli space of tropical expansions $T$ for $\Sigma_X$ and its universal family $\Upsilon$, we set $\mathsf{Exp}(X|D)$ to be the Artin fan associated to the cone space $T$ via Theorem~\ref{thm: CCUW}.  It carries a natural logarithmic structure and is smooth if we choose $T$ to be smooth. The points of $\mathsf{Exp}(X|D)$ correspond to cones of $T$.

To produce a universal expansion over $\mathsf{Exp}(X|D)$, we apply the Artin fan construction to the inclusion $\Upsilon \hookrightarrow \Sigma \times T$ to obtain
\[
\mathsf{A}_{\Upsilon} \rightarrow \mathsf{A}_{\Sigma} \times \mathsf{Exp}(X|D).
\]
Using the natural map $X \rightarrow \mathsf{A}_\Sigma$, we set
\[
\mathcal Y := 
\mathsf{A}_{\Upsilon} \times_{ \mathsf{A}_{\Sigma} \times \mathsf{Exp}(X|D)} (X \times\mathsf{Exp}(X|D)).
\]
and consider the diagram
\[
\begin{tikzcd}
\mathcal Y  \arrow{d}\arrow{r} &X \\
\mathsf{Exp}(X|D).&
\end{tikzcd}
\]
Since we arranged for the morphism of cone complexes $\Upsilon \rightarrow T$ to be flat and reduced, $\mathcal Y$ is a family of expansions of $X$ over 
$\mathsf{Exp}(X|D)$.  

Recall that, for every point $P$ of $T$, we have a distinguished set of \emph{tube vertices} which are $2$-valent vertices that appear in the subdivision $\Upsilon_P \rightarrow G_P$.   Given a cone $\sigma$ of $T$, the combinatorics of the universal complex does not change in the interior of $\sigma$, so the tube vertices of $\Upsilon$ are well-defined over $\sigma$ by choosing any point $P$ in its interior.  
If $\mathcal Y_\sigma$ denotes the geometric expansion corresponding to this point of 
$\mathsf{Exp}(X|D)$, the irreducible components corresponding to these tube vertices will be called \textit{tube components}.  These are components that are not needed for applying the valuative criterion, but appear when producing families over the entire stack of degenerations. They play a significant role in the next section.

\section{Moduli of ideal sheaves}\label{moduli-of-sheaves}


In this section, we define stable ideal sheaves on $X$ relative to $D$ and show that, for fixed numerical invariants, their families are parameterized by a proper Deligne-Mumford stack $\mathsf{DT}(X|D)$.  For expository clarity we will assume that $X$ has dimension $3$. Removing this assumption is straightforward but notationally cumbersome. 

Let $\mathsf{Exp}(X|D)$ be a stack of target expansions\footnote{We suppress the choice from the notation for brevity. } of $X$ along $D$ and let $T(\Sigma_X)$ the associated tropical moduli stack. We will denote it by $T$ when there is no chance of confusion.

\subsection{Tube components and tube subschemes} The expansions constructed in the previous section contain certain distinguished components. In order to obtain well-behaved moduli spaces, we restrict subschemes inside such components. 

Let $G\subset \Sigma_X$ be an embedded $1$-complex. A \textit{linear $2$-valent vertex} is a $2$-valent vertex $v$ such that  two conditions hold (i) $v$ and both of its outgoing edges lie in the interior of a single cone of $\Sigma_X$ and (ii) the directions of the two flags are opposite, i.e. a neighborhood of $v$ lies on a line inside the cone $\sigma_v$

Fix a moduli space of $1$-complexes $T$ equipped with a cone structure as in the previous section, as well as a cone complex on the associated family $\Upsilon\to T$. Recall that the underlying set $|T|$ is in natural bijection with the set of embedded $1$-complexes in $\Sigma_X$. 

Choose a point $p$ of $T$. There are two $1$-complexes that one can associate to this point. First, let $G_p$ be the $1$-complex obtained by examining the fiber of $p$ in $\Upsilon$. Second, let $G^\star_p$ be the embedded $1$-complex associated to the point $p\in |T|$. There is a natural refinement $c: G_p\to G_p^\star$ which erases some -- but not all -- the linear $2$-valent vertices. We maintain this notation for the next definition. 

\begin{definition}[Tube vertex]
Let $v$ be a linear $2$-valent vertex of $G_p$. The vertex $v$ is called a \textit{tube vertex} if its image $c(v)$ under the refinement map above is not a vertex of $G^\star_p$. 
\end{definition}

\begin{remark}
Tube vertices are a subset of linear $2$-valent vertices, and we have defined a tube vertex by forcing the linear $2$-valency condition for clarity. It should be noted that vertices that are not linear and $2$-valent could never arise from a subdivision, so in fact the linear $2$-valency condition could be removed from the preceding definition without change. 
\end{remark}

The notion geometrizes as follows. Given a point of $\mathsf{Exp}(X|D)$, there is an associated expansion $\mathcal Y_p$ there is an associated cone of $T$. Given any point $p$ in the interior of this cone, we obtain a $1$-complex $G_p$. A component of $\mathcal Y_p$ is called a \textit{tube component} if it corresponds to a tube vertex in $G_p$. 

Note that each tube component is a $\mathbb P^1$-bundle $\mathbb P$ over a surface $S$.  

Given a $\mathbb P^1$- bundle $\mathbb P$ over a surface $S$, a subscheme of $\mathbb P$ is a \textit{tube subscheme} if it is the schematic preimage of a zero-dimensional subscheme in $S$.\footnote{These are the analogues of ``trivial bubbles'' in relative and expanded Gromov--Witten theory.} We say that a component of an expansion is \textit{linear $2$-valent} if the corresponding vertex in its $1$-complex is linear $2$-valent. Linear $2$-valent components components are all $\mathbb P^1$-bundles over a surface $S$.

\noindent
{\bf Tube terminology: a summary.} There are three different objects that have the adjective ``tube''. A tube vertex is a distinguished linear $2$-valent vertex of a $1$-complex. A tube component is a component of an expansion that corresponds to such a tube vertex. And finally, a tube subscheme is a subscheme of a linear $2$-valent component of an expansion (so in particular a $\mathbb P^1$-bundle) that is pulled back along the bundle. A priori, there is nothing that stops a non-tube, linear $2$-valent component from containing a tube subscheme. We will explicitly stop this from happening in our moduli problem, which we now come to.

\subsection{Stable ideal sheaves}  We now state our main definition.

\begin{definition}
A \textit{stable ideal sheaf on $X$ relative $D$} over a closed point $p$ is a morphism $$p\to\mathsf{Exp}(X|D)$$ with associated expansion $\mathcal Y_p$, and a proper, algebraically transverse subscheme $\mathcal Z_p\subset\mathcal Y_p$, which satisfies the following stability condition:

\underline{\it DT stability}: \ \ the subscheme $\mathcal Z_P$ restricted to a linear $2$-valent component of $\mathcal Y_P$ is a tube subscheme if and only if that component is a tube component.

A \textit{family of stable ideal sheaves on $X$ relative to $D$ over a  scheme $S$} is a morphism $S\to\mathsf{Exp}(X|D)$ with associated expansion $\mathcal Y_S$ and a flat family of algebraically transverse proper subschemes $\mathcal Z_S\hookrightarrow \mathcal Y_S$ such that DT stability is satisfied in each geometric fiber. 
\end{definition}

Recall that algebraic transversality implies that $Z$ meets every stratum of $\mathcal Y_p$.  Also, notice that a family of stable ideal sheaves over a scheme $S$ induces a logarithmic structure on $S$ by pulling back the natural structure on the Artin fan $\mathsf{Exp}(X|D)$. 

\begin{remark}
The DT stability condition stems from the fact that while $|T|$ genuinely parameterizes $1$-complexes, it is not a cone space or even a cone stack. On the other hand, given a cone space structure on $T$ and its universal family $\Upsilon$, there may be points of $T$ such that the universal family may be identical. 

The DT stability condition itself can be understood as follows. Suppose we have two points $P$ and $Q$ in $T$ such that the fiber in $\Upsilon$ gives the same embedded $1$-complex, and let us denote this $1$-complex by $\Theta$. However, $P$ and $Q$ are also points of $|T|$ which determine \textit{distinct} $1$-complexes. It follows that the fibers $\Upsilon_P$ and $\Upsilon_Q$, which are both identified with $\Theta$, are obtained from two distinct $1$-complexes via subdivision. Necessarily then, the choices of points $P$ and $Q$ determine two distinct ways to coarsen $\Theta$ by erasing \textit{certain} but \textit{not all} $2$-valent vertices. A vertex of $\Theta$ that is erased according to the point $P$ will be a tube vertex for $\Upsilon_P$, and will determine a tube component. However, such a vertex need not be erased according to the point $Q$. Passing to the corresponding geometric picture, the $1$-complex $\Theta$ determines an expansion, and some of the components in this expansion may be $\mathbb P^1$-bundle components. But a given such component might be a tube component in the expansion corresponding to $P$ but not in the one corresponding to $Q$, and vice versa. If the component is a tube, the expansion can be blown down by erasing the vertex. The DT stability condition then insists that the subscheme is pulled back along this blow down. Figure~\ref{fig: tube-picture} depicts the situation. 
\end{remark}

\begin{figure}

\tikzset{every picture/.style={line width=0.75pt}} 

\begin{tikzpicture}[x=0.75pt,y=0.75pt,yscale=-1,xscale=1,scale=0.8]

\draw   (288,760.95) .. controls (288,724.18) and (326.54,694.36) .. (374.09,694.36) .. controls (421.63,694.36) and (460.17,724.18) .. (460.17,760.95) .. controls (460.17,797.73) and (421.63,827.55) .. (374.09,827.55) .. controls (326.54,827.55) and (288,797.73) .. (288,760.95) -- cycle ;
\draw    (314.52,717.55) .. controls (316.19,715.88) and (317.85,715.88) .. (319.52,717.55) .. controls (321.19,719.22) and (322.85,719.22) .. (324.52,717.55) .. controls (326.19,715.88) and (327.85,715.88) .. (329.52,717.55) .. controls (331.19,719.22) and (332.85,719.22) .. (334.52,717.55) .. controls (336.19,715.88) and (337.85,715.88) .. (339.52,717.55) .. controls (341.19,719.22) and (342.85,719.22) .. (344.52,717.55) .. controls (346.19,715.88) and (347.85,715.88) .. (349.52,717.55) .. controls (351.19,719.22) and (352.85,719.22) .. (354.52,717.55) .. controls (356.19,715.88) and (357.85,715.88) .. (359.52,717.55) .. controls (361.19,719.22) and (362.85,719.22) .. (364.52,717.55) .. controls (366.19,715.88) and (367.85,715.88) .. (369.52,717.55) .. controls (371.19,719.22) and (372.85,719.22) .. (374.52,717.55) .. controls (376.19,715.88) and (377.85,715.88) .. (379.52,717.55) .. controls (381.19,719.22) and (382.85,719.22) .. (384.52,717.55) .. controls (386.19,715.88) and (387.85,715.88) .. (389.52,717.55) .. controls (391.19,719.22) and (392.85,719.22) .. (394.52,717.55) .. controls (396.19,715.88) and (397.85,715.88) .. (399.52,717.55) .. controls (401.19,719.22) and (402.85,719.22) .. (404.52,717.55) .. controls (406.19,715.88) and (407.85,715.88) .. (409.52,717.55) .. controls (411.19,719.22) and (412.85,719.22) .. (414.52,717.55) .. controls (416.19,715.88) and (417.85,715.88) .. (419.52,717.55) .. controls (421.19,719.22) and (422.85,719.22) .. (424.52,717.55) .. controls (426.19,715.88) and (427.85,715.88) .. (429.52,717.55) .. controls (431.19,719.22) and (432.85,719.22) .. (434.52,717.55) .. controls (436.19,715.88) and (437.85,715.88) .. (439.52,717.55) .. controls (441.19,719.22) and (442.85,719.22) .. (444.52,717.55) .. controls (446.19,715.88) and (447.85,715.88) .. (449.52,717.55) .. controls (451.19,719.22) and (452.85,719.22) .. (454.52,717.55) .. controls (456.19,715.88) and (457.85,715.88) .. (459.52,717.55) .. controls (461.19,719.22) and (462.85,719.22) .. (464.52,717.55) .. controls (466.19,715.88) and (467.85,715.88) .. (469.52,717.55) .. controls (471.19,719.22) and (472.85,719.22) .. (474.52,717.55) .. controls (476.19,715.88) and (477.85,715.88) .. (479.52,717.55) .. controls (481.19,719.22) and (482.85,719.22) .. (484.52,717.55) .. controls (486.19,715.88) and (487.85,715.88) .. (489.52,717.55) .. controls (491.19,719.22) and (492.85,719.22) .. (494.52,717.55) .. controls (496.19,715.88) and (497.85,715.88) .. (499.52,717.55) .. controls (501.19,719.22) and (502.85,719.22) .. (504.52,717.55) -- (505.83,717.55) -- (505.83,717.55) ;
\draw    (404,820) .. controls (402.41,818.26) and (402.48,816.59) .. (404.22,815) .. controls (405.95,813.41) and (406.02,811.74) .. (404.43,810.01) .. controls (402.84,808.27) and (402.91,806.6) .. (404.65,805.01) .. controls (406.39,803.42) and (406.46,801.76) .. (404.87,800.02) .. controls (403.28,798.29) and (403.35,796.62) .. (405.08,795.02) .. controls (406.82,793.43) and (406.89,791.77) .. (405.3,790.03) .. controls (403.71,788.29) and (403.78,786.62) .. (405.52,785.03) .. controls (407.25,783.44) and (407.32,781.77) .. (405.73,780.04) .. controls (404.14,778.3) and (404.21,776.63) .. (405.95,775.04) .. controls (407.69,773.45) and (407.76,771.79) .. (406.17,770.05) .. controls (404.58,768.32) and (404.65,766.65) .. (406.38,765.05) .. controls (408.12,763.46) and (408.19,761.8) .. (406.6,760.06) .. controls (405.01,758.32) and (405.08,756.65) .. (406.82,755.06) .. controls (408.55,753.47) and (408.62,751.8) .. (407.03,750.07) .. controls (405.44,748.33) and (405.51,746.66) .. (407.25,745.07) .. controls (408.99,743.48) and (409.06,741.82) .. (407.47,740.08) .. controls (405.88,738.35) and (405.95,736.68) .. (407.68,735.08) .. controls (409.42,733.49) and (409.49,731.82) .. (407.9,730.08) .. controls (406.31,728.34) and (406.38,726.68) .. (408.12,725.09) .. controls (409.85,723.49) and (409.92,721.82) .. (408.33,720.09) .. controls (406.74,718.35) and (406.81,716.69) .. (408.55,715.1) .. controls (410.29,713.51) and (410.36,711.84) .. (408.77,710.1) .. controls (407.18,708.37) and (407.25,706.7) .. (408.98,705.11) .. controls (410.72,703.52) and (410.79,701.85) .. (409.2,700.11) .. controls (407.61,698.37) and (407.68,696.71) .. (409.42,695.12) .. controls (411.15,693.52) and (411.22,691.85) .. (409.63,690.12) .. controls (408.04,688.38) and (408.11,686.72) .. (409.85,685.13) .. controls (411.59,683.54) and (411.66,681.87) .. (410.07,680.13) .. controls (408.48,678.4) and (408.55,676.73) .. (410.28,675.14) .. controls (412.02,673.55) and (412.09,671.88) .. (410.5,670.14) .. controls (408.91,668.4) and (408.98,666.74) .. (410.72,665.15) .. controls (412.45,663.55) and (412.52,661.88) .. (410.93,660.15) .. controls (409.34,658.41) and (409.41,656.75) .. (411.15,655.16) .. controls (412.89,653.57) and (412.96,651.9) .. (411.37,650.16) .. controls (409.78,648.43) and (409.85,646.76) .. (411.58,645.16) .. controls (413.32,643.57) and (413.39,641.91) .. (411.8,640.17) .. controls (410.21,638.43) and (410.28,636.76) .. (412.02,635.17) .. controls (413.75,633.58) and (413.82,631.91) .. (412.23,630.18) -- (412.35,627.55) -- (412.35,627.55) ;
\draw    (28,818) -- (28,670) ;
\draw [shift={(28,668)}, rotate = 90] [color={rgb, 255:red, 0; green, 0; blue, 0 }  ][line width=0.75]    (10.93,-3.29) .. controls (6.95,-1.4) and (3.31,-0.3) .. (0,0) .. controls (3.31,0.3) and (6.95,1.4) .. (10.93,3.29)   ;
\draw    (28,818) -- (176,818) ;
\draw [shift={(178,818)}, rotate = 180] [color={rgb, 255:red, 0; green, 0; blue, 0 }  ][line width=0.75]    (10.93,-3.29) .. controls (6.95,-1.4) and (3.31,-0.3) .. (0,0) .. controls (3.31,0.3) and (6.95,1.4) .. (10.93,3.29)   ;
\draw [shift={(28,818)}, rotate = 0] [color={rgb, 255:red, 0; green, 0; blue, 0 }  ][fill={rgb, 255:red, 0; green, 0; blue, 0 }  ][line width=0.75]      (0, 0) circle [x radius= 3.35, y radius= 3.35]   ;
\draw    (28.54,758) -- (108.56,757.12) ;
\draw [shift={(108.56,757.12)}, rotate = 359.37] [color={rgb, 255:red, 0; green, 0; blue, 0 }  ][fill={rgb, 255:red, 0; green, 0; blue, 0 }  ][line width=0.75]      (0, 0) circle [x radius= 3.35, y radius= 3.35]   ;
\draw [shift={(28.54,758)}, rotate = 359.37] [color={rgb, 255:red, 0; green, 0; blue, 0 }  ][fill={rgb, 255:red, 0; green, 0; blue, 0 }  ][line width=0.75]      (0, 0) circle [x radius= 3.35, y radius= 3.35]   ;
\draw    (108.01,817.11) -- (108.56,757.12) ;
\draw [shift={(108.56,757.12)}, rotate = 270.52] [color={rgb, 255:red, 0; green, 0; blue, 0 }  ][fill={rgb, 255:red, 0; green, 0; blue, 0 }  ][line width=0.75]      (0, 0) circle [x radius= 3.35, y radius= 3.35]   ;
\draw [shift={(108.01,817.11)}, rotate = 270.52] [color={rgb, 255:red, 0; green, 0; blue, 0 }  ][fill={rgb, 255:red, 0; green, 0; blue, 0 }  ][line width=0.75]      (0, 0) circle [x radius= 3.35, y radius= 3.35]   ;
\draw    (108.56,757.12) -- (108.01,680) ;
\draw [shift={(108,678)}, rotate = 89.6] [color={rgb, 255:red, 0; green, 0; blue, 0 }  ][line width=0.75]    (10.93,-3.29) .. controls (6.95,-1.4) and (3.31,-0.3) .. (0,0) .. controls (3.31,0.3) and (6.95,1.4) .. (10.93,3.29)   ;
\draw    (108.56,757.12) -- (166,757.97) ;
\draw [shift={(168,758)}, rotate = 180.85] [color={rgb, 255:red, 0; green, 0; blue, 0 }  ][line width=0.75]    (10.93,-3.29) .. controls (6.95,-1.4) and (3.31,-0.3) .. (0,0) .. controls (3.31,0.3) and (6.95,1.4) .. (10.93,3.29)   ;
\draw    (316.7,627.55) .. controls (318.37,625.88) and (320.03,625.88) .. (321.7,627.55) .. controls (323.37,629.22) and (325.03,629.22) .. (326.7,627.55) .. controls (328.37,625.88) and (330.03,625.88) .. (331.7,627.55) .. controls (333.37,629.22) and (335.03,629.22) .. (336.7,627.55) .. controls (338.37,625.88) and (340.03,625.88) .. (341.7,627.55) .. controls (343.37,629.22) and (345.03,629.22) .. (346.7,627.55) .. controls (348.37,625.88) and (350.03,625.88) .. (351.7,627.55) .. controls (353.37,629.22) and (355.03,629.22) .. (356.7,627.55) .. controls (358.37,625.88) and (360.03,625.88) .. (361.7,627.55) -- (364.52,627.55) -- (364.52,627.55) .. controls (366.19,625.88) and (367.85,625.88) .. (369.52,627.55) .. controls (371.19,629.22) and (372.85,629.22) .. (374.52,627.55) .. controls (376.19,625.88) and (377.85,625.88) .. (379.52,627.55) .. controls (381.19,629.22) and (382.85,629.22) .. (384.52,627.55) .. controls (386.19,625.88) and (387.85,625.88) .. (389.52,627.55) .. controls (391.19,629.22) and (392.85,629.22) .. (394.52,627.55) .. controls (396.19,625.88) and (397.85,625.88) .. (399.52,627.55) .. controls (401.19,629.22) and (402.85,629.22) .. (404.52,627.55) .. controls (406.19,625.88) and (407.85,625.88) .. (409.52,627.55) .. controls (411.19,629.22) and (412.85,629.22) .. (414.52,627.55) .. controls (416.19,625.88) and (417.85,625.88) .. (419.52,627.55) .. controls (421.19,629.22) and (422.85,629.22) .. (424.52,627.55) .. controls (426.19,625.88) and (427.85,625.88) .. (429.52,627.55) .. controls (431.19,629.22) and (432.85,629.22) .. (434.52,627.55) .. controls (436.19,625.88) and (437.85,625.88) .. (439.52,627.55) .. controls (441.19,629.22) and (442.85,629.22) .. (444.52,627.55) .. controls (446.19,625.88) and (447.85,625.88) .. (449.52,627.55) .. controls (451.19,629.22) and (452.85,629.22) .. (454.52,627.55) .. controls (456.19,625.88) and (457.85,625.88) .. (459.52,627.55) .. controls (461.19,629.22) and (462.85,629.22) .. (464.52,627.55) .. controls (466.19,625.88) and (467.85,625.88) .. (469.52,627.55) .. controls (471.19,629.22) and (472.85,629.22) .. (474.52,627.55) .. controls (476.19,625.88) and (477.85,625.88) .. (479.52,627.55) .. controls (481.19,629.22) and (482.85,629.22) .. (484.52,627.55) .. controls (486.19,625.88) and (487.85,625.88) .. (489.52,627.55) .. controls (491.19,629.22) and (492.85,629.22) .. (494.52,627.55) .. controls (496.19,625.88) and (497.85,625.88) .. (499.52,627.55) .. controls (501.19,629.22) and (502.85,629.22) .. (504.52,627.55) -- (508,627.55) -- (508,627.55) ;
\draw    (504,820) .. controls (502.37,818.3) and (502.4,816.63) .. (504.1,815) .. controls (505.8,813.37) and (505.84,811.7) .. (504.21,810) .. controls (502.58,808.3) and (502.61,806.63) .. (504.31,805) .. controls (506.01,803.37) and (506.05,801.7) .. (504.42,800) .. controls (502.79,798.3) and (502.82,796.64) .. (504.52,795.01) .. controls (506.22,793.38) and (506.25,791.71) .. (504.62,790.01) .. controls (502.99,788.31) and (503.03,786.64) .. (504.73,785.01) .. controls (506.43,783.38) and (506.46,781.71) .. (504.83,780.01) .. controls (503.2,778.31) and (503.24,776.64) .. (504.94,775.01) .. controls (506.64,773.38) and (506.67,771.71) .. (505.04,770.01) .. controls (503.41,768.31) and (503.44,766.64) .. (505.14,765.01) .. controls (506.84,763.38) and (506.88,761.71) .. (505.25,760.01) .. controls (503.62,758.31) and (503.65,756.64) .. (505.35,755.01) .. controls (507.05,753.38) and (507.08,751.72) .. (505.45,750.02) .. controls (503.82,748.32) and (503.86,746.65) .. (505.56,745.02) .. controls (507.26,743.39) and (507.29,741.72) .. (505.66,740.02) .. controls (504.03,738.32) and (504.07,736.65) .. (505.77,735.02) .. controls (507.47,733.39) and (507.5,731.72) .. (505.87,730.02) .. controls (504.24,728.32) and (504.27,726.65) .. (505.97,725.02) .. controls (507.67,723.39) and (507.71,721.72) .. (506.08,720.02) .. controls (504.45,718.32) and (504.48,716.65) .. (506.18,715.02) .. controls (507.88,713.39) and (507.92,711.72) .. (506.29,710.02) .. controls (504.66,708.32) and (504.69,706.65) .. (506.39,705.02) .. controls (508.09,703.39) and (508.12,701.73) .. (506.49,700.03) .. controls (504.86,698.33) and (504.9,696.66) .. (506.6,695.03) .. controls (508.3,693.4) and (508.33,691.73) .. (506.7,690.03) .. controls (505.07,688.33) and (505.11,686.66) .. (506.81,685.03) .. controls (508.51,683.4) and (508.54,681.73) .. (506.91,680.03) .. controls (505.28,678.33) and (505.31,676.66) .. (507.01,675.03) .. controls (508.71,673.4) and (508.75,671.73) .. (507.12,670.03) .. controls (505.49,668.33) and (505.52,666.66) .. (507.22,665.03) .. controls (508.92,663.4) and (508.95,661.73) .. (507.32,660.03) .. controls (505.69,658.33) and (505.73,656.67) .. (507.43,655.04) .. controls (509.13,653.41) and (509.16,651.74) .. (507.53,650.04) .. controls (505.9,648.34) and (505.94,646.67) .. (507.64,645.04) .. controls (509.34,643.41) and (509.37,641.74) .. (507.74,640.04) .. controls (506.11,638.34) and (506.14,636.67) .. (507.84,635.04) .. controls (509.54,633.41) and (509.58,631.74) .. (507.95,630.04) -- (508,627.55) -- (508,627.55) ;
\draw    (108.28,787.12) ;
\draw [shift={(108.28,787.12)}, rotate = 0] [color={rgb, 255:red, 0; green, 0; blue, 0 }  ][fill={rgb, 255:red, 0; green, 0; blue, 0 }  ][line width=0.75]      (0, 0) circle [x radius= 3.35, y radius= 3.35]   ;
\draw    (414,770) .. controls (415.67,768.33) and (417.33,768.33) .. (419,770) .. controls (420.67,771.67) and (422.33,771.67) .. (424,770) .. controls (425.67,768.33) and (427.33,768.33) .. (429,770) .. controls (430.67,771.67) and (432.33,771.67) .. (434,770) .. controls (435.67,768.33) and (437.33,768.33) .. (439,770) .. controls (440.67,771.67) and (442.33,771.67) .. (444,770) .. controls (445.67,768.33) and (447.33,768.33) .. (449,770) .. controls (450.67,771.67) and (452.33,771.67) .. (454,770) .. controls (455.67,768.33) and (457.33,768.33) .. (459,770) .. controls (460.67,771.67) and (462.33,771.67) .. (464,770) .. controls (465.67,768.33) and (467.33,768.33) .. (469,770) .. controls (470.67,771.67) and (472.33,771.67) .. (474,770) .. controls (475.67,768.33) and (477.33,768.33) .. (479,770) .. controls (480.67,771.67) and (482.33,771.67) .. (484,770) .. controls (485.67,768.33) and (487.33,768.33) .. (489,770) .. controls (490.67,771.67) and (492.33,771.67) .. (494,770) .. controls (495.67,768.33) and (497.33,768.33) .. (499,770) .. controls (500.67,771.67) and (502.33,771.67) .. (504,770) -- (504,770) ;
\draw  [dash pattern={on 0.75pt off 0.75pt}]  (424,770) .. controls (422.33,768.33) and (422.33,766.67) .. (424,765) .. controls (425.67,763.33) and (425.67,761.67) .. (424,760) .. controls (422.33,758.33) and (422.33,756.67) .. (424,755) .. controls (425.67,753.33) and (425.67,751.67) .. (424,750) .. controls (422.33,748.33) and (422.33,746.67) .. (424,745) .. controls (425.67,743.33) and (425.67,741.67) .. (424,740) .. controls (422.33,738.33) and (422.33,736.67) .. (424,735) .. controls (425.67,733.33) and (425.67,731.67) .. (424,730) .. controls (422.33,728.33) and (422.33,726.67) .. (424,725) .. controls (425.67,723.33) and (425.67,721.67) .. (424,720) -- (424,720) ;
\draw [color={rgb, 255:red, 0; green, 0; blue, 0 }  ,draw opacity=1 ] [dash pattern={on 0.75pt off 0.75pt}]  (434,770) .. controls (432.33,768.33) and (432.33,766.67) .. (434,765) .. controls (435.67,763.33) and (435.67,761.67) .. (434,760) .. controls (432.33,758.33) and (432.33,756.67) .. (434,755) .. controls (435.67,753.33) and (435.67,751.67) .. (434,750) .. controls (432.33,748.33) and (432.33,746.67) .. (434,745) .. controls (435.67,743.33) and (435.67,741.67) .. (434,740) .. controls (432.33,738.33) and (432.33,736.67) .. (434,735) .. controls (435.67,733.33) and (435.67,731.67) .. (434,730) .. controls (432.33,728.33) and (432.33,726.67) .. (434,725) .. controls (435.67,723.33) and (435.67,721.67) .. (434,720) -- (434,720) ;
\draw [color={rgb, 255:red, 0; green, 0; blue, 0 }  ,draw opacity=1 ] [dash pattern={on 0.75pt off 0.75pt}]  (444,770) .. controls (442.33,768.33) and (442.33,766.67) .. (444,765) .. controls (445.67,763.33) and (445.67,761.67) .. (444,760) .. controls (442.33,758.33) and (442.33,756.67) .. (444,755) .. controls (445.67,753.33) and (445.67,751.67) .. (444,750) .. controls (442.33,748.33) and (442.33,746.67) .. (444,745) .. controls (445.67,743.33) and (445.67,741.67) .. (444,740) .. controls (442.33,738.33) and (442.33,736.67) .. (444,735) .. controls (445.67,733.33) and (445.67,731.67) .. (444,730) .. controls (442.33,728.33) and (442.33,726.67) .. (444,725) .. controls (445.67,723.33) and (445.67,721.67) .. (444,720) -- (444,720) ;
\draw  [dash pattern={on 0.75pt off 0.75pt}]  (454,770) .. controls (452.33,768.33) and (452.33,766.67) .. (454,765) .. controls (455.67,763.33) and (455.67,761.67) .. (454,760) .. controls (452.33,758.33) and (452.33,756.67) .. (454,755) .. controls (455.67,753.33) and (455.67,751.67) .. (454,750) .. controls (452.33,748.33) and (452.33,746.67) .. (454,745) .. controls (455.67,743.33) and (455.67,741.67) .. (454,740) .. controls (452.33,738.33) and (452.33,736.67) .. (454,735) .. controls (455.67,733.33) and (455.67,731.67) .. (454,730) .. controls (452.33,728.33) and (452.33,726.67) .. (454,725) .. controls (455.67,723.33) and (455.67,721.67) .. (454,720) -- (454,720) ;
\draw  [dash pattern={on 0.75pt off 0.75pt}]  (464,770) .. controls (462.33,768.33) and (462.33,766.67) .. (464,765) .. controls (465.67,763.33) and (465.67,761.67) .. (464,760) .. controls (462.33,758.33) and (462.33,756.67) .. (464,755) .. controls (465.67,753.33) and (465.67,751.67) .. (464,750) .. controls (462.33,748.33) and (462.33,746.67) .. (464,745) .. controls (465.67,743.33) and (465.67,741.67) .. (464,740) .. controls (462.33,738.33) and (462.33,736.67) .. (464,735) .. controls (465.67,733.33) and (465.67,731.67) .. (464,730) .. controls (462.33,728.33) and (462.33,726.67) .. (464,725) .. controls (465.67,723.33) and (465.67,721.67) .. (464,720) -- (464,720) ;
\draw [color={rgb, 255:red, 0; green, 0; blue, 0 }  ,draw opacity=1 ] [dash pattern={on 0.75pt off 0.75pt}]  (474,770) .. controls (472.33,768.33) and (472.33,766.67) .. (474,765) .. controls (475.67,763.33) and (475.67,761.67) .. (474,760) .. controls (472.33,758.33) and (472.33,756.67) .. (474,755) .. controls (475.67,753.33) and (475.67,751.67) .. (474,750) .. controls (472.33,748.33) and (472.33,746.67) .. (474,745) .. controls (475.67,743.33) and (475.67,741.67) .. (474,740) .. controls (472.33,738.33) and (472.33,736.67) .. (474,735) .. controls (475.67,733.33) and (475.67,731.67) .. (474,730) .. controls (472.33,728.33) and (472.33,726.67) .. (474,725) .. controls (475.67,723.33) and (475.67,721.67) .. (474,720) -- (474,720) ;
\draw [color={rgb, 255:red, 208; green, 2; blue, 27 }  ,draw opacity=1 ]   (482.5,770) .. controls (480.83,768.33) and (480.83,766.67) .. (482.5,765) .. controls (484.17,763.33) and (484.17,761.67) .. (482.5,760) .. controls (480.83,758.33) and (480.83,756.67) .. (482.5,755) .. controls (484.17,753.33) and (484.17,751.67) .. (482.5,750) .. controls (480.83,748.33) and (480.83,746.67) .. (482.5,745) .. controls (484.17,743.33) and (484.17,741.67) .. (482.5,740) .. controls (480.83,738.33) and (480.83,736.67) .. (482.5,735) .. controls (484.17,733.33) and (484.17,731.67) .. (482.5,730) .. controls (480.83,728.33) and (480.83,726.67) .. (482.5,725) .. controls (484.17,723.33) and (484.17,721.67) .. (482.5,720) -- (482.5,720)(485.5,770) .. controls (483.83,768.33) and (483.83,766.67) .. (485.5,765) .. controls (487.17,763.33) and (487.17,761.67) .. (485.5,760) .. controls (483.83,758.33) and (483.83,756.67) .. (485.5,755) .. controls (487.17,753.33) and (487.17,751.67) .. (485.5,750) .. controls (483.83,748.33) and (483.83,746.67) .. (485.5,745) .. controls (487.17,743.33) and (487.17,741.67) .. (485.5,740) .. controls (483.83,738.33) and (483.83,736.67) .. (485.5,735) .. controls (487.17,733.33) and (487.17,731.67) .. (485.5,730) .. controls (483.83,728.33) and (483.83,726.67) .. (485.5,725) .. controls (487.17,723.33) and (487.17,721.67) .. (485.5,720) -- (485.5,720) ;
\draw  [dash pattern={on 0.75pt off 0.75pt}]  (494,770) .. controls (492.33,768.33) and (492.33,766.67) .. (494,765) .. controls (495.67,763.33) and (495.67,761.67) .. (494,760) .. controls (492.33,758.33) and (492.33,756.67) .. (494,755) .. controls (495.67,753.33) and (495.67,751.67) .. (494,750) .. controls (492.33,748.33) and (492.33,746.67) .. (494,745) .. controls (495.67,743.33) and (495.67,741.67) .. (494,740) .. controls (492.33,738.33) and (492.33,736.67) .. (494,735) .. controls (495.67,733.33) and (495.67,731.67) .. (494,730) .. controls (492.33,728.33) and (492.33,726.67) .. (494,725) .. controls (495.67,723.33) and (495.67,721.67) .. (494,720) -- (494,720) ;
\draw  [dash pattern={on 0.75pt off 0.75pt}]  (504,770) .. controls (502.33,768.33) and (502.33,766.67) .. (504,765) .. controls (505.67,763.33) and (505.67,761.67) .. (504,760) .. controls (502.33,758.33) and (502.33,756.67) .. (504,755) .. controls (505.67,753.33) and (505.67,751.67) .. (504,750) .. controls (502.33,748.33) and (502.33,746.67) .. (504,745) .. controls (505.67,743.33) and (505.67,741.67) .. (504,740) .. controls (502.33,738.33) and (502.33,736.67) .. (504,735) .. controls (505.67,733.33) and (505.67,731.67) .. (504,730) .. controls (502.33,728.33) and (502.33,726.67) .. (504,725) .. controls (505.67,723.33) and (505.67,721.67) .. (504,720) -- (504,720) ;
\draw  [color={rgb, 255:red, 208; green, 2; blue, 27 }  ,draw opacity=1 ] (326.32,780.04) .. controls (349.72,660.05) and (372.96,840.06) .. (396.35,720.07) ;
\draw  [color={rgb, 255:red, 208; green, 2; blue, 27 }  ,draw opacity=1 ] (333.85,626.83) .. controls (358.9,810.94) and (371.29,535.94) .. (396.34,720.05) ;
\draw  [color={rgb, 255:red, 208; green, 2; blue, 27 }  ,draw opacity=1 ] (324,790) .. controls (328.08,779.75) and (331.98,770) .. (336.5,770) .. controls (341.02,770) and (344.92,779.75) .. (349,790) .. controls (353.08,800.25) and (356.98,810) .. (361.5,810) .. controls (366.02,810) and (369.92,800.25) .. (374,790) .. controls (378.08,779.75) and (381.98,770) .. (386.5,770) .. controls (391.02,770) and (394.92,779.75) .. (399,790) .. controls (400.68,794.22) and (402.33,798.35) .. (404,801.76) ;
\draw  [color={rgb, 255:red, 208; green, 2; blue, 27 }  ,draw opacity=1 ] (406.5,795) .. controls (416.98,782.19) and (426.98,770) .. (431.5,770) .. controls (436.02,770) and (433.83,782.19) .. (431.5,795) .. controls (429.17,807.81) and (426.97,820) .. (431.49,820) .. controls (436.02,820) and (446.02,807.81) .. (456.5,795) .. controls (466.98,782.19) and (476.98,770) .. (481.5,770) .. controls (486.02,770) and (483.83,782.19) .. (481.5,795) .. controls (479.61,805.4) and (477.8,815.39) .. (479.61,818.78) ;
\draw [color={rgb, 255:red, 208; green, 2; blue, 27 }  ,draw opacity=1 ]   (334,670) .. controls (374,640) and (464,700) .. (504,670) ;
\draw [color={rgb, 255:red, 208; green, 2; blue, 27 }  ,draw opacity=1 ]   (483.1,718.8) .. controls (491.25,712.69) and (494.31,706.49) .. (494.31,700.18) .. controls (494.31,691.69) and (489.03,683.13) .. (483.85,674.55) .. controls (478.35,665.45) and (472.95,656.33) .. (472.95,647.36) .. controls (472.95,640.3) and (476.16,633.25) .. (485.36,626.35)(484.9,721.2) .. controls (494.1,714.3) and (497.31,707.25) .. (497.31,700.18) .. controls (497.31,691.21) and (491.91,682.1) .. (486.41,673) .. controls (481.23,664.42) and (475.95,655.85) .. (475.95,647.36) .. controls (475.95,641.05) and (479.01,634.86) .. (487.16,628.75) ;
\draw [color={rgb, 255:red, 208; green, 2; blue, 27 }  ,draw opacity=1 ]   (340,660) ;
\draw [shift={(340,660)}, rotate = 0] [color={rgb, 255:red, 208; green, 2; blue, 27 }  ,draw opacity=1 ][fill={rgb, 255:red, 208; green, 2; blue, 27 }  ,fill opacity=1 ][line width=0.75]      (0, 0) circle [x radius= 3.35, y radius= 3.35]   ;
\draw [color={rgb, 255:red, 208; green, 2; blue, 27 }  ,draw opacity=1 ]   (370,760) ;
\draw [shift={(370,760)}, rotate = 0] [color={rgb, 255:red, 208; green, 2; blue, 27 }  ,draw opacity=1 ][fill={rgb, 255:red, 208; green, 2; blue, 27 }  ,fill opacity=1 ][line width=0.75]      (0, 0) circle [x radius= 3.35, y radius= 3.35]   ;
\draw [color={rgb, 255:red, 208; green, 2; blue, 27 }  ,draw opacity=1 ]   (470,780) ;
\draw [shift={(470,780)}, rotate = 0] [color={rgb, 255:red, 208; green, 2; blue, 27 }  ,draw opacity=1 ][fill={rgb, 255:red, 208; green, 2; blue, 27 }  ,fill opacity=1 ][line width=0.75]      (0, 0) circle [x radius= 3.35, y radius= 3.35]   ;

\draw (125,781) node [anchor=north west][inner sep=0.75pt]   [align=left] {\textsf{Tube vertex $v$}};
\draw (515,742) node [anchor=north west][inner sep=0.75pt]   [align=left] {\textsf{Tube component $\mathcal Y_v$}};
\end{tikzpicture}
\caption{An $1$-complex with a tube vertex on the left and the corresponding tube component on the right. The subscheme depicted in that tube component is a tube subscheme -- it is attempting to depict the preimage of a $0$-dimensional subscheme of length $2$, along the projection. On the figure on the right, the circle in the bottom left indicates the main component of the expansion, while the rest of the components are bundles over strata. The tube component contains a tube subscheme, and the picture is meant to depict a subscheme satisfying DT stability.}\label{fig: tube-picture}
\end{figure}

\subsection{Geometry of the space of relative ideal sheaves} Let $\mathsf{DT}(X|D)$ denote the fibered category over schemes of families of stable ideal sheaves on $X$ relative to $D$. We prove that the moduli problem is algebraic with finite stabilizers.

\begin{theorem}
For each fixed stack $\mathsf{Exp}(X|D)$, the fibered category $\mathsf{DT}(X|D)$ is an algebraic stack.
\end{theorem}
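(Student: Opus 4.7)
The plan is to exhibit $\mathsf{DT}(X|D)$ as a locally closed substack of a relative Hilbert stack over $\mathsf{Exp}(X|D)$, reducing algebraicity to the already established algebraicity of $\mathsf{Exp}(X|D)$. First I would study the forgetful morphism $\mathsf{DT}(X|D) \to \mathsf{Exp}(X|D)$ sending a family of stable ideal sheaves to its underlying family of expansions.  Since $\mathsf{Exp}(X|D)$ is an Artin fan and therefore algebraic (Section~\ref{sec:geometric-moduli} and Theorem~\ref{thm: CCUW}), it suffices to show that this morphism is representable by algebraic stacks.  Working smooth-locally on $\mathsf{Exp}(X|D)$, I replace the target by a smooth scheme $S$ carrying a flat, logarithmically smooth family of expansions $\mathcal{Y}_S \to S$ pulled back from the universal family.

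The family $\mathcal{Y}_S \to S$ is typically not proper, so to set up a Hilbert-scheme-type moduli space I would invoke Remark~\ref{rem: rel-comp}, which allows one to extend $\mathcal{Y}_S$ to a relative compactification $\overline{\mathcal{Y}}_S \to S$ by enlarging the underlying polyhedral subdivision to a proper one.  Grothendieck's relative Hilbert stack $\mathsf{Hilb}_{\beta,\chi}(\overline{\mathcal{Y}}_S/S)$ is then algebraic, and requiring the universal subscheme to have support disjoint from the added boundary strata is an open condition, cutting out the open substack whose geometric fibers consist of flat families of subschemes of $\mathcal{Y}_S$ itself with the prescribed numerical invariants.

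Within this open substack I would impose strong transversality and DT stability.  Strong transversality asks for injectivity of $\mathcal{I}_Z \otimes \mathcal{O}_E \to \mathcal{O}_E$ for every divisor stratum $E$ of $\mathcal{Y}_S$; for a flat family this is a $\mathrm{Tor}$-vanishing condition and hence open.  DT stability is imposed one combinatorial type at a time.  On each tube component $\mathbb{P} \to B$ of the universal expansion, the condition that $Z|_{\mathbb{P}}$ be the scheme-theoretic preimage of a zero-dimensional subscheme of $B$ is closed in the corresponding Hilbert stack, while on each non-tube component the complement of the analogous closed locus is open.  Intersecting these conditions over all tube and non-tube components defines a locally closed substack of the relative Hilbert stack.

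The main obstacle will be to reconcile these stratum-by-stratum descriptions as one varies over $\mathsf{Exp}(X|D)$, since the combinatorial type of the expansion, and therefore the set of tube components, itself changes under specialization.  Here I would lean on the structure of the universal family $\Upsilon \to \mathsf{T}$ built in Section~\ref{sec:universal}: tube vertices are introduced precisely to flatten $\Upsilon$, and they specialize compatibly under refinements of cones of $\mathsf{T}$, so the stratum-by-stratum DT stability loci assemble into a coherent substack of the relative Hilbert stack.  Gluing these smooth-local constructions then produces the algebraic structure on $\mathsf{DT}(X|D)$, which in turn is Deligne--Mumford once one shows that the $\mathbb{G}_m$-symmetries of tube components are absorbed by the automorphisms of $\mathsf{Exp}(X|D)$ and cancelled by the DT stability condition.
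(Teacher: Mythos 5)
Your overall strategy is the same as the paper's: realize $\mathsf{DT}(X|D)$ inside the relative Hilbert functor of the universal expansion over the Artin fan $\mathsf{Exp}(X|D)$, then cut out transversality and stability. One inessential difference: the paper does not compactify $\mathcal Y$, since the Hilbert functor of a merely \emph{separated} morphism of algebraic spaces is already algebraic, so the detour through Remark~\ref{rem: rel-comp} and a disjointness-from-the-boundary condition is not needed. A secondary omission is that strong transversality is not only the Tor-vanishing condition: it also includes dimensional transversality (nonempty intersection with every stratum, in the expected dimension), which the paper handles by observing that closures of generic-fiber strata contain special-fiber strata, so flatness forces the generic fiber to meet them.

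The genuine gap is in your treatment of DT stability. You propose to impose it as a \emph{locally closed} condition, closed on tube components and open on the others, one stratum of $\mathsf{Exp}(X|D)$ at a time, and you yourself flag the assembly of these strata as the main obstacle --- but you then assert rather than prove that it works. There are two problems. First, $\mathsf{DT}(X|D)$ is defined as the fibered category of families all of whose \emph{geometric fibers} are stable; a fiberwise locally closed condition does not in general cut out an algebraic substack (the subfunctor of maps landing set-theoretically in a closed subset is not representable), so to close the argument you need DT stability to be \emph{open}, i.e., preserved under generization. Second, openness is exactly where the content lies, because the combinatorial type, and with it the set of tube components, jumps under specialization. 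The paper establishes it by (i) showing that tube vertices of the generic $1$-complex receive only tube vertices under specialization of the tropical data, and (ii) a rigidity argument in the local model $Y_v\to S\to B$: using flatness, constancy of the fiber degree, absence of embedded points, and the observation that the holomorphic Euler characteristic of a Cohen--Macaulay thickening of fibers of a $\PP^1$-bundle is minimized exactly when the subscheme is a tube (equivalently, when the induced map $\PP^1\to\mathsf{Hilb}(S_\eta,d)$ is constant), one concludes that a tube special fiber forces a tube generic fiber. Without some version of this lemma your stratum-by-stratum loci need not assemble into an algebraic stack representing the stated moduli problem. (Finiteness of stabilizers, which you fold into this theorem, is also a separate argument in the paper and is not a consequence of the stability condition alone at this stage.)
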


The stack comes with a map to $\mathsf{Exp}(X|D)$, and we equip it with the pullback logarithmic structure. 

\begin{proof}
The proof is broken up into four steps; we prove algebraicity, first for the stack of expansions, then for the relative Hilbert scheme on the universal family of the stack of expansions, then for the substack of algebraically transverse maps, and finally for the substack of transverse maps satisfying the DT stability condition.  

First, the stack $\mathsf{Exp}(X|D)$ is an Artin fan, and therefore is algebraic. 
Second, since the map $\mathcal Y \rightarrow \mathsf{Exp}(X|D)$ is separated,
it follows from ~\cite[\href{https://stacks.math.columbia.edu/tag/0D01}{Tag 0D01}]{stacks-project} that the Hilbert functor $\mathsf{Hilb}(\mathcal Y/\mathsf{Exp}(X|D))$ is representable by an algebraic space over $\mathsf{Exp}(X|D)$.  

We next consider algebraic transversality.  Denote by $\mathsf{Hilb}^\dagger(\mathcal Y/\mathsf{Exp}(X|D))$ the open subspace parameterizing algebraically transverse subschemes, without the DT stability condition. 
Suppose we are given a family of expansions $\mathcal Y_S \rightarrow S$ and a family of subschemes $Z_S \subset Y_S$.   Since algebraic transversality is characterized by the vanishing of higher Tor's, it is a constructible condition. Therefore it suffices to show it is stable under generization, so we assume $S$ is a DVR.  Recall that any expansion is naturally stratified, since it arises as the special fiber of a logarithmically smooth degeneration. Given any expansion over $S$, the closure of any stratum lying over the generic fiber contains strata of the closed fiber.  Therefore, by flatness, every stratum of the generic fiber must intersect $Z$ non-trivially.  Similarly, by upper semi-continuity, the Tor-vanishing condition for divisor strata of the generic fiber follows from the same condition on the closed fiber (by choosing a divisor stratum of the closed fiber in the closure). 

Finally, we deduce openness of the DT stability condition from the following lemmas.
\end{proof}

\begin{lemma}
The DT stability condition is open and therefore $DT(X|D)\subset \mathsf{Hilb}(\mathcal Y/\mathsf{Exp}(X|D))$ is an open algebraic substack.
\end{lemma}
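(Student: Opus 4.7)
The plan is to decompose the DT stability condition into two sub-conditions and analyze each separately: (a) on each tube component $\pi: \mathbb{P} \to S$ of a geometric fiber $\mathcal{Y}_p$, the restriction $Z|_{\mathbb{P}}$ equals $\pi^{-1}(W)$ for some $0$-dimensional $W \subset S$; and (b) on each non-tube $\mathbb{P}^1$-bundle component of $\mathcal{Y}_p$, the restriction is not a tube. First I would argue openness of (b) directly from general principles, and then show that (a) holds automatically for strongly transverse subschemes, so that DT stability reduces to the open condition (b).

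For (b), the key observation is that being a tube is a closed condition on the Hilbert scheme of a $\mathbb{P}^1$-bundle. Concretely, for $\pi: \mathbb{P} \to S$ a subscheme $Z$ is a tube precisely when $Z$ is invariant under the natural fiberwise $\mathbb{G}_m$-action on $\mathbb{P}$, or equivalently when the adjunction morphism $\pi^* \pi_* \mathcal{O}_Z \to \mathcal{O}_Z$ is an isomorphism; both characterizations cut out a closed subscheme. Since the list of non-tube $\mathbb{P}^1$-bundle components is locally constant along strata of $\mathsf{Exp}(X|D)$, the complement of the closed tube locus on each such component is open, and a finite intersection remains open. A stratum-by-stratum gluing then gives openness of (b) globally.

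For (a), I would argue that the condition is automatic on the strongly transverse locus. Tube vertices in $\Upsilon_P$ are, by Remark~\ref{rem:twofamilies}, $2$-valent vertices added beyond the minimal strongly transverse complex $G_P$; tropically they carry no local content, as they subdivide an edge of $G_P$ without changing its image in $\Sigma_X$. Geometrically, inserting such a $2$-valent vertex corresponds to a further toric modification along a divisor stratum of the coarser expansion, replacing that stratum with the tube component $\mathbb{P} \to S$. Because the flat limit algorithm of Proposition~\ref{thm: uniqueness-strong-case} determines the strongly transverse subscheme on the coarser expansion first, and the further refinement does not bend the tropicalization, the subscheme on the inserted tube component is forced to be the pullback of its intersection with $S$, i.e.\ a tube. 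Combining with the strong transversality requirement that $Z$ meet every stratum non-trivially, this pullback is exactly $\pi^{-1}(W)$, confirming (a).

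The main obstacle will be making the automaticity argument for (a) rigorous. Concretely, one must verify that if the tropicalization of a strongly transverse subscheme matches $G_P$, then its geometric realization on the refined expansion associated to $\Upsilon_P$ is forced to be a tube on each tube component. This requires tracing the construction of the universal family in Section~\ref{sec:universal} together with a careful application of the tropical correspondence of Section~\ref{sec: flat-limits}, ruling out any spurious \emph{horizontal} components on the tube component that could destroy the tube structure. Once (a) and (b) are established, the algebraicity statement for $\mathsf{DT}(X|D)$ as an open substack of $\mathsf{Hilb}(\mathcal{Y}/\mathsf{Exp}(X|D))$ follows formally.
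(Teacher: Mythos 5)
Your proposal has two genuine gaps, and they sit exactly where the real work happens. First, part (a) is not automatic: strong transversality does not force the restriction of $Z$ to a tube component to be a tube. A point of $\mathsf{Hilb}(\mathcal Y/\mathsf{Exp}(X|D))$ is an arbitrary map $p\to\mathsf{Exp}(X|D)$ together with a subscheme of the fiber; nothing requires it to arise as the limit of a family from the unexpanded locus, so the limit algorithm of Proposition~\ref{thm: uniqueness-strong-case} and the discussion in Remark~\ref{rem:twofamilies} say nothing about it. Concretely, on a tube component $\pi:\mathbb P\to S$ one may place a multisection of $\pi$ meeting the two boundary divisors in finite subschemes compatible with the neighbouring components; this is strongly transverse but not a tube. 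Indeed, if tube-ness on tube components were automatic, DT stability would be vacuous, whereas the paper imposes it precisely to kill the $\mathbb G_m$-worth of non-uniqueness on tube components: it is what makes the isotropy groups finite and the limit in the valuative criterion unique. So (a) is a genuine condition whose openness must be proved, not deduced.

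Second, for (b) (and for the real content of (a)), ``open on each stratum plus stratum-by-stratum gluing'' does not yield openness: a locus that is open in every stratum of a stratified stack is merely constructible, and what is missing is exactly stability under generization \emph{across} strata of $\mathsf{Exp}(X|D)$. That is the entire content of the paper's proof: over a DVR whose closed point lands in a deeper stratum, the combinatorial type changes, a single component $Y_v$ of the generic expansion specializes to a chain $Y_{w_1}\cup\cdots\cup Y_{w_k}$, and one must show (i) combinatorially that tube vertices of $G_\eta$ only receive tube vertices of $G_0$ under the specialization map, and (ii) geometrically that if $Z_0$ is a tube on each $Y_{w_i}$ then $Z_\eta$ is a tube on $Y_v$. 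The paper proves (ii) by a semicontinuity and Euler-characteristic argument: $Z_\eta$ is shown to be a Cohen--Macaulay thickening of fibers, giving a map $\mathbb P^1\to\mathsf{Hilb}(S_\eta,d)$ that is constant exactly in the tube case, and constancy of $\chi$ minus the degree of this map forces the conclusion. None of this cross-strata analysis appears in your proposal, and the closedness of the tube locus in a \emph{fixed} $\mathbb P^1$-bundle (via $\mathbb G_m$-invariance), while a reasonable observation within a stratum, cannot substitute for it because the bundle itself degenerates as one specializes.
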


\begin{proof}
As before, we need to check that the condition of DT stability is preserved by generization. Let $B$ be the spectrum of a discrete valuation ring with generic point $\eta$ and closed point $b$ and consider a morphism $B\to \mathsf{Hilb}^\dagger(\mathcal Y/\mathsf{Exp}(X|D))$, to the locus in the relative Hilbert scheme parameterizing algebraically transverse subschemes. Assume that DT stability is satisfied at $0$. We will prove that it is satisfied at the generic point. 

Examine the map $B\to \mathsf{Exp}(X|D)$. The generic and closed point each determine strata of the latter space. Let $F_\eta$ and $F_b$ be the corresponding cones in $T$; note that $F_\eta$ is a face of $F_b$. Each component determines a $1$-complex embedded in $\Sigma_X$ and by slight abuse of notation, we let $G_\eta$ and $G_b$ be the combinatorial types of the associated graphs and note that there is a specialization map\footnote{The reader may wish to have in mind that $G_\eta$ typically simpler than $G_b$, in the sense of having fewer vertices.} from $G_b$ to $G_\eta$. To see this specialization, we may choose particular metric structures on $G_b$ and $G_\eta$ such that there is a morphism from a polyhedral complex $\mathcal G\to \RR_{\geq 0}$, with $\mathcal G\subset \Sigma_X\times \RR_{\geq 0}$ whose fibers over $0$ and $1$ are $G_0$ and $G_1$ respectively. The combinatorial types of $G_1$ and $G_b$ agree, and the combinatorial types of $G_0$ and $G_\eta$ also agree.The specialization map is obtained by taking the cone over a face in $G_1$ and intersecting with the $0$ fiber, and then making the natural identifications with $G_b$ and $G_\eta$.

Let $\mathcal Z$ be the family of subschemes over $B$ with generic and special fibers $\mathcal Z_\eta$ and $\mathcal Z_b$. By the DT stability condition we see that at the closed point $b$, the subscheme $Z_b$ is a tube along the tube vertices of $G_b$. We wish to show that the same is true at $\eta$. 

We claim that, if $v$ is a tube vertex of $G_\eta$, then all vertices $w$ of $G_b$ that specialize to $v$ are also tube vertices. Indeed, tube vertices arise by adding $2$-valent vertices to subdivide the minimal graph structure on $G_b$ and then specializing these vertices to obtain a subdivision of the minimal structure on $G_\eta$. It follows that the set of vertices specializing to a tube vertex must all be tube vertices. 

Let $v$ be a tube vertex of $G_\eta$. The star around $v$ in $G_\eta$ can be identified with the fan of $\mathbb P^1$. Let $w_1,\ldots, w_k$ be the vertices of $G_b$ specializing to $v$. Since tube vertices are always $2$-valent, the star of $v$ in the total space $\mathcal G$ can be identified with the cone over a polyhedral subdivision of $\RR$ with $k$ vertices. The toric dictionary provides us with the following structure for the component $Y_v$ dual to $v$ over $B$. We have morphisms
\[
Y_v\to S\to B,
\]
where $S$ is a smooth surface over $B$ and $Y_v\to S$ is a proper family of semistable genus $0$ curves that is generically a $\mathbb P^1$ bundle -- that is, $X_\eta\to S_\eta$ is a $\mathbb P^1$-bundle. The surface $S$ is obtained as a locally closed stratum in a logarithmic modification of $X$, and in particular, is typically non-proper. Let $Y_b$ be the union of the irreducible components corresponding to the vertices $w_i$. 

After replacing our degeneration with this local model, we are left to check that if $Z_b$ is a tube subscheme on every component of $Y_b$, then $\mathcal Z_\eta$ is also a tube subscheme. This is established by the lemma below, which then concludes the proof.
\end{proof}

\begin{lemma}
As above let 
\[
Y_v\to S\to B,
\]
be a generically smooth semistable genus $0$ curve fibration over a family of smooth surfaces over $B$. Let $Z_b$ and $Z_\eta$ be the special and generic subschemes obtained by restriction to the local model as above.  If $Z_b\subset Y_b$ is the preimage of a zero-dimensional subscheme  $W_b \subset S_b$, then there exists a subscheme $W\subset S$ such that $Z$ is the preimage of $W$.
\end{lemma}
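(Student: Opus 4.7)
My plan is to construct $W$ as the scheme-theoretic image $\pi(Z) \subset S$, and to verify $\pi^{-1}(W) = Z$ via a Nakayama-style argument leveraging the tube hypothesis on the special fiber. Since $Y_v \to S$ is a proper, flat family of reduced, connected semistable curves of arithmetic genus zero, one has $\pi_*\mathcal O_{Y_v} = \mathcal O_S$. Define $\mathcal I_W \subset \mathcal O_S$ as the kernel of the natural map $\mathcal O_S \to \pi_*\mathcal O_Z$, and let $W \subset S$ be the corresponding closed subscheme.

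Next I would verify that $W$ restricts to $W_0$ on the special fiber. The fibers of $\pi|_Z\colon Z\to S$ are either empty, chains of $\mathbb P^1$s (over points of $W_0$, by hypothesis), or closed subschemes of $\mathbb P^1$ (over points of $S_\eta$), and in every case $H^1(\mathcal O)$ vanishes. Cohomology and base change therefore shows $R^1(\pi|_Z)_*\mathcal O_Z = 0$ and that $\pi_*\mathcal O_Z$ is $B$-flat and compatible with restriction to $S_0$. Combining this with the defining exact sequence of $\mathcal I_W$, which now remains exact after tensoring with $\mathcal O_{S_0}$ by flatness, identifies $\mathcal I_W|_{S_0}$ with the ideal of the scheme-theoretic image of $Z_0$ in $S_0$. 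Since $\pi_0|_{Z_0}\colon Z_0 \to W_0$ is surjective by the tube hypothesis, this image is precisely $W_0$, so $W|_{S_0} = W_0$. In particular, $W$ is $B$-flat as a subquotient of the $B$-flat module $\pi_*\mathcal O_Z$.

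Finally, the universal property of scheme-theoretic image furnishes a closed immersion $Z \hookrightarrow \pi^{-1}(W) := W\times_S Y_v$, and both sides are $B$-flat (the latter by flatness of $W/B$ and $Y_v/S$). Let $K$ denote the kernel of the surjection $\mathcal O_{\pi^{-1}(W)} \twoheadrightarrow \mathcal O_Z$; as the kernel of a surjection of $B$-flat coherent sheaves, it is itself coherent and $B$-flat. Over the closed fiber, $\pi^{-1}(W)|_0 = \pi_0^{-1}(W|_{S_0}) = \pi_0^{-1}(W_0) = Z_0$ by the previous paragraph and by hypothesis, so $K|_0 = 0$. Nakayama's lemma, applied to stalks over the DVR $B$, then forces $K = 0$, yielding $Z = \pi^{-1}(W)$ as desired.

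The main obstacle I anticipate is the base change step above: one must verify that cohomology and base change applies to $\pi_*\mathcal O_Z$ even though $Z$ is only known to be $B$-flat (not $S$-flat) and the fibers of $Y_v$ may be chains of rational curves. The resolution is the fiberwise $H^1$-vanishing above, which uses only the one-dimensional, genus-zero nature of the fibers of $\pi|_Z$ together with the standard fact that any closed subscheme of $\mathbb P^1$ has vanishing first cohomology.
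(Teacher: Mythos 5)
Your argument is essentially correct, but it takes a genuinely different route from the paper. The paper's proof is numerical: it first invokes the tropical properness and transversality criteria to see that the restricted family is flat and proper over $B$, deduces that the image of $Z_\eta$ in $S_\eta$ is zero-dimensional and that $Z_\eta$ is Cohen--Macaulay (openness of the CM condition, starting from the fact that a tube over a nodal fibration has no embedded points), and then identifies $Z_\eta$ with a CM thickening of a union of fibers, i.e.\ a morphism $\mathbb P^1 \to \mathsf{Hilb}(S_\eta,d)$; constancy of the holomorphic Euler characteristic in the flat family, together with the fact that the tube is characterized by the extremal value of $\chi$ among such thickenings, forces $Z_\eta$ to be a tube. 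Your proof instead constructs $W$ directly as the scheme-theoretic image and closes the argument with base change and Nakayama; it avoids the Hilbert-scheme-of-points input and the CM/openness step entirely, and it produces the global $W\subset S$ asked for in the statement rather than only the fiberwise conclusion. Each approach buys something: the paper's argument is robust to how one packages the pushforward (it only needs Euler characteristics), while yours is more self-contained and sheaf-theoretic.

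Three points in your write-up deserve more care. First, the base change isomorphism $(\pi_*\mathcal O_Z)|_{S_0}\cong \pi_{0,*}\mathcal O_{Z_0}$ cannot be obtained from the usual cohomology-and-base-change theorem, since $Z$ is not flat over $S$; the correct mechanism is Tor-independence of $Z$ and $S_0$ over $S$ (which is exactly $B$-flatness of $\mathcal O_Z$, i.e.\ $t$ being a nonzerodivisor) combined with the fiberwise vanishing of $H^1(\mathcal O)$ you identify, or alternatively the theorem on formal functions. Second, identifying $\mathcal I_W|_{S_0}$ with $\mathcal I_{W_0}$ requires knowing that $\mathcal O_S\to\pi_*\mathcal O_Z$ is surjective (otherwise the cokernel can contribute $t$-torsion obstructing the inclusion $W|_{S_0}\subseteq W_0$); this surjectivity does hold, because its cokernel vanishes after $\otimes k(0)$ (as $\mathcal O_{S_0}\twoheadrightarrow\mathcal O_{W_0}=\pi_{0,*}\mathcal O_{Z_0}$) and one applies Nakayama again. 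Third, both Nakayama steps silently use that the supports in question are proper over $B$ --- otherwise a nonzero sheaf supported only over the generic point of $B$ would have vanishing restriction to the special fiber. Since $S$ and $Y_v$ are typically non-proper here, properness of $Z$ over $B$ is a genuine input; the paper establishes it as the first step of its proof via Theorem~\ref{thm: properness-criterion}, and you should do the same before invoking Nakayama.
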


\begin{proof}
By applying the criteria for properness and transversality from Theorems~\ref{thm: properness-criterion} and~\ref{thm: transversality-criterion}, we see that the family of subschemes obtained by the construction above to the component above is flat and proper. Properness and upper-semicontinuity of dimension for the family of subschemes guarantees that the image of $Z_\eta$ in $S_\eta$ is zero-dimensional. Since the special fiber $Z_b$ is the preimage of a zero dimensional subscheme along a nodal curve fibration, the special fiber has no embedded points. The Cohen-Macaulay condition is open, so the generic fiber also has no embedded points~\cite[\href{https://stacks.math.columbia.edu/tag/0E0H}{Tag 0E0H}]{stacks-project}. 

We examine the Hilbert polynomial of the subscheme. By flatness of the family of subschemes, the fiber degree of the subscheme is constant. We therefore conclude that the subscheme $Z_\eta$ is a Cohen--Macaulay thickening of a union of fibers in a $\mathbb P^1$-bundle over $S_\eta$. For each such fiber, the thickening gives rise to a morphism from $\mathbb P^1$ to the Hilbert scheme of points $\mathsf{Hilb}(S_\eta,d)$ on the surface $S_\eta$. The minimal possible holomorphic Euler characteristic of this thickening occurs if and only if the subscheme is a tube. Indeed, if the subscheme is not a tube, then the morphism $\mathbb P^1\to \mathsf{Hilb}(S_\eta,d)$ is non-constant. The difference of the Euler characteristic and the degree of this moduli map is constant, from which the claim follows. 
\end{proof}


\subsection{Boundedness}\label{sec: boundedness} Fix a curve class $\beta$ and holomorphic Euler characteristic $n$ and consider the moduli space $\mathsf{DT}_{\beta,n}(X|D)$ of ideal sheaves on $X$ relative to $D$ with these \textit{numerical data}. Specifically, we consider subschemes for which the pushforward of the fundamental cycle 
to $X$ is $\beta$.  We establish boundedness of the moduli problem. 

\begin{theorem}
There exists a finite type scheme $S$ and a map
\[
S\to \mathsf{DT}_{\beta,n}(X|D)
\]
which exhausts the geometric points of the target. 
\end{theorem}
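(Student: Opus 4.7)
The plan is to reduce the question to two finite type statements: first, that only finitely many isomorphism classes of combinatorial embedded $1$-complexes $G \hookrightarrow \Sigma_X$ can arise as the underlying minimal type of an expansion supporting a stable ideal sheaf with numerical data $(\beta,\chi)$; second, that for each such fixed type the corresponding family of relative Hilbert schemes is of finite type. Taking a finite disjoint union then supplies the scheme $S$.

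For the first step, I would argue by a tropical accounting. A geometric point of $\mathsf{DT}_{\beta,\chi}(X|D)$ determines a cone of $\mathsf T$ and, via Remark~\ref{rem:twofamilies}, an embedded $1$-complex $\Upsilon_P$ together with its underlying minimal refinement $\overline G_P \hookrightarrow \Sigma_X$. By Theorem~\ref{thm: divisorial-trop} and Remark~\ref{rem: asymptotics}, the rays of $\overline G_P$ are asymptotic to rays of $\Sigma_X$, and for each divisor component $D_i$ with associated ray $\rho_i$, the sum of the multiplicities of rays of $\overline G_P$ along $\rho_i$ equals $\beta \cdot D_i$ by strong transversality. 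This bounds both the number of rays and their direction vectors. The essential vertices of $\overline G_P$ are in bijection with the nontrivially embedded interior components of the subscheme, and the holomorphic Euler characteristic $\chi$ controls their total count and genera; the bounded edges carry direction vectors drawn from the finite set of primitive lattice vectors in the cones of $\Sigma_X$, with multiplicities bounded by intersection numbers of subscheme components with strata. Together these constraints yield finitely many combinatorial types for $\overline G_P$. Since the moduli space $\mathsf T$ was fixed at the outset, the further refinement $\Upsilon_P \to \overline G_P$ by tube vertices also admits only finitely many possibilities once $\overline G_P$ is fixed.

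For the second step, a fixed combinatorial type corresponds to a locally closed stratum of $\mathsf{Exp}(X|D)$, which is a quotient of an Artin cone. After passing to a smooth torsor cover by a toric chart, the universal expansion $\mathcal Y$ pulls back to a projective logarithmically smooth family over a finite type affine scheme, with polarization induced from $X$. The relative Hilbert scheme with numerical invariants $(\beta,\chi)$ is then of finite type by standard boundedness for Hilbert schemes with bounded polarization. Restricting to the open locus cut out by strong transversality and DT stability preserves finite type. Taking the union of these charts over the finitely many combinatorial types produces the required $S$.

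The main obstacle is making rigorous the bound on essential vertices and bounded edges of $\overline G_P$. The ray multiplicities are controlled transparently by $\beta \cdot D_i$, but one must argue that the interior combinatorics of $\overline G_P$ cannot proliferate: each essential vertex must host a genuine component or corner of the transverse subscheme and hence consume a positive contribution to $\chi$ or to $\beta$ restricted to a divisor stratum, by a balancing-type argument in tropical intersection theory. This is parallel to boundedness arguments in relative Gromov--Witten and Donaldson--Thomas theory (e.g.\ Li--Wu in the smooth pair case) and can be made precise by a careful accounting on each deeper stratum of $\Sigma_X$ using Remark~\ref{rem: asymptotics}.
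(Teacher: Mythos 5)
Your two-step decomposition --- finitely many combinatorial types, then boundedness for each fixed type --- is exactly the paper's, but both halves have gaps as written. The serious one is in the first step. You propose to bound the interior vertices of $\overline G_P$ by arguing that each essential vertex ``must host a genuine component or corner of the transverse subscheme and hence consume a positive contribution to $\chi$ or to $\beta$.'' This is not the right mechanism and I do not see how to make it work: a vertex lying in the interior of a two- or three-dimensional cone of $\Sigma_X$ is dual to a component of the expansion that is contracted to a curve or a point in $X$, so the subscheme it hosts pushes forward to the zero class in $H_2(X)$ and contributes nothing visible to $\beta$; nor is there an a priori positive lower bound on its contribution to the global Euler characteristic. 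The paper's actual mechanism is different: one first establishes a \emph{balancing condition} at each vertex by intersecting the subscheme with the Cartier divisors attached to integral linear functions on $\RR_{\geq 0}^k$ (computing the intersection number once schematically and once tropically), and then invokes the purely combinatorial bound of Yu \cite{Yu14} on the number of at-least-trivalent vertices of a balanced embedded tropical curve with prescribed asymptotics, followed by \cite{NS06} for finiteness of types. The Euler-characteristic accounting you describe is used in the paper only for the residual $2$-valent non-tube vertices, where it does work. Without the balancing condition your sketch has no control over high-valence vertices dual to contracted components.

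The second step also glosses over the key difficulty. You appeal to ``standard boundedness for Hilbert schemes with bounded polarization,'' but the polarization pulled back from $X$ is \emph{not} relatively ample on the universal expansion: entire components of the expansion (those dual to cones of positive dimension) are collapsed in $X$, and subschemes supported there are invisible to the Hilbert polynomial with respect to the pullback. This is precisely why the paper cannot quote standard boundedness and instead argues stratum by stratum: it shows the homology class of the subscheme in each compactified torus-bundle component is determined by the fixed contact orders and the class in the base stratum, and then bounds the Castelnuovo--Mumford regularity following the realization-space argument of Katz--Payne \cite{KP11}. Your identification of a fixed combinatorial type with a locally closed stratum of $\mathsf{Exp}(X|D)$ and the passage to a toric chart are fine, but the finiteness of the relevant Hilbert schemes requires this extra input.
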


\begin{proof}
We prove this in two lemmas. First, in Lemma~\ref{lem: comb-boundedness}, we bound the number of tropical types that can arise by examining points of $\mathsf{DT}_{\beta,n}(X|D)$. Second, in Lemma~\ref{lem: relative-boundedness}, we
exhaust the locus of transverse subschemes in a fixed degenerate target that give a fixed tropicalization. \end{proof}

Let $G$ and $G'$ be embedded $1$-complexes in $\Sigma_X$. We say that $G$ and $G'$ \textit{have the same type} if they lie in the interior of the same cone of $T(\Sigma_X)$. 

\begin{lemma}\label{lem: comb-boundedness}
There are finitely many combinatorial types of embedded $1$-complexes in $\Sigma_X$ that arise from ideal sheaves in $X$ relative to $D$ with fixed curve class $\beta$ and Euler characteristic $n$. That is, the moduli map
\[
DT_{\beta,n}(X|D)\to \mathsf{Exp}(X|D)
\] 
factors through a finite type substack of the target. 
\end{lemma}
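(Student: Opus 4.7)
The plan is to bound the combinatorial type of the embedded $1$-complex $G \hookrightarrow \Sigma_X$ attached to a stable relative ideal sheaf by using strong transversality to translate features of $G$ into intersection-theoretic contributions of $\mathcal{Z}$ controlled by $\beta$ and $\chi$. Since each cone of the tropical moduli space $T$ determines a stratum of $\mathsf{Exp}(X|D)$, and cones of $T$ biject with isomorphism classes of combinatorial $1$-complexes in $\Sigma_X$, it suffices to show that only finitely many such complexes arise.

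First, I would bound the edges and rays of $G$. By strong transversality, every stratum of $\mathcal{Y}$ meets $\mathcal{Z}$ nontrivially, so every edge or ray $e$ of $G$ corresponds to a codimension-one boundary stratum $S_e \subset \mathcal{Y}$ on which $\mathcal{Z}$ restricts to a nonempty zero-dimensional subscheme $\mathcal{Z} \cap S_e$. The sum of the lengths of these restrictions along edges asymptotic to a given ray $\rho_i \subset \Sigma_X$ is bounded by the intersection number $\beta \cdot D_i$, which is fixed by $\beta$; this is an application of the geometric tropicalization description in Theorem~\ref{thm: divisorial-trop} combined with Remark~\ref{rem: asymptotics}. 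The primitive direction vector of $e$ lies in a cone $\sigma \in \Sigma_X$, which is a standard orthant, and its coordinate-wise contribution to each extremal ray of $\sigma$ is controlled by the intersection numbers above. Only finitely many primitive integer vectors satisfy such a bound, so only finitely many edge and ray directions can appear.

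Next, I would bound the topology of $G$. The number of connected components of the underlying graph is bounded by the number of connected components of the dual graph of the support of $\mathcal{Z}$, and its first Betti number is bounded above by the arithmetic genus $p_a(\mathcal{Z}) = 1 - \chi$ (in the connected case; the general case is similar). Indeed, $G$ is a refinement of the dual graph of the support of $\mathcal{Z}$ obtained by subdividing along bivalent vertices corresponding to stable or to tube components, and subdivision preserves $b_1$. Each tube vertex forces at least one tube subscheme on the corresponding $\mathbb{P}^1$-bundle component, which by a direct Euler characteristic computation contributes a strictly positive amount to $\chi(\mathcal{O}_{\mathcal{Z}})$, so the number of tube vertices is bounded in terms of $\chi$. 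Combined with the bound on edges, this bounds the total number of vertices; each is finally labelled by one of finitely many cones of $\Sigma_X$, completing the finiteness of combinatorial types.

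The main obstacle I anticipate is the bound on the number of bivalent tube vertices, since these do not appear in the minimal tropicalization but are forced by the construction of the universal family $\Upsilon$ of Remark~\ref{rem:twofamilies}. I would handle this by a careful Euler characteristic accounting: decompose $\chi(\mathcal{O}_{\mathcal{Z}})$ as a sum of contributions from each component of $\mathcal{Y}$ together with correction terms at codimension-one strata, and observe that each tube subscheme contributes a strictly positive term that cannot be cancelled by adjacent corrections. A secondary subtlety is the interaction between the tropicalization of $\mathcal{Z} \subset \mathcal{Y}$ and the pushforward to $X$: components of $\mathcal{Z}$ contracted to the boundary of $X$ contribute to $\chi$ but not to $\beta$, while non-contracted components contribute to both, and one must ensure that the intersection numbers $\beta \cdot D_i$ correctly capture the weighted length of $\trop(\mathcal{Z})$ along each $\rho_i$. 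Once these accounting issues are in place, the remaining bookkeeping — finiteness of cone labelings, finiteness of compatible primitive direction vectors in a fixed orthant, and finiteness of graph topologies of bounded size — is standard.
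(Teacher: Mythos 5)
Your overall strategy is the right one, but two of its load-bearing steps do not work as stated, and both are precisely the points where the paper has to do real work. The first gap is the bound on edge directions. The intersection numbers $\beta\cdot D_i$ directly control only the \emph{asymptotic} rays of $\trop(\mathcal Z)$ (via geometric tropicalization and the star construction of Remark~\ref{rem: asymptotics}); they say nothing a priori about the primitive direction vector of a bounded internal edge sitting inside a two- or three-dimensional cone of $\Sigma_X$. Your sentence ``its coordinate-wise contribution to each extremal ray of $\sigma$ is controlled by the intersection numbers above'' is exactly the assertion that needs proof, and it is false without an additional mechanism propagating the asymptotic constraints inward: nothing you have written prevents an internal edge of direction $(1,N)$ for arbitrarily large $N$. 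The paper supplies this mechanism by proving a balancing condition --- each linear function $\alpha\in\NN^k$ gives a line bundle with canonical section on each component $Y_v$, and computing $\deg(Z_v\cap \mathbb V(s_{v,\alpha}))$ both schematically and tropically forces the weighted edge directions at each vertex to sum to an integral vector $-\varphi_v$ lying in $-\RR_{\geq 0}^k$. Summing over vertices then bounds everything in terms of the asymptotic data, after which the finiteness of types (for the at-least-trivalent part) is extracted from Yu's bound on vertices of balanced tropical curves and the Nishinou--Siebert finiteness of combinatorial types. Without balancing, neither the slopes nor the number of vertices is controlled; note also that your bound $b_1(G)\leq 1-\chi$ is delicate for the \emph{embedded image}, since distinct branches crossing in $\Sigma_X$ can create cycles not present in the source graph.

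The second gap is the tube count. A tube over a length-$n$ subscheme $W\subset S$ is $\pi^{-1}(W)$ in a $\mathbb{P}^1$-bundle, with $\chi(\mathcal O_{\pi^{-1}(W)})=n$, and inserting it into a chain adds exactly one new gluing divisor along which $Z$ has length $n$; in the normalization sequence $0\to\mathcal O_Z\to\bigoplus_v\mathcal O_{Z_v}\to\bigoplus_e\mathcal O_{Z\cap S_e}\to 0$ the contribution $+n$ is cancelled exactly by the correction $-n$. Tubes are $\chi$-neutral --- this is the whole reason they behave like trivial bubbles and are dangerous for boundedness --- so your proposed Euler-characteristic accounting cannot bound their number, and the claim that the positive term ``cannot be cancelled by adjacent corrections'' is wrong. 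The positivity argument you have in mind applies to \emph{non-tube} $2$-valent vertices (a non-tube subscheme on a $\mathbb{P}^1$-bundle component gives a non-constant map $\mathbb{P}^1\to\mathsf{Hilb}(S,d)$ and strictly raises $\chi$ or the degree), and that is how the paper bounds those. The tube vertices themselves are instead controlled combinatorially: by DT stability they occur exactly at the tube vertices of the universal family $\Upsilon_P\to G_P$, whose number depends only on the combinatorial type of the minimal $1$-complex $G_P$ and the fixed choice of $\Upsilon$, hence is finite once the types of $G_P$ have been bounded. You should restructure the tube step along these lines rather than via $\chi$.
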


\begin{proof} The blueprint for this result on the Gromov--Witten side is~\cite[Section~3.1]{GS13}. The proof bounds the possible slopes of edges of $1$-complexes embedded in $\Sigma_X$ that can arise from subschemes with given discrete data. This is possible by means of the balancing condition, which we formulate below, together with known finiteness results for balanced tropical curves. 

\subsection*{\sc Step I. Embedded tropical curves} Let ${\mathcal Z}\subset \mathcal Y$ be an algebraically transverse subscheme of an expansion of $X$ along $D$, defined over a point. This point is equipped with a logarithmic structure pulled back from $\mathsf{Exp}(X|D)$. It has characteristic monoid $P$. After pulling back the family to a standard logarithmic point via a sharp map $P\to \NN$, we obtain a $1$-complex $\mathbb G_{\mathcal Z}$ embedded in $\Sigma_X$. We want to bound the combinatorial possibilities for these $1$-complexes. The first step is to observe that $1$-complexes coming from such ${\mathcal Z}\subset\mathcal Y$ can be decorated with additional information.

First, decorate each vertex vertex $v$ with the homology class of the associated subscheme $Z_v$ in the homology of $X$. To be precise, we note that there is a proper cycle $Z_v$ in the component dual to $v$. Its pushforward in $X$ defines a class in $H_2(X)$. Denote this homology class at a vertex $v$ by $\beta_v$. In particular, note that vertices such that the subscheme $Z_v$ lies entirely in fiber of the projection $\mathcal Y\to X$ are decorated with $0$. 

Next, let $v$ be a vertex of $\mathbb G_{\mathcal Z}$ and $e$ an edge or ray incident to $v$. Note that $e$ is dual to a codimension $1$ stratum of $\mathcal Y$. The intersection of ${\mathcal Z}$ with this stratum is a zero dimensional subscheme. Define $w_e$ to be the length of this subscheme. Since the subschemes on either side of a double divisor $\mathcal S$ share the same intersection with $\mathcal S$, if $e$ is an edge rather than a ray, the number $w_e$ depends only on $e$ and not on $v$. Range over all $v$ to determine decorations for all edges and rays in $\mathbb G_{\mathcal Z}$. 

As discussed previously, $\Sigma_X$ can be embedded as a subcomplex of $\RR_{\geq 0}^k$ -- the latter with its standard cone structure -- for $k$ equal to the number of irreducible components in $D$. We obtain a $1$-complex with vertex and edge decorations in $\RR_{\geq 0}^k\subset \mathbb R^k$. We refer to this object as an \textit{embedded tropical curve}. We abuse notation slightly and continue to denote it by $\mathbb G_{\mathcal Z}$, even after adding the decorations.

\subsection*{\sc Step II. Piecewise linear functions} The group of integral linear functions on $\RR_{\geq 0}^k$ is naturally identified with $\mathbb Z^k$. Indeed, given a set of integers $(a_1,\ldots, a_k)$ we can consider the function whose slope along the $i^{\mathsf{th}}$ coordinate axis is $a_i$. Every linear function in $\mathbb Z^k$ restricts to a function on $\Sigma_X$ that is piecewise linear, and linear on each cone\footnote{Note we use piecewise linear, rather than linear, as the adjective for these functions. The reason is that complexes come with a natural notion of piecewise linear function, but no natural notion of linear function, unlike fans in toric geometry. The embedding of $\Sigma_X$ into $\RR_{\geq 0}^k$ endows it with a notion of linear functions, but a different embedding would lead to a different notion of linear. }. A piecewise linear function, in this sense, gives rise to a Cartier divisor on $X$. If $\alpha$ is a piecewise linear function, we let $(\mathcal O_X(-\alpha),s_\alpha)$ be the associated Cartier divisor\footnote{The notation is chosen to be consistent with convention in Fulton's text on toric varieties~\cite{Ful93}. Specifically, on the toric variety $\PP^1$, the function that has slope $1$ along a ray and slope $0$ along the other ray is $\mathcal O_{\PP^1}(-1)$}. 

We can make sense of this construction on any component $\mathcal Y_v$ of the expansion as well. Let $v$ be a vertex of  $\mathbb G_{\mathcal Z}$. Restrict $\alpha$ to the star $\mathbb G_{\mathcal Z}(v)$ of $v$ in $\mathbb G_{\mathcal Z}$. The result is a piecewise \textit{affine} function at the star of $v$; subtract the constant $\alpha(v)$ to obtain a piecewise linear function on the (zero or) one-dimensional fan $\mathbb G_{\mathcal Z}(v)$. The construction above again gives a line bundle and a section on the associated component of the expansion. 

\subsection*{\sc Step III. The balancing condition} Fix a linear function $\alpha\in \ZZ^k$. By the procedure above, this determines a Cartier divisor $(\mathcal O_{\mathcal Y_v}(-\alpha),s_{v,\alpha})$ on the component $\mathcal Y_v$ of the expansion, together with a canonical section. Let $Z_v$ be the subscheme in the component $\mathcal Y_v$. The divisor associated to this piecewise linear function can be intersected with $Z_v$ to give a homomorphism:
\begin{eqnarray*}
\varphi_v: \ZZ^k&\to&  \ZZ\\
 \alpha &\mapsto&\mathsf{deg}( [Z_v]\cap c_1(\mathcal O_{\mathcal Y_v}(-\alpha))).
\end{eqnarray*}
The ambient space of the embedded tropical curve is canonically identified with the real positive dual of this copy of $\NN^k$. In other words, we have
\[
\mathbb G_{\mathcal Z}\hookrightarrow \RR_{\geq 0}^k = \Hom_{\mathbf{Mon}}(\NN^k,\RR_{\geq 0}).
\]
The homomorphism $\varphi_v$ above is an integral element in the ambient cone  $\RR_{\geq 0}^k$; we use this to enhance $\mathbb G_{\mathcal Z}$ by adding yet more decoration. We may consider the negative of $\varphi_v$ as a lattice point in the ambient vector space $\RR^k$. To each vertex $v$, appended a ray $e_v$ starting at $v$ in the direction of $-\varphi_v$; we call these \textit{appended rays}. The vector $-\varphi_v$ is a positive integral multiple of the primitive generator in the direction of $e_v$, and we equip these rays with a weight equal to this positive multiple.  

Finally, we call the resulting object \textit{the balanced tropical curve associated to ${\mathcal Z}$}. We view it as a new graph $\widehat{\mathbb G}_{\mathcal Z}$ equipped with an immersion into $\RR^k$ that restricts to an embedding on the $1$-complex $\mathbb G_{\mathcal Z}$ into the standard quadrant.

The term \textit{balanced} is justified as follows. When equipped with these new edges and their weight decorations, the sum of the outgoing vectors at each vertex, weighted by the edge decoration, is zero. Explicitly, we have at each vertex $v$:
\[
\sum_{e\in E(\mathbb G_{\mathcal Z})\colon v\prec e} w_e\cdot \vec e = \varphi_v,
\]
where (i) the sum is taken over all edges $e$ in the edge set of $\mathbb G_{\mathcal Z}$ that are incident to $v$, and (ii) the symbol $\vec e$ denotes the primitive integral vector along $e$, directed away from the $v$.

The balancing condition follows from geometric considerations -- we calculate the degree on $Z_v$ of the line bundles associated to piecewise linear functions $\alpha\in\mathbb Z^k$ in two different ways, as follows. The intersections of $Z_v$ with the divisors $\mathsf{div}(s_{v,\alpha})$ are algebraically transverse, so we may calculate the numbers on the right hand side of the balancing equation:
\[
[Z_v]\cap c_1(\mathcal O_{\mathcal Y_v}(-\alpha))
\]
by explicitly calculating lengths of scheme theoretic intersections. It suffices to check the balancing condition coordinate-by-coordinate in $\ZZ^k$. If we let $\varphi_i$ denote the piecewise linear function corresponding to the $i^{\mathsf{th}}$ divisor, the sum of the $i^{\textsf{th}}$-coordinate components of the weighted edge directions incident to $v$. This gives left hand side of the equation above. Ranging over over all $i$ gives the equation. 

In particular, if $\beta_v$ is equal to $0$, then there is no appended ray $e_v$, and the balancing condition simply states that the weighted of edges incident to a vertex in $\mathbb G_{\mathcal Z}$ is equal to $0$. More generally, by the formula above, the direction and weight of the appended ray is determine by the intersection numbers $\beta_v\cdot D_i$ over all $i$. 

\subsection*{\sc Step IV. Bounding the balanced curves} We now examine the asymptotic structure of $\widehat{\mathbb G}_{\mathcal Z}$. Since $\beta$ is fixed and equal to an effective sum of the $\beta_v$, there are only finitely many vertices of $\mathbb G_{\mathcal Z}$ that can have non-zero curve class decoration. From the discussion above, for any such vertex $v$ with $\beta_v$ nonzero, there is a uniquely determined appended ray $e_v$ with a weight decoration that makes the balancing condition hold. As a consequence, there are finitely many possibilities for the appended rays. 

The unbounded rays in $\mathbb G_{\mathcal Z}$ itself, rather than the enhanced $\widehat{\mathbb G}_{\mathcal Z}$, are each parallel to one of the axes in $\mathbb R_{\geq 0}^k$.  The sum of the weights of rays parallel to the $i^{\mathsf{th}}$-ray is $\beta\cdot D_i$, so there are finitely many possibilities for the unbounded edges of $\widehat{\mathbb G}_{\mathcal Z}$. Any unbounded ray in $\widehat{\mathbb G}_{\mathcal Z}$ is either an appended ray or an unbounded in $\mathbb G_{\mathcal Z}$. 

Putting the two preceding paragraphs together, there are only finitely many combinatorial possibilities for the asymptotic structure, meaning, outside of some large compact neighborhood of the origin, of the balanced tropical curve $\widehat{\mathbb G}_{\mathcal Z}$ 

We now claim that with these data fixed there are only finitely many balanced tropical curves $\widehat{\mathbb G}_{\mathcal Z}$ that can appear for fixed discrete data. Indeed, Nishinou--Siebert present a Newton polygon argument to deduce this in the course of the proof of~\cite[Proposition~2.1]{NS06} which assures this.\footnote{Alternatively, a balanced tropical curve in $\mathbb R^n$ is a polyhedral subcomplex of the dual tropical hypersurface of a subdivision of the associated Chow polytope, see~\cite[End of Section~4.4]{MS14}. The types are therefore bounded by the number of coherent subdivisions of the Chow polytope. See Bertrand--Brugalle--Lopez de Medrano~\cite{BBLdM} and Yu~\cite[Proposition~4.1]{Yu14} for related boundedness results.} 

The key consequence of the Nishinou--Siebert argument for us is that the underlying set of every balanced tropical curve in $\mathbb R^n$ can be obtained as a subset of a complete intersection of $n-1$ tropical hypersurfaces. Moreover, each such tropical hypersurface is the preimage of a balanced tropical curve in $\mathbb R^2$ under an integer linear projection. The number of combinatorial types tropical curves in $\mathbb R^2$ of given asymptotics is bounded by the number of coherent subdivisions of the dual lattice polytope of this projected tropical curve. The lattice polytope is determined by the images of the asymptotic directions under the projection, of which there are certainly a finite number of possibilities. See~\cite[Section~3]{Mi03} for a detailed discussion of the plane curve case.

The argument above immediately implies that, if we range over all $\widehat {\mathbb G}_{\mathcal Z}$, and ignore $2$-valent vertices and all isolated vertices $v$, there are finitely many possibilities for the combinatorial type of $\widehat {\mathbb G}_{\mathcal Z}$. The number of isolated points in the tropicalization is bounded because we have fixed the the curve class and holomorphic Euler characteristic. By the DT stability condition, non-tube $2$-valent vertices cannot contain a tube subscheme, so contribute to either the curve class or increase the Euler characteristic.  Therefore there are only finitely many such $2$-valent vertices. The number of tube vertices only depends on the combinatorial type determined in the previous paragraph. We conclude that the number of combinatorial types of embedded $1$-complexes arising from ideal sheaves is bounded after fixing the numerical data.
\end{proof}

The following remark should give some intuition for what an expansion looks like, which might be valuable in navigating the arguments that follow. 

\begin{remark}[The shape of a component in an expansion]
Let us work in dimension $3$ for concreteness. Let $\mathcal Y$ be an expansion of a threefold $X$. The top dimensional components of $\mathcal Y$ have the following four forms (i)  toroidal modification of $X$; there is at most one such component, (ii) a compactified $\mathbb G_m$-bundle over a locally closed boundary surface in $X$, (iii) a compactified $\mathbb G_m^2$-bundle over a boundary curve in $X$, or (iv) a toric threefold. 
\end{remark}

To conclude boundedness we claim that, after fixing the Hilbert polynomial with respect to the pullback of a polarization from $X$, the space of subschemes whose expansion has a given combinatorial type with numerical data fixed (i.e. subschemes with a fixed tropicalization) is parameterized by a quasi-projective scheme of finite type. 

Let us briefly explain the nature of this claim. The polarization is pulled back from $X$, it is typically not ample on an expansion, and we may have subschemes in these components that only contribute to the Hilbert polynomial via their holomorphic Euler characteristic. A basic principle is that the higher the codimension of the stratum of $X$ over which a component lives, the less control the Hilbert polynomial provides in the choice of subscheme in that component. The extreme case occurs where the stratum in question is a toric variety $Y$ that is collapsed to a point on projection to $X$. The problem here reduces to examining the moduli of subschemes of $Y$ with fixed contact orders along the toric boundary. But while the Hilbert polynomial is not a strong constraint, the balancing condition compensates for this. It constrains the contact orders of the subscheme, and thereby constrains the homology class of the subscheme.\footnote{This problem, with slightly different hypotheses, was considered by Katz--Payne, and their~\cite[Theorem~1.2]{KP11} can be seen as a manifestation of the same phenomenon.} 

\begin{lemma}\label{lem: relative-boundedness}
The stack of ideal sheaves on $X$ relative to $D$ with fixed combinatorial type and fixed numerical data is bounded. 
\end{lemma}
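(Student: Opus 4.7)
The plan is to fix a combinatorial type $\Theta$ from the finite list produced by Lemma~\ref{lem: comb-boundedness}, corresponding to a cone $\sigma$ of $T(\Sigma_X)$ and an open substack $\mathsf{Exp}_\sigma \subset \mathsf{Exp}(X|D)$ of finite type, over which the universal expansion $\mathcal Y_\sigma \to \mathsf{Exp}_\sigma$ has combinatorially constant fiber. The dual graph of the fibers, their vertex labels by strata of $X$, and their edge labels are all pinned down.  I would show that the space of strongly transverse subschemes of fibers of $\mathcal Y_\sigma$ realizing the fixed numerical data $(\beta,\chi)$ is of finite type, and then invoke Lemma~\ref{lem: comb-boundedness} to take the (finite) union over $\sigma$.

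The first step is to split the irreducible components $Y_v$ of a fiber $\mathcal Y_p$ of $\mathcal Y_\sigma$ into two classes.  The first class is formed by components $Y_v$ which dominate an irreducible component of $X$ or of $D$.  For these, an ample polarization on $X$ restricts to an ample or big-and-nef class on $Y_v$, so once we fix the homology class and Euler characteristic of $Z \cap Y_v$ --- which is determined up to finite data by $(\beta,\chi)$, the vertex decoration of $\mathbf G_Z$ at $v$, and the edge weights $w_e$ of edges incident to $v$ coming from the proof of Lemma~\ref{lem: comb-boundedness} --- the subscheme lies in a finite-type relative Hilbert scheme over $\mathsf{Exp}_\sigma$.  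The second class consists of components $Y_v$ which are contracted to a codimension $\geq 2$ stratum of $X$.  Locally over $\mathsf{Exp}_\sigma$, such a $Y_v$ is a toric fibration over a stratum of $X$, and the pullback polarization contains no information along the fibers.  Here we use that the combinatorial type determines, via the edge weights $w_e$, all intersection multiplicities of $Z \cap Y_v$ with the toric boundary divisors of $Y_v$.  A relative version of the Katz--Payne realization theorem~\cite[Theorem~1.2]{KP11} then furnishes a finite-type parameter space for subschemes in the fibers of $Y_v \to \mathsf{Exp}_\sigma$ with these fixed contact data.

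Finally, the strong transversality condition imposes that the restrictions of the subschemes on neighbouring components $Y_v$, $Y_{v'}$ to their common double divisor agree, so the moduli space of stable relative ideal sheaves of type $\sigma$ sits as a locally closed substack in the fibered product, over the finitely many double-divisor restrictions, of the per-component finite-type Hilbert schemes constructed above.  A finite-type fibered product of finite-type stacks is of finite type, giving the lemma.  The main technical obstacle lies in the second class of components: one must verify that the Katz--Payne result admits the modest relative generalization invoked here, i.e.\ works in families over $\mathsf{Exp}_\sigma$ with contact orders prescribed by the combinatorial type rather than in the absolute toric case treated in loc.~cit.  Once this is in place, the gluing and the dominance arguments are essentially formal consequences of flatness and the definition of strong transversality.
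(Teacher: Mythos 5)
Your overall strategy is the same as the paper's: stratify by combinatorial type, bound subschemes component-by-component using a mix of the pulled-back polarization and the contact orders, invoke the Katz--Payne regularity argument for the components on which the polarization degenerates, and glue along double divisors. However, your dichotomy is drawn in the wrong place, and this breaks the argument for one class of components. A component $Y_v$ dominating an irreducible component $D_i$ of $D$ is a compactified $\mathbb{G}_m$-bundle (generically a $\mathbb{P}^1$-bundle) over a locally closed surface in $D_i$; the map $Y_v\to X$ factors through $D_i$, so the pullback of an ample class on $X$ is pulled back from that surface and has degree zero on the $\mathbb{P}^1$-fibers. It is nef but emphatically not big, so your claim that it is ``ample or big-and-nef'' on such $Y_v$ is false, and the Hilbert polynomial with respect to it does not bound the fiber degree of $Z\cap Y_v$. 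These components must be routed through your second argument: the fiber degree is controlled by the contact orders with the boundary sections (the edge weights $w_e$), and finiteness of the resulting parameter space requires the Castelnuovo--Mumford regularity bound of Katz--Payne rather than naive Hilbert-scheme finiteness --- note also that these components are typically non-proper, being torus bundles over locally closed strata, so even with a fixed homology class one cannot simply quote properness of a relative Hilbert scheme. This is exactly how the paper proceeds: only the components that are birational modifications of $X$ itself are handled by the pullback polarization alone, and all the rest --- $\mathbb{G}_m$-bundles over boundary surfaces, $\mathbb{G}_m^2$-bundles over boundary curves, and toric threefolds --- are treated via the contact-order and regularity argument, with the class of $Z\cap Y_v$ pinned down by combining the pushforward to the base stratum (controlled by $\beta$) with the fiber degree (controlled by the edge weights) and the blowup formula.

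The remainder of your outline --- the reduction to finitely many cones $\sigma$ via Lemma~\ref{lem: comb-boundedness}, the relative form of the Katz--Payne argument over $\mathsf{Exp}_\sigma$, and the gluing along double divisors using strong transversality --- matches the paper and is sound once the misclassified components are moved into the second class.
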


\begin{proof}
Fix an expansion $\mathcal Y$ and examine the irreducible components. We study the subschemes in an irreducible component of this expansion. As noted above, the components are all compactified torus bundles over locally closed strata of $X$. By the algebraic transversality hypothesis, the boundedness is insensitive to further logarithmic modifications or partial compactifications of $\mathcal Y$. In particular, after replacing $\mathcal Y$ with a further logarithmic modification, any component $V$ of $\mathcal Y$ can be presented as
\[
V\to W\to B
\]
with $V\to W$ logarithmically \'etale and with $B$ a closed stratum in $X$ and $W$ a $\mathbb P^r$-bundle over $B$, with $r = \mathsf{codim}_X V$. The logarithmically \'etale morphism can be taken to be a composition of an open immersion and sequence of blowups. 

Let $Z\subset V$ be a subscheme with the given combinatorial data. We claim that there are finitely many choices for the homology class of $Z$ in $V$. We argue as follows. Since the Hilbert polynomial in $X$ is fixed, there are finitely many choices for the homology class of the pushforward of $[Z]$ to $B$. On the other hand, since we have fixed the contact orders of $Z$, there are finitely many choices for the fiber degree of the class $[Z]$ after pushforward to the toric bundle $W$. Moreover, the cohomology of $V$ can be obtained as a module over that of $W$ by the blowup formula, and is generated by classes of exceptional divisors and exceptional curves in the blowup. The homology class $[Z]$ is determined by its intersection multiplicities with these exceptionals, which are in turn determined by the contact order. 

Choose a very ample line bundle on $V$ and observe that the degree of the subscheme $Z$ with respect to this very ample line bundle is now bounded. Since the holomorphic Euler characteristic of $Z$ is also fixed, there are finitely many possible values of the Hilbert polynomial for the subscheme $Z$. The possible choices for the ideal sheaf are parameterized by an open subset in a finite disjoint union of connected components of the Hilbert scheme of $V$. The result follows.
\end{proof}

\subsection{Finiteness of automorphisms} We continue with the notation from the previous sections. Let $\mathsf{DT}_{\beta,n}(X|D)$ be the moduli space of ideal sheaves on $X$ relative to $D$, with a fixed choice of target expansion moduli. There is a structure map
\[
\mathsf{DT}_{\beta,n}(X|D)\to \mathsf{Exp}(X|D). 
\]
We use this to deduce finiteness of automorphisms the space of relative ideal sheaves.

\begin{theorem}
The isotropy groups of the moduli space $\mathsf{DT}_{\beta,n}(X|D)$ are finite. 
\end{theorem}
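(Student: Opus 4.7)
The plan is to analyze the isotropy group of a geometric point $p = (\mathcal Y_p, \mathcal Z_p)$ via the structure morphism $\pi \colon \mathsf{DT}_{\beta,\chi}(X|D) \to \mathsf{Exp}(X|D)$. First I would observe that the kernel of $\mathrm{Aut}(p) \to \mathrm{Aut}(\pi(p))$ is trivial: any element of this kernel acts as the identity on $\mathcal Y_p$ while preserving $\mathcal Z_p$, but a closed subscheme of a fixed ambient scheme admits no non-trivial automorphisms as a subscheme. Hence $\mathrm{Aut}(p)$ embeds into the isotropy group $\mathbb T_p := \mathrm{Aut}(\pi(p))$, which, by the Artin fan structure on $\mathsf{Exp}(X|D)$ coming from Section~\ref{sec:geometric-moduli} and Theorem~\ref{thm: CCUW}, is an algebraic torus associated to the cone of $T$ containing the image of $\pi(p)$. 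It therefore suffices to show that the subgroup of $\mathbb T_p$ stabilizing $\mathcal Z_p$ is zero-dimensional.

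I would perform this verification componentwise. The torus $\mathbb T_p$ acts on the expansion $\mathcal Y_p$ via the toric torus of rescalings of each irreducible component $Y_v$ that fix its boundary divisors. By the DT stability condition, for each non-tube component $Y_v$ the restriction $\mathcal Z_p \cap Y_v$ is not a tube, and strong transversality ensures it meets all strata properly. A torus-invariant strongly transverse subscheme of a toric bundle is necessarily pulled back from the base, and hence is a tube; this contradicts the DT condition, so the stabilizer of $\mathcal Z_p \cap Y_v$ in the toric torus of $Y_v$ must be zero-dimensional, and therefore finite. Assembling these finite stabilizers over all non-tube components gives the desired bound, provided one knows that the characters $\mathbb T_p \to \mathbb T_{Y_v}$ associated to non-tube components span the character lattice of $\mathbb T_p$.

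This last spanning assertion will be the hardest step and requires untangling the combinatorial structure of $T$ and its universal family $\Upsilon$ from Sections~\ref{sec:tropical-moduli} and~\ref{sec:universal}. Specifically, one must verify that every $\mathbb G_m$-factor of $\mathbb T_p$ acts non-trivially on at least one non-tube component: the tube vertices in $\Upsilon_P$ are introduced by the combinatorial choice of universal family, and the natural $\mathbb G_m$-action on each resulting $\mathbb P^1$-bundle tube component is rigidified by the integral structure on $T$ determined by the tube vertex positions, so that any such $\mathbb G_m$-factor is subordinate to the rescaling parameters of the adjacent essential components. Once this rigidification is made precise, the finiteness of stabilizers on non-tube components forces the full subgroup of $\mathbb T_p$ preserving $\mathcal Z_p$ to be zero-dimensional, completing the proof.
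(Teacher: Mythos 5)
Your overall strategy coincides with the paper's: reduce to the isotropy torus of $\mathsf{Exp}(X|D)$ at the image point (the torus with cocharacter lattice $\sigma^{\mathrm{gp}}(\NN)$ for the relevant cone $\sigma$ of $T$), and then rule out any positive-dimensional stabilizing subtorus by showing that a one-parameter subgroup acting nontrivially on a component and preserving a strongly transverse subscheme forces that subscheme to be a tube, which DT stability forbids on non-tube components. Your preliminary observation that the Hilbert functor contributes no automorphisms over a fixed expansion is also implicit in the paper.

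The place where your proposal stops short of a proof is exactly the step you flag: the assertion that every nontrivial one-parameter subgroup of $\mathbb T_p$ acts nontrivially on at least one \emph{non-tube} component. Your sketch of this (``rigidified by the integral structure\dots subordinate to the rescaling parameters of the adjacent essential components'') is a plausible heuristic but not an argument, and without it the componentwise analysis leaves open the possibility of a $\mathbb G_m$ supported entirely on tube components, where the subscheme \emph{is} invariant. The paper closes this gap with a cleaner device: given a candidate $\mathbb G_m \subset \mathbb T_p$ stabilizing $\mathcal I_Z$, one pulls back along $B\mathbb G_m \to B\mathbb G_m^r$ and uses the contraction $\mathcal Y \to \overline{\mathcal Y}$ that collapses precisely the tube components (available because, by DT stability, $Z$ is pulled back from a subscheme $\overline Z$ of $\overline{\mathcal Y}$, cf.\ Remark~\ref{rem:twofamilies}). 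The $\mathbb G_m$ acts compatibly on the contracted family, whose components correspond to the essential vertices of $G_P$ and whose moduli are exactly the parameters of the cone $\sigma$; since $\overline Z$ meets every component and carries no tubes, the componentwise argument applies to \emph{all} components of $\overline{\mathcal Y}$ and forces the cocharacter to be trivial. If you want to salvage your route without the contraction, you would need to prove directly that the map from $\sigma^{\mathrm{gp}}$ to the product of the translation tori of the non-tube components is injective, i.e.\ that the positions of the essential vertices of $G_P$ determine the point of $\sigma$ --- which is essentially what the contraction argument packages geometrically. One further small imprecision: ``torus-invariant and transverse implies pulled back from the base, hence a tube'' only literally parses for $\PP^1$-bundle components over surfaces (the only components for which ``tube'' is defined); for the other component types the correct conclusion is that no nontrivial invariance is compatible with strong transversality at an essential vertex, and this deserves a separate sentence.
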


\begin{proof}
We show that the objects of $\mathsf{DT}_{\beta,n}(X|D)$ have finite automorphism group. Recall that a point of this stack is given by a morphism
\[
\spec\CC\to \mathsf{Exp}(X|D)
\]
together with an ideal sheaf $\mathcal I_{\mathcal Z}$ on the expansion $\mathcal Y$ of $(X|D)$; here ${\mathcal Z}$ denotes the associated subscheme. Moreover, we have imposed the condition that the ideal sheaf gives rise to a subscheme that is a tube precisely along the tube components. In order to analyze the stabilizer group of the ideal sheaf $\mathcal I_{\mathcal Z}$, we first note that the expansion $\mathcal Y$ maps to a substack $B\mathbb G_m^r$ of $\mathsf{Exp}(X|D)$. Specifically, the map above determines a locally closed stratum of $\mathsf{Exp}(X|D)$ corresponding to a cone $\sigma$ of the associated cone space. The isotropy group at this point is naturally identified with the torus whose lattice of one-parameter subgroups is $\sigma^{\mathsf{gp}}(\NN)$. We claim that $\mathcal I_{\mathcal Z}$ is not stabilized by any positive dimensional subgroup of this isotropy group. 

To see this, we unwind how the isotropy group acts on $\mathcal Y$. The components of the expansion are either modifications of $X$, or an equivariantly compactified $\mathbb G^k_m$ bundle over $S^\circ$ where $S^\circ$ is a locally closed stratum in $X$ of codimension $k$. Each of these bundles therefore possesses a translation action by the torus. Upon restriction to each component, the isotropy group acts as by scaling the torus directions described above (however, not all such translation actions come from the action of isotropy).

We wish to show that, in the setup above, no positive dimensional subtorus in $\mathbb G_m^r$ stabilizes the ideal sheaf $\mathcal I_{\mathcal Z}$. If there was such a subgroup, then in particular, $\mathcal I_{\mathcal Z}$ would be preserved by any one-parameter subgroup of that subgroup. It therefore suffices to show that there is no no one-parameter subgroup of $\mathbb G_m^r$ that stabilizes $\mathcal I_{\mathcal Z}$.

Assume to the contrary, that there were a one-parameter subgroup of $\mathbb G_m^r$ that stabilizes $\mathcal I_{\mathcal Z}$. We may pullback the expansion $\mathcal Y$ along the inclusion $B\mathbb G_m\to B\mathbb G_m^r$. This choice of $B\mathbb G_m$ point corresponds to a ray in a cone inside $T$. On the interior of this ray, there may be tube vertices on the associated $1$-complex. Erasing these vertices by passing to the associated $1$-complex in $|T|$ gives rise to a contraction morphism
\[
\begin{tikzcd}
\mathcal Y\arrow{dr} \arrow{rr} & & \overline{\mathcal Y}\arrow{dl}\\
&B\mathbb G_m&
\end{tikzcd}
\]
which collapses the $\mathbb P^1$-bundles corresponding to these vertices to their base. Furthermore, the subscheme ${\mathcal Z}$ is the inverse image, along this contraction, of a subscheme $\overline {\mathcal Z}\subset\overline{\mathcal Y}$. By the definition of algebraic transversality, every component of $\mathcal Y$ has nonempty intersection with the subscheme. The only possibility for a $\mathbb G_m$ to stabilize a subscheme is by scaling the fibers of a $\mathbb P^1$-bundle component in $\overline{\mathcal Y}$. But in order for $\mathbb G_m$ to stabilize such a subscheme, that subscheme, restricted to this component, has to be pulled back along the projection, and therefore it has to be a tube subscheme. But we have blown down all the tube components of $\mathcal Y$, and thus by DT stability, the remaining $\mathbb P^1$-bundle components cannot contain tube subschemes. It follows that the stabilizer must be finite, as required. 
\end{proof}

\subsection{Properness}  
We now verify properness of the moduli problem. We only prove the properness here under the simplifying assumption that the generic fiber of a family of expansions is unexpanded. This conveys the essential geometric arguments in the proof. The removal of the assumption adds some technical subtleties, and we delay this to Section~\ref{sec:expanded}.

\begin{theorem}\label{thm: properness}
The moduli space $\mathsf{DT}_{\beta,n}(X|D)$ is proper. 
\end{theorem}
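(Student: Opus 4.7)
The plan is to verify the valuative criterion of properness. Combined with boundedness (Lemmas~\ref{lem: comb-boundedness} and~\ref{lem: relative-boundedness}) and the finite-stabilizer result of the previous theorem, this will show $\mathsf{DT}_{\beta,\chi}(X|D)$ is a proper Deligne--Mumford stack. So it suffices to check uniqueness and existence of limits over the spectrum of a discrete valuation ring $R$, up to finite base change on $R$. Throughout, let $K$ denote the fraction field of $R$.

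For existence, fix a $K$-point of $\mathsf{DT}_{\beta,\chi}(X|D)$: a classifying map $\spec K\to \mathsf{Exp}(X|D)$ with associated expansion $\mathcal Y_K$ and strongly transverse, DT-stable subscheme $\mathcal Z_K\subset \mathcal Y_K$. I first handle the \emph{unexpanded case}, where the classifying map factors through the point of $\mathsf{Exp}(X|D)$ corresponding to the zero cone, so that $\mathcal Y_K = X_K$ and $\mathcal Z_K\subset X_K$ has dense part in $X^\circ$. Proposition~\ref{thm: uniqueness-strong-case} supplies a canonical finite base change $R\subset R'$ and a minimal strongly transverse extension $\mathcal Z'\subset \mathcal Y'$ over $\spec R'$, encoded by an embedded $1$-complex $G\hookrightarrow \Sigma_X$ which classifies a point $P\in|\mathsf T|$. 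The geometric expansion $\mathcal Y_P$ parametrized by $P$ may differ from $\mathcal Y'$ by the $\mathbb P^1$-bundle components associated to the tube vertices of $\Upsilon_P$ (Remark~\ref{rem:twofamilies}); I obtain $\mathcal Y_P$ from $\mathcal Y'$ by the corresponding sequence of weighted blowups along the indicated double divisors, and extend $\mathcal Z'$ to $\mathcal Z_P\subset\mathcal Y_P$ by installing, over each new $\mathbb P^1$-bundle component, the unique tube subscheme whose restrictions to the two bounding divisors match those of $\mathcal Z'$. By Remark~\ref{rem:notubes}, the bivalent vertices already present in $G$ carry non-tube subschemes, so the only tubes in $\mathcal Z_P$ lie on tube components; this is exactly the DT stability condition.

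For separatedness, take two families over $\spec R$ with isomorphic restrictions to $\spec K$. Since the open part of a strongly transverse subscheme is dense in $X^\circ\times \spec K$, the uniqueness statement of Proposition~\ref{thm: uniqueness-strong-case} identifies the two minimal strongly transverse extensions after a common finite base change. The $1$-complex $G$ is intrinsic to the generic fiber, and once the stack $\mathsf{Exp}(X|D)$ is fixed the tube vertices of $\Upsilon_P$, and hence the tube components of $\mathcal Y_P$, are pinned down. The two subschemes therefore agree away from these tube components, while on each tube component they are forced to coincide as tube pullbacks of a common restriction to the adjacent double divisors. This yields the required isomorphism over $\spec R$.

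The main obstacle, and the only point not already in hand, is the case where the generic map $\spec K\to \mathsf{Exp}(X|D)$ is already a nontrivial expansion. The plan, carried out in Section~\ref{sec:expanded}, is to spread out $(\mathcal Y_K,\mathcal Z_K)$ over a smooth curve, use geometric tropicalization (Theorem~\ref{thm: divisorial-trop}) to re-extract the embedded $1$-complex from the closure of $\mathcal Z_K$ inside the total space of the generic expansion, and reduce to the unexpanded case by passing to a common refinement that accommodates both the generic tropical data and the $1$-complex produced by Tevelev's algorithm. The subtle step is identifying the resulting canonical limit with one produced by the argument above and checking that the DT stability condition is preserved under this reduction; this is the technical matter taken up in the final section.
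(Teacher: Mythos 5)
Your proposal follows essentially the same route as the paper: reduce to the weak valuative criterion (using boundedness and quasi-separatedness), settle the unexpanded case via Proposition~\ref{thm: uniqueness-strong-case} together with the tube-vertex analysis of Remarks~\ref{rem:twofamilies} and~\ref{rem:notubes} to get existence and uniqueness of a DT-stable limit, and defer the generically expanded case to Section~\ref{sec:expanded}, exactly as the paper does. The only cosmetic difference is that you build the limit on the tube components by hand (gluing in tube subschemes over the inserted $\mathbb P^1$-bundles), whereas the paper takes the flat limit directly in the modified family $\mathcal Y_R\to\mathcal Y_G$ and observes that the contracted components are precisely the tube components; these produce the same family.
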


\begin{proof}
The stack $\mathsf{Exp}(X|D)$ is locally of finite presentation, see~\cite[Corollary~6.7]{CCUW}. The moduli space $\mathsf{DT}_{\beta,n}(X|D)$ is a relative Hilbert scheme over the stack of expansions and is therefore also quasiseparated; the arguments in the previous section show that it is of finite type. Therefore we need only check the weak valuative criterion for properness. Let $K$ be a valued field and consider a morphism
\[
\spec K\to  \mathsf{DT}_{\beta,n}(X|D). 
\]
Since $\mathsf{DT}_{\beta,n}(X|D)$ is of finite type, we may assume that $K$ is the function field of a smooth curve and the valuation is given by order of vanishing at a closed point. After pulling back the universal expansion and universal subscheme, we must check that, after replacing $K$ with an extension, the resulting family extends to yield a map
\[
\spec R \to  \mathsf{DT}_{\beta,n}(X|D).
\]
We prove the result here in the case where the fiber over $K$ is unexpanded.  
In the unexpanded case, this follows directly from Proposition \ref{thm: uniqueness-strong-case}; indeed, our construction has been rigged to arrange this, and we spell out the details as follows.  The general case is handled by a nearly identical argument in Section~\ref{sec:expanded}, but a more complicated formalism is required to carry it out. 

Suppose we have a cone space $T$ with Artin fan $\mathsf A_T$.  The data of a morphism $\spec R \rightarrow \mathsf A_T$ which sends the generic point to the open stratum is equivalent to giving an integral point in $T$.  Indeed, such a statement holds for an Artin cone by a direct analysis, and the general statement follows.  If we pass to a base change of $R$, the corresponding integral point of the composed morphism is obtained by scaling the original point.

To prove the existence of a stable limit, apply Proposition \ref{thm: uniqueness-strong-case}; possibly after a base change, there exists a $1$-complex $\mathsf G \subset \Sigma_X$ such that the corresponding expansion $\mathcal Y_G \rightarrow \spec R$ has an algebraically transverse limit.  Furthermore, by Remark \ref{rem:notubes}, there are no tube subschemes contained in the components associated to bivalent vertices of $\mathsf G$ .

The $1$-complex $\mathsf G$ defines a rational point of $T$.  After a possible scaling to ensure integrality, along with the corresponding base change of $R$, we have a point $P \in T$ and an associated morphism $\spec R \rightarrow \mathsf{Exp}(X|D)$.  By construction, the corresponding expansion $\mathcal Y_{R} \rightarrow \spec R$
factors through $\mathcal Y_G$, and the contracted components are precisely the tube vertices of $\Upsilon_P$.  As a result the flat limit of ${\mathcal Z}_K$ in $\mathcal Y_{R}$ will be algebraically transverse and stable.

For uniqueness of the limit, given an expansion $\mathcal Y \rightarrow  \spec R$ associated to an integral point $P'$ of $T$, the universal property of $\mathsf G$ implies that there is a subdivision $\Upsilon_{P'} \rightarrow \mathsf G$ and corresponding modification $\mathcal Y \rightarrow \mathcal Y_G$.  There are no tube subschemes for $\mathcal Y_G$, so the new vertices of the subdivision are precisely the tube vertices of this family, so $P'$ and $P$ coincide and the stable limit is unique.
\end{proof}

\begin{remark}[Stable pair moduli]\label{rem: stable-pairs}
In this remark, we sketch how to modify the definitions and constructions to handle stable pairs instead of ideal sheaves.
A \textit{stable pair} consists of a pair $(F,s)$ consisting of a sheaf $F$ of fixed Hilbert polynomial with purely $1$-dimensional support and a section $\mathcal O_X\xrightarrow{s} F$ whose cokernel is $0$-dimensional~\cite{PT09}. A stable pair $(\mathcal F,s)$ on an expansion $\mathcal Y$ of $X$ is algebraically transverse if the scheme theoretic support of $\mathcal F$ is algebraically transverse and the support of the cokernel is disjoint from the divisorial strata of of $\mathcal Y$. 

The existence and uniqueness of algebraically transverse limits is proven analogously to the case of subschemes.  Specifically, given a family of algebraically transverse stable pairs over a valuation ring, we find a dimensionally transverse limit for the schematic support using the tropicalization.  We can then apply the stable pairs arguments used by Li and Wu~\cite{LiWu15} to produce an algebraically transverse limit.  As before with the Gr\"{o}bner limit, one can tropically characterize the algebraically transverse limit.  
A \textit{tube stable pair} is a pair that is pulled back from a $0$-dimensional subscheme on a surface along a $\PP^1$-bundle, with the condition that the cokernel of the section vanishes.  The bivalent vertices added to the dimensionally transverse limit correspond to those components where the limit stable pair is not a tube stable pair (and  one can use this to give an alternate proof of existence as well).  

The moduli space $\mathsf{PT}(X|D)$ can be constructed as a relative stable pair space on the universal expansion over $\mathsf{Exp}(X|D)$, with the condition that only tube components host tube stable pairs.  Properness follows from the tropical limit algorithms, and the virtual class follows from~\cite[Proposition~10]{MPT10} as in the ideal sheaf case. 
\end{remark}

\section{Logarithmic Donaldson--Thomas theory}\label{sec: log-DT-theory}

We construct logarithmic Donaldson--Thomas invariants for a simple normal crossings threefold pair $(X|D)$, by showing that the moduli spaces constructed in the previous section come equipped with virtual classes and evaluation morphisms to the relative divisors. We also show that the virtual class, and hence the numerical theory, does not depend on the choices that have been in constructing the moduli stack of target expansions.  We define DT generating functions and state the basic rationality conjecture, analogous to the non-relative setting.

\subsection{Virtual classes} Each moduli space of ideal sheaves comes equipped with a structure map
\[
\mathfrak t:\mathsf{DT}_{\beta,n}(X|D)\to \mathsf{Exp}(X|D).
\]
Let $\mathcal I$ be an ideal sheaf. Recall that given a coherent sheaf $\mathcal E$ on $X$, there is a trace map:
\[
\mathrm{Ext}^i(\mathcal E,\mathcal E)\to H^i(X,\mathcal O_X),
\]
see~\cite[Chapter~10]{SheafBook}. The kernel $\mathrm{Ext}_0(\mathcal E,\mathcal E)$ of this map is called the \textit{traceless} Ext group. 

The groups
\[
\mathrm{Ext}^1(\mathcal I,\mathcal I)_0 \ \ \ \mathrm{Ext}^2(\mathcal I,\mathcal I)_0
\]
govern the deformation and obstruction spaces of the ideal sheaf $\mathcal I$ respectively. Additional details concerning the fact that the traceless Ext groups govern the obstructions to deformations of sheaves can be found in~\cite[Section~4]{HT10} and~\cite[Section~3]{RPWT}.

\begin{proposition}
The structure map
\[
\mathfrak t:\mathsf{DT}_{\beta,n}(X|D)\to \mathsf{Exp}(X|D).
\]
is equipped with a perfect obstruction theory, with deformation and obstruction spaces as above. If $\beta$ is the curve class, then the there is an associated virtual fundamental class $[\mathsf{DT}_{\beta,n}(X|D)]^{\mathsf{vir}}$ in Chow homology of degree
\[
\mathsf{vdim} \ \mathsf{DT}_{\beta,n}(X|D) = \int_\beta c_1(T_X). 
\]
The virtual class in Borel--Moore homology is defined by taking the image under the cycle class map.
\end{proposition}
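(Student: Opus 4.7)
The plan is to adapt the classical Atiyah class construction of the perfect obstruction theory for Hilbert schemes of ideal sheaves on a smooth threefold, due to Thomas and Huybrechts--Thomas, to the relative setting over $\mathsf{Exp}(X|D)$, following the strategy of Li--Wu in the smooth pair case. Let $\pi \colon \mathcal Y \to \mathsf{DT}_{\beta,\chi}(X|D)$ denote the universal expansion and $\mathcal I := \mathcal I_{\mathcal Z}$ the universal ideal sheaf. The relative dualizing sheaf $\omega_{\mathcal Y/\mathsf{Exp}}$ is available because $\mathcal Y \to \mathsf{Exp}(X|D)$ is log smooth and Cohen--Macaulay of relative dimension three. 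Taking the $\mathfrak t$-direction part of the Atiyah class of $\mathcal I$, projecting to the traceless component, and applying Grothendieck--Verdier duality relative to $\pi$, one obtains a candidate obstruction theory
\[
\phi \colon E^\bullet := R\pi_{\ast}\bigl(R\mathcal{H}om(\mathcal I,\mathcal I)_0 \otimes \omega_{\mathcal Y/\mathsf{Exp}}\bigr)[2] \longrightarrow \mathbb L_{\mathfrak t}.
\]

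The first step is to identify the cohomology sheaves of $E^\bullet$ with the claimed deformation and obstruction spaces $\mathrm{Ext}^1(\mathcal I_Z,\mathcal I_Z)_0$ and $\mathrm{Ext}^2(\mathcal I_Z,\mathcal I_Z)_0$, using flat base change together with the vanishing of the $\mathrm{Ext}^0$ and $\mathrm{Ext}^3$ traceless pieces. These vanishings follow from $\mathcal Z$ being a strongly transverse $1$-dimensional subscheme and a standard simplicity argument. Next I would verify the Behrend--Fantechi cohomology criterion for $\phi$, which follows from the usual Atiyah class argument, closely paralleling the formalism of Li--Wu.

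The main obstacle is verifying that $E^\bullet$ has tor-amplitude in $[-1,0]$, because the fibers of $\pi$ are not smooth threefolds but expansions, with simple normal crossings singularities along the double locus. Strong transversality is essential here: the injection $\mathcal I_{\mathcal Z} \otimes \mathcal O_S \hookrightarrow \mathcal O_S$ along each divisorial stratum $S$ forces the two one-sided restrictions of $\mathcal I$ to a double divisor to agree, so $\mathcal I$ admits locally free resolutions on each irreducible component of $\mathcal Y$ which glue compatibly across the double locus. Working with these glued resolutions, the local calculation of $R\mathcal{H}om(\mathcal I, \mathcal I)$ reduces component by component to the classical smooth threefold situation, and the tor-amplitude bound follows. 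The absence of embedded points or one-dimensional components on divisorial strata, also guaranteed by strong transversality, prevents additional higher cohomology sheaves from appearing.

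Finally, the virtual dimension is computed by taking Euler characteristics of $E^\bullet$ on fibers. On $Y = \bigcup Y_i$ with strongly transverse $Z \subset Y$, Grothendieck--Riemann--Roch applied component by component, together with the cancellation of contributions along double divisors afforded by strong transversality, yields
\[
\mathrm{vdim} \; = \; -\chi(\mathcal I, \mathcal I)_0 \; = \; \int_{[Z]} c_1\bigl(T^{\log}_{\mathcal Y/\mathsf{Exp}}\bigr).
\]
The difference between $c_1(T^{\log}_{\mathcal Y/\mathsf{Exp}})$ and the pullback of $c_1(T_X)$ is supported on boundary strata of $\mathcal Y$, on which $[Z]$ intersects trivially by strong transversality, so $\mathrm{vdim} = \int_\beta c_1(T_X)$, as claimed.
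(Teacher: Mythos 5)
Your overall strategy is workable but diverges from the paper's proof, which is much shorter: the paper simply invokes \cite[Proposition~10]{MPT10}, where the relative obstruction theory for ideal sheaves on families of expanded targets (via the traceless Atiyah class and relative duality, essentially the construction you outline, with perfection of $\mathcal I_{\mathcal Z}$ supplied by strong transversality) is established in the required generality; it then observes that $\mathsf{Exp}(X|D)$ is a connected Artin fan, hence irreducible and equidimensional, so virtual pullback of its fundamental class in the sense of \cite{Mano12} produces $[\mathsf{DT}_{\beta,\chi}(X|D)]^{\mathrm{vir}}$. Re-deriving the cited input is legitimate, but you omit this second step entirely: a relative perfect obstruction theory over an Artin stack does not by itself yield a virtual class, and you need to say why the base has a fundamental class to pull back.

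More seriously, the final step of your virtual dimension computation fails as written. You assert that the difference between $c_1(T^{\log}_{\mathcal Y/\mathsf{Exp}})$ and the pullback of $c_1(T_X)$ pairs to zero against $[Z]$ because it is supported on boundary strata that $Z$ ``intersects trivially.'' Strong transversality does not make these intersections empty --- it makes them zero-dimensional --- and the pairing of $[Z]$ with the boundary divisor class is exactly $D\cdot\beta$, which is nonzero for the curve classes of interest. Since $c_1(T^{\log})=c_1(T)-[D]$, your formula would give $\int_\beta c_1(T_X)-D\cdot\beta$, contradicting the statement you are proving. The clean way to finish is to avoid the log tangent bundle altogether: the deformation and obstruction spaces are the ordinary traceless $\mathrm{Ext}^1$ and $\mathrm{Ext}^2$ on the expansion, the traceless $\mathrm{Ext}^0$ and $\mathrm{Ext}^3$ vanish, so the virtual dimension is $-\chi(\mathcal I,\mathcal I)_0$; this Euler pairing is topological and localizes to the proper support of $\mathcal Z$, hence is unchanged under degenerating the expansion back to $X$ itself, where the standard Riemann--Roch computation of MNOP gives $\int_\beta c_1(T_X)$.
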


\begin{proof}
The existence of the relative obstruction theory as described is established in the requisite generality in~\cite[Proposition~10]{MPT10}. Since the base $\mathsf{Exp}(X|D)$ of the obstruction theory is a connected Artin fan, it is irreducible and equidimensional. Virtual pullback of the fundamental class gives rise to the virtual fundamental class in the usual fashion~\cite{Mano12}. 
\end{proof}

\subsection{Virtual birational models} The spaces constructed in the paper to this point have all required a choice of tropical target moduli. We show that different choices lead to compatible theories. 

We supplement the notation in this section to keep track of the different theories arising from the tropical choices. Recall that the construction of $T$ in Section~\ref{sec: target-moduli} produces an infinite collection of spaces, depending on the choices made in the algorithmic steps of Section~\ref{sec: constructing-T}. Moreover, there are additional choices that are made in the construction of the universal family. 

We denote by $\Lambda$ the set of outputs of that construction. More precisely, each element in $\Lambda$ is a morphism of polyhedral complexes 
\[
\pi^\lambda: \Upsilon^\lambda\to T^\lambda
\]
where $T^\lambda$ is a moduli space of tropical expansions and $\pi^\lambda$ is the projection from the universal family. Let $\mathsf{Exp}(X|D)^\lambda$ be the associated moduli space of expansions and let 
\[
\mathcal Y^\lambda\to \mathsf{Exp}(X|D)^\lambda
\]
be the projection from the universal family. We fix the discrete data and establish the following birational invariance statement, parallel to~\cite[Section~3.6]{R19}, which is itself modeled on~\cite[Section~6]{AW}. 

\begin{theorem}[Independence of choices]\label{thm: independence-of-choices}
For any $\lambda,\mu\in \Lambda$ there exists $\varphi\in\Lambda$ and logarithmic modifications
\[
\begin{tikzcd}
 & \mathsf{DT}^\varphi_{\beta,n}(X|D)\arrow[swap]{dl}{\pi^\lambda}\arrow{dr}{\pi^\mu}& \\
 \mathsf{DT}^\lambda_{\beta,n}(X|D) & &\mathsf{DT}^\mu_{\beta,n}(X|D)
\end{tikzcd}
\]
such that 
\[
\pi^\lambda_\star[\mathsf{DT}^\varphi_{\beta,n}(X|D)]^{\mathsf{vir}} =  [\mathsf{DT}^\lambda_{\beta,n}(X|D)]^{\mathsf{vir}} \ \ \mathrm{and} \ \ \pi^\mu_\star [\mathsf{DT}^\varphi_{\beta,n}(X|D)]^{\mathsf{vir}} = [\mathsf{DT}^\mu_{\beta,n}(X|D)]^{\mathsf{vir}}. \]
\end{theorem}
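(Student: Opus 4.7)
The plan is to reduce the comparison to a birational modification at the level of the stack of expansions, then invoke functoriality of virtual pullback across a Cartesian diagram. First, I would extend Proposition~\ref{prop: common-refinements} so that it simultaneously refines the base tropical moduli and the universal families: applying Lemma~\ref{lem: equivariant-strictness} to the cones $\mathbb U_G$ in parallel with the treatment of $\mathbb X_G$, with choices made consistently across graph surjections, produces $\varphi\in\Lambda$ together with compositions of proper subdivisions $T^\varphi\to T^\lambda$ and $T^\varphi\to T^\mu$ whose refinements extend to compatible refinements of the universal families. Theorem~\ref{thm: CCUW} then yields logarithmic modifications of smooth Artin fans $\mathsf{Exp}(X|D)^\varphi\to\mathsf{Exp}(X|D)^\lambda$ and $\mathsf{Exp}(X|D)^\varphi\to\mathsf{Exp}(X|D)^\mu$, and the universal expansion $\mathcal Y^\varphi$ receives a contraction to the pullback of $\mathcal Y^\lambda$ (respectively $\mathcal Y^\mu$) that collapses the tube components of $\mathcal Y^\varphi$ absent from $\mathcal Y^\lambda$.

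Next I would construct the morphism $\pi^\lambda:\mathsf{DT}^\varphi_{\beta,\chi}(X|D)\to\mathsf{DT}^\lambda_{\beta,\chi}(X|D)$ and identify it with the logarithmic modification induced by the base. Given a $\varphi$-stable family $(\mathcal Y^\varphi_S,\mathcal Z^\varphi_S)$, the contraction $\mathcal Y^\varphi_S\to\mathcal Y^\lambda_S$ descends $\mathcal Z^\varphi_S$ to a subscheme $\mathcal Z^\lambda_S\subset\mathcal Y^\lambda_S$, since by $\varphi$-DT stability the subscheme is a tube on precisely the contracted components; this produces a $\lambda$-stable family. Conversely, given $\lambda$-stable data together with a lift of the base to $\mathsf{Exp}(X|D)^\varphi$, the expansion $\mathcal Y^\lambda_S$ pulls back along the refinement to an expansion $\mathcal Y^\varphi_S$ with additional tube components, and the subscheme $\mathcal Z^\lambda_S$ extends canonically to a $\varphi$-stable subscheme by taking, on each new tube component, the preimage along the $\mathbb P^1$-bundle structure of its restriction to the adjacent double-point stratum. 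These operations are mutually inverse in families, so $\pi^\lambda$ realizes $\mathsf{DT}^\varphi_{\beta,\chi}(X|D)$ as the fiber product $\mathsf{DT}^\lambda_{\beta,\chi}(X|D)\times_{\mathsf{Exp}(X|D)^\lambda}\mathsf{Exp}(X|D)^\varphi$; the analogous statement holds for $\pi^\mu$.

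For the virtual class comparison, the relative obstruction theories of $\mathfrak t^\varphi$ and $\mathfrak t^\lambda$ are both given by the traceless $\mathrm{Ext}$-complex of the universal ideal sheaf, and their compatibility under the Cartesian identification above follows because the complex on each side is pulled back from a common complex depending only on the ideal sheaf, not on the tropical choice. Since $\mathsf{Exp}(X|D)^\varphi\to\mathsf{Exp}(X|D)^\lambda$ is a birational morphism of smooth irreducible Artin stacks, it pushes the fundamental class forward to the fundamental class, so Manolache's functoriality of virtual pullback in the Cartesian setting gives
\[
\pi^\lambda_\star[\mathsf{DT}^\varphi_{\beta,\chi}(X|D)]^{\mathsf{vir}}=[\mathsf{DT}^\lambda_{\beta,\chi}(X|D)]^{\mathsf{vir}},
\]
and the identical argument applies to $\pi^\mu$. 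The hard part is the middle paragraph: one must verify that the unique tube extension of the subscheme across each new component behaves well in families, so that the preimage construction commutes with arbitrary base change and preserves flatness and the Hilbert polynomial, in order that the Cartesian identification holds literally rather than only up to further refinement. This should reduce to a local analysis along each new double-point stratum, directly parallel to the Li--Wu uniqueness arguments invoked in Section~\ref{sec: flat-limits}.
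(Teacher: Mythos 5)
Your proposal follows essentially the same route as the paper: a common tropical refinement $\varphi$ via Proposition~\ref{prop: common-refinements}, the observation that $\mathcal Y^\varphi$ differs from the pullback of $\mathcal Y^\lambda$ only by tube components so that DT stability forces $\mathcal Z^\varphi$ (and hence the obstruction theory) to be pulled back, and then pushforward along the proper birational comparison map. The only cosmetic difference is that you invoke Manolache's functoriality of virtual pullback over the Cartesian square where the paper cites Costello's pushforward theorem --- interchangeable tools for the same step --- and you are somewhat more explicit than the paper about why the square is Cartesian, which is a reasonable point to dwell on.
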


\begin{proof}
We drop the discrete data to avoid overcrowding the notation. By applying Proposition~\ref{prop: common-refinements} in conjunction with the categorical equivalence between Artin fans and cone stacks, we can find a common refinement 
\[
\begin{tikzcd}
&\mathsf{Exp}^\varphi(X|D)\arrow{dl}\arrow{dr}&\\
\mathsf{Exp}^\lambda(X|D) & & \mathsf{Exp}^\mu(X|D)
\end{tikzcd}
\]
where both vertical arrows are birational. The definitions furnish a forgetful morphism
\[
\mathsf{DT}^\varphi(X|D)\to \mathsf{DT}^\lambda(X|D).
\]
We argue that pushforward identifies virtual classes. We have a universal diagram
\[
\begin{tikzcd}
\mathcal Z^\varphi \arrow[rr,hook]\arrow{dr}& &\mathcal Y^\varphi\arrow{dl} \arrow{r} & X\\
&  \mathsf{DT}^\varphi(X|D).&
\end{tikzcd}
\]
The analogous diagram exists for $\lambda$. We claim that the induced morphism
\[
p: \mathcal Y^\varphi\to \mathcal Y^\lambda\times_{\mathsf{Exp}^\lambda(X|D)} \mathsf{Exp}^\varphi(X|D) 
\]
is obtained by adding tube components to the target geometry. Indeed, this is a tautology -- by definition, tropicalizations of the expansions of this family of targets induce the \text{same} moduli map to $|T|$. Since the DT stability condition guarantees the tube components only contain tube subschemes, the universal subscheme $\mathcal Z^\varphi$ is simply obtained by pulling back the family $\mathcal Z^\lambda$ along the modification. It follows that we have a commutative square
\[
\begin{tikzcd}
\mathsf{DT}^\lambda(X|D)\arrow{r}\arrow{d}& \mathsf{DT}^\mu(X|D)\arrow{d}\\
 \mathsf{Exp}^\lambda(X|D)\arrow{r}& \mathsf{Exp}^\mu(X|D)
\end{tikzcd}
\]
As we have argued above, each of the two vertical arrows have perfect obstruction theories. The introduction of tube components has no effect on the obstruction theory. Indeed, the universal ideal sheaf $\mathcal I^\varphi$ is pullback of the universal ideal sheaf $\mathcal I^\lambda$ along the induced map of targets, so the right vertical obstruction pulls back to the \textit{same} obstruction theory as the one on the left. Since the lower horizontal is a subdivision, it is proper and birational; equality of virtual classes follows from the Costello--Herr--Wise theorem~\cite{Cos06,HW23}
\end{proof}

\begin{remark}
There is a further birational invariance statement that one might expect for the \textit{target}. Specifically, if $X'\to X$ is the blowup of $X$ along a stratum of $D$ with total transform $D'$, one may hope that there is an associated map $\mathsf{DT}(X'|D')\to \mathsf{DT}(X|D)$ that identifies virtual classes, in direct analogy with~\cite{AW}. In fact, when phrased without the DT stability condition, such a result is obvious since every expansion of $X'$ is also an expansion of $X$. However, a non-tube component in a subscheme of an expansion of $X'$ can become a tube component with respect to $X$. An appropriately corrected analysis does yield a birational invariance statement. We delay this to future work. 
\end{remark}

{
\subsection{Evaluations at the relative divisors} Let $(X|D)$ be as above. Let $E_1,\ldots E_k$ be the irreducible components of $D$. These components are smooth by hypothesis. Each intersection of distinct divisors $E_i\cap E_j$ is either empty or a connected divisor, again by hypothesis. Thus, each variety $E_i$ is equipped with a simple normal crossings divisor of its own. We abuse notation slightly and denote these by $(E_i|D)$. 

By applying the constructions of the preceding sections to $E_i$, we obtain a stack of expansions $\mathsf{Exp}(E_i|D)$. For our purposes, it will suffice to examine with the substack parameterizing those expansions whose associated $1$-complex is a finite set of points. We pass to the subscheme of the universal expansion where the irreducible components of the expansion are disjoint. 

For each positive integer $d_i$, we consider the Hilbert scheme of points in the universal expansion
\[
\mathcal E\to \mathsf{Exp}^\lambda(E_i|D)
\]
of zero dimensional subschemes of length $d_i$. As before, the symbol $\lambda$ indicates that we have chosen some stack of target expansions. By studying the locus comprising subschemes of expansions such that no component of the expansion is nonempty, we obtain a proper and finite type stack $\mathsf{Hilb}_\lambda^{d_i}(E_i|D)$ as a special case of the construction above. 

As a special case of the virtual birational invariance of the Donaldson--Thomas moduli spaces, we obtain the parallel statement for the Hilbert schemes of points on a curve or surface. However, since the Hilbert schemes of points on smooth curves and surfaces are unobstructed moduli schemes, the same argument produces an honest birationality statement -- different choices of $\lambda$ give rise to birational models of the logarithmic Hilbert scheme of points on logarithmically smooth (curves and) surfaces.

Given a curve class $\beta$, define $d_i$ to be the intersection number $D_i\cdot \beta$. We therefore have an evaluation space for relative insertions
\[
\mathsf{Ev}_\beta(X|D) = \mathsf{Hilb}^{d_1}(E_1|D)\times\cdots\times \mathsf{Hilb}^{d_k}(E_k|D). 
\]

\begin{lemma}
There exists compatible stacks $\mathsf{Exp}(X|D)$ and $\mathsf{Exp}(E_i|D)$ and an evaluation morphism 
\[
\mathsf{ev}: \mathsf{DT}_{\beta,n}(X|D)\to \mathsf{Ev}_\beta(X|D)
\]
of the associated moduli spaces of ideal sheaves and points. 
\end{lemma}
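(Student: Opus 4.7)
The plan is to construct the evaluation morphism by restricting the universal subscheme to the preimages of the divisors $E_i$ inside the universal expansion, and to arrange the combinatorial data of the expansion stacks so that these restrictions descend to morphisms of Artin fans. Recall from Remark~\ref{rem: asymptotics} that the star of the ray $\rho_i \subset \Sigma_X$ dual to $E_i$ is canonically identified with $\Sigma_{E_i}$, and that the asymptotic directions of an embedded $1$-complex $G \hookrightarrow \Sigma_X$ in the direction $\rho_i$ form a finite subset of $\Sigma_{E_i}$, i.e., a zero-dimensional embedded complex.

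First, I would make the tropical moduli spaces compatible. For each combinatorial $1$-complex $G$ in $\Sigma_X$, there is a tautological ``asymptotic restriction'' sending $G$ to the set of asymptotic rays pointing along $\rho_i$, viewed as a zero-dimensional embedded complex in $\Sigma_{E_i}$. This restriction is functorial under surjections in the sense of Definition~\ref{def: surjection} and is compatible with the cone structures on $\mathbb{X}_G$. Running the colimit construction of Section~\ref{sec:tropical-moduli} in parallel for $\Sigma_X$ and for each $\Sigma_{E_i}$, one can make the inductive subdivision choices compatibly, so that the asymptotic restriction descends to a morphism of cone spaces $\mathsf{T}(\Sigma_X) \to \mathsf{T}(\Sigma_{E_i})$ mapping into the locus of $0$-complexes of total weight $d_i$. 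By Theorem~\ref{thm: CCUW}, this gives a morphism of Artin fans $\mathsf{Exp}(X|D) \to \mathsf{Exp}(E_i|D)$, as well as a compatible morphism between the universal expansions.

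Next, I would construct the evaluation on the moduli of ideal sheaves. Let $\mathcal{Y} \to \mathsf{DT}_{\beta,\chi}(X|D)$ be the universal expansion, and let $\mathcal{E}_i \subset \mathcal{Y}$ be the reduced union of those divisorial strata whose image in $X$ lies in $E_i$; tropically this is the preimage of $\rho_i$ in $\Upsilon$. By the compatibility above, $\mathcal{E}_i$ is naturally the pullback of the universal expansion of $E_i$ along the evaluation map on the base. Given the universal subscheme $\mathcal{Z} \subset \mathcal{Y}$, set $\mathcal{Z}_i := \mathcal{Z} \times_{\mathcal{Y}} \mathcal{E}_i$. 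The strong transversality condition of Section~\ref{sec: strong-transversality} guarantees precisely that $\mathcal{I}_{\mathcal{Z}} \otimes \mathcal{O}_{\mathcal{E}_i} \to \mathcal{O}_{\mathcal{E}_i}$ is injective, so $\mathcal{Z}_i$ has no embedded points or components supported on $\mathcal{E}_i$ and is fiberwise of pure dimension zero. Flatness of the family $\mathcal{Z}_i$ then follows because the Hilbert polynomial on each fiber is constant, equal to $\beta \cdot E_i = d_i$ by the projection formula together with the fact that $\mathcal{Z}$ pushes forward to a cycle of class $\beta$ in $X$. Finally, strong transversality also forces $\mathcal{Z}_i$ to meet every stratum of $\mathcal{E}_i$, so it defines a point of the open substack $\mathsf{Hilb}^{d_i}(E_i|D)$ rather than a larger Hilbert scheme of expansions; taking the product over $i$ gives the desired morphism $\mathsf{ev}$.

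The main obstacle is the first step: making the combinatorial choices in the constructions of $\mathsf{Exp}(X|D)$ and of each $\mathsf{Exp}(E_i|D)$ mutually compatible, so that asymptotic restriction is a genuine morphism of cone spaces and not merely a set-theoretic map between their supports. This is ultimately a bookkeeping issue, handled by performing the equivariant subdivisions of Lemma~\ref{lem: equivariant-strictness} on $\Sigma_X$ first and then pushing the resulting polyhedral structure to each $\Sigma_{E_i}$ before running the inductive construction there; any further refinements needed on the $\mathsf{Exp}(E_i|D)$ side can then be pulled back to $\mathsf{Exp}(X|D)$ without changing the virtual class by Theorem~\ref{thm: independence-of-choices}. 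The remaining verifications, in particular flatness of $\mathcal{Z}_i$ over the base and the correct length $d_i$, are routine consequences of strong transversality and base change.
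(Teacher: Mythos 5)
Your proposal is correct and follows essentially the same route as the paper: read off the evaluation tropically via the asymptotic rays in the star of $\rho_i$, subdivide the tropical moduli spaces until the asymptotic restriction becomes a morphism of cone spaces (hence of Artin fans), and then cut the universal subscheme against the universal expansion of $E_i$, using strong transversality to get a flat length-$d_i$ family. The only cosmetic difference is that the paper arranges the needed subdivisions \emph{a posteriori}, using the fact that $\mathsf{DT}_{\beta,\chi}(X|D)\to\mathsf{Exp}(X|D)$ factors through finitely many cones so only finitely many subdivision operations are required, whereas you propose to build the compatibility into the inductive construction up front; both are viable within the paper's framework.
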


\begin{proof}
We first explain how the evaluation looks at the tropical level. The reader may first wish to reconsult with Remark~\ref{rem: asymptotics}.  Fix a space of expansions for $\mathsf{Exp}(X|D)$ with associated tropical moduli space $\mathsf{T}(X|D)$. The latter is a particular conical structure on the space of embedded $1$-complexes. Examine such a $1$-complex $G\hookrightarrow \Sigma$. Each divisor component $E_i$ determines a ray $\rho_i$. The transversality hypothesis implies that outside a bounded set in $\Sigma$, the $1$-complex is made up of a finite set of unbounded rays, each parallel to one of the rays $\rho_i$. The star of $\rho_i$ in $\Sigma$ is a cone complex of dimension one lower than that of $\Sigma$. Following the toric dictionary, rays parallel to $\rho_i$ determine points in the star $\Sigma(\rho_i)$. This star is canonically embedded in the tropicalization of $(E_i|D)$. For each cone $\sigma$ in the cone space of $\mathsf{T}(X|D)$, we therefore obtain a family over $\sigma$ of points in $\Sigma(\rho_i)$.

We now apply the construction in Section~\ref{sec: constructing-T} to the cone complex $\Sigma(\rho_i)$ to produce a tropical moduli space $\mathsf{T}(E_i|D)$. By the description above, given a cone of $\mathsf{T}(X|D)$, we produce a map to $\mathsf{T}(E_i|D)$. The image of $\sigma$ need not be contained in a single cone of $\mathsf{T}(E_i|D)$, but after replacing both $\mathsf{T}(E_i|D)$ and $\sigma$ with subdivisions $\mathsf{T}'(E_i|D)$ and $\sigma'$, we obtain a morphism
\[
\sigma'\to \mathsf{T}'(E_i|D)
\]
of cone spaces. For any choice of conical model, the morphism from $\mathsf{DT}_{\beta,n}(X|D)\to \mathsf{Exp}(X|D)$ factors through a finite type substack of the image, given by a finite collection of cones in $\mathsf{T}(X|D)$. It follows that after a finite collection of subdivision operations, we obtain a morphism
\[
\mathsf{DT}_{\beta,n}(X|D)\to \mathsf{Exp}'(E_i|D)
\]
again for an appropriate system of choices. In order to lift this to an evaluation morphism, we pullback the universal expansion $\mathcal E$ over $\mathsf{Exp}'(E_i|D)$ to $\mathsf{DT}_{\beta,n}(X|D)$. There is a inclusion of expansions
\[
\mathcal E\times_{\mathsf{Exp}'(E_i|D)} \mathsf{Exp}'(X|D)\hookrightarrow \mathcal Y.
\]
Intersecting the universal subscheme $\mathcal Z$ with the expansion, the algebraic transversality condition implies that we obtain a subscheme of length $d_i$, and thereby a point of $\mathsf{Hilb}^{d_i}(E_i|D)$. Performing the construction for each divisor, we have constructed the claimed morphism.
\end{proof}

\begin{remark}
When combined with the projection formula, the birationality statements for the moduli spaces of sheaves and of points arising from Theorem~\ref{thm: independence-of-choices} guarantee that numerical invariants are independent of the choices made in the construction of the spaces of expansions.
\end{remark}

\begin{remark}
This evaluation morphism imposes schematic tangency conditions via the Hilbert scheme of points on the boundary surfaces. A more refined evaluation is to the relative Hilbert scheme of points on the universal surfaces over $\mathsf{Exp}(X|D)$. This refined object is more natural from the point of view of the degeneration formula, and for the GW/DT correspondence. In particular, this is the way in which the gluing formula is proved on the Gromov--Witten side~\cite{R19}. We will examine this refined evaluation in future work, in its appropriate context.
\end{remark}

\subsection{Logarithmic DT invariants}\label{sec: invariants-conjectures}

We define descendent insertions for logarithmic DT invariants as follows.
Let 
\[
(\pi_{\mathsf{DT}}, \pi_X): \mathcal Y \rightarrow \mathsf{DT}_{\beta,n}(X|D) \times X
\]
denote the universal expansion over the DT moduli space.  While the map $\pi_{\mathsf DT}$ is not proper, it is proper when restricted to the universal subscheme $\mathcal Z \hookrightarrow \mathcal Y$.  We claim the universal ideal sheaf $\mathcal{I}_{\mathcal{Z}}$ is a perfect complex.  Indeed, it suffices to show it is Tor-finite, and for this we can take the derived tensor product with the skyscraper sheaf $k_p$ at any closed point $p$ of $\mathcal Y$. Since $\mathcal{I}_{\mathcal{Z}}$ is flat over $ \mathsf{DT}_{\beta,n}(X|D)$,
this is equivalent to showing Tor-finiteness of $\mathcal{I}|_Z$ for the point $$[Z] = \pi_{\mathsf{DT}}(p) \in \mathsf{DT}_{\beta,n}(X|D).$$  However, this follows from the algebraic transversality of $Z$, by Proposition $3.5$ of \cite{LiWu15}.

Since $\mathcal{I}_{\mathcal{Z}}$ is perfect, it admits Chern classes; we denote by $\mathrm{ch}_k(\mathcal{I}) \in H^\star(\mathcal{Y})$ the cohomological degree $k$ part of the Chern character. These classes are supported on $\mathcal{Z}$ for $k \geq 1$.

Given a cohomology class $\gamma \in H^\star(X)$, we define descendent operators for $k\geq 0$
\[
\tau_k(\gamma): H_\star(\mathsf{DT}_{\beta,n}(X|D), \mathbb{Q}) \rightarrow H_\star(\mathsf{DT}_{\beta,n}(X|D), \mathbb{Q})
\]
via the formula
\[
\tau_k(\gamma) := (-1)^{k+1}\pi_{\mathsf{DT},\star}\left(\mathrm{ch}_{k+2}(\mathcal{I})\cdot \pi_X^\star(\gamma)\right)\cap \pi_{\mathsf{DT}}^{\star}.
\]
Here, the pullback on homology is well-defined since $\pi_{\mathsf{DT}}$ is flat by \cite{Verdier}, and the pushforward is well-defined since $\pi_{\mathsf{DT}}$ is proper on $\mathcal{Z}$.

Given a cohomology class $\mu \in H^\star(\mathsf{Ev}_\beta(X|D))$, and cohomology classes
$\gamma_1, \dots, \gamma_r \in H^\star(X)$ and indices $k_1, \dots, k_r$, we can define
descendent invariants 
\[
\langle \tau_{k_{1}}(\gamma_1)\dots \tau_{k_{r}}(\gamma_r)| \mu \rangle_{\beta,n}
= \mathsf{deg}\left( \prod_{i=1}^{r}\tau_{k_{i}}(\gamma_i)\mathsf{ev}^\star\mu\cap[\mathsf{DT}]^{\mathsf{vir}}  \right)
\]
where $\mathsf{deg}$ denotes the degree of the pushforward to a point.  
If we sum over $n$ we have the partition function
\[
\mathsf{Z}_{\mathsf{DT}}\left(X,D;q | \prod_{i=1}^{r} \tau_{k_{i}}(\gamma_i)|\mu\right)_{\beta}
= \sum_{\chi} \langle \prod_{i=1}^{r} \tau_{k_{i}}(\gamma_i)| \mu \rangle_{\beta,n} q^{\chi}
\]
which is a Laurent series since for each fixed curve class $\beta$, the logarithmic DT spaces are empty for $\chi\ll0$.

In analogy with absolute and relative DT invariants, we have the following basic conjecture regarding this series.
\begin{conjecture}\label{conj: basic-conjecture}
\begin{enumerate}
\item[(i)]
The DT series for zero-dimensional subschemes is given by
\[
\mathsf{Z}_{\mathsf{DT}}\left(X,D;q)\right)_{\beta=0} = M(-q)^{\int_{X} c_{3}(T^\mathsf{log}_X\otimes K^{\mathsf{log}}_{X})},
\]
where
\[
M(q) = \prod_{n\geq1} \frac{1}{(1-q^n)^n}
\]
is the McMahon function.
\item[(ii)]
For any curve class $\beta$, and insertions $\gamma_1, \dots, \gamma_r$ of degree $\geq 2$ and relative insertion $\mu$, the normalized DT series 
\[
\mathsf{Z}'_{\mathsf{DT}}\left(X,D;q | \prod_{i=1}^{r} \tau_{k_{i}}(\gamma_i)|\mu\right)_{\beta}
:= 
\frac{\mathsf{Z}_{\mathsf{DT}}\left(X,D;q | \prod_{i=1}^{r} \tau_{k_{i}}(\gamma_i)|\mu\right)_{\beta}}{\mathsf{Z}_{\mathsf{DT}}\left(X,D;q)\right)_{\beta=0}}
\]
is the Laurent expansion of a rational function in $q$.
\end{enumerate}
\end{conjecture}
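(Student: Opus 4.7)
The plan is to handle the two parts separately, since they parallel different aspects of the absolute/relative DT theory and will require largely independent techniques.

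For part (i), the strategy is to reduce the computation to a weighted Euler characteristic on (a completion of) the Hilbert scheme of points on $X^\circ$. The first step is to observe that for $\beta=0$ the moduli space $\mathsf{DT}_{0,\chi}(X|D)$ is essentially unexpanded: any zero-dimensional strongly transverse subscheme has empty intersection with $D$, since a point on a divisor stratum would be an embedded point on a divisorial stratum and would violate the Tor-vanishing condition. Consequently the tropical limit algorithm of Section~\ref{sec: flat-limits} returns the trivial $1$-complex, and the moduli problem reduces to parametrizing length-$n$ subschemes of $X^\circ$ with a virtual structure coming from the logarithmic tangent bundle $T^{\log}_X$. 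From here one follows the Behrend--Fantechi / MNOP--style computation: express the invariant as a Behrend-function weighted Euler characteristic, then invoke the deformation/birational invariance established in Theorem~\ref{thm: independence-of-choices} to reduce to a toric local model such as $(\A^3 \mid \partial \A^3)$, where equivariant localization produces the McMahon factor. The twist by $K^{\log}_X$ arises exactly as in the absolute case from the trace-free $\Ext$ groups being controlled by $T^{\log}_X \otimes K^{\log}_X$ under Serre duality for the logarithmic cotangent complex.

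For part (ii), the primary approach is inductive, via a logarithmic degeneration formula. Given $(X|D)$, perform the deformation to the normal cone of a component $D_i \subset D$; the special fiber is a simple normal crossings union of two threefolds, each carrying a log boundary with strictly fewer components or with a boundary handled by Li--Wu's smooth-pair theory. A logarithmic degeneration formula---to be developed in the sequel announced in the introduction---would express $\mathsf{Z}'_{\mathsf{DT}}(X,D;q \mid \ldots)_\beta$ as a sum of products of normalized DT series of the two pieces, glued along compatible evaluation data at $D_i$. Induction on the number of irreducible components of $D$, combined with a suitable splitting argument for relative insertions through $\mathsf{Ev}_\beta(X|D)$, reduces the rationality statement to the smooth-pair case, where it is due to Maulik--Pandharipande--Thomas and Pandharipande--Pixton. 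An alternative, more self-contained route would be to develop DT/PT wall-crossing directly in the logarithmic category: combine the moduli space $\mathsf{DT}(X|D)$ with the logarithmic stable pair variant $\mathsf{PT}(X|D)$ from Remark~\ref{rem: stable-pairs} in a Joyce--Song / Bridgeland Hall-algebra framework built on perverse coherent sheaves relative to $D$, and identify the normalized generating series with a DT/PT change-of-variable.

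The hard part will be twofold. For (i), one must verify that the virtual class on the partial compactification of $\mathrm{Hilb}^n(X^\circ)$ by strongly transverse subschemes genuinely reproduces the $\mathsf{DT}_{0,\chi}(X|D)$ virtual class, including checking that no spurious contributions appear from tube components on higher-codimension strata; the tropical combinatorics of Section~\ref{sec: target-moduli} strongly suggest this, but the bookkeeping requires care. The main obstacle, however, is for (ii): the degeneration formula is not yet available, and its construction requires (a) a definition of logarithmic DT invariants of pairs of opposite pairs with matched evaluation morphisms to a common $\mathsf{Ev}$, (b) a proof that the total virtual class of a one-parameter log degeneration decomposes as a sum of gluing contributions indexed by bipartite tropical data, and (c) compatibility with the non-canonical choice of expansion stack---precisely the package the authors defer. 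A useful first test case would be $(\PP^3 \mid \partial\PP^3)$ with $\beta$ the class of a line, where equivariant localization computes both sides explicitly and one can check the shape of the McMahon formula, rationality, and the log twist directly.
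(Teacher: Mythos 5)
This statement is a \emph{conjecture} in the paper, not a theorem: the authors give no proof, and say only that they expect part (i) to become accessible ``once the degeneration formalism is in place,'' which is deferred to a sequel. So there is no argument in the paper to compare yours against; what you have written is a research programme rather than a proof, and you are candid that its central ingredient --- a logarithmic degeneration formula with a gluing statement along $\mathsf{Ev}$ --- does not yet exist. On those terms your outline for part (ii) is consistent with how the authors themselves signal the problem should be attacked (degeneration plus induction reducing to the Li--Wu smooth-pair case, where rationality is known), and your alternative Hall-algebra route is a reasonable, if even more speculative, second option.

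There is, however, a concrete error in your treatment of part (i). You claim that for $\beta=0$ the moduli space is ``essentially unexpanded'' because strongly transverse zero-dimensional subschemes are disjoint from $D$, so that ``the tropical limit algorithm returns the trivial $1$-complex.'' The first half is right --- strong transversality does force a punctual subscheme off the divisor --- but the conclusion is backwards: precisely because the points must stay off $D$, a family of points specializing into the divisor forces the target to expand. The tropicalization of such a family is a nontrivial $0$-dimensional complex (a collection of points at positive distance from the origin in $\Sigma_X$), whose cone produces genuine bubbles over strata of $D$, and the limiting subscheme lives in those bubbles. So $\mathsf{DT}_{0,\chi}(X|D)$ is a nontrivial compactification of $\mathrm{Hilb}^\chi(X^\circ)$ with expanded fibers, and the boundary contributions are not spurious bookkeeping: they are exactly why the conjectured exponent is $\int_X c_3(T^{\log}_X\otimes K^{\log}_X)$ rather than $\int_X c_3(T_X\otimes K_X)$, in parallel with the smooth-divisor relative theory, where the analogous degree-zero evaluation is itself proved via the degeneration formula rather than by a direct Behrend-function computation on the open Hilbert scheme. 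Any honest proof of (i) must therefore either control these expanded contributions directly (e.g.\ by localization on the bubbles, which are torus bundles over strata) or, as the authors anticipate, route through the degeneration formula --- at which point (i) is not independent of the machinery you defer in (ii).
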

We expect the first part of this conjecture can be proven once the degeneration formalism is in place. 

\begin{remark}
The conjecture above is natural in the context of the Gromov--Witten/Donaldson--Thomas conjectures, which were studied for Calabi--Yau threefolds in~\cite{MNOP06a} and generalized to more general targets, including smooth pair relative geometries, in the sequel~\cite{MNOP06b}. The rationality of the reduced Donaldson--Thomas partition function is required in order for a precise formulation of the GW/DT equivalence, as the latter involves an exponential change of variables. The DT series itself is \textit{not} a rational function, as evidenced by the first part of the conjecture concerning the degree $0$ part, which must first be removed. The degree $0$ evaluation is known for empty and smooth boundary divisors~\cite{BF08,LP09,Li06}. Rationality of the DT generating series is known for all toric targets~\cite{MOOP}. 
\end{remark}

The normalized DT series that removed the degree $0$ contribution of the MacMahon function has a geometric interpretation in terms of the stable pairs theory of Pandharipande and Thomas~\cite{PT09}. By replacing the DT invariants in the definition of $\mathsf Z_{\mathsf{DT}}$ with the corresponding stable pairs invariants, we obtain a PT generating series denoted $\mathsf{Z}_{\mathsf{PT}}\left(X,D;q | \prod_{i=1}^{r} \tau_{k_{i}}(\gamma_i)|\mu\right)_{\beta}$. We expect rationality and wall-crossing statements for this generating series.

\begin{conjecture}\label{conj: PT-conjecture}

For any curve class $\beta$, relative insertion $\mu$, and insertions $\gamma_1, \dots, \gamma_r$, the PT series
\[
\mathsf{Z}_{\mathsf{PT}}\left(X,D;q | \prod_{i=1}^{r} \tau_{k_{i}}(\gamma_i)|\mu\right)_{\beta}
\]
is a rational function in $q$.
If the degree of each $\gamma_i$ is at least $2$, this series equals the normalized DT series
$\mathsf{Z}'_{\mathsf{DT}}$ from the previous conjecture.
\end{conjecture}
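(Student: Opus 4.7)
The plan is to reduce this conjecture to the known smooth-pair and absolute results by deploying the logarithmic degeneration formalism promised in the sequel. First I would establish a degeneration formula for $\mathsf{Z}_{\mathsf{DT}}$ and $\mathsf{Z}_{\mathsf{PT}}$ of $(X|D)$ under a simple normal crossings degeneration whose special fiber has two components meeting along a smooth surface, in direct parallel to Li--Wu~\cite{LiWu15} and~\cite{MNOP06b}. Iterated application, by deforming to the normal cone of one boundary component at a time, should express the log series of $(X|D)$ as a finite sum of products of \emph{smooth-pair} relative series, glued along insertions pulled back from Hilbert schemes of points on the divisor components via the evaluation map $\mathsf{ev}$.

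Granting such a formula, both halves of the conjecture follow by induction on the number of components of $D$. The base case, when $D$ is smooth, reduces to the Li--Wu relative theory, whose rationality and DT/PT equivalence are known along the lines of~\cite{PP12}. In the inductive step the degeneration formula presents the $(X|D)$ series as a finite sum
\[
\sum_{\beta_1+\beta_2=\beta,\,\eta} g_\eta \cdot \mathsf{Z}^{\mathrm{rel}}_{\bullet}(X_1|D_1)_{\beta_1,\eta} \cdot \mathsf{Z}^{\mathrm{rel}}_{\bullet}(X_2|D_2)_{\beta_2,\eta^{\vee}},
\]
where $\bullet \in \{\mathsf{DT},\mathsf{PT}\}$, the index $\eta$ runs over a basis of relative insertions, and the gluing coefficients $g_\eta$ are independent of $q$. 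Since rationality in $q$ and the DT/PT equality are both preserved under finite sums and products, and since the normalization by $\mathsf{Z}_{\beta=0}$ factors compatibly across the node, the inductive step will go through. The $\beta=0$ statement of Conjecture~\ref{conj: basic-conjecture}(i) can be obtained by a further degeneration to toric pieces together with the standard McMahon computation on $\A^{3}$.

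The principal obstacle is the compatibility of the logarithmic degeneration formula with DT stability, and in particular with the dichotomy between \emph{tube components} of the expansion, on which the subscheme is forced to be a tube, and the remaining components, on which tubes are forbidden. In the Li--Wu and MNOP theories one may expand freely, whereas here the tropical algorithm dictates which bubbles appear; one must therefore verify that a splitting $\beta = \beta_1 + \beta_2$ together with relative data $\eta$ is compatible with this dichotomy on each side of the separating surface. Reconciling the combinatorics of such splittings with the tube-component condition, and matching the virtual classes on the total space with those of the glued pieces, is the main technical work. An alternative attack on the DT/PT half, independent of the degeneration formalism, would be to adapt the Bridgeland--Toda wall-crossing argument fiberwise over $\mathsf{Exp}(X|D)$ inside the derived category of each expansion; there too the essential new ingredient is a torsion pair that respects the tube/non-tube structure, so the same combinatorial difficulty surfaces in a different guise.
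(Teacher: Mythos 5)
This statement is a \emph{conjecture} in the paper: the authors state it without proof, explicitly defer the degeneration formalism to a sequel, and remark only that they expect part (i) of the preceding conjecture to follow once that formalism exists. So there is no argument in the paper to compare yours against, and the real question is whether your proposal closes the gap. It does not: it is a research program whose every load-bearing step is currently unavailable. The degeneration formula for $\mathsf{Z}_{\mathsf{DT}}$ and $\mathsf{Z}_{\mathsf{PT}}$ of a normal crossings pair is precisely the missing ingredient the authors postpone to future work; invoking it as "established first" assumes the hardest part. Moreover, your induction on the number of components of $D$ is circular as stated. Deforming to the normal cone of $D_1\subset X$ produces a special fiber $\tilde X\cup_{D_1} P$ in which the component $\tilde X$ is isomorphic to $X$ and still carries \emph{all} of $D_1,\dots,D_k$ (with $D_1$ now playing the role of the gluing divisor), so the relative series appearing on the right-hand side of your splitting formula is the series for the same pair $(X|D)$ you started with; the number of divisor components does not drop. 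This is exactly why the normal crossings theory cannot be bootstrapped from the smooth-pair theory by naive iteration, and why a genuinely new gluing formalism (tracking insertions from $\mathsf{Ev}(D)$ and the tube/non-tube dichotomy, as you correctly note) is required.

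Even granting a degeneration formula, the base case of your induction is not a theorem. Rationality of relative PT descendent series and the DT/PT correspondence for a smooth pair $(X,D)$ with $X$ an arbitrary smooth projective threefold are themselves open in general; the results of \cite{PP12} cover complete intersections in products of projective spaces, and wall-crossing arguments in the style of Bridgeland and Toda handle restricted settings, but neither gives the unconditional statement your induction needs. Your closing paragraph honestly identifies the compatibility of the splitting data with DT stability and the tube components as "the main technical work," which is an accurate self-assessment: what you have written is a plausible strategy consistent with the authors' stated plans, not a proof. As a roadmap it is reasonable and aligned with the motivation in the paper's introduction, but the conjecture remains a conjecture.
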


The GW/DT/PT correspondences, including the degree $0$ evaluations and their compatibility with logarithmic degeneration formulas, will be addressed more systematically in forthcoming work~\cite{MR22}.

%
%
%
%
%

\section{First examples}\label{sec: first-examples}

We sketch a handful of basic examples, focusing on the space of targets, where the main new complexity lies. The goal of this section is to show that in a reasonable range of test cases, the logarithmic Donaldson--Thomas moduli spaces can be worked with fairly explicitly. The final part of this section explains how to think about the tube vertices i the traditional Li--Wu context.

\subsection{Target yoga} There is a well-known procedure by which the strata of the expanded target geometry may be recognized from the tropical pictures, as a consequence of the toric dictionary concerning closed torus orbits and stars around cones~\cite[Section~3.1]{Ful93}. Let $G\hookrightarrow \Sigma$ be an embedded $1$-complex. Let $\mathcal Y_G$ be the associated expansion of $X$. Let $v$ be a vertex of $G$ that maps to a cone $\sigma$ of $\Sigma$ of dimension $k$. There is an associated irreducible component
\[
Y_v\hookrightarrow \mathcal Y_G\to X.
\]
This map factors through the inclusion of the closed stratum $X_\sigma\hookrightarrow X$. The induced morphism
\[
Y_v\to X_\sigma
\]
is a partially compactified torus bundle of rank $k$. For instance, if $X$ is a threefold and $\sigma$ has dimension $3$, then $Y_v$ is simply a toric threefold. The torus bundles are obtained from the line bundles associated to the divisorial logarithmic structure on $X$ and its blowups. The partial compactification of $Y_v$ is obtained (fiberwise) as the toric variety associated to the star of $v$ in $G$, which gives rise to a fan embedded in the star of $\sigma$. 

\subsection{Dual plane} Let $(X|D)$ be the pair consisting of $\PP^2$ and its toric boundary divisor. The most basic example is the moduli space of lines in $\PP^2$, with transverse contact orders. There is a canonical space of ideal sheaves $\mathsf{DT}(\PP^2|D)$, and it is identified with the blowup of the dual $\PP^2$ at its three torus fixed points. This example is worked out carefully in~\cite[Section~4.1]{R15b} for stable maps, and the situation is identical to the sheaf theory setup. The figures in op. cit. may be of particular use. 

The fibers of the expanded target family $\mathcal Y\to \mathsf{DT}(\PP^2|D)$ can be described as follows. The target is either isomorphic to $\PP^2\setminus \{p_1,p_2,p_3\}$ or the complement of the codimension $2$ strata in the deformation to the normal cone of a line in $\PP^2$. The latter arises as a transverse replacement in two ways, either when the subscheme limits to a coordinate line, or when the intersection point of the subscheme with the coordinate line limits to a coordinate point. 

\subsection{Plane curves} The case of degree $d$ curves in $\PP^2$ makes contact with the study of discriminants, and in particular, the secondary fan of a toric fan, constructed by Gelfand--Kapranov--Zelevinsky~\cite{GKZtriangulations}. A detailed construction of the stable pair moduli spaces in these cases has been undertaken Kennedy-Hunt~\cite{KH21}. The main result of this paper shows that the PT moduli spaces can be expressed explicitly as the vanishing locus of a section of a tautological vector bundle on smooth space that lies over a toric variety. The discussion that follows collapses to the dual plane for $d = 1$.

Let $\mathcal P_d$ be the convex hull of the lattice points $(0,0)$, $(d,0)$ and $(0,d)$. Let $\PP_d$ be the linear system of degree $d$ curves in $\mathbb P^2$, which coincides with the Hilbert scheme of degree $d$ curves. An open subset of this projective space parameterizes curves that are algebraically transverse to the coordinate lines in $\PP^2$. This subset is stable under the $\mathbb G_m^2$ action on $\mathbb P^2$. Limits under the action of this torus capture the relevant degenerations to construct the Donaldson--Thomas moduli space, as we now sketch. 

Any expansion $\mathbb P^2$ along its toric strata arising from our construction is necessarily toric. Let $G\subset \Sigma_{\PP^2}$ be a $1$-complex. Given an algebraically transverse subscheme
\[
 Z\subset \mathcal Y_G,
\]
of the associated expansion, we may proceed as in Section~\ref{sec: boundedness} and label each edge $E$ of $G$ with the length of the intersection of $Z$ with the divisor attached to $E$. Since the target is toric, the graph $G$ together with this weighting furnishes a balanced tropical curve. We abuse notation slightly and continue to denote this decorated object by the symbol $G$.

The secondary fan $T^\sim_d$ of the polytope $\mathcal P_d$ enters as a moduli space of tropical plane curves up to translation by the natural additive action of $\mathbb R^2$.  Recall that the secondary fan of $\mathcal P_d$ parameterizes subdivisions of $\mathcal P_d$ that are \textit{regular} or \textit{coherent} i.e. they are induced by a strictly convex piecewise linear function, see the chapter in the text~\cite{GKZtriangulations}. The dual of such a subdivision gives rise to a polyhedral subdivision of $\mathbb R^2$. The edges and rays of this subdivision carry a natural weight -- they correspond to edges in the coherent subdivision of $\mathcal P_d$, and the weight is defined to be the lattice length of the edge. This weighted $1$-dimensional polyhedral complex is a balanced tropical curve in $\RR^2$~\cite[Remark~2.3.10]{MS14}. The top dimensional cones correspond to maximal triangulations of $\mathcal P_d$.

Geometrically, toric variety of the secondary fan is isomorphic to the Chow quotient $\mathbb P_d/\!/\mathbb G_m^2$, of the Hilbert scheme by the dilating action of the torus in $\PP^2$. The Chow quotient carries a universal flat family of broken toric surfaces, arising as limits of $\mathbb G_m^2$-orbits in $\PP_d$, see~\cite[Section~3]{AM14}, and the total space of the universal family is also a toric variety. The total space of this family is birational to $\mathbb P_d$.\footnote{Notice that the Chow quotient itself has dimension $2$ lower than $\mathbb P_d$. It can be interpreted as a ``rubber variant'' of the logarithmic DT space. } We explain how some combinatorial adjustments give rise to an instance of the spaces constructed here.  

Let $T_d$ be the fan of the universal family of the Chow quotient. At the combinatorial level, these data are captured by the maps of polyhedral complexes
\[
\begin{tikzcd}
\Sigma_d \arrow{r}\arrow{d} & \RR^2\\
T_d.
\end{tikzcd}
\]
The toric variety associated to $\Sigma_d$ is the universal family \textit{of the universal family} of the Chow quotient, interpreted a moduli space of pairs of a tropical curve together with a point in $\mathbb R^2$. See~\cite{KH21} for further details. 
In order to make a connection to the Donaldson--Thomas moduli space, a further modification is required. The expansions we propose are always equipped with a map to $\PP^2$, while the degenerations arising from regular triangulations need not be modifications of the constant family. The adaptation is as follows. The secondary fan comes equipped with a universal family of polyhedral subdivisions of $\mathbb R^2$:
\[
\begin{tikzcd}
\Sigma_d \arrow{r}\arrow{d} & \RR^2\\
T_d.
\end{tikzcd}
\]
Let $\widetilde \Sigma_d$ be the fiber product $\Sigma_d\times_{\RR^2} \Sigma_{\PP^2}$, which is the common refinement of $\Sigma_d$ with the constant family $T_d\times\Sigma_{\PP^2}$. The toric map induced by $\widetilde\Sigma_d\to T_d$ need not be flat with reduced fibers, but by universal weak semistable reduction, there is a canonical such family over a new fan $T'_d$, see~\cite{KH21,Mol16}. 

Finally, let $T''_d$ denote the quotient of $T'_d$ by the equivalence relation that identifies two embedded tropical curves if they have the same underlying $1$-complex (i.e. we forget the weights along the edges described above).  It is straightforward to check that this is a cone space, and is a union of cones in a moduli space of tropical expansions $T(\Sigma)$ constructed in Section~\ref{sec: constructing-T}.  We refer the reader to~\cite{KH21} for further details. 

Let $\mathsf{Exp}(\PP^2|D)$ and $\mathsf A_d$ be the Artin fans associated to $T''_d$ and $T'_d$ respectively. The constructions in the previous section give rise to a Donaldson--Thomas moduli space; given a subscheme in an expansion, we can recover the weights on the edges using the schematic intersection numbers with the corresponding divisors.  As a result, the structure morphism to $\mathsf{Exp}(\PP^2|D)$ factors through
\[
\mathsf{DT}(\PP^2|D)\to\mathsf{A}_d.
\]
The subdivision $\widetilde \Sigma$ gives rise to a (non-representable) birational modification $\mathbb P'_d$ of the universal orbit of the Chow quotient $\PP_d/\!/ \mathbb G_m^2$.This toric stack is naturally identified with $\mathsf{DT}(\PP^2|D)$. A rigorous treatment of the identification is given in~\cite{KH21}. Analogous statements hold for any toric surface. 

We conclude with a series of remarks.

\begin{remark}
The stable pair moduli spaces in degree $d$ on $\PP^2$ but with arbitrary holomorphic Euler characteristic admit an explicit and elegant description. A complete picture, including the basic geometry and virtual class, has been given in~\cite{KH21}. The upshot of Kennedy-Hunt's results are that the logarithmic pair spaces are obtained as the relative Hilbert schemes of points on the universal subscheme over the toric variety constructed above, parallel to~\cite[Propostion~B.10]{PT10}. At the combinatorial level, this amounts to allowing a fixed additional number of vertices on the balanced $1$-complexes considered above. The additional vertices allow for expansions along the divisors to accommodate support for the cokernel of the section of the stable pair. 
\end{remark}

\begin{remark}
The subdivision step above can be avoided by working directly with the logarithmic multiplicative group $\mathbb G^2_{\mathsf{log}}$. The tropicalization is the $\RR^2$ appearing above, with ``fan'' structure given by the single non-strictly convex cone $\RR^2$. We avoid the detour through the details of the logarithmic multiplicative group, but a reader interested in the details of this perspective may extract them using the arguments in~\cite{RW19}.
\end{remark}

\begin{remark}
The moduli space above is naturally identified with the universal family over the Chow quotient. The Chow quotient itself can also be interpreted as a Donaldson--Thomas moduli space for the rubber moduli space $\PP^2$, see~\cite{MW17}. The rubber moduli will be discussed elsewhere. 
\end{remark}

\begin{remark}
The relationship between the secondary fan and enumerative geometry should be credited to Katz~\cite[Section~9]{Kat09}. Katz suggested that the universal family of the Hilbert quotient of a toric variety should function as a moduli space of polarized target expansions for Gromov--Witten theory relative to the toric boundary. In retrospect, it seems more natural to realize this connection within the context of ideal sheaves or stable pairs. A detailed study of the role of the secondary fan in logarithmic curve counting theories seems worthwhile. 
\end{remark}

\subsection{Subgroups in toric threefolds} Let $X$ be a smooth projective toric threefold with $D$ the toric boundary divisor. Let $\Sigma_X$ be the fan in the cocharacter space $N_\RR$. We make the further assumption that there exists a nonzero primitive vector $v$ such that the ray generated by $v$ and by $-v$ are both rays in $\Sigma_X$. This guarantees that the associated compactified one-parameter subgroup
\[
\varphi_v: \PP^1\to X
\]
is transverse to $D$. 

The Donaldson--Thomas moduli space in the class $[\mathsf{im}(\varphi)]$ is again related to the geometry of Chow quotients. Consider the line $\ell_v$ spanned by $v$. The relevant embedded $1$-complexes in this instance are precisely the parallel translates of $\ell$ in $N_\RR$. Given such a translate $\ell$ of $\ell_v$, we equip it with a canonical polyhedral structure by taking the common refinement with $\Sigma_X$ inside $N_\RR$. 

The moduli space of such embedded $1$-complexes is canonically identified with the fan of the Chow quotient $X/\!/\mathbb G_m$ by results of Chen--Satriano~\cite{CS12}. The identification is somewhat indirect, so we spell it out. In loc. cit. the authors identify the Chow quotient with the moduli space of logarithmic stable maps to $X$ in the class of $\varphi_v$. The tropical moduli space of maps is precisely the set of parallel translates $\ell_v$ with the polyhedral structure described above, for instance from the arguments in~\cite{R15b}. However, it is straightforward to see that all tropical maps in this case are forced to be embedded complexes, so we may identify the tropical moduli space with the set of embedded $1$-complexes. The Donaldson--Thomas moduli space for this data is then equal to the Chow quotient. The identification of coarse moduli spaces occurs can be upgraded using the Chow quotient stack. 

\begin{remark}
Stable map, ideal sheaf, and stable pair spaces coincide in this case. 
\end{remark}

\subsection{An embedded point and a subgroup} We maintain the notation from the previous example, and consider subschemes in the curve class $\varphi_v$, but we increase the Euler characteristic by $1$. Geometrically, we consider the class of a one-parameter subgroup together with an embedded point. As the additional point may wander the target, the space of relative ideal sheaves includes expansions caused by the presence of the additional point on the boundary $D$. The relevant $1$-complexes are therefore translates of the line $\ell$ together with an additional point, which can lie anywhere in $N_\RR$, and whose presence introduces a vertex. 

The space of expansions can be constructed as follows. Let $S_v$ be the toric fan associated to the Chow quotient $X/\!/\mathbb G_m$ as above and let $U$ be the universal orbit of the Chow quotient. The latter is a refinement of $\Sigma_X$. The product fan $\Sigma_U\times S_v$ can be viewed as a moduli space of pairs $(p,\ell)$ where, as above, $\ell$ is a parallel translate of $\ell_v$ and $p$ is a point in $\Sigma_U$. Consider the incidence locus
\[
\mathsf{IL}:=\left\{(p,[\ell]): p\in \ell\right\}\subset \Sigma_U\times S_v.
\]
This incidence locus has a fan structure by viewing it as the preimage of the diagonal in the projection 
$$\Sigma_U\times S_v\to S_v\times S_v$$ where the morphisms are the projection of the universal family and the identity map on the two factors. Note that since the vertical maps are both combinatorially flat with reduced fibers, the fiber product is canonically a cone complex. 

Consider any subdivision $T$ of $\Sigma_U\times S_v$ such that the locus $\mathsf{IL}$ is a union of faces. Over $T$, we obtain two families of subcomplexes -- the first is the family of parallel translates of $\ell_v$ while the second is simply the universal family $U$ over $T$. Each of these map to $T\times \Sigma_U$, and we choose a conical structure on the union of their images. The image gives the required conical structure. 

The Artin fan associated to $T$ gives rise to a moduli space $\mathsf{Exp}(X|D)$ of expansions of $X$ along $D$. Note that the fibers of the morphism
\[
\mathsf{DT}(X|D)\to \mathsf{Exp}(X|D)
\]
also have a straightforward description. Fixing the target expansion also fixes a $1$-complex in $\Sigma_X$. For each vertex of $\Sigma_X$ that lies on the line $\ell$, we choose a torsor for the one-parameter subgroup $v$, in the corresponding component. The closure provides a subscheme $Z$ with the required homology class. Therefore, additional moduli is provided by the choice of point in the expansion. In order to account for the moduli of tangent directions when the point lies on on $Z$, consider the moduli space of points in the expansion and blowup the incidence locus with $Z$. 


\subsection{Lines in $\PP^3$} Let $(X|D)$ be the toric pair $(\PP^3|H)$ where $H$ is the union of the coordinate planes. We take the fan of $\PP^3$ as formed by the standard basis vectors and the negative of their sum in $\RR^3$. The class of a generic line in $\PP^3$ can be described using similar consideration as the one above. First we consider the moduli spaces of lines in $\PP^3$. The generic line in $\PP^3$ that meets only the codimension $1$ strata of the toric boundary has tropicalization a single vertex with $4$ outgoing infinite rays, in each of the three coordinate directions and $(-1,-1,-1)$. The relevant embedded $1$-complexes required to construct the moduli space of expansions are balanced tropical curves whose asymptotic rays are the ones described above.

Once again the space of $1$-complexes can naturally be identified with the elements of the moduli space of tropical maps, because all tropical maps for this moduli problem are forced to be embeddings. The moduli space coincides with an explicit blowup of $\Mbar_{0,4}\times \PP^3$, following~\cite[Theorem~B]{R15b}. 

If we consider a line together with an embedded point, thereby raising the holomorphic Euler characteristic, the tropical space of expansions is obtained from the one above by adding an additional point, and subdividing the incidence locus where this point lies on the $1$-complex. As before, the Donaldson--Thomas moduli space has a map to the space of targets, whose fibers can be explicitly described as the blowup of an algebraic incidence locus, stratum-wise on the space of target expansions. We leave the details to the reader. 

\subsection{Revisiting the Li--Wu setup}\label{sec: revisiting-Li-Wu} The notions of tube vertices, tube components, and tube subschemes are not present in Li--Wu's construction of the DT moduli problem for smooth pairs $(X|D)$, and our formalism can be made to specialize to theirs by choosing the right cone structure on the cone space $T$ of $1$-complexes. However, tube components can be \textit{forced} to appear in the Li--Wu setup if one makes less efficient choices for $T$. We explain this now, since it may help readers who are familiar with relative DT and GW theory already.\footnote{The content of the present subsection came out of a email discussion between R.~Thomas and the second author. We take the opportunity to thank him for his questions.}

\subsubsection{Li--Wu via $1$-complexes} Consider a pair $(X|D)$ with $D$ smooth and connected. The cone complex $\Sigma_X$ of this space is just $\mathbb R_{\geq 0}$. A particular type of $1$-complex in $\Sigma_X$ can be obtained by choosing a finite collection of distinct points $q_1<q_2<\cdots<q_L$ in $\mathbb R_{\geq 0}$. Given this choice, we can subdivide $\mathbb R_{\geq 0}$ by adding these points as vertices, taking all edges between them, as well as the unbounded ray starting at $q_L$. This subdivision can be viewed as a $1$-complex embedded in $\mathbb R_{\geq 0}$. See Figure~\ref{fig: Li-Wu-picture}.

\begin{figure}

\tikzset{every picture/.style={line width=0.75pt}} 

\begin{tikzpicture}[x=0.75pt,y=0.75pt,yscale=-1,xscale=1]

\draw    (120,820) -- (308,820) ;
\draw [shift={(310,820)}, rotate = 180] [color={rgb, 255:red, 0; green, 0; blue, 0 }  ][line width=0.75]    (10.93,-3.29) .. controls (6.95,-1.4) and (3.31,-0.3) .. (0,0) .. controls (3.31,0.3) and (6.95,1.4) .. (10.93,3.29)   ;
\draw [shift={(120,820)}, rotate = 0] [color={rgb, 255:red, 0; green, 0; blue, 0 }  ][fill={rgb, 255:red, 0; green, 0; blue, 0 }  ][line width=0.75]      (0, 0) circle [x radius= 3.35, y radius= 3.35]   ;
\draw    (120,720) -- (308,720) ;
\draw [shift={(310,720)}, rotate = 180] [color={rgb, 255:red, 0; green, 0; blue, 0 }  ][line width=0.75]    (10.93,-3.29) .. controls (6.95,-1.4) and (3.31,-0.3) .. (0,0) .. controls (3.31,0.3) and (6.95,1.4) .. (10.93,3.29)   ;
\draw [shift={(120,720)}, rotate = 0] [color={rgb, 255:red, 0; green, 0; blue, 0 }  ][fill={rgb, 255:red, 0; green, 0; blue, 0 }  ][line width=0.75]      (0, 0) circle [x radius= 3.35, y radius= 3.35]   ;
\draw    (150,720) ;
\draw [shift={(150,720)}, rotate = 0] [color={rgb, 255:red, 0; green, 0; blue, 0 }  ][fill={rgb, 255:red, 0; green, 0; blue, 0 }  ][line width=0.75]      (0, 0) circle [x radius= 3.35, y radius= 3.35]   ;
\draw [shift={(150,720)}, rotate = 0] [color={rgb, 255:red, 0; green, 0; blue, 0 }  ][fill={rgb, 255:red, 0; green, 0; blue, 0 }  ][line width=0.75]      (0, 0) circle [x radius= 3.35, y radius= 3.35]   ;
\draw    (180,720) ;
\draw [shift={(180,720)}, rotate = 0] [color={rgb, 255:red, 0; green, 0; blue, 0 }  ][fill={rgb, 255:red, 0; green, 0; blue, 0 }  ][line width=0.75]      (0, 0) circle [x radius= 3.35, y radius= 3.35]   ;
\draw [shift={(180,720)}, rotate = 0] [color={rgb, 255:red, 0; green, 0; blue, 0 }  ][fill={rgb, 255:red, 0; green, 0; blue, 0 }  ][line width=0.75]      (0, 0) circle [x radius= 3.35, y radius= 3.35]   ;
\draw    (210,720) ;
\draw [shift={(210,720)}, rotate = 0] [color={rgb, 255:red, 0; green, 0; blue, 0 }  ][fill={rgb, 255:red, 0; green, 0; blue, 0 }  ][line width=0.75]      (0, 0) circle [x radius= 3.35, y radius= 3.35]   ;
\draw [shift={(210,720)}, rotate = 0] [color={rgb, 255:red, 0; green, 0; blue, 0 }  ][fill={rgb, 255:red, 0; green, 0; blue, 0 }  ][line width=0.75]      (0, 0) circle [x radius= 3.35, y radius= 3.35]   ;
\draw [color={rgb, 255:red, 128; green, 128; blue, 128 }  ,draw opacity=1 ]   (200,750) -- (200,797) ;
\draw [shift={(200,800)}, rotate = 270] [fill={rgb, 255:red, 128; green, 128; blue, 128 }  ,fill opacity=1 ][line width=0.08]  [draw opacity=0] (10.72,-5.15) -- (0,0) -- (10.72,5.15) -- (7.12,0) -- cycle    ;
\draw [shift={(200,750)}, rotate = 90] [color={rgb, 255:red, 128; green, 128; blue, 128 }  ,draw opacity=1 ][line width=0.75]      (0,-11.18) .. controls (-3.09,-11.18) and (-5.59,-8.68) .. (-5.59,-5.59) .. controls (-5.59,-2.5) and (-3.09,0) .. (0,0) ;

\draw (329,817) node [anchor=north west][inner sep=0.75pt]    {$\mathbb{R}_{\geq }{}_{0}$};
\draw (329,708) node [anchor=north west][inner sep=0.75pt]   [align=left] {\textsf{A typical 1-complex}};
\draw (139,689) node [anchor=north west][inner sep=0.75pt]    {$q_{1}$};
\draw (169,689) node [anchor=north west][inner sep=0.75pt]    {$q_{2}$};
\draw (199,689) node [anchor=north west][inner sep=0.75pt]    {$q_{3}$};

\end{tikzpicture}

\caption{A typical $1$-complex arising from the Li--Wu theory of expansions.}\label{fig: Li-Wu-picture}
\end{figure}

Note that there are other $1$-complexes, which come from disconnected graphs and the discussion that follows can be carried over to that setup, but we ignore these both for simplicity, and because they can be easily avoided in the smooth pair case. 

Denote the set of $1$-complexes above by $|T^c(\Sigma_X)|$. It is a subset of $|T(\Sigma_X)|$ discussed in Section~\ref{sec:tropical-moduli}. There is a canonical cone space structure on $|T^c(\Sigma_X)|$, where two points lie in the same cone if the associated $1$-complexes have the same number of vertices. This cone space is not of finite type: it has one cone of dimension $r$ for each natural number. In the corresponding universal $1$-complex, there are no tube vertices. 

Correspondingly, there is a canonical stack of expansions which we denote $\mathsf{Exp}^c(X|D)$, and it is equipped with a universal family 
\[
\mathcal Y\to \mathsf{Exp}^c(X|D).
\]
The stack and its universal family are precisely those found in~\cite{LiWu15}, as well as Li's earlier work in GW theory~\cite{Li01}. Using this stack, the constructions in Section~\ref{moduli-of-sheaves} can be adapted to reproduce the Li--Wu moduli space. 

\subsubsection{Artificially introducing tube complexes} To introduce tube phenomena, perform any sequence of blowup $\mathcal Y'\to \mathcal Y$ along strata. Examine the composition map
\[
\mathcal Y^\sim\to \mathsf{Exp}^c(X|D). 
\]
This map is likely not flat, but by performing universal toroidal semistable reduction~\cite{AK00,Mol16}, we obtain a map between blowups:
\[
\mathcal Y'\to \mathsf{Exp}^b(X|D)
\]
that is now flat with reduced fibers. 

To orient ourselves, let us choose a point $q$ in $\mathsf{Exp}^b(X|D)$. It maps to a point of $\mathsf{Exp}^c(X|D)$ that parameterizes expansions with $k$ expanded components. By pulling back to $q$ we now have two expansions over $q$:
\[
\pi: \mathcal Y'_q\to \mathcal Y\to q.
\]
The space $\mathcal Y'_q$ is still an expansion of $X$ along $D$, since any subdivision of an expansion is still an expansion. It may have, and can always be arranged to have, more than $k$ expanded components. Let us assume that $\mathcal Y'_q$ has $k+m$ expanded components. The map $\pi$ contracts these new components. Let us call these $m$ components the \textit{extra components}. 

The construction above can be performed using the formalism developed in Section~\ref{sec:tropical-moduli}. Precisely, since $\mathsf{Exp}^b(X|D)$ and $\mathcal Y'$ are both subdivisions, they can be obtained from appropriate cone structures on $|T^c(\Sigma_X)|$ and on its universal $1$-complex. The tube vertices, as defined earlier in the paper, give tube components in this context. These correspond exactly to the extra components. 

We can now pull back the DT moduli space $\mathsf{DT}^b_{\beta,n}(X|D)$ along the blowup map
\[
\mathsf{Exp}^c(X|D)\to \mathsf{Exp}^b(X|D).
\]
We can pullback the universal subscheme in $\mathcal Y$ to the blowup $\mathcal Y'$. Since in the usual Li--Wu space, a component of the expansion can never contain a tube subscheme, we can now can see now that the only subschemes that can be contained in a tube/extra components of a fiber of $\mathcal Y'$ are tube subschemes. 

The moral of the story is that in the smooth pair setting, all the possible outputs from different choices in our construction can be obtained by the procedure described above. Since we have the Li--Wu space already in this setting, the formalism of Section~\ref{sec:tropical-moduli} can be avoided -- the paper does not produce anything beyond the system of logarithmic modifications of the Li--Wu space. But in the general setting of simple normal crossings pairs, there does not seem to exist a minimal model for the stack $\mathsf{Exp}(X|D)$. As such, the system of models must be constructed independently of the minimal model. The situation is similar to the work of Molcho--Wise on the Picard group~\cite{MW18} and is explored in detail in forthcoming work of Kennedy-Hunt on general logarithmic Hilbert and Quot schemes~\cite{KH22}. 

%
%

\section{Transverse limits and generically expanded targets}\label{sec:expanded}

We verify the details of the valuative criterion for Theorem~\ref{thm: properness} that were postponed i.e. for families with a generic target expansion. An analysis via normalization requires a thorough treatment of the rubber geometry. Instead, we handle it with some additional formalism concerning logarithmic structures and tropicalizations in these circumstances; the overall structure is similar to the special case dealt with previously, so parallel reading may be advisable. 

\subsection{Preliminaries on valuation rings} Let $K$ be a complete discretely valued field with algebraically closed residue field and valuation ring $R$. Denote the spectra of $K$ and $R$ by $S^\circ$ and $S$ respectively, and the inclusion by $j:S^\circ\to S$. Equip the scheme $S^\circ$ with the standard logarithmic structure with monoid $\NN$. The tropicalization of $S^\circ$ is canonically identified with the real dual $\Hom(\NN,\RR_{\geq 0})$ of $\NN$, where the Hom is taken in the category of monoids. The tropicalization is therefore a ray and we denote it by $\rho$. 

In the situation typically considered, and in particular the one handled in Section~\ref{sec: strong-transversality}, one starts with a trivial logarithmic structure on the generic point of the spectrum of a DVR $\spec R$, and endows the full spectrum with the divisorial logarithmic structure. The characteristic monoid at the closed point can be understood as being the image of $R$ under the valuation. We are now in the case where there is an existing logarithmic structure on the generic point, which we can and will choose to have characteristic monoid $\mathbb N$, that must be extended to the closed point. We now explain how to do this. A geometric perspective is recorded in the remark below. 

Consider the rank $2$ lattice obtained as the direct sum of $\overline M_{S^\circ}^{\mathsf{gp}}$ and the group of exponents of a uniformizer for $R$; as presented, there is a natural identification with $\mathbb Z^2$, by sending the generator of $\overline M_{S^\circ}$ to $(1,0)$ and the positive generator of the value group to $(0,1)$. Let $N_S$ be the dual vector space of this group. There is a generization map on characteristic monoids from the closed point to the open point. Dualizing this map, the ray $\rho$ above, i.e. the dual cone of the characteristic monoid at $S^\circ$, embeds in $N_S$. Finally, there is a canonical quotient map $N_S\to \RR$, defined by taking the quotient of the span of $\rho$. The image of the lattice $N_S$ is canonically identified with the value group of $K$. 

We examine fine and saturated extensions of the logarithmic structure from $S^\circ$ to $S$; denote the inclusion by $j\colon S^\circ\hookrightarrow S$. A sufficient class is obtained as follows. Begin with the direct image logarithmic structure $j_\star M_{S^\circ}$.\footnote{The direct image logarithmic structure is typically not fine. It is possible to work with it nonetheless, but as we prefer to use standard toric machinery, we will avoid the direct use of this logarithmic structure.} The characteristic monoid of this structure at the closed point is isomorphic to the set of lexicographically nonnegative elements in $\ZZ^2$. The dual of the characteristic group at the closed point is the vector space $N_S$ defined above and the ray $\rho$ is equipped with an embedding in $N_S$. 

Collect the set of all $2$-dimensional rational polyhedral cones $\sigma\subset N_S$ containing $\rho$ as a face and whose image in the value group is the nonnegative elements. There is an ordering among elements in this set by inclusion. The dual monoids $P_\sigma$ determine fine and saturated submonoids of the characteristic monoid of direct image. We obtain, for each choice of cone $\sigma$, a fine and saturated logarithmic structure extending the one on $S^\circ$ whose characteristic monoid at the closed point is $P_\sigma$. Denote the extension by $S_\sigma$. 

\subsection*{Terminology} The logarithmic structures extending the given one on $S^\circ$ above will be called \textit{logarithmic extensions}. The set of such cones $\sigma$ form a filtered system under inclusion. In the discussion that follows will say that a statement holds for \textit{for sufficiently small $\sigma$} to indicate that it is true after replacing $\sigma$ with any element in a lower interval in the filtered system. We will correspondingly refer to the extensions as \textit{sufficiently small (fine and saturated) extensions}. 

\begin{remark}[Geometric perspective on the extensions]
Geometrically, the reader may visualize $S$ as the germ of a curve whose generic point lies in the interior of a boundary divisor of a toric variety and specializes to a codimension $2$ orbit, meeting this orbit with multiplicity $1$. One obtains a natural logarithmic structure on $S$ by pulling back the toric logarithmic structure. However, we can also blow up this toric variety, lift $S$ to this blowup by taking strict transform, and perform the same construction. Iterating this construction, we obtain a sequence of logarithmic structures on $S$. These are all fine and saturated, but not naturally isomorphic. Further blowups lead to smaller cones in the description above. The reader may wish to contrast this with the situation where $S$ specializes from the interior of a toric variety to the interior of a divisor, meeting the divisor with multiplicity $1$. In this case as well, pullback gives rise to a natural logarithmic structure But performing blowups to the ambient toric variety do not affect the pullback logarithmic structure on $S$. 
\end{remark}

For a finite extension $K'$ of $K$ equipped with the natural valuation, the corresponding morphism $\spec R'\to\spec R$ induces a map from the value group of $K'$ to the value group of $K$. It is ramified cover, \'etale away from from the closed points. We can equip $\spec R'$ with the a logarithmic structure by pulling back the logarithmic structure from $\spec R$ and then saturating the result. The map on value groups is an inclusion of a finite index sublattice, and in particular, if we denote by the spectrum by $S'$, then the associated vector spaces $N_S$ and $N_{S'}$ above are naturally identified. The ray $\rho$ is canonically identified the dual cone of the characteristic monoid at the generic point in $\spec R$, i.e. $\spec K$. Since the map $\spec K'\to \spec K$ is \'etale, the morphism above is strict with the above logarithmic structures. In particular, the characteristic monoids at the generic points are naturally identified. As a result, the integral structure on the ray $\rho$ induced from the lattice in $N_S$ and in $N_{S'}$ are the same. 

Before proceeding, we remind the reader that a map from the logarithmic scheme $S_\sigma$ to an Artin cone is equivalent to the data of a map from $\sigma$ to the corresponding cone~\cite[Section~6.2]{CCUW}.

\subsection{Expansions and tropicalizations} Let $\mathcal Y_\rho\to S^\circ$ be an expansion of $X$ and let $\Sigma_\rho\to \rho$ be the associated tropical family, noting that $\Sigma_\rho$ is embedded in $\Sigma_X\times\rho$. Let $Z_\rho$ be an algebraically transverse subscheme of $\mathcal Y_\rho$.\footnote{As before, we assume that these schemes are defined over a finitely generated subfield of $K$ to avoid foundational issues.}

\begin{lemma}
There exists a rough expansion $\mathcal Y_\sigma$ of $X$ over a sufficiently small extension $S_\sigma$ of $S^\circ$, extending the family $\mathcal Y_\rho$, such that the closure of $Z_\rho$ in $\mathcal Y_\sigma$ is dimensionally transverse. Moreover, the family $\mathcal Y_\sigma\to S_\sigma$ can be chosen to have relative logarithmic rank at most $1$, i.e. the rank of the relative characteristic monoid of the expansion over the base at most $1$. 
\end{lemma}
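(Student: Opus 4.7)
The plan is to adapt the tropical limit argument of Section~\ref{sec: flat-limits} (specifically Proposition~\ref{prop: uniqueness-dimensional-case}) to the setting where the generic fiber is already expanded.  The key additional subtlety is that the logarithmic structure on $S^\circ$ already carries the rank-$1$ cone $\rho$ coming from the expansion $\mathcal Y_\rho$, so the construction must take place one dimension higher than in the unexpanded case: rather than tropicalizing in $\Sigma_X$ and taking a cone, we will tropicalize $Z_\rho^\circ := Z_\rho \cap \mathcal Y_\rho^\circ$ jointly with respect to $\rho$ and with respect to the valuation direction of $R$, producing a $2$-dimensional polyhedral complex inside $\Sigma_X \times N_S$ whose shape dictates the desired rough expansion.

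The construction proceeds in three stages.  First, pass to a valued field extension $L/K$ with real surjective valuation and tropicalize $Z_\rho^\circ$ recording both the $\rho$-coordinate and a uniformizer of $R$, obtaining a polyhedral set $\trop(Z_\rho^\circ) \subset \Sigma_X \times N_S$ of topological dimension at most $2$ by Bieri--Groves.  The hypothesis that $Z_\rho \subset \mathcal Y_\rho$ is strongly transverse, combined with Theorem~\ref{thm: divisorial-trop}, identifies the restriction of $\trop(Z_\rho^\circ)$ to the preimage of $\rho$ with $\Sigma_\rho$ at the level of supports.  Second, choose $\sigma$ sufficiently small: a $2$-dimensional cone containing $\rho$ as a face and mapping into the nonnegative half-line under the value-group quotient $N_S \to \RR$, small enough that $\trop(Z_\rho^\circ)$ pulled back to $\pi^{-1}(\sigma)$ projects to $\sigma$ surjectively with at most $1$-dimensional fibers.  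Third, endow this restricted tropicalization with any polyhedral structure and extend it to a subdivision $\Sigma_\sigma \subset \Sigma_X \times \sigma$ whose restriction over $\rho$ coincides with $\Sigma_\rho$ and whose projection to $\sigma$ has $1$-dimensional fibers throughout the interior; the toric dictionary from Section~\ref{sec: subdivisions} and Remark~\ref{rem: divisorial} then yields a rough expansion $\mathcal Y_\sigma \to S_\sigma$ of relative logarithmic rank $1$, extending $\mathcal Y_\rho$.  Dimensional transversality of the closure of $Z_\rho$ in $\mathcal Y_\sigma$ follows from the transversality criterion for families proved at the end of Section~\ref{sec: flavours}, because $\trop(Z_\rho^\circ)$ is, by construction, a union of polyhedra in $\Sigma_\sigma$.

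The main obstacle is the third stage: extending the partial polyhedral complex $\trop(Z_\rho^\circ)|_\sigma$ to a subdivision $\Sigma_\sigma$ of $\Sigma_X \times \sigma$ that simultaneously (i) agrees with $\Sigma_\rho$ over $\rho$, (ii) has $1$-dimensional fibers over the interior of $\sigma$, and (iii) contains the tropicalization as a union of faces.  The boundary compatibility provided by strong transversality (identifying $\trop(Z_\rho^\circ)|_\rho$ with $\Sigma_\rho$ in the first stage) together with the freedom to shrink $\sigma$ is what makes this possible: one propagates $\Sigma_\rho$ across the interior of $\sigma$ guided by the polyhedral combinatorics of $\trop(Z_\rho^\circ)$, using an equivariant completion argument in the spirit of Lemma~\ref{lem: equivariant-strictness} applied locally near $\rho$ rather than globally.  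Since $\sigma$ may be chosen arbitrarily small inside $N_S$, no obstructions arise from distant regions of $\Sigma_X \times N_S$, and the extension exists.
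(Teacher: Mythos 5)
There is a genuine gap, and it sits exactly where the paper warns the difficulty lies. Your first stage presumes that $Z_\rho^\circ = Z_\rho \cap \mathcal Y_\rho^\circ$ admits a coordinatewise valuative tropicalization into $\Sigma_X\times N_S$ to which Bieri--Groves and Theorem~\ref{thm: divisorial-trop} can be applied. But the generic fiber is already expanded: the pieces of $Z_\rho$ lying in the bubble components of $\mathcal Y_\rho$ map into $D$ under $\mathcal Y_\rho\to X$, so the local equations of the $D_i$ pull back to zero along them and admit no finite valuation. Recording where these pieces sit in $\Sigma_X$ requires trivializing the torus bundles on the bubble components --- precisely the ``rubber'' analysis that Section~\ref{sec:expanded} announces it will avoid --- or else an a priori choice of model over $R$, which is what the lemma is supposed to produce. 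For the same reason, the appeal to Theorem~\ref{thm: divisorial-trop} to identify $\trop(Z_\rho^\circ)|_\rho$ with $\Sigma_\rho$ is circular in this setting: that statement is proved for subschemes meeting the strata of a fixed compactification in expected dimension, which is the conclusion being sought. Your third stage (propagating $\Sigma_\rho$ across $\sigma$ guided by the tropicalization) therefore rests on an object that has not actually been constructed, and you have already flagged that stage as the delicate one.

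The paper's proof runs in the opposite order and never tropicalizes $Z_\rho$ at this point. It first completes $\Sigma_\rho$ to a proper subdivision $\Sigma^0_\sigma$ of $\Sigma_X\times\sigma$ using the completion theorem for fans (cf.\ Remark~\ref{rem: rel-comp}), shrinking $\sigma$ to ensure flatness, which already gives some rough expansion. Dimensional transversality is then achieved componentwise: each irreducible component $Y$ of $\mathcal Y_\rho$ is itself a logarithmically smooth pair over $K$ containing $Z_\rho\cap Y$ in its interior, so Ulirsch's tropical compactification theorems furnish a sequence of stratum blowups of its closure after which $Z_\rho\cap Y$ is dimensionally transverse; strong transversality of the generic fiber forces the centers of these blowups into the special fiber, so the generic family is untouched. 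Finally, relative logarithmic rank $1$ is obtained not by engineering the fibers of the subdivision in advance but by discarding the strata that do not meet the closure of $Z_\rho$, which is legitimate once dimensional transversality is in hand. The tropicalization you want to start from is only defined afterwards, divisorially, with respect to the model that this lemma provides, and its independence of the model is a separate lemma.
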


\begin{proof}
Let $\sigma_0$ be any smooth cone giving rise to a fine and saturated extension and consider $\Sigma_X\times \sigma_0$. The given family $\Sigma_\rho$ is embedded in $\Sigma_X\times \sigma_0$. Since $\Sigma_X\times \sigma_0$ is smooth it can be embedded in a vector space, and by the completion theorem for fans, we obtain a complete fan refining $\Sigma_X\times \sigma_0$ and extending $\Sigma_\rho$; this was referenced in Remark~\ref{rem: rel-comp}. For any sufficiently small replacement $\sigma$ of $\sigma_0$, every cone of this subdivision surjects onto a cone of $\sigma$, guaranteeing flatness. We obtain a complete subdivision $\Sigma^0_\sigma$ of $\Sigma_X\times\sigma$ and therefore a rough expansion of $X$ over $S_\sigma$. 

Consider the set of all subdivisions $\Sigma_\sigma$ of $\Sigma_\sigma^0$ that do not change $\Sigma_\rho$. We claim any sufficiently refined subdivision has the property that the closure $Z_\rho$ is dimensionally transverse. To see this, consider the closure of a component $Y$ of the generic fiber of $\mathcal Y_\rho$. By the tropical compactification results for subschemes of logarithmic schemes, there exists a sequence of blowups of strata in the closure of this component in $\mathcal Y_\sigma$, such that the closure of $Z_\rho\cap Y$ is dimensionally transverse~\cite[Theorems~1.1 {\it \&} 1.2]{U15}. The generic fiber subscheme has been supposed to be algebraically transverse and has dimension $1$, so any stratum that it intersects has codimension at most $1$ and dominates the base $S_\sigma$. Therefore the generic fiber is disjoint from the centers of these blowups. Repeat this for each component $Y$ of $\mathcal Y_\rho$ to obtain a new total space $\mathcal Y_\sigma$ for the target with cone complex $\Sigma_\sigma$ and the closure of the subscheme $Z_\rho$ is dimensionally transverse. After replacing $\sigma$ with a sufficiently small member in the filtered system and pulling back the target family, every cone of $\Sigma_\sigma$ maps surjectively onto a cone in $\sigma$, guaranteeing flatness. Pass to the union of strata that meet the closure of $Z_\rho$ nontrivially and obtain a rough expansion of relative logarithmic rank equal to $1$. 
\end{proof}

Given a subscheme $Z_\rho\hookrightarrow \mathcal Y_\rho$ as above, choose an extension $\mathcal Y_\sigma$ over $S_\sigma$ such that the closure of $\mathcal Z_\rho$ is dimensionally transverse. Let $\Sigma_\sigma$ be the corresponding embedded complex in $\Sigma_X\times\sigma$; we make the following definition.

\begin{definition}[Tropicalization with expanded target]
The tropicalization of $Z_\rho$ with respect to the family $\mathcal Y_\sigma$ over $S_\sigma$ above is the union of cones in $\Sigma_\sigma$ such that the closure of $Z_\rho$ has nonempty intersection with the corresponding logarithmic stratum of $\mathcal Y_\sigma$. The tropicalization is denoted $\trop(Z_\rho\hookrightarrow \mathcal Y_\sigma)$ viewed as a subset of $\Sigma_X\times \sigma$. 
\end{definition}

The tropicalization of a subscheme of the interior of a logarithmically smooth scheme is the image under a valuation map and is manifestly insensitive to subdivisions. 

\begin{lemma}[Birational invariance]
Let $\mathcal Y'_{\sigma'}$ and $\mathcal Y_{\sigma}$ be two rough expansions over $S_{\sigma'}$ and $S_\sigma$, both extending $\mathcal Y_\rho$. After restricting both families to a sufficient small cone $\sigma''$ the two tropicalizations
\[
\trop(Z_\rho\hookrightarrow \mathcal Y_{\sigma'}) \ \ \ \textrm{and} \ \ \ \trop(Z_\rho\hookrightarrow \mathcal Y_{\sigma})
\]
give the same subsets of $\Sigma_X\times\sigma''$. 
\end{lemma}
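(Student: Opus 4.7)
The plan is to reduce to a common logarithmic extension and common refinement, then exploit birational invariance of tropicalizations via valuations (geometric tropicalization applied in families).

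First, since the cones giving fine and saturated extensions form a filtered system under inclusion, I can choose a sufficiently small $\sigma''$ that admits maps to both $\sigma$ and $\sigma'$. Pulling back the rough expansions yields two rough expansions $\mathcal Y_\sigma\times_{S_\sigma} S_{\sigma''}$ and $\mathcal Y_{\sigma'}\times_{S_{\sigma'}} S_{\sigma''}$ of $X$ over the same base $S_{\sigma''}$, both extending $Y_\rho$, with associated embedded complexes in $\Sigma_X\times\sigma''$ that agree over $\rho$. I may thus assume from the outset that both expansions live over the same logarithmic extension. Next, I form the common refinement of the two embedded complexes inside $\Sigma_X\times\sigma''$, and subdivide further using the combinatorial flatness procedure of~\cite{AK00} to obtain an embedded complex $\Sigma''$ whose every cone surjects onto a cone of $\sigma''$. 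After possibly shrinking $\sigma''$ again so that the construction in the preceding lemma applies, $\Sigma''$ corresponds to a third rough expansion $\mathcal Y''$ equipped with proper toroidal birational morphisms $p: \mathcal Y''\to \mathcal Y_\sigma$ and $p': \mathcal Y''\to \mathcal Y_{\sigma'}$ over $S_{\sigma''}$. It then suffices to show that the morphism $p$ induces an equality of supports $\trop(Z_\rho\hookrightarrow \mathcal Y_{\sigma''}'')=\trop(Z_\rho\hookrightarrow \mathcal Y_\sigma|_{\sigma''})$, since symmetric reasoning yields the same statement for $p'$.

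The key invariance step is the following analogue of Theorem~\ref{thm: divisorial-trop} in the relative setting: for a toroidal birational morphism $p:\mathcal Y''\to \mathcal Y_\sigma$ over $S_{\sigma''}$, a cone $\tau''\in\Sigma''$ lies in $\trop(Z_\rho\hookrightarrow \mathcal Y'')$ if and only if the image cone $p_{\trop}(\tau'')\in \Sigma_\sigma$ lies in $\trop(Z_\rho\hookrightarrow \mathcal Y_\sigma)$. For the forward direction, properness of $p$ implies that the closure $\mathcal Z''$ of $Z_\rho$ in $\mathcal Y''$ surjects onto the closure $\mathcal Z$ in $\mathcal Y_\sigma$; the image of any closed stratum $V(\tau'')$ meeting $\mathcal Z''$ lies in $V(p_{\trop}(\tau''))$ and meets $\mathcal Z$. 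For the reverse direction, given a cone $\tau\in\Sigma_\sigma$ such that $V(\tau)\cap \mathcal Z\neq\emptyset$, I pick a closed point $z\in V(\tau)\cap \mathcal Z$ and a valued extension $L$ of the residue field at which $z$ lifts to an $L$-point of $X^\circ\times_{S_{\sigma''}}\spec R_L$ whose tropicalization lies in the relative interior of $\tau$; by properness this extends to $\mathcal Y''$ and specializes into some closed stratum $V(\tau'')$ with $p_{\trop}(\tau'')\subset \tau$, giving a point of the tropicalization of $\mathcal Y''$ in the relative interior of $p_{\trop}^{-1}(\tau)$. Running over closed points of $V(\tau)\cap \mathcal Z$ shows the union of such $\tau''$ has support covering $\tau$.

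The main obstacle is making the valuation-theoretic characterization in the reverse direction above rigorous in the relative expanded setting: we need a version of the Bieri--Groves/Tevelev--Ulirsch correspondence between strata incidence and the image of the coordinatewise valuation map on the interior $\mathcal Y_\sigma^\circ=X^\circ\times S^\circ$. Since $X^\circ\times S^\circ$ is intrinsic (independent of the choice of extension or expansion), this intrinsic characterization, once established, immediately yields invariance under both the passage $\mathcal Y''\to\mathcal Y_\sigma$ and the symmetric passage $\mathcal Y''\to\mathcal Y_{\sigma'}$, giving the desired equality of tropicalizations. The approach parallels the proof of the theorem on properness of closures in Section~\ref{sec: flavours}, which was reduced to~\cite[Lemma~4.1]{U15}; a directly analogous argument, spreading $Z_\rho$ out to a family over a smooth curve and applying the relative Tevelev--Ulirsch lemma cone by cone in $\Sigma''$, closes the proof.
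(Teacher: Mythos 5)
Your reduction steps --- passing to a common logarithmic extension $\sigma''$, forming the common refinement, subdividing for combinatorial flatness, and reducing to the invariance of $\trop$ under a single toroidal birational morphism $p:\mathcal Y''\to\mathcal Y_\sigma$ --- match the paper's proof. The divergence, and the gap, is in how you establish that invariance. You propose to deduce it from an ``intrinsic'' valuation-theoretic characterization of the tropicalization in terms of the locus $X^\circ\times S^\circ$, arguing that since this locus is independent of the choice of extension and expansion, the two tropicalizations must agree. But in this section the \emph{generic fiber is already expanded}: $Z_\rho$ is a strongly transverse subscheme of $\mathcal Y_\rho$, whose irreducible components include compactified torus bundles over positive-codimension strata of $(X|D)$. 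A component of $Z_\rho$ lying in such a bubble maps into $D$, hence is disjoint from $X^\circ\times S^\circ$, so $Z_\rho\cap(X^\circ\times S^\circ)$ is not dense in $Z_\rho$ and your intrinsic locus simply does not see those components (nor the cones of the tropicalization they contribute). The correct ``relative interior'' is the union of the dense torus-bundle opens of the components of $\mathcal Y_\rho$, and a valuation-theoretic tropicalization there is precisely the rubber formalism that the opening of Section~\ref{sec:expanded} announces it is avoiding. Your reverse-direction step --- lifting a closed point of $V(\tau)\cap\mathcal Z$ to an $L$-point of $X^\circ\times\spec R_L$ --- fails for exactly these components. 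This is why the tropicalization with expanded target is \emph{defined} via strata incidence rather than via valuations, and why the lemma cannot be reduced to an already-available Tevelev--Ulirsch statement without further work.

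The paper's proof sidesteps this entirely: after the common refinement, it notes that any subdivision is dominated by an iterated stellar subdivision, reducing to the case where $p$ is a single weighted blowup of a stratum of the special fiber. There the total transform of $\mathcal Z$ tautologically meets exactly the strata lying over strata met by $\mathcal Z$, and dimensional transversality of $\mathcal Z$ forces the strict transform (which is the closure of $Z_\rho$ upstairs) to meet the same strata as the total transform; hence the two strata-incidence sets, and so the two tropicalizations, coincide. If you want to salvage your route, you would need either to develop the rubber tropicalization honestly, or to replace the appeal to $X^\circ\times S^\circ$ with the stellar-subdivision/strict-transform comparison, at which point you have reproduced the paper's argument.
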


\begin{proof}
Consider two families as in the statement and let $\Sigma'_{\sigma'}$ and $\Sigma_{\sigma'}$ be the associated cone complexes. As we have done previously, we can complete both complexes to complete subdivisions and guarantee them to be flat after shrinking. By shrinking further we can assume that $\sigma$ and $\sigma'$ coincide, and examine them as cone complexes embedded in $\Sigma_X\times\sigma$. Let $\Sigma''_\sigma$ be the common refinement. After shrinking again, this common refinement is flat over $\sigma$. Since any subdivision of a cone complex is dominated by an iterated stellar subdivision, we reduce further to the case where $\Sigma_\sigma$ is replaced by a single stellar subdivision i.e. a weighted blowup of a stratum, necessarily in the special fiber. Since the closure $Z_\sigma$ of $Z_\rho$ is dimensionally transverse to the strata of $Y_\sigma$, the set of strata that the proper  and total transforms intersect nontrivially are the same. The strict transform is equal to the closure of $Z_\rho$ in a weighted blowup $\mathcal Y_{\sigma}'$ so the tropicalizations coincide. 
\end{proof}

\begin{lemma}[Algebraic transversality]\label{lem:expanded-strong-transversality}
After replacing $S_\sigma$ with a ramified base change $S_\sigma'$, there exists an expansion $\mathcal Y_\sigma$ of $X$ over a sufficiently small extension $S'_\sigma$ of $S_\rho$, extending the family $\mathcal Y_\rho$, such that the closure of $Z_\rho$ in $\mathcal Y_\sigma$ is algebraically transverse. 
\end{lemma}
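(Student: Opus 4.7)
The plan is to invoke the preceding lemma to obtain a dimensionally transverse extension $\mathcal Y_\sigma \to S_\sigma$ of $\mathcal Y_\rho$ with relative logarithmic rank $1$, and then to modify this extension stratum-by-stratum along its special fiber to achieve strong transversality. This mirrors the strategy used in the proof of Proposition~\ref{thm: uniqueness-strong-case}: the new wrinkle is only that the base cone $\sigma$ is two-dimensional rather than one-dimensional, and this is absorbed by the flexibility to shrink $\sigma$.

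First, consider the closure $Z_\sigma \subset \mathcal Y_\sigma$ of $Z_\rho$. The failure of strong transversality occurs only along finitely many codimension-$1$ strata $W$ of the special fiber that host embedded points or one-dimensional components of $Z_\sigma$. Each such $W$ is dual to an edge $e$ of the embedded $1$-complex $\Sigma_\sigma \subset \Sigma_X\times\sigma$ realized by the relative logarithmic rank $1$ family. Since $Z_\sigma$ is dimensionally transverse, complete $\mathcal Y_\sigma$ along $W$ to obtain $\widehat{\mathcal Y}_{\sigma,W}$, which is \'etale-locally a product of a smooth surface with a double-point degeneration; we are now in the setup of \cite[Section~5]{LiWu15}.

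Second, apply the Li--Wu algorithm locally at each problematic $W$: after an order-$b_W$ ramified base change and a sequence of weighted blowups supported along $W$, the closure of $Z_\rho$ becomes strongly transverse in a neighborhood of $W$. Combinatorially, this amounts to inserting finitely many new $2$-valent vertices along the edge $e$ of $\Sigma_\sigma$ and then passing to a finite-index sublattice in the base direction so that the new vertices become integral. Take $b$ to be the least common multiple of the $b_W$ as $W$ ranges over problematic strata, and perform this single order-$b$ base change $S'_\sigma \to S_\sigma$; carry out the blowups simultaneously to obtain a refined subdivision $\Sigma'_{\sigma'} \subset \Sigma_X \times \sigma'$ that agrees with $\Sigma_\rho$ on the generic ray. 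After shrinking $\sigma'$ if needed so that every cone of $\Sigma'_{\sigma'}$ surjects onto a cone of $\sigma'$ (combinatorial flatness) and restricting to the union of strata meeting the closure of $Z_\rho$ nontrivially, the resulting family $\mathcal Y'_{\sigma'}\to S'_{\sigma'}$ is an expansion of $X$ of relative logarithmic rank $1$ in which the closure of $Z_\rho$ is strongly transverse.

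The principal obstacle is that the Li--Wu modifications at different problematic strata $W$ must be carried out compatibly and must not destroy the dimensional transversality obtained in the previous lemma. Both issues are handled by the \emph{sufficiently small} clause on $\sigma$: each Li--Wu modification is supported in an \'etale neighborhood of the corresponding $W$ in the special fiber, and after a sufficient shrinking of $\sigma$ the modifications at distinct strata have disjoint supports and so commute. A subsidiary technical point is ensuring that the base change and lattice refinement needed to make the newly added vertices integral can be performed once and for all, rather than iteratively; this is exactly the role of the LCM $b$. The remaining verifications (reducedness of fibers over standard logarithmic points, relative logarithmic rank $1$, birational invariance of the tropicalization under the refinement) follow from the birational invariance lemma and a routine tracing through the constructions.
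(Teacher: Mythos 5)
Your proposal is correct and follows essentially the same route as the paper: the paper's (much terser) proof likewise takes the dimensionally transverse rank-$1$ rough expansion from the preceding lemma, performs the ramified base change to get reduced fibers via \cite[Section~5]{AK00}, and then invokes the Li--Wu completion-along-codimension-$1$-strata argument exactly as in Section~\ref{sec: strong-transversality}, allowing a further base change. Your write-up simply unpacks that reference (local double-point model, LCM of the local base changes, shrinking $\sigma$ for combinatorial flatness), which matches the intended argument.
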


\begin{proof}
After obtaining a rough expansion whose logarithmic structure has relative rank $1$ with the requisite dimensional transversality property, we perform the ramified base change to obtain an expansion of $X$ with reduced fibers~\cite[Section~5]{AK00}. We handle the passage from dimensional transversality to algebraic transversality similarly to Section~\ref{sec: strong-transversality} as we now explain. However, the reader is informed however that after establishing dimensional transversality as we have, the problem is formally identical to the one considered by Li--Wu, whose arguments can now be plugged in~\cite{LiWu15}. 

Let $\mathcal Y'_\sigma$ denote the expansion guaranteeing dimensional transversality. Any failure of algebraic transversality necessarily occurs at the double intersection of two components of the special fiber of $\mathcal Y'_\sigma$. If there is a Zariski open neighborhood of this double intersection inside the total space that is irreducible, then the Zariski local situation is identical to Section~\ref{sec: strong-transversality}. In this case, the equation at this double intersection is $xy = f$, where $f$ is an element of the valuation ring $R$. The tropicalization of this local neighborhood is therefore an edge of length equal to the valuation of $f$. We associate, as Section~\ref{sec: strong-transversality}, an initial degeneration to each rational point on this edge, and again all but finitely many of these points have tubular initial degeneration. We may blowup by including these non-tubular points to the polyhedral structure. After a base change to make these components reduced, algebraic transversality follows as in the case already treated. 

The remaining case is similar. We may pass to a Zariski open neighborhood $U$, in the total space of the degeneration, of the double intersection. The neighborhood is necessarily reducible and its normalization consists of components $U_1$ and $U_2$ glued along a smooth divisor $D$. Let $Z_1$ and $Z_2$ be the restriction of $Z_\rho$ to $U_1$ and $U_2$ respectively, and note that $Z_1$ and $Z_2$ are both necessarily algebraically transverse. Treating the $U_i$ separately, we may apply the results of Section~\ref{sec: strong-transversality}. Specifically, the tropicalization of each $U_i$ is a ray $\RR_{\geq 0}$ and every rational point on this ray gives rise to an initial degeneration, finitely many of which are non-tubular. These determine an expansion and a base change that give rise to a family of expansions of $U_i$. such that the closure of $Z_i$ is algebraically transverse. We obtain expansions of both $U_i$ along $D$ such that the closure of $Z_i$ is algebraically transverse. 

These expansions can be naturally glued; indeed, any expansion of $U_i$ along $D$ can be obtained by first performing a subdivision of the family $\mathcal Y_\sigma$ and then pulling back along the inclusion of $U_i$. Since algebraic transversality can be checked after normalizing, it follows that there exists a family $\mathcal Y_\sigma$ over $S'_\sigma$ of expansions such that the closure of the generic fiber is algebraically transverse. Finally, blow down any components in the special fiber whose subschemes are tubular using Lemma~\ref{lem: contraction-lemma}. We obtain an algebraically transverse expansion without tubular components. 
\end{proof}

\subsection{Verification of the valuative criterion} We fix a conical structure on the moduli space $T$ of tropical expansions and on its universal family. This determines a moduli space $\mathsf{DT}(X|D)$ of relative ideal sheaves equipped with a structure map to the stack $\mathsf{Exp}(X|D)$ of target expansions. We check that for each map
\[
\spec K \to \mathsf{DT}(X|D)
\]
from the spectrum of a discretely valued field, there exists a canonical base change $K\subset K'$ of a well-defined smallest order, and an extension of the map to the spectrum of the corresponding valuation ring. 

We have shown existence in the case where $\spec K$ maps to the locus in $\mathsf{Exp}(X|D)$ where the logarithmic structure is trivial and therefore assume that it maps to the boundary of $\mathsf{Exp}(X|D)$. By replacing $T$ with a subdivision i.e. by performing a weighted blowing up of stratum to which it maps, we may choose a lift of the morphism from $\spec K$ so that it maps to the interior of a logarithmic divisor in $\mathsf{Exp}(X|D)$. Note that the blowup is proper, so it is sufficient to check the valuative criterion in the case where the generic point maps to a divisor. Indeed, this follows from applying the valuative criterion to the blowup map itself: if we find a completion of the map to $\spec R$ on the blowup, we can compose with the blow down to obtain an extension.

We have fixed $S^\circ\subset S$. Pull back the logarithmic structure, universal expansion, and subscheme to form a family
\[
\begin{tikzcd}
\mathcal Z_\rho\arrow{rr}\arrow{dr} & & \mathcal Y_\rho\arrow{dl}\\
&S^\circ.&
\end{tikzcd}
\]
In keeping with the notation of the preceding discussion, $\rho$ denotes the ray corresponding to the exceptional divisor that contains the image of $\spec K$. We fix these data for the following. 

\begin{proposition}
There exists a sufficiently small extension $S_\sigma$ of $S^\circ$ and its logarithmic structure, a ramified base change $S_\sigma'\to S_\sigma$, and an expansion $\mathcal Y_{\sigma'}$ over $S_{\sigma'}$ such that, after pullback, the closure of $Z_\rho$ is algebraically transverse and satisfies the DT stability condition. 
\end{proposition}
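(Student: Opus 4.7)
The plan is to mirror the structure of the unexpanded case treated in Theorem~\ref{thm: properness}, replacing each geometric input by its expanded-target analogue developed earlier in this section. First I would apply Lemma~\ref{lem:expanded-strong-transversality} to the given family $\mathcal Z_\rho \hookrightarrow \mathcal Y_\rho$ over $S^\circ$: this produces, after a ramified base change $S_\sigma' \to S_\sigma$ and passage to a sufficiently small cone, a logarithmic extension $S_{\sigma'}$ and an expansion $\mathcal Y_{\sigma'} \to S_{\sigma'}$ of relative logarithmic rank one, in which the closure $\mathcal Z_{\sigma'}$ of $\mathcal Z_\rho$ is strongly transverse. Let $\Sigma_{\sigma'} \subset \Sigma_X \times \sigma'$ be the associated embedded cone complex, whose fiber over the ray $\rho$ recovers the $1$-complex of $\mathcal Y_\rho$.

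Next I would cut down $\Sigma_{\sigma'}$ to a minimal model compatible with DT stability. The strongly transverse extension provided by Lemma~\ref{lem:expanded-strong-transversality} may introduce $2$-valent vertices along each edge of $\Sigma_\rho$ — giving rise to tube components — but, following the Li--Wu stabilization argument invoked in Section~\ref{sec: strong-transversality} and recorded in Remark~\ref{rem:notubes}, for each edge there is a unique minimal such set of added vertices so that the restriction of $\mathcal Z_{\sigma'}$ to the corresponding components is not a tube. Concretely, I would contract every added component over which the subscheme is a tube, obtaining a toroidal model $\mathcal Y^{\mathrm{st}}_{\sigma'}$ with cone complex $\Sigma^{\mathrm{st}}_{\sigma'}$ in which the only bivalent vertices supporting tubes are the tube vertices dictated by the universal family $\Upsilon \to \mathsf T$. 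By the local uniqueness supplied by Proposition~\ref{thm: uniqueness-strong-case}, applied edge by edge in a neighbourhood of the special fiber, this minimal strongly transverse DT-stable extension is canonical.

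Now I would translate this combinatorial datum into a morphism to the moduli stack. The cone complex $\Sigma^{\mathrm{st}}_{\sigma'}$ is, by the construction in Section~\ref{sec: constructing-T}, a family of embedded $1$-complexes in $\Sigma_X$ parametrized by $\sigma'$, hence determines a map $\sigma' \to |\mathsf T|$. Using the equivalence between cone stacks and Artin fans (Theorem~\ref{thm: CCUW}) and the correspondence between morphisms from $S_{\sigma'}$ to an Artin cone and morphisms $\sigma' \to $ cone, this yields a morphism $S_{\sigma'} \to \mathsf{Exp}(X|D)$ extending the given map on the generic point. Pulling back the universal expansion along this map recovers $\mathcal Y^{\mathrm{st}}_{\sigma'}$, and the closure $\mathcal Z^{\mathrm{st}}_{\sigma'}$ is strongly transverse and DT-stable by construction, so together they define the desired extension $S_{\sigma'} \to \mathsf{DT}(X|D)$.

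The main obstacle is ensuring that the image of $\sigma'$ in $|\mathsf T|$ actually lies in the integral structure chosen on $\mathsf T$, so that it defines a morphism of logarithmic stacks rather than merely of underlying cone spaces. This is where the freedom to shrink $\sigma'$ and to perform a further ramified base change is crucial: exactly as in the unexpanded argument at the end of the proof of Theorem~\ref{thm: properness}, after scaling the value group by a least common multiple determined by the finitely many new bivalent vertices of $\Sigma^{\mathrm{st}}_{\sigma'}$ relative to $\Sigma_\rho$, all vertices of the slice at $1$ lie on lattice points of $\Sigma_X$, and the induced map $\sigma' \to |\mathsf T|$ becomes integral. Uniqueness of the resulting map $S_{\sigma'} \to \mathsf{DT}(X|D)$ then follows from the universal property used in the minimality step, completing the valuative criterion.
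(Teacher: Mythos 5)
Your argument follows the paper's own proof essentially step for step: apply Lemma~\ref{lem:expanded-strong-transversality} to get a strongly transverse closure over a sufficiently small extension $S'_\sigma$, erase the tube vertices of a fiber $\mathbf G_P$ to define the map $\sigma\to T$ and hence $S_{\sigma'}\to\mathsf{Exp}(X|D)$, invoke the Li--Wu contraction argument to see that the resulting family on the pullback of the universal expansion is still strongly transverse and satisfies DT stability, and repair integrality by a further ramified base change. The only cosmetic difference is the order of operations: the paper first passes to the common refinement $\Delta_\sigma$ of the strongly transverse model with the pullback $\Upsilon_\sigma$ of the universal family (shrinking $\sigma$ to restore flatness) and then contracts the tube-carrying components, whereas you contract first and identify the result with the pullback --- the same content.
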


\begin{proof}
Apply Lemma~\ref{lem:expanded-strong-transversality} to obtain a finite extension of $K$ and a sufficiently small extension $S'_\sigma$ and an expansion $\mathcal Y_\sigma\to S'_\sigma$ such that the closure of $Z_\rho$ is algebraically transverse. The tropicalization gives rise to a family of $1$-complexes $\Sigma_\sigma\to\sigma$. Choose any point $P$ in the relative interior of $\Sigma$ and let $\mathbb G_P$ be the associated embedded $1$-complex. Each vertex $v$ of the $1$-complex $\mathbb G_P$ corresponds to a component of the special fiber of the expansion family. Let $Z_v\hookrightarrow Y_v$ be the associated subscheme. A $2$-valent vertex that lies on a line in $\Sigma_X$ determines a component of the expansion that is a $\PP^1$-bundle over a codimension $2$ stratum in $X$; we examine whether the associated subscheme $Z_v$ is a tube, i.e. it is pulled back from a $0$-dimensional subscheme along the bundle projection. Let $\underline {\mathbb G}_P$ be the $1$-complex obtained by erasing all such tube vertices. Formally, if $v$ is such a vertex that is incident to two edges, delete both $v$ and these edges and connect the two vertices that were incident to $v$ by a single edge. The result is an embedded $1$-complex. This gives rise to a map from the ray generated by $P$ to the complex $|T|$. After shrinking $\sigma$ further, for example until $P$ is extremal, this defines a map
\[
\sigma\to T.
\]
The union of the $1$-complexes $\underline {\mathbb G}_P$ do not necessarily form a cone complex, though their support is certainly the same as a that of a cone complex. 

By pulling back the universal tropical expansion over $T$ to $\sigma$ we obtain a subdivision $\Upsilon_\sigma$ of $\Sigma_X\times\sigma$ whose support equal to that of $\Sigma_\sigma$. Pass to a common refinement $\Delta_\sigma$, which again, after shrinking $\sigma$ is flat over $\sigma$ and therefore gives rise to a rough expansion over $S_\sigma'$. After performing another ramified base change, we obtain a new expansion and a morphism of expansions associated to the map $\Delta_\sigma\to \Upsilon_\sigma$
\[
\mathcal Y_{\sigma}^\Delta\to \mathcal Y_{\sigma}^\Upsilon, \ \ \textrm{over } S''_\sigma. 
\]
As algebraic transversality is stable under additional subdivision, the closure of $\mathcal Z_\rho$ inside $\mathcal Y_{\sigma}^\Delta$ is algebraically transverse. By construction, any component of the special fiber that is contracted by this map carries a tube subscheme. The resulting family is therefore also algebraically transverse, and satisfies the DT stability condition. We conclude existence of limits. 
\end{proof}

The order of the base change required for an extension can be controlled. We produced a map $\sigma\to T$ and momentarily view it as a map only at the level of the $\QQ$-structures. The cone $\sigma$ can be equipped with an integral structure coming from the value group of $R$ or any of finite extensions. When equipped with the integral structure coming from $R$ the map to $T$ above may not be given by an integer linear map. The restriction to $\rho$ is an inclusion. There is a minimal base change in these circumstances, obtained by passage to a finite index sublattice of the standard lattice in $N_S$, such that the map can be extended, giving rise to a family of stable ideal sheaves. Algebraic transversality and DT stability are unaffected by further base change, we can conclude \textit{a posteriori} that the family produced by this minimal extension is transverse and stable. 

Call the minimal extension constructed above as the \textit{distinguished} extension. We conclude the proof of properness by verifying the distinguished extension is the only one.

\begin{proposition}
Given a morphism $\spec K\to \mathsf{DT}(X|D)$ and an extension $\spec R''\to \mathsf{DT}(X|D)$ from a ramified base change, it is obtained by pulling back the distinguished extension above.
\end{proposition}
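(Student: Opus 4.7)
The plan is to adapt the uniqueness argument from the non-expanded case, Proposition~\ref{thm: uniqueness-strong-case}, to the generically expanded setting, by combining birational invariance of the tropicalization with the DT stability condition and the Li--Wu uniqueness result along codimension~$1$ strata. Write $(\mathcal Y', \mathcal Z')/S'$ for the distinguished extension constructed above and $(\mathcal Y'', \mathcal Z'')/S''$ for an arbitrary competing extension.

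First I would pass to a sufficiently small common subcone $\tau$ in the filtered system of logarithmic extensions of $S^\circ$ over which both families become defined; this is possible because that filtered system is cofinal and both strong transversality and DT stability are preserved by shrinking. Denote by $\Sigma'_\tau, \Sigma''_\tau \subset \Sigma_X \times \tau$ the embedded $1$-complex families associated to $\mathcal Y'$ and $\mathcal Y''$. By the birational invariance lemma proved earlier in this section, the supports of both complexes agree with $\trop(Z_\rho \hookrightarrow \mathcal Y_\rho)$.

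Next, the DT stability condition identifies the tube subschemes of each family as living precisely over the tube components. Erasing the $2$-valent tube vertices of $\Sigma'_\tau$ and $\Sigma''_\tau$ therefore produces canonical minimal $1$-complexes, both equal to the complex $\underline{\mathbf G}_P$ used in building the distinguished extension. Consequently $\Sigma''_\tau$ refines $\Sigma'_\tau$ only by insertion of further tube vertices, and the induced classifying maps $\tau \to |T|$ for the two families agree as maps of cone spaces.

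Finally, I would compare integral structures. The distinguished extension $R'$ was constructed to be minimal among ramified base changes making the classifying map $\tau \to T$ integral, so the value group of $R''$ sits inside that of $R'$ as a sublattice of finite index and $\spec R''$ is a ramified cover of $\spec R'$. Once this identification is in place, the target expansion $\mathcal Y''$ is the base change of $\mathcal Y'$ up to further insertion of tube components, which are uniquely specified by the universal family over $\mathsf{Exp}(X|D)$; the subscheme $\mathcal Z''$ is then identified with the base change of $\mathcal Z'$ by a fibrewise application of the Li--Wu separatedness argument along each codimension~$1$ stratum of the special fiber, exactly as in the proof of Proposition~\ref{thm: uniqueness-strong-case}. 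The principal obstacle will be the bookkeeping required to align the two sufficiently small logarithmic extensions and to verify that the combinatorial operation of erasing tube vertices is compatible with the universal expansion on $\mathsf{Exp}(X|D)$; once the classifying maps to $T$ are identified and the integral structures are aligned, the geometric uniqueness of the stable extension reduces to the already-handled Li--Wu uniqueness on individual tube components.
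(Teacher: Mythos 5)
Your proposal follows essentially the same route as the paper: pass to a sufficiently small common logarithmic extension, use birational invariance of the tropicalization to identify the supports of the two embedded $1$-complex families, erase the tube vertices (detected via DT stability) to obtain identical classifying maps to $|T|$, and invoke minimality of the distinguished base change to align integral structures. The only imprecision is your claim that $\Sigma''_\tau$ refines $\Sigma'_\tau$ --- the two are in general incomparable polyhedral structures on the same support, which is why the paper passes to their common refinement before erasing tube vertices --- but since your argument only uses that both erase to the same minimal complex, this does not affect the conclusion.
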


\begin{proof}
Given the morphism from $S^\circ\to \mathsf{DT}(X|D)$ we have obtained the distinguished extension to a logarithmic morphism $S_\sigma'\to \mathsf{DT}(X|D)$ as in the previous proposition. Let $R\subset R''$ be a finite base change and consider an extension
\[
\spec R'' \to \mathsf{DT}(X|D).
\]
By the universal property of the direct image, this lifts to a logarithmic map from the direct image logarithmic structure on $\spec R''$ to $\mathsf{DT}(X|D)$. The target is fine and saturated so for any sufficiently small fine and saturated extension, the schematic map is enhanced to a logarithmic map $S''_\tau\to \mathsf{DT}(X|D)$. This can be seen by direct analysis of the monoids, see also~\cite[Section~2.2.5]{MW18}. 

We view $\tau$ and $\sigma$ as two cones in the same vector space $N_S$, but that span possibly different rank $2$ lattices. By shrinking, we may and do assume that $\tau$ and $\sigma$ coincide set theoretically on $N_S$ i.e. their real points coincide, and we obtain two families of embedded $1$-complexes in $\Sigma_X\times\sigma$. Denote these by $\Upsilon$ and $\Upsilon'$ respectively, and note that they are different polyhedral structures on the same set in $\Sigma_X\times\sigma$. We may therefore pass to the common refinement of these two complexes and choose a sublattice of $\sigma$ such that the associated map has reduced fibers. 

Let $P$ be a point in the relative interior of $\sigma$ and let $\mathbb G_P$ be the fiber of this common refinement over $P$. As in the previous proposition, we may examine the closure of $Z_\rho$ for vertices in $\mathbb G_P$ that carry tube subschemes, and erasing the corresponding tube vertices. This gives rise to a new graph and map from $\sigma$ to $|T|$ as discussed in the previous proposition. Now note that the outcome of erasing the vertices with tube subschemes for $\mathbb G_P$ is the same as erasing such vertices for the fibers over $p$ in $\Upsilon$ and $\Upsilon'$. The map from the base cone $\sigma$ to $T$ that is induced by the map above therefore coincides in both cases, at the level of the real points. 

For the base change, observe that as discussed immediately before the proposition statement, there is a minimum order of base change required to extend the given map from $\spec K$. It follows that up to a further ramified base change, the two extensions coincide, establishing separatedness. 
\end{proof}

\bibliographystyle{siam} 
\bibliography{LogarithmicDTv6}

\end{document}